\documentclass[12pt,oneside,english]{amsart}
\usepackage{amssymb,amsmath,latexsym,amsthm}

\usepackage[a4paper,top=3cm,bottom=3cm,left=3cm,right=3cm,marginparwidth=1.75cm]{geometry}
\usepackage{graphicx}

\usepackage{xcolor}

\newtheorem{theorem}{Theorem}[section]
\newtheorem{corollary}[theorem]{Corollary}
\newtheorem{lemma}[theorem]{Lemma}
\newtheorem{prop}[theorem]{Proposition}

\theoremstyle{definition}
\newtheorem{definition}[theorem]{Definition}
\newtheorem{remark}[theorem]{Remark}

 \newtheorem*{thmA}{Theorem A} 
 \newtheorem*{thmB}{Theorem B} 
 \newtheorem*{thmC}{Theorem C} 
\newtheorem*{thmD}{Theorem D}

\newcommand\blfootnote[1]{%
  \begingroup
  \renewcommand\thefootnote{}\footnote{#1}%
  \addtocounter{footnote}{-1}%
  \endgroup
}
\newcommand{\PSH}{{\rm PSH}}

\newcommand{\capa}{{\rm Cap}}

\numberwithin{equation}{section}

 \usepackage{hyperref}
\hypersetup{  
    pdftitle={Cegrell classes},    
    pdfauthor={Mohammed Salouf},     
    colorlinks=true,       
   linkcolor=black,          
    citecolor=black,        
    filecolor=black,      
    urlcolor=black}        



\subjclass[2010]{32W20, 32U05, 32Q15, 35A23}

\keywords{Monge-Amp\`ere equations, hermitian manifolds, solutions with prescribed singularities, convergence in capacity}

 \begin{document}

 \title[Degenerate complex Monge-Amp\`ere type equations]{Monge-Amp\`ere  equations with prescribed singularities on compact Hermitian manifolds} 
\author{Omar Alehyane}
\author{Chinh H. Lu} 
\author{Mohammed Salouf}

\address{
Laboratoire d'Informatique, Mathématiques et leurs Applications (LIMA), 
Faculté des Sciences El Jadida, 
Université Chouaib Doukkali,
24000 El Jadida, Maroc.} 
\email{alehyane.o@ucd.ac.ma}
\email{salouf.m@ucd.ac.ma}
\address{Institut Universitaire de France, Univ Angers, CNRS, LAREMA, SFR MATHSTIC, F-49000 Angers, France}
\email{hoangchinh.lu@univ-angers.fr}

\date{\today}
   
\maketitle

\begin{abstract}
 Given a compact complex manifold $X$, we study the existence and the uniqueness of weak solutions to degenerate Monge-Ampère equations on $X$ with prescribed singularities when the reference form is semipositive and big, while the right hand side is a non-pluripolar positive Radon measure. 
 This generalizes our previous work to more general hermitian manifolds and also to the case of solutions with prescribed singularities.
\end{abstract}

\blfootnote{This work is partially supported by the Kris project Fondation Charles Defforey and Institut Universitaire de France}

\tableofcontents
\section{Introduction}
The problem of solving Monge-Ampère equations {\rm (MAE)} on complex manifolds  has gained an increasing interest over the last few years due to numerous applications in differential geometry, algebraic geometry and theoretical physics.

In K\"ahler settings, first solutions to these equations  were established by Yau \cite{Yau78} and Aubin \cite{Aub78} with the aim of constructing canonical metrics as described in \cite{Cal57}. Bedford-Taylor \cite{BT76} showed that {\rm(MAE)} can be defined on all bounded quasi-plurisubharmonic functions, motivating the study of the existence of weak solutions. By adapting a construction that goes back to Bedford-Taylor \cite{BT87},  Guedj-Zeriahi \cite{GZ07} succeeded in  defining the Monge-Ampère operator on a class of unbounded quasi-plurisubharmonic functions and then established unbounded weak solutions to {\rm (MAE)} with a general right hand side being a non-pluripolar Radon measure. By this time, {\rm (MAE)} have continued to make a significant contribution in the development of K\"ahler geometry in connection with the search of canonical metrics and models (e.g. \cite{EGZ09,BCHM,DP10,BBGZ13,BG14,BB17,BBEGZ19,DR17,BDL17,BDL20,ChCh21a,ChCh21b}). In particular, by solving degenerate {\rm (MAE)}, Darvas,  Di Nezza and the second author \cite{DDL2,DDL4,DDL23} succeeded in constructing K\"ahler-Einstein metrics with prescribed singularities on singular K\"ahler varieties (we refer the reader to \cite{DDL23} and the references therein for a comprehensive survey of this topic and for numerous applications).

Studying {\rm (MAE)} on non-K\"ahler manifolds has always been a challenging issue.  Cherrier \cite{Cher87} has initiated the problem of  extending the results of Aubin and Yau  to non-K\"ahler settings. He succeeded only in a particular case by assuming a restrictive curvature condition.  Similar partial results have been given in \cite{Han96,GL10} before that Tossati-Weinkove solve the Monge-Ampère equation in full generality \cite{TW10}. Thereafter, many authors were interested in generalizing Tossati-Weinkove's result to the study of bounded solutions \cite{DK12, KN15Phong, Ngu16, GL22, GL23Crelle, KN23CVPDE,BGL24} when the right hand side is a measure having an $L^p$-density with respect to the Lebesgue measure.

Our first main result establishes bounded weak solutions to {\rm (MAE)} for measures dominated by the Monge-Ampère measure of bounded potentials. 
\begin{thmA}[Theorem \ref{thm: ext of bounded sol}]
Let $X$ be a compact complex manifolds of dimension $n$, and let $\theta$ be a smooth  $(1,1)$-form on $X$ which is semipositive and big. Assume $\mu$ is a positive Radon measure on $X$ satisfying $\mu \leq (\theta+dd^c \psi)^n$ with $\psi \in \PSH(X,\theta)\cap L^\infty(X)$. Then, for every positive constant $\lambda$, we can find a unique function $u \in  \PSH(X,\theta)\cap L^\infty(X)$ solving the equation
    $$ (\theta + dd^c u)^n = e^{\lambda u} \mu.  $$
\end{thmA}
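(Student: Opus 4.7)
The plan is to obtain $u$ as a limit of solutions to a family of regularized equations with smooth positive right-hand sides, for which existence is already available from the results of Tossati-Weinkove and its extensions on Hermitian manifolds cited in the introduction. Concretely, decompose $\mu=f(\theta+dd^c\psi)^n$ with $0\leq f\leq 1$ measurable (possible since $\mu\leq(\theta+dd^c\psi)^n$), fix a Hermitian form $\omega$ on $X$, and approximate $\psi$ by a decreasing sequence of smooth $(\theta+\varepsilon_j\omega)$-psh functions $\psi_j$ via Demailly-type regularization, while approximating $f$ by smooth positive densities $f_j$ with $f_j\to f$ in $L^1$. Setting $\mu_j:=(f_j+\varepsilon_j)\omega^n$ and $\theta_j:=\theta+\varepsilon_j\omega$, the existence theorems for smooth strictly positive densities on Hermitian manifolds yield smooth $\theta_j$-psh solutions $u_j$ to
\[
(\theta_j+dd^c u_j)^n=e^{\lambda u_j}\mu_j.
\]

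The crux is to establish uniform bounds $\|u_j\|_{L^\infty(X)}\leq C$ independent of $j$. For the lower bound, compare $u_j$ with $\psi_j$: at any point $x_0$ where $\psi_j-u_j$ attains its maximum, one has $dd^c(\psi_j-u_j)(x_0)\leq 0$, so the pointwise inequality between semipositive $(1,1)$-forms gives $(\theta_j+dd^c\psi_j)^n(x_0)\leq(\theta_j+dd^c u_j)^n(x_0)$. Inserting the equation and using the domination $\mu_j\lesssim(\theta_j+dd^c\psi_j)^n$ (up to a controlled perturbation depending on $\varepsilon_j$) forces $u_j(x_0)\geq -C_0$, and therefore $u_j\geq\psi_j-\sup_X\psi_j-C_0$ globally. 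The upper bound on $\sup u_j$ is the main obstacle: on a Hermitian manifold the total Monge-Ampère mass is not conserved, so the integration-by-parts argument available in the K\"ahler case fails. I would appeal instead to the Ko\l odziej-type capacity $L^\infty$ a priori estimate for Hermitian semipositive-and-big reference forms, as developed in the recent works of Guedj-Lu and Ko\l odziej-Nguyen cited above, applied to $u_j-\sup u_j$, and exploit the positivity of $\lambda$ to absorb the exponential factor.

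With uniform $L^\infty$ bounds in hand, extract a subsequence $u_j\to u$ in $L^1(X)$ and almost everywhere, with $u\in\PSH(X,\theta)\cap L^\infty(X)$. To pass to the limit I would invoke the continuity of the non-pluripolar Monge-Amp\`ere operator along uniformly bounded sequences converging in capacity (a Bedford-Taylor type statement valid in this Hermitian setting), together with the weak convergence $e^{\lambda u_j}\mu_j\to e^{\lambda u}\mu$ that follows from the pointwise convergence of $u_j$ and dominated convergence. Finally, uniqueness follows from the comparison principle combined with the strict monotonicity of $t\mapsto e^{\lambda t}$: given two bounded solutions $u,v$, the comparison principle yields
\[
\int_{\{u>v\}}e^{\lambda u}\,d\mu=\int_{\{u>v\}}(\theta+dd^c u)^n\leq\int_{\{u>v\}}(\theta+dd^c v)^n=\int_{\{u>v\}}e^{\lambda v}\,d\mu,
\]
which together with $e^{\lambda u}>e^{\lambda v}$ on $\{u>v\}$ forces $\mu(\{u>v\})=0$; by symmetry $u=v$ $\mu$-a.e., and since $\mu$ is non-pluripolar this implies $u\equiv v$ on $X$.
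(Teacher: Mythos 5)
Your proposal has several genuine gaps, and the most serious one is structural: you set $\mu_j := (f_j+\varepsilon_j)\omega^n$, but every such measure is absolutely continuous with bounded density with respect to $\omega^n$, whereas $\mu$ is only assumed dominated by $(\theta+dd^c\psi)^n$ for a \emph{bounded} $\theta$-psh $\psi$, and such Monge-Amp\`ere measures can be singular with respect to $\omega^n$ (e.g.\ can charge sets of Lebesgue measure zero). So $\mu_j$ cannot converge weakly to $\mu$ in general, and the scheme does not approximate the right equation. The paper avoids this by writing $\mu = f\theta_\psi^n$ with $f\in L^\infty$, fixing the structure $f_j\theta_{\psi_j}^n$ with $\psi_j\searrow\psi$ and $f_j$ smooth bounded, so that the limiting measure is genuinely $f\theta_\psi^n$.

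The second gap is the upper bound on $\sup_X u_j$, which you correctly identify as the main difficulty but resolve only by gesture. A Ko\l odziej-type capacity estimate gives a \emph{relative} bound $\sup_X u_j - u_j \leq C$; it does not control $\sup_X u_j$ itself. In the K\"ahler case one would integrate the equation and use the conservation of Monge-Amp\`ere mass, but in the Hermitian setting $\int_X\theta_{u_j}^n$ is not a cohomological invariant and can even be unbounded (the theorem makes no assumption $v_+(\theta)<+\infty$), so ``exploiting the positivity of $\lambda$'' does not close this. The paper instead uses a mixed Monge-Amp\`ere inequality $(\theta+dd^c u_j)\wedge\theta_\psi^{n-1}\geq e^{\lambda u_j/n}f_j^{1/n}\theta_\psi^n$, integrates, and controls the Hermitian error terms $dd^c\omega_\psi^{n-1}$ coming from Stokes via the CLN inequality; this is specifically Hermitian machinery with no K\"ahler analogue and it is the crux of the argument.

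Finally, your uniqueness argument relies on the comparison-principle inequality
$\int_{\{u>v\}}\theta_u^n\leq\int_{\{u>v\}}\theta_v^n$, which is a Stokes-based identity and fails on general Hermitian manifolds (it produces error terms involving $d\omega$). The paper instead invokes the Hermitian domination principle of Guedj--Lu (\cite[Proposition 2.8]{GL22}): if $\theta_u^n\leq c\,\theta_v^n$ on $\{u<v\}$ with $c<1$, then $u\geq v$; applied here with $\theta_u^n=e^{\lambda(u-v)}\theta_v^n\leq e^{-\lambda\varepsilon}\theta_v^n$ on $\{u<v-\varepsilon\}$ it gives the result. You would need to replace your comparison step by this or an equivalent Hermitian tool.
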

When $X$ is K\"ahler and $\theta$ is closed, it is known by \cite[Corollary 11.9]{GZbook} that the equation in the last theorem admits a unique solution in the Guedj-Zeriahi class which is then bounded by the domination principle \cite[Proposition 10.11]{GZbook} (see also Corollary \ref{cor: uniqness MA-sol in [phi]}). Generalizing this result to the case $\lambda = 0$ will provide a positive answer to a question of Ko\l odziej from K\"ahler settings \cite[Question 15]{DGZ16}. The novelty here is the existence of the solution in a general complex manifold.

The main goal of this paper is to study the existence of unbounded solutions to {\rm (MAE)} on complex non-K\"ahler manifolds. In a previous work \cite{ALS24}, we considered the case of a complex compact manifold $X$ which satisfies a curvature condition introduced by Guan-Li. Then, giving any real smooth $(1,1)$-form $\theta$ on $X$ which is semipositive and big, we generalized the definition of the full mass class of Guedj-Zeriahi to this settings, we proved that the non-pluripolar Monge-Ampère measure is well defined on this class, that it is continuous with respect to convergence in capacity, and we solved the associated Monge-Ampère equation.

Our objective here is  to push forward the techniques developed in \cite{ALS24} to a more general complex manifold where we also seek for solutions with prescribed singularities in the terminology of \cite{DDL2}. We then let $(X,\omega)$ denote a compact hermitian manifold of dimension $n$, and we fix a smooth $(1,1)$-form $\theta$ on $X$ which is semipositive and big. As it is well known, a major difficulty of the problem lies in the fact that $\int_X (\theta+dd^c \varphi)^n$ depends on the chosen function $\varphi$. Guedj-Lu \cite{GL22} defined the quantities 
$$ v_+(\theta) = \sup \left\{ \int_X (\theta + dd^c \varphi)^n : \; \varphi \in \PSH(X,\theta) \cap L^\infty(X) \right\}, $$
and 
$$ v_-(\theta) = \inf \left\{ \int_X (\theta + dd^c \varphi)^n : \; \varphi \in \PSH(X,\theta) \cap L^\infty(X)  \right\}. $$
 It is an interesting open problem to see whether $v_-(\theta)>0$ and/or $v_+(\theta)<+\infty$. By Stokes' theorem, these conditions are always satisfied when $\theta$ is K\"ahler, or more generally Guan-Li. Corollary 2.4 in \cite{ALS24} shows that it holds true as soon as the manifold admits a hermitian metric $\omega$ satisfying  $v_+(\omega)<+\infty$ and $v_-(\omega)>0$, which covers the case of compact complex manifolds of dimension $n\leq 2$. At the time of this writing, we are not aware of any example of a compact complex manifold where any of these two conditions does not hold. We refer to \cite{GL22,AGL23,BGL24} for more information.

Let $\phi$ be a fixed $\theta$-model potential (see Definition \ref{defn: Mod pot}). Our second main result establishes weak solutions to degenerate Monge-Ampère equations with the same singularity type as $\phi$ in a general complex manifold, generalizing \cite[Theorem A]{DDL4} which deals with the case $\theta$ is closed and $\omega$ is K\"ahler.
\begin{thmB}[Theorems \ref{thm: sol to MA eqt with right hand-side has Lp density} and \ref{sol to MA for lambda=0}]
     Assume $\mu = f\omega^n$ where $0 \leq f \in L^p(\omega^n)$ is such that $p>1$ and $\mu(X)>0$. Then, 
      \begin{itemize}
        \item for every $\lambda>0$, there is a uniquely determined $\varphi \in \PSH(X,\theta)$ such that $\varphi \simeq \phi$ and
        $$ (\theta + dd^c \varphi)^n = e^{\lambda \varphi} \mu; $$
        \item  there is a unique constant $c>0$ and a function $\varphi \in \PSH(X,\theta)$ such that $\varphi \simeq \phi$ and
        $$ (\theta + dd^c \varphi)^n = c\mu. $$
    \end{itemize}
\end{thmB}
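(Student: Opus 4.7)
My plan is to reduce both assertions to Theorem A by approximating the model potential $\phi$ by its truncations $\phi_k := \max(\phi, -k) \in \PSH(X,\theta)\cap L^\infty(X)$ and then passing to the limit using the continuity of the non-pluripolar Monge-Amp\`ere operator in capacity from \cite{ALS24}. Uniqueness is a consequence of the domination principle restricted to the class $\{\varphi \in \PSH(X,\theta) : \varphi \simeq \phi\}$: for $\lambda > 0$, if $\varphi_1, \varphi_2$ are two solutions, comparing Monge-Amp\`ere measures on $\{\varphi_1 < \varphi_2\}$ forces $e^{\lambda \varphi_2} \leq e^{\lambda \varphi_1}$ there, so the set has zero non-pluripolar mass; for $\lambda = 0$, once $\varphi$ is determined up to an additive constant by comparison, the constant $c$ is pinned down by the total non-pluripolar mass $\int_X(\theta + dd^c\phi)^n$.

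For existence when $\lambda > 0$: since $\mu = f\omega^n$ with $f \in L^p(\omega^n)$, $p > 1$, a Ko\l odziej-type bound in the spirit of \cite{GL22,GL23Crelle} yields a bounded $\theta$-psh $\psi$ with $\mu \leq (\theta + dd^c \psi)^n$, so Theorem A is available. For each $k$ I would solve the bounded problem $(\theta + dd^c u_k)^n = e^{\lambda u_k}\mu$ adapted to the envelope $P_\theta[\phi_k]$, so that $u_k$ has the same singularity type as $\phi_k$, following the strategy of \cite{DDL2, DDL4} in the K\"ahler setting but invoking the non-K\"ahler tools of \cite{ALS24}. The crucial a priori estimate is a uniform-in-$k$ sandwich $\phi_k - C_1 \leq u_k \leq \phi_k + C_2$: the upper bound follows because $\phi_k + C_2$ is a supersolution for $C_2$ large (using the integrability of $\mu$); the lower bound comes from the comparison principle applied on $\{u_k < \phi_k - C\}$, where the equation forces $e^{\lambda (\phi_k - C)} \leq e^{\lambda u_k}$, a contradiction for $C$ large. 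Uniqueness in the bounded class then transfers the monotonicity $\phi_{k+1} \leq \phi_k$ to $u_{k+1} \leq u_k$, so $u_k \searrow \varphi$ with $\varphi \simeq \phi$, and capacity-continuity of the non-pluripolar Monge-Amp\`ere operator from \cite{ALS24} yields the equation for $\varphi$.

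For $\lambda = 0$, I would pass to the limit $\lambda \searrow 0^+$ in the solutions $\varphi_\lambda$ from part (i) after a normalization that keeps the family compact, for example $\varphi_\lambda - s_\lambda$ with $s_\lambda := \sup_X(\varphi_\lambda - \phi)$. The integrated form $\int_X e^{\lambda\varphi_\lambda}\, d\mu = \int_X (\theta + dd^c\phi)^n =: V_\phi$ shows that $c := \lim_\lambda e^{\lambda s_\lambda}$ exists, is positive, and equals $V_\phi/\mu(X) > 0$; the subsequential limit $\varphi := \lim_\lambda (\varphi_\lambda - s_\lambda)$ satisfies $\varphi \simeq \phi$ and $(\theta + dd^c\varphi)^n = c\mu$ by another application of the capacity-continuity theorem from \cite{ALS24}. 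The strict positivity of $c$ uses only the hypothesis $\mu(X) > 0$ and the fact that $V_\phi > 0$ (automatic from the mass-preservation principle for model potentials).

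The main obstacle is the combination of two difficulties inherent to the non-K\"ahler setting: securing the uniform lower bound $u_k \geq \phi_k - C$ without access to Stokes' theorem or the classical energy identities, which requires the comparison/domination principle in the refined form of \cite{ALS24} together with the $L^p$-control on $f$; and ensuring that no non-pluripolar mass is lost in the limit $u_k \searrow \varphi$, so that $\int_X(\theta + dd^c\varphi)^n = V_\phi$ and hence $\varphi \simeq \phi$ rather than $\varphi$ being strictly more singular than $\phi$. This mass preservation crucially exploits that $\phi$ is a model potential and relies on the capacity-continuity of the non-pluripolar Monge-Amp\`ere operator established in \cite{ALS24}.
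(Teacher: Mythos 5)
There are serious gaps in this proposal, both in the truncation device and in the $\lambda=0$ limit.

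The truncation $\phi_k:=\max(\phi,-k)$ does not lead anywhere useful: each $\phi_k$ is bounded, hence $P_\theta[\phi_k]$ coincides with $P_\theta[0]$ for all $k$, so "solving in the class $\simeq P_\theta[\phi_k]$" is the same bounded problem for every $k$ and yields the same bounded solution. You never escape the bounded class, and there is no monotone family $u_k$ whose limit could have the singularities of $\phi$. Relatedly, the asserted sandwich $\phi_k-C_1\le u_k\le \phi_k+C_2$ is not justified: the upper bound would require $\theta_{\phi_k}^n\le e^{\lambda(\phi_k+C_2)}\mu$ (there is no reason $\phi_k$ is a supersolution for any $C_2$), and the lower bound via comparison on $\{u_k<\phi_k-C\}$ does not produce a contradiction because $\theta_{\phi_k}^n$ is not controlled from below by $e^{\lambda\phi_k}\mu$. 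In the paper the relative $L^\infty$ bound (Lemma~\ref{lem: uniform estimate}) is established by a quite different route: Skoda's uniform integrability yields an $L^q$ weight $e^{-\gamma(\phi_\varepsilon+u)}f$, an auxiliary bounded potential $v$ solving a twisted equation is introduced via \cite{GL23Crelle}, and the domination principle (Theorem~\ref{thm: the dom prn in [phi]}) is applied to the function $\varphi=\phi-1/(h-\phi-1)$ built from $h=\phi_\varepsilon+v$. That argument is essential and is not replaced by anything in your sketch. The paper's existence proof for $\lambda>0$ also does not proceed by approximating $\phi$; instead it maximizes the mass $\int_X e^{-\lambda u}\theta_u^n$ over subsolutions $u\simeq\phi$ dominated by $f\omega^n$, uses the subsolution-adding device (Theorem~\ref{thm: sol to MA eqt with right hand-side is the sum of MA mesures}) to rule out strict inequality, and passes to the limit by Theorem~\ref{main cvg thm}.

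For $\lambda=0$, your argument hinges on the identity $\int_X e^{\lambda\varphi_\lambda}\,d\mu=\int_X(\theta+dd^c\phi)^n$. That identity fails on a general Hermitian manifold: since $\theta$ is not closed, the total non-pluripolar mass $\int_X\theta_u^n$ is not an invariant of the singularity class $[\phi]$, and in particular it is not equal to $\int_X\theta_\phi^n$ for solutions $\varphi_\lambda\simeq\phi$. This is precisely the obstruction the paper highlights as the reason the K\"ahler strategy of \cite{DDL4} breaks down. The paper's Theorem~\ref{sol to MA for lambda=0} instead bounds $\sup_Xu_j/j$ by comparing with a bounded solution of $\theta_h^n=\alpha\mu$ via the domination principle, uses Lemma~\ref{lem: uniform estimate} for the lower bound $v_j\ge\phi-A$, and applies Theorem~\ref{main cvg thm} (condition~\eqref{condition 2}) to pass to the limit; the positivity of $c$ then drops out, with no appeal to mass preservation. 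Your uniqueness arguments via the domination principle are correct in spirit, but the existence construction as written would not go through.
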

Here the notation $\varphi \simeq \phi$ means $\phi -C \leq \varphi \leq \phi + C$ for some constant $C\geq 0$; we say that $\varphi$ has the same singularity type as $\phi$. The function $\phi$ is not bounded (unless it vanishes identically), hence so is $\varphi$, the term $(\theta + dd^c \varphi)^n$ refers to the non-pluripolar Monge-Ampère measure defined in \cite{GZ07}.

To prove the last theorem, in the case $\theta$ is closed, Darvas-DiNezza-Lu \cite{DDL4} proposed a pluripotential method which is based on a generalization of Ko\l odziej's $L^\infty$ estimate \cite{Kol98}. This method involves minimizing the Monge-Ampère volume of potentials that are less singular than $\phi$: the function that achieves this minimum has a Monge-Ampère volume equal to the one of $\phi$ and it is less singular than $\phi$, hence it is the solution we seek. This method fails in our context since the total mass of the non-pluripolar Monge-Ampère measures of potentials with the same singularity type as $\phi$ is not constant. We could attempt to prove this theorem using the variational method in the spirit of \cite{DDL2,Tru22,DDL23}, however this seems unclear due to the difficulty of establishing a primitive of the operator $(\theta + dd^c .)^n$.

To overcome these difficulties, we consider a Perron type envelope of subsolutions $\varphi_j$ for which $\varphi_j \simeq \phi$ and the non-pluripolar Monge-Ampère measure of $\varphi_j$ is dominated by $\mu$. Then using Theorem A, we prove a subsolution type theorem for potentials with the same singularity type as $\phi$ (Theorem \ref{thm: sol to MA eqt with right hand-side is the sum of MA mesures}). We also establish a general relative uniform estimate of $\varphi_j$ (Lemma \ref{lem: uniform estimate}). Based on these two results, we prove that the total mass of the non-pluripolar Monge-Ampère measure of $\varphi_j$ converges to $\mu(X)$ and that $(\varphi_j)$ has a subsequence converging in capacity to the solution in the last theorem. The uniqueness of the solutions in the last theorem will follow from the domination principle (Theorem \ref{thm: the dom prn in [phi]}), which is a generalization of \cite[Proposition 2.8]{GL22} to the case of unbounded functions. Notice that the uniqueness of the solution $\varphi$ in the second part of the theorem remains largely open (we refer to \cite{KN19} for a partial answer).

After proving the last theorem, a natural question to ask is whether there are solutions to degenerate {\rm (MAE)} with right-hand side being a positive non-pluripolar Radon measure. As evidenced in \cite{GZ07,DiwJFA09,DDL2}, solutions to these equations belong to a special classes of quasi-psh functions. We thus let $\mathcal{E}(X,\theta,\phi)$ denote a generalization of the relative mass class defined in \cite{DDL2} and further studied in \cite{DDL4,DV22,Tru22,DDL23}. 
 
The following result gives a characterization of the range of the Monge-Ampère operator on the relative full mass class. 
\begin{thmC}[Theorem \ref{main thm C} and Corollary \ref{cor: sol of MA equation}]
    Assume $v_-(\theta)>0$. Let $\mu$ be a positive Radon measure vanishing on pluripolar sets. Then,
    \begin{itemize}
        \item for every $\lambda>0$, there is a uniquely determined $\varphi \in \mathcal{E}(X,\theta,\phi)$ such that 
        $$ (\theta + dd^c \varphi)^n = e^{\lambda \varphi} \mu; $$
        \item  there is a unique constant $c>0$ and a function $\varphi \in \mathcal{E}(X,\theta,\phi)$ such that 
        $$ (\theta + dd^c \varphi)^n = c\mu. $$
    \end{itemize}
\end{thmC}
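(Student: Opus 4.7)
The plan is to reduce Theorem C to the $L^p$-density case handled by Theorem B via an approximation-from-below argument, and then to pass to the limit within the relative full mass class $\cE(X,\theta,\phi)$. First I would exploit the fact that $\mu$ vanishes on pluripolar sets to write $\mu$ as the increasing limit of measures $\mu_j$ admitting bounded (hence $L^p$ for every $p>1$) density with respect to $\omega^n$, with $\mu_j(X)>0$ for $j$ large. This is a standard Cegrell-type construction: write $\mu = f \nu$ where $\nu=(\theta+dd^c\psi)^n$ for a bounded $\theta$-psh $\psi$ and $f \in L^1(\nu)$, then truncate $f$ by $\min(f,j)$; after a further density/cutoff step one may assume $\mu_j \leq g_j \omega^n$ with $g_j \in L^\infty$.

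For each such $\mu_j$, Theorem B furnishes a unique $\varphi_j \in \PSH(X,\theta)$ with $\varphi_j \simeq \phi$ satisfying $(\theta+dd^c\varphi_j)^n = e^{\lambda\varphi_j}\mu_j$. Since $\mu_j \nearrow \mu$, the domination principle (Theorem \ref{thm: the dom prn in [phi]}) forces the sequence $(\varphi_j)$ to be monotone decreasing: an increasing right-hand side can only be balanced by a smaller $e^{\lambda\varphi_j}$. Set $\varphi := \lim_j \varphi_j$. One then has to show that $\varphi \not\equiv -\infty$, that it lies in $\cE(X,\theta,\phi)$, and that it satisfies the target equation. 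Here the hypothesis $v_-(\theta)>0$ is essential: combined with the uniform relative estimate of Lemma \ref{lem: uniform estimate}, it prevents the Monge-Amp\`ere mass of the $\varphi_j$ from collapsing in the limit, and upgrades the monotone convergence to convergence in capacity along a subsequence. Continuity of the non-pluripolar Monge-Amp\`ere operator in capacity, already established earlier in the paper, then lets me pass to the limit and conclude $(\theta+dd^c\varphi)^n = e^{\lambda\varphi}\mu$. A mass-conservation argument using Fatou's lemma on both sides identifies $\varphi$ as an element of $\cE(X,\theta,\phi)$, and uniqueness follows immediately from Theorem \ref{thm: the dom prn in [phi]}.

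For the second assertion I would take the limit $\lambda\to 0^+$ of the solutions $\varphi_\lambda$ just produced. After an additive normalization (e.g.\ requiring $\sup(\varphi_\lambda - \phi) = 0$) and writing $c_\lambda := \int_X e^{\lambda\varphi_\lambda}\,d\mu / \mu(X)$, the equation rewrites as $(\theta+dd^c\varphi_\lambda)^n = c_\lambda \mu \cdot (e^{\lambda \varphi_\lambda}/c_\lambda)$; uniform bounds on $\varphi_\lambda$ from the $v_-(\theta)>0$ hypothesis and Hartogs-type compactness yield, along a subsequence $\lambda_k \to 0$, a limit $(\varphi,c)$ with $c>0$, $\varphi\in\cE(X,\theta,\phi)$, and $(\theta+dd^c\varphi)^n = c\mu$. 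Uniqueness of the constant $c$ (but not of $\varphi$) is then a consequence of the comparison principle: two constants $c_1<c_2$ with corresponding solutions would violate the domination principle on the set $\{\varphi_1<\varphi_2+C\}$ for a suitable $C$.

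The main obstacle I expect is the third step of the first part: showing that the monotone decreasing limit $\varphi$ genuinely lies in $\cE(X,\theta,\phi)$, rather than being a strictly more singular $\theta$-psh function whose Monge-Amp\`ere mass has leaked. This is precisely where the interplay between $v_-(\theta)>0$, the relative uniform estimate, and the continuity of the non-pluripolar Monge-Amp\`ere operator along monotone sequences of \emph{unbounded} $\theta$-psh functions with prescribed singularity type $\phi$ becomes delicate in the hermitian (non-K\"ahler) setting — substantially more so than in the K\"ahler, closed-$\theta$ case of \cite{DDL2}.
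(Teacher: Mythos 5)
There is a genuine gap at the very first step of your reduction, and it unravels the rest of the argument. You propose to write $\mu$ as the increasing limit of measures $\mu_j \leq g_j \omega^n$ with $g_j \in L^\infty$. This is impossible in general: an increasing limit of measures absolutely continuous with respect to $\omega^n$ is itself absolutely continuous, whereas a non-pluripolar Radon measure $\mu$ can perfectly well charge a proper analytic subvariety. The correct Cegrell-type decomposition, the one the paper uses via \cite[Theorem 4.1]{ALS24}, is $\mu = f(\omega + dd^c\psi)^n$ with $\psi \in \PSH(X,\omega)\cap L^\infty(X)$ and $f\in L^1(\omega_\psi^n)$. Truncating $f$ then produces measures $\min(f,j)\,\omega_\psi^n$ which do \emph{not} have $L^p$ density with respect to $\omega^n$, so Theorem B is not directly applicable to them. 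This is precisely why the paper's proof of Theorem \ref{main thm C} is a three-step bootstrap: first $f$ continuous and positive with $\psi_j \searrow \psi$ smooth (where the resulting $f\omega_{\psi_j}^n$ does have $L^p$ density and Theorem B applies), then $f\in L^\infty$, then $f\in L^1$. Your proposal collapses these to a single step that is not available.

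This feeds into a secondary problem: you invoke the relative $L^\infty$ estimate of Lemma \ref{lem: uniform estimate} to prevent mass collapse, but that lemma produces a constant depending on an upper bound for $\|f\|_p$, and in the general approximation the $L^p$-norms of the densities necessarily blow up (indeed the limiting solution is only expected in $\mathcal{E}(X,\theta,\phi)$, not $\simeq\phi$, so no such uniform relative bound can hold). The mechanism the paper actually uses to keep the limit inside $\mathcal{E}(X,\theta,\phi)$ and to pass to the limit in the equation is not a Fatou-type mass-conservation argument; it is Lemma \ref{lem: P_theta(inf u_j) in E} together with Theorem \ref{main cvg thm}, i.e.\ verifying that $P_\theta(\inf_j u_j)$ stays in $\mathcal{E}(X,\theta,\phi)$ under conditions \eqref{condition 1} or \eqref{condition 2} (this is where $v_-(\theta)>0$ enters, via Proposition \ref{prop: cond ensure v- positive}) and then upgrading to convergence in capacity and weak convergence of the Monge--Amp\`ere measures. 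Finally, a small but real point: since the solution is only in $\mathcal{E}(X,\theta,\phi)$ rather than of the same singularity type as $\phi$, uniqueness must come from the $\mathcal{E}$-version of the domination principle (Theorem \ref{thm: the dom prn in E(X,theta,phi)} and Corollary \ref{cor: uniqness MA-sol in E(X,theta,phi)}), not from Theorem \ref{thm: the dom prn in [phi]}.
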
 
To prove the last theorem, we follow Cegrell's ideas from the local setting \cite{Ceg98,Ceg04}. This method is based on approximating the measure $\mu$ by appropriate Monge-Ampère measures with potentials $\varphi_j \in \mathcal{E}(X,\theta,\phi)$. This is possible thanks to Theorem B.

By construction, the sequence $\varphi_j$ satisfies one of the following conditions: 
\begin{enumerate}
    \item $(\theta + dd^c{\varphi_j})^n \leq A e^{\lambda \varphi_j}(\omega+dd^c{\psi_j})^n$, where  $\lambda$, $A$ are positive constants and $\psi_j$ is a uniformly bounded sequence of  $\omega$-psh functions converging in capacity to a function $\psi \in \PSH(X,\omega)\cap L^\infty(X)$; 
    \item $(\theta + dd^c{\varphi_j})^n \leq \nu$ is dominated by a non-pluripolar Radon measure $\nu$. 
\end{enumerate}
Applying the Hartogs' lemma and extracting, one can assume $\varphi_j$ converges in $L^1(X)$ to a $\theta$-psh function $\varphi$. We then face the challenge of proving that $\varphi$ is the solution in the last theorem. This will be done by proving the continuity of the non-pluripolar Monge-Ampère measure along the sequence $\varphi_j$. In general, this is not always possible for any sequence converging in $L^1(X)$ as showed by Cegrell  \cite[Example 3.25]{GZbook}. Some related results have been given in \cite{Xin09,DH12,BBEGZ19,GLZ19,DV22} when $\theta$ is closed and $X$ is K\"ahler. In a previous work \cite{ALS24}, by assuming that $X$ admits a Guan-Li metric and that $\phi=0$, we were able to prove that the convergence $\varphi_j \rightarrow \varphi$ holds in capacity and that the non-pluripolar Monge-Ampère measure is continuous with respect to this type of convergence.

Here we propose the following convergence criteria. 
\begin{thmD}[Theorem \ref{main cvg thm}]
     Assume $\varphi_j \in \mathcal{E}(X,\theta,\phi)$ satisfies (1) or (2) and converges in $L^1(X)$ to a $\theta$-psh function $\varphi$.  If $P_\theta(\inf_j \varphi_j) \in \mathcal{E}(X,\theta,\phi)$ then $\varphi \in \mathcal{E}(X,\theta,\phi)$, $\varphi_j \rightarrow \varphi$ in capacity and  $(\theta + dd^c{\varphi_j})^n \rightarrow (\theta + dd^c{\varphi})^n$ weakly. 
\end{thmD}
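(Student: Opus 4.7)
The plan is to sandwich the sequence $(\varphi_j)$ between a decreasing envelope from above and an increasing envelope from below, both converging to $\varphi$ in a strong enough sense, and then to exploit the continuity of the non-pluripolar Monge-Amp\`ere operator with respect to convergence in capacity (already developed earlier in the paper, in the spirit of \cite{ALS24}) to obtain the weak convergence of the measures. Set
$$ \chi_k := \bigl(\sup_{j\geq k}\varphi_j\bigr)^{*} \qquad \text{and} \qquad \psi_k := P_\theta\bigl(\inf_{j\geq k}\varphi_j\bigr). $$
Then $(\chi_k)$ is a decreasing sequence of $\theta$-psh functions and, by Hartogs' lemma applied to the $L^1$-convergence $\varphi_j\to\varphi$, one has $\chi_k\searrow\varphi$ outside a pluripolar set. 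On the other hand $(\psi_k)$ is increasing, and since $\psi_k\geq P_\theta(\inf_j\varphi_j)\in\mathcal{E}(X,\theta,\phi)$ by assumption, each $\psi_k$ lies in $\mathcal{E}(X,\theta,\phi)$.

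Let $\psi:=\lim_k\psi_k\in\PSH(X,\theta)$. The sandwich $\psi_k\leq\varphi_j\leq\chi_k$ (for $j\geq k$) combined with the $L^1$-convergence $\varphi_j\to\varphi$ immediately yields $\psi\leq\varphi$, and hence $\varphi\in\mathcal{E}(X,\theta,\phi)$, being less singular than $\psi_k$ and of the right singularity type $\phi$. To upgrade $\psi\leq\varphi$ to equality and, simultaneously, to deduce the weak convergence of the Monge-Amp\`ere measures, I would combine (i) the monotone convergence of the non-pluripolar Monge-Amp\`ere operator along the increasing sequence $\psi_k\nearrow\psi$, (ii) the uniform domination provided by hypothesis (1) or (2), which forces tightness of $(\theta+dd^c\varphi_j)^n$ and rules out any escape of mass to pluripolar sets, and (iii) the domination principle of Theorem~\ref{thm: the dom prn in [phi]} in $\mathcal{E}(X,\theta,\phi)$, which forces $\psi=\varphi$ once both potentials are shown to carry the same non-pluripolar Monge-Amp\`ere measure.

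Convergence in capacity is then read off from the same sandwich: for any $\epsilon>0$ and any $j\geq k$,
$$ \{\varphi_j-\varphi>\epsilon\}\subset\{\chi_k-\varphi>\epsilon\}, \qquad \{\varphi-\varphi_j>\epsilon\}\subset\{\varphi-\psi_k>\epsilon\}, $$
so that $\capa(\{|\varphi_j-\varphi|>\epsilon\})$ is controlled by capacities of level sets of $\chi_k-\varphi$ and $\varphi-\psi_k$, both of which tend to zero as $k\to\infty$ by monotonicity and the usual quasi-continuity in $\mathcal{E}(X,\theta,\phi)$. Letting $j\to\infty$ for fixed $k$ (handling the upper part via Hartogs) and then $k\to\infty$ gives $\varphi_j\to\varphi$ in capacity, after which the established continuity of the non-pluripolar Monge-Amp\`ere operator under convergence in capacity delivers $(\theta+dd^c\varphi_j)^n\rightharpoonup(\theta+dd^c\varphi)^n$.

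The main obstacle is the identification $\psi=\varphi$ in steps (ii)--(iii). In the K\"ahler, closed setting this is carried out through the conservation of total Monge-Amp\`ere mass along monotone sequences in a fixed singularity class, which fails here since the non-pluripolar Monge-Amp\`ere mass depends genuinely on the potential when $\theta$ is not closed. The appropriate substitute is to combine the uniform estimate of Lemma~\ref{lem: uniform estimate} (which absorbs the defect coming from $d\theta\neq 0$) with the domination assumption provided by (1) or (2); the hypothesis $P_\theta(\inf_j\varphi_j)\in\mathcal{E}(X,\theta,\phi)$ is then exactly what provides a uniform floor in the relative full mass class, allowing the monotone convergence arguments to close.
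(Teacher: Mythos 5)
Your proposal correctly identifies the overall scaffolding -- sandwich the sequence between rooftop envelopes from below and an upper envelope, reduce to the identification of the increasing limit $\psi:=\lim_k P_\theta(\inf_{j\geq k}\varphi_j)$ with $\varphi$ via the domination principle, and then read off convergence in capacity from the sandwich. This is essentially the structure of the paper's Lemma~\ref{lem: cvg in capacity}. However, there is a genuine gap at the decisive step, namely showing $\psi = \varphi$. You describe this as ``forcing $\psi = \varphi$ once both potentials are shown to carry the same non-pluripolar Monge-Amp\`ere measure,'' but you never explain how to obtain this. The paper's argument does not compare the two measures globally: it establishes (from hypothesis (1) or (2), via \cite[Lemma 2.3]{KN22} and Lemma~\ref{lem 11.5 GZbook}) the quantitative estimate $\int_X |e^{\varphi_j}-e^{\varphi}|\theta_{\varphi_j}^n\to 0$, propagates it through the finite rooftop envelopes $P_\theta(\varphi_j,\dots,\varphi_{j+k})$ using Proposition~\ref{prop: the envelope of min of two fncs}, passes to the limit in $k$ and then in $j$ via the semicontinuity of masses \cite[Theorem 2.6]{DDL23} to conclude $\int_X |e^{\psi}-e^{\varphi}|\theta_{\psi}^n = 0$, i.e.\ $\theta_\psi^n = 0$ on $\{\psi<\varphi\}$, and only then invokes Theorem~\ref{thm: the dom prn in [phi]} or~\ref{thm: the dom prn in E(X,theta,phi)} with $c=0$. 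Without this chain your step (ii)--(iii) is circular.

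Two further points. Your appeal to Lemma~\ref{lem: uniform estimate} is misdirected: that lemma concerns measures with $L^p$-density and supplies a relative $L^\infty$ bound; it plays no role in the proof of Theorem~D (it is used later, in Theorems~\ref{thm: sol to MA eqt with right hand-side has Lp density} and~\ref{sol to MA for lambda=0}). The tool that actually ``closes the monotone argument'' is the exponential weight estimate just described, not Lemma~\ref{lem: uniform estimate}. Finally, for the weak convergence $\theta_{\varphi_j}^n\to\theta_\varphi^n$, the mere assertion that the non-pluripolar operator is continuous under convergence in capacity is insufficient here, because the total masses are not constant when $\theta$ is non-closed; one must additionally verify, as the paper does using (1)/(2) and \cite[Proposition 2.5]{DK12}, the uniform tail estimate $\int_{\{P_\theta(\inf\varphi_j)\leq -c\}}\theta_{\varphi_j}^n\to 0$ uniformly in $j$ as $c\to+\infty$, which is the hypothesis of Lemma~\ref{lem: continuity of MA mesures}. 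You gesture at ``tightness,'' but it is precisely this explicit uniform bound, derived from (1) or (2), that needs to be supplied.
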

We will show in Lemma \ref{lem: P_theta(inf u_j) in E} that the condition $P_\theta(\inf_j \varphi_j) \in \mathcal{E}(X,\theta,\phi)$ can be assured by $v_-(\theta)>0$.

The paper is structured as follows.  In Section \ref{sect: NP measures}, we define the non-pluripolar Monge-Ampère measure of quasi-plurisubharmonic functions, and we give a sufficient condition to ensure that it is continuous with respect to potentials converging in capacity; we also study the Monge-Ampère volume of $\theta$-plurisubharmonic functions. In Section \ref{sect rel mass class}, we define and study the relative full mass class and we establish the domination principle (Theorems \ref{thm: the dom prn in [phi]} and \ref{thm: the dom prn in E(X,theta,phi)}). Section \ref{sec: cvg in capacity} is devoted to the proof of Theorem D. Theorems A, B and C are proven in Section 
\ref{sec: MAE with sin}. 

\section{Non-pluripolar Monge-Ampère measures}\label{sect: NP measures}
Throughout the paper, we let $(X,\omega)$ denote a compact hermitian manifold of dimension $n$ and we fix a smooth $(1,1)$-form $\theta$ on $X$ which is semipositive and big. Up to rescaling, there is no loss of generality in assuming $\theta \leq \omega$. 
\subsection{Quasi-plurisubharmonic functions}
We say that a function $u : X \rightarrow \mathbb{R} \cup \{-\infty\}$ is  quasi-plurisubharmonic, quasi-psh for short, if it is locally given by the sum of a smooth and a plurisubharmonic function.  We denote by $\PSH(X,\theta)$  the set of quasi-psh functions $u\in L^1(X)$ satisfying the condition  $\theta+ dd^c u\geq 0$ in the sense of currents. Functions in $\PSH(X,\theta)$ are called $\theta$-psh functions. If $u \in \PSH(X,\theta)$ then $\theta_u^n:=(\theta+ dd^c u)^n$ will refer to the non-pluripolar Monge-Ampère measure of $u$. It is defined by the strong increasing limit 
$$ \theta_u^n = \lim_{t \rightarrow +\infty} \textit{1}_{\{u>-t\}}(\theta + dd^c \max(u,-t))^n,  $$
see \cite{GZ07,ALS24}. We recall that the measures $(\theta + dd^c \max(u,-t))^n$ are defined by the Bedford-Taylor's wedge product \cite{BT76}. Observe that the condition \newline $v_+(\theta)<+\infty$ guarantees that $\theta_u^n$ is a Radon measure on $X$. Here we do not make use of the hypothesis $v_+(\theta)<+\infty$ and therefore we can not confirm that $\int_X \theta_u^n <+\infty$. However, Lemma \ref{lem : control of max(u,-j)} shows that $e^u \theta_u^n$ is always a Radon measure on $X$.  
\subsubsection{Maximum principle and quasi-psh envelopes}
Combining Theorem 2.1 and Corollary 2.2 in \cite{ALS24}, we can state the following fundamental result. 
\begin{theorem}\label{thm: plurifine property and Demailly type inequality} 
    Let $u$, $v\in \PSH(X,\theta)$. We have  
    $$  \textit{1}_{\{u>v\}} (\theta + dd^cu)^n = \textit{1}_{\{u>v\}} (\theta + dd^c \max(u,v))^n.   $$
    If in addition $u \leq v$, then 
    $$ \textit{1}_{\{u = v\}} (\theta + dd^cu)^n \leq \textit{1}_{\{u = v\}} (\theta + dd^cv)^n. $$ 
\end{theorem}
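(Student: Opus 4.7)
The plan is to reduce both statements to the classical bounded setting where the Bedford-Taylor plurifine locality and the bounded Demailly inequality are available, and then pass to the limit using the very definition of the non-pluripolar Monge-Amp\`ere measure as the increasing limit of its canonical truncations $\textit{1}_{\{u > -t\}}(\theta + dd^c \max(u,-t))^n$. Throughout I write $u_t := \max(u,-t)$, $v_t := \max(v,-t)$, $w := \max(u,v)$, and $w_t := \max(w,-t) = \max(u_t,v_t)$, all of which are bounded $\theta$-psh.

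For the first identity, the Bedford-Taylor plurifine locality applied to the bounded pair $(u_t, w_t)$ gives, since $u_t = w_t$ on the plurifine-open set $\{u_t > v_t\}$,
\begin{equation*}
\textit{1}_{\{u_t > v_t\}}(\theta + dd^c u_t)^n = \textit{1}_{\{u_t > v_t\}}(\theta + dd^c w_t)^n.
\end{equation*}
A short case analysis on the value of $v$ relative to $-t$ shows
\begin{equation*}
\{u_t > v_t\} \cap \{u > -t\} = \{u > v\} \cap \{u > -t\}.
\end{equation*}
Multiplying by $\textit{1}_{\{u > -t\}}$ and letting $t \to +\infty$, the left side increases to $\textit{1}_{\{u > v\}}\theta_u^n$ by definition; since $w \geq u$ one has $\{u > -t\} \subseteq \{w > -t\}$, and the standard consistency identity $\textit{1}_{\{w > -s\}}(\theta + dd^c w_t)^n = \textit{1}_{\{w > -s\}}(\theta + dd^c w_s)^n$ for $t \geq s$ lets the right side increase to $\textit{1}_{\{u > v\}}\theta_w^n$.

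For the Demailly-type inequality the hypothesis $u \leq v$ transfers to $u_t \leq v_t$, and the bounded version of Demailly's inequality, itself a short consequence of plurifine locality via the $\max(u_t, v_t - \varepsilon) \searrow v_t$ trick, yields
\begin{equation*}
\textit{1}_{\{u_t = v_t\}}(\theta + dd^c u_t)^n \leq \textit{1}_{\{u_t = v_t\}}(\theta + dd^c v_t)^n.
\end{equation*}
On $\{u = v\} \cap \{u > -t\}$ one has $u_t = u = v = v_t$, hence this set is contained in $\{u_t = v_t\}$; moreover $v > -t$ there, so applying once more $\textit{1}_{\{v > -t\}}(\theta + dd^c v_t)^n = \textit{1}_{\{v > -t\}}\theta_v^n$ converts the right side into an honest restriction of $\theta_v^n$. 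Sending $t \to +\infty$ produces the desired estimate up to the pluripolar sliver $\{u = -\infty\}$, which is negligible since both $\theta_u^n$ and $\theta_v^n$ are non-pluripolar. The main obstacle I foresee is purely bookkeeping: one must track the two distinct reasons why truncations can be replaced by their non-pluripolar limits, namely the plurifine-locality identification of $(\theta + dd^c u_t)^n$ with $\theta_u^n$ on $\{u > -s\}$ for all $t \geq s$, and the set-level containment $\{u = v\} \cap \{u > -t\} \subseteq \{u_t = v_t\}$ (which is not an equality, so only the inequality survives in the limit), and ensure that the non-pluripolarity of the measures absorbs the negligible locus where $u = -\infty$.
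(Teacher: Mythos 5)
Your proof is correct and takes the same route as the cited result: the paper itself does not prove this theorem but instead invokes Theorem 2.1 and Corollary 2.2 of [ALS24], and the argument there is precisely the standard reduction you give — Bedford–Taylor plurifine locality and the Demailly inequality for the bounded truncations $u_t,v_t,w_t$, combined with the consistency identity $\textit{1}_{\{u>-s\}}(\theta+dd^c u_t)^n=\textit{1}_{\{u>-s\}}\theta_u^n$ for $t\ge s$ and the non-pluripolarity of $\theta_u^n,\theta_v^n$ to absorb $\{u=-\infty\}$. The only blemish is cosmetic: in your parenthetical, $\max(u_t,v_t-\varepsilon)$ increases (not decreases) to $v_t$ as $\varepsilon\searrow 0$.
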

 Given any measurable function $h$ on $X$ with real values, we recall that the $\theta$-psh envelope of $h$ is defined by
$$ P_{\theta}(h) = \left(\sup \{ \varphi \in \PSH(X,\theta) : \varphi \leq h\}\right)^*,  $$
with the convention $\sup\emptyset = -\infty$. When $h$ is given by $\min(u,v)$ for some $u$, $v \in \PSH(X,\theta)$, we will use the abbreviation $P_\theta(u,v) := P_\theta(\min(u,v))$.

The following result will be used multiple times in the sequel. 
\begin{theorem}[Theorem 2.3 in \cite{ALS24}]\label{thm: env}
    Assume $h$ is quasi-continuous on $X$ and $P_\theta(h) \in \PSH(X,\theta)$. Then $P_\theta(h) \leq h$ outside a pluripolar set and 
    $$ \int_{\{P_\theta(h)<h\}} (\theta + dd^c P_\theta(h))^n = 0. $$ 
\end{theorem}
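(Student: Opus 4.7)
My plan is to establish the two conclusions separately: the first is a quick consequence of Choquet's lemma combined with the Bedford-Taylor regularization theorem, while the second requires a balayage-type contradiction argument that uses the quasi-continuity of $h$ in an essential way.

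For the inequality $P_\theta(h)\leq h$ off a pluripolar set, I would invoke Choquet's lemma to write $P_\theta(h)=(\sup_k \varphi_k)^*$ for a countable family of candidates $\varphi_k\in\PSH(X,\theta)$ with $\varphi_k\leq h$. The classical fact that the upper semicontinuous regularization of a supremum of quasi-psh functions agrees with the pointwise supremum outside a pluripolar set then gives $P_\theta(h)(x)=\sup_k \varphi_k(x)\leq h(x)$ for every $x$ outside some pluripolar set. The quasi-continuity of $h$ is used here only to make sense of the pointwise value of $h$ outside a pluripolar set.

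For the mass identity, set $u=P_\theta(h)$ and argue by contradiction. If $\theta_u^n(\{u<h\})>0$, then for some $\epsilon>0$ one has $\theta_u^n(\{u\leq h-\epsilon\})>0$. Using the quasi-continuity of $h$, select a closed set $F\subset X$ of small capacity complement on which $h|_F$ is continuous; since the bounded truncations $\max(u,-t)$ produce Monge-Ampère measures controlled by capacity, this yields a compact set $K\subset F\cap\{u\leq h-\epsilon\}$ with $\theta_u^n(K)>0$. The goal is then to exhibit a $\theta$-psh function $\tilde u$ satisfying $u\leq\tilde u\leq h$ with $\tilde u\not\equiv u$, contradicting the defining maximality of $P_\theta(h)$. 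Concretely, I would produce a bounded $v\in\PSH(X,\theta)$ with $v\leq 0$ and $\theta_v^n\geq c\mathbf{1}_K \theta_u^n$ (for some $c>0$) by applying a Cegrell-type solvability result, and then consider a suitable max of $u$ with a small convex perturbation of the form $(1-s)u+sv+s\eta$, calibrated so that on $K$ the perturbation strictly exceeds $u$ while on the contact set $\{u=h\}$ it does not exceed $h$. Combining this with Theorem \ref{thm: plurifine property and Demailly type inequality} to track the Monge-Ampère mass of the maximum shows $\tilde u>u$ on a set of positive $\theta_u^n$-mass.

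The main obstacle is precisely the construction and control of this perturbation in the hermitian setting: without the closedness of $\theta$, one cannot invoke the Kähler comparison principle and integration-by-parts identities directly. Instead, all mass inequalities must be established by localization and by the plurifine / Demailly-type inequality of Theorem \ref{thm: plurifine property and Demailly type inequality}, together with careful handling of the truncations $\max(u,-t)$ needed to accommodate the possible unboundedness of $u$. The quasi-continuity hypothesis on $h$ cannot be removed from this scheme, since it is what allows replacing the measurable set $\{u<h-\epsilon\}$ by a compact set on which $h$ is continuous — the step that makes the perturbation argument quantitative.
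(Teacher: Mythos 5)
Your first claim — that $P_\theta(h)\leq h$ off a pluripolar set follows from Choquet's lemma plus the negligibility of $\{(\sup_k\varphi_k)^*>\sup_k\varphi_k\}$ — is correct and is the standard argument.

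The second part, however, has a genuine circularity and an unresolved tension. You propose to produce a bounded $v\in\PSH(X,\theta)$ with $\theta_v^n\geq c\,\mathbf{1}_K\theta_u^n$ ``by applying a Cegrell-type solvability result.'' On a compact Hermitian manifold with $\theta$ merely semipositive and big, solvability of $(\theta+dd^c v)^n=\nu$ for a prescribed non-pluripolar $\nu$ is precisely one of the main outputs of this paper (Theorems A, B, C), and even the bounded case (Theorem~A) is not available as an off-the-shelf input. The envelope identity you are trying to prove (cited as Theorem 2.3 of \cite{ALS24}) sits upstream of those existence results and is used to prove them, so invoking Cegrell-type solvability here would be circular. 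If anything, what is available at this foundational stage is the \emph{local} Bedford--Taylor Dirichlet problem on small coordinate balls, and a balayage argument built on that — first for continuous $h$, where one shrinks the ball so the oscillation of $h$ is controlled and the maximum principle keeps the lifted potential below $h$; then for quasi-continuous $h$, by passing to the limit along continuous approximants off sets of small capacity.

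There is also an unaddressed obstruction in your perturbation $\max\bigl(u,(1-s)u+sv+s\eta\bigr)$. You ``calibrate'' $\eta$ so the perturbation exceeds $u$ on $K$ while not exceeding $h$ on the contact set $\{u=h\}$, but the real constraint is that the perturbation remain $\leq h$ on the \emph{entire} region $\{u<h\}$, including points where $u$ is arbitrarily close to $h$. Since $v$ is globally bounded, the quantity $(1-s)u+sv+s\eta-h$ can be made positive near such near-contact points, regardless of how you pick $s$ and $\eta$; nothing in the construction localizes the lift to a neighborhood of $K$. This is exactly why the classical balayage uses a \emph{local} Dirichlet modification with boundary data equal to $u$ on a small ball where $u$ is uniformly below $h$: the lifted function agrees with $u$ on the boundary and off the ball, so the constraint $\leq h$ is automatic there, and the maximum principle bounds it inside. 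As stated, your global convex-combination perturbation does not satisfy the admissibility constraint $\tilde u\leq h$, so the maximality of $P_\theta(h)$ is not contradicted.
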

In the case where $h$ is given by the minimum of two $\theta$-psh functions, we have the following observation. 
\begin{prop}\label{prop: the envelope of min of two fncs}
Let $u$, $v \in \PSH(X,\theta)$. If $P_\theta(u,v) \in \PSH(X,\theta)$, then $$ (\theta + dd^c P_\theta(u,v))^n \leq \textit{1}_{\{P_\theta(u,v) = u < v\}} \theta_u^n + \textit{1}_{\{P_\theta(u,v) =  v\}} \theta_v^n.  $$
\end{prop}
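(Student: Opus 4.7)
Set $\varphi := P_\theta(u,v) = P_\theta(\min(u,v))$. The plan is to combine Theorem \ref{thm: env}, which confines $\theta_\varphi^n$ to the contact set $\{\varphi = \min(u,v)\}$, with the second part of Theorem \ref{thm: plurifine property and Demailly type inequality}, which lets us bound $\theta_\varphi^n$ from above by $\theta_u^n$ on $\{\varphi = u\}$ and by $\theta_v^n$ on $\{\varphi = v\}$.

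First I would observe that $h := \min(u,v)$ is quasi-continuous (being the minimum of two quasi-psh functions), and that $P_\theta(h) = \varphi \in \PSH(X,\theta)$ holds by hypothesis. Theorem \ref{thm: env} then yields $\varphi \leq h$ outside a pluripolar set and $\theta_\varphi^n(\{\varphi < h\}) = 0$. Since non-pluripolar Monge-Amp\`ere measures do not charge pluripolar sets, these two facts together give
$$ \theta_\varphi^n = \textit{1}_{\{\varphi = \min(u,v)\}}\, \theta_\varphi^n. $$
Because $\varphi \leq u$ and $\varphi \leq v$ hold outside a pluripolar set, the contact set splits (modulo a pluripolar null set) as the disjoint union $\{\varphi = u < v\} \cup \{\varphi = v\}$, where the second piece absorbs both $\{\varphi = v < u\}$ and $\{\varphi = u = v\}$.

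Next I would apply the comparison inequality of Theorem \ref{thm: plurifine property and Demailly type inequality} to the pair $\varphi \leq u$ to obtain $\textit{1}_{\{\varphi = u\}}\, \theta_\varphi^n \leq \textit{1}_{\{\varphi = u\}}\, \theta_u^n$, and then multiply both sides by the plurifine Borel function $\textit{1}_{\{u<v\}}$ to get
$$ \textit{1}_{\{\varphi = u < v\}}\, \theta_\varphi^n \leq \textit{1}_{\{\varphi = u < v\}}\, \theta_u^n. $$
A symmetric application to the pair $\varphi \leq v$ yields $\textit{1}_{\{\varphi = v\}}\, \theta_\varphi^n \leq \textit{1}_{\{\varphi = v\}}\, \theta_v^n$. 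Summing these two estimates and combining with the localization of $\theta_\varphi^n$ on the contact set delivers the proposition.

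The only genuine subtlety is pluripolar set bookkeeping: the inequality $\varphi \leq \min(u,v)$ is furnished by Theorem \ref{thm: env} only outside a pluripolar set, so one must systematically exploit the fact that $\theta_\varphi^n$, $\theta_u^n$ and $\theta_v^n$ vanish on pluripolar sets in order to upgrade each quasi-everywhere statement to a measure inequality valid globally on $X$. No pluripotential input beyond the two theorems quoted above is required.
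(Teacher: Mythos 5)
Your argument is correct and matches the paper's proof essentially step for step: both localize $\theta_\varphi^n$ on the contact set $\{\varphi=\min(u,v)\}$ via Theorem \ref{thm: env}, split it as $\{\varphi=u<v\}\cup\{\varphi=v\}$, and then invoke the second part of Theorem \ref{thm: plurifine property and Demailly type inequality} to bound by $\theta_u^n$ and $\theta_v^n$ respectively. Your version is just a bit more explicit about the pluripolar-set bookkeeping, which the paper handles tacitly.
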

\begin{proof}
Set $\varphi = P_\theta(u,v)$. By the last theorem, we know that the measure $\theta_\varphi^n$ is carried by the set 
   $$ \{\varphi = \min(u,v)\} = \{\varphi = u<v \} \cup \{\varphi = v\}.  $$
  Since $\varphi \leq \min(u,v)$,  we get by Theorem \ref{thm: plurifine property and Demailly type inequality} that
  $$    \theta_\varphi^n \leq   \textit{1}_{\{\varphi = u < v\}} \theta_\varphi^n + \textit{1}_{\{\varphi = v\}} \theta_\varphi^n  \leq \textit{1}_{\{\varphi = u < v\}} \theta_u^n + \textit{1}_{\{\varphi =v\}} \theta_v^n. $$
\end{proof}
We will also need the following consequence of Theorems \ref{thm: plurifine property and Demailly type inequality} and \ref{thm: env}. 
\begin{prop}\label{prop : the MA measure of P(bu - (b-1)v}
 Fix $b \geq 1$. Assume $u$, $v \in \PSH(X,\theta)$ are such that $\varphi_b := P_\theta(bu - (b-1)v)$ is in $\PSH(X,\theta)$. Then, we have 
 \begin{itemize}
     \item[(i)] $b^{-1} \varphi_b + (1-b^{-1}) v \leq u$ on $X$; 
     \item[(ii)] $\theta_{\varphi_b}^n$ is carried by $D_b := \{b^{-1} \varphi_b + (1-b^{-1}) v = u\}$;
     \item[(iii)] $b^{-n}\theta_{\varphi_b}^n + \textit{1}_{D_b} (1-b^{-1})^n  \theta_v^n \leq \textit{1}_{D_b} \theta_u^n$.
 \end{itemize} 
\end{prop}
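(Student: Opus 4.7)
The plan is to set $w := b^{-1}\varphi_b + (1-b^{-1})v$; since $\varphi_b, v \in \PSH(X,\theta)$ and the coefficients are nonnegative summing to one, we have $w \in \PSH(X,\theta)$, and the equality $\{w = u\} = D_b$ is immediate from the definition of $D_b$. Note also that $w \leq u$ is equivalent to $\varphi_b \leq bu - (b-1)v$.

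For (i), I would apply Theorem \ref{thm: env} with $h = bu - (b-1)v$. The function $h$ is a finite linear combination of quasi-psh functions, hence is quasi-continuous on $X$. Since $\varphi_b = P_\theta(h) \in \PSH(X,\theta)$ by assumption, the theorem yields $\varphi_b \leq bu - (b-1)v$ outside a pluripolar set, equivalently $w \leq u$ outside a pluripolar set. Pluripolar sets have Lebesgue measure zero, so the inequality holds almost everywhere; as $w, u \in \PSH(X,\theta)$ are both quasi-psh (hence upper semicontinuous after regularization and determined by their Lebesgue a.e.\ values), $w \leq u$ holds pointwise on $X$.

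For (ii), Theorem \ref{thm: env} further asserts $\int_{\{\varphi_b < h\}}\theta_{\varphi_b}^n = 0$. Thus $\theta_{\varphi_b}^n$ is supported, up to a set of measure zero for this measure, on $\{\varphi_b = bu - (b-1)v\} = D_b$. Because the non-pluripolar Monge-Amp\`ere measure puts no mass on pluripolar sets, we deduce $\theta_{\varphi_b}^n = \textit{1}_{D_b}\,\theta_{\varphi_b}^n$.

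For (iii), the key identity is the expansion
\[
\theta + dd^c w \;=\; b^{-1}(\theta + dd^c \varphi_b) \;+\; (1-b^{-1})(\theta + dd^c v),
\]
which gives, by taking the $n$-th non-pluripolar self-intersection and keeping only the two extremal terms (all the mixed terms being positive measures),
\[
\theta_w^n \;\geq\; b^{-n}\,\theta_{\varphi_b}^n \;+\; (1-b^{-1})^n\,\theta_v^n.
\]
Multiplying by $\textit{1}_{D_b}$ and using (ii) to drop the indicator on the first term yields
\[
\textit{1}_{D_b}\,\theta_w^n \;\geq\; b^{-n}\,\theta_{\varphi_b}^n \;+\; \textit{1}_{D_b}(1-b^{-1})^n\,\theta_v^n.
\]
Since $w \leq u$ on $X$ with $\{w = u\} = D_b$, Theorem \ref{thm: plurifine property and Demailly type inequality} gives $\textit{1}_{D_b}\,\theta_w^n \leq \textit{1}_{D_b}\,\theta_u^n$, and combining these two bounds produces the desired inequality.

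The step requiring the most care will be justifying the expansion of $\theta_w^n$ as a sum of nonnegative mixed non-pluripolar products: for unbounded potentials this is not just a formal algebraic identity. I would handle it by working first with the bounded truncations $\varphi_b^{(t)} := \max(\varphi_b,-t)$ and $v^{(t)} := \max(v,-t)$, using Bedford-Taylor's theory to expand $(\theta + dd^c(b^{-1}\varphi_b^{(t)} + (1-b^{-1})v^{(t)}))^n$ as a genuine sum of positive Bedford-Taylor products, discarding the mixed terms, and then passing to the limit on the plurifine open set $\{w > -t\}$ via the monotone convergence definition of the non-pluripolar product recalled after Theorem \ref{thm: plurifine property and Demailly type inequality}; the plurifine property allows the truncations of $w$ and the convex combination of the truncations of $\varphi_b$ and $v$ to be interchanged on the relevant sublevel sets.
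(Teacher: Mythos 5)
Your proposal is correct and follows essentially the same path as the paper's proof: parts (i) and (ii) come from Theorem \ref{thm: env} applied to the quasi-continuous function $h = bu - (b-1)v$ (upgrading a.e.\ to everywhere via quasi-plurisubharmonicity), and (iii) combines the Demailly-type inequality of Theorem \ref{thm: plurifine property and Demailly type inequality} on $D_b$ with the Newton binomial expansion of $(\theta + dd^c w)^n$, dropping the nonnegative mixed terms. The paper invokes the binomial expansion without comment; your final paragraph correctly identifies that in the non-pluripolar setting this should be justified by truncating $\varphi_b$ and $v$, expanding via Bedford--Taylor, and passing to the limit on plurifine sublevel sets, which is the standard argument the paper leaves implicit.
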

\begin{proof}
By Theorem \ref{thm: env}, we know that $\theta_{\varphi_b}^n$ is carried by $D_b$ and that $b^{-1} \varphi_b + (1-b^{-1}) v \leq u$ a.e. on $X$. Since these functions are quasi-psh, we obtain $b^{-1} \varphi_b + (1-b^{-1}) v \leq u$ everywhere on $X$. Thus, we only need to prove (iii). 

According to Theorem \ref{thm: plurifine property and Demailly type inequality}, we have
$$ \textit{1}_{D_b} (\theta + dd^c({b^{-1}\varphi_b + (1-b^{-1})v}))^n \leq \textit{1}_{D_b} (\theta + dd^c u)^n. $$
The result then follows by the Newton binomial formula and (ii).
\end{proof}
We finish this paragraph with the following well-known lemmas. 
\begin{lemma}\label{lem: Ptheta(u-v) is in psh(X,theta)}
Assume $u$ and $v$ are quasi-psh functions on $X$. Then $P_\theta(u-v)$ is bounded from above and $P_\theta(u) - \sup_X v \leq P_\theta(u-v)$. 
\end{lemma}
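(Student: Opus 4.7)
I would handle the second inequality first, as it is essentially formal. Set $M := \sup_X v$, which is finite since $v$ is quasi-psh, hence upper semicontinuous on the compact manifold $X$. Then $u - v \geq u - M$ pointwise (on $\{v = -\infty\}$ the inequality is trivial). For any $\psi \in \PSH(X,\theta)$ with $\psi \leq u$, the shift $\psi - M$ lies in $\PSH(X,\theta)$ and satisfies $\psi - M \leq u - M \leq u - v$, so $\psi - M \leq P_\theta(u-v)$. Taking the supremum over such $\psi$ and then the usc regularization gives $P_\theta(u) - M \leq P_\theta(u-v)$.

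For the upper boundedness of $P_\theta(u-v)$, the main observation is that both $u$ and $v$, being quasi-psh on the compact Hermitian manifold $(X,\omega)$, lie in $L^1(X,\omega^n)$. Since the pluripolar set $\{v = -\infty\} \cup \{u = -\infty\}$ has zero $\omega^n$-measure, the integral
$$ K := \int_X (u-v)\,\omega^n = \int_X u\,\omega^n - \int_X v\,\omega^n $$
is a well-defined finite quantity. Any candidate $\varphi \in \PSH(X,\theta)$ with $\varphi \leq u - v$ (almost everywhere) therefore satisfies $\int_X \varphi\,\omega^n \leq K$.

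To convert this integral bound into a uniform pointwise upper bound on $\varphi$, I would invoke the standard estimate
$$ \sup_X \psi \;\leq\; \frac{1}{\vol(X)}\int_X \psi\,\omega^n + C_0, \qquad \psi \in \PSH(X,\theta), $$
where $C_0$ depends only on $(X,\omega,\theta)$; it follows at once from the $L^1$-compactness of the normalized family $\{\psi \in \PSH(X,\theta) : \sup_X \psi = 0\}$. Applied to $\varphi$, this yields $\sup_X \varphi \leq K/\vol(X) + C_0$, a bound independent of $\varphi$. Passing to the usc regularization of the supremum over all such $\varphi$ preserves this upper bound, so $P_\theta(u-v)$ is bounded above.

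I do not anticipate a serious obstacle. The only point worth double-checking is the interpretation of $u - v$ on the pluripolar set where the difference is formally indeterminate; but since pluripolar sets are removable for $\theta$-psh functions and $\omega^n$-negligible, they affect neither the $L^1$ computation nor the definition of the envelope.
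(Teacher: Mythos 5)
Your proof is correct, and it takes a genuinely different route from the paper on the interesting half (the upper bound). The paper normalizes $\sup_X v = 0$, observes that $D := \{v > -1\}$ is non-pluripolar, invokes the boundedness of the global extremal function $V_{D,\omega}$ (citing \cite[Theorem 9.17]{GZbook} and \cite{Vu19IJM}), and uses the pointwise bound $u - v \leq \sup_X u + 1$ on $D$ together with $\PSH(X,\theta) \subset \PSH(X,\omega)$ to conclude that any competitor in the envelope is dominated by $V_{D,\omega} + \sup_X u + 1$. You instead integrate the inequality $\varphi \leq u-v$ against $\omega^n$ to bound $\int_X \varphi\,\omega^n$ by the finite constant $\int_X u\,\omega^n - \int_X v\,\omega^n$, and then convert this $L^1$ bound into a uniform $\sup$-bound via the standard compactness estimate for normalized $\theta$-psh families. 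Both arguments ultimately rest on compactness phenomena in pluripotential theory, but your version bypasses extremal functions entirely, requires no normalization of $v$, and does not use the standing assumption $\theta \leq \omega$; the paper's version, in exchange, produces a sharper geometric bound tied to the level set of $v$. Your handling of the pluripolar indeterminacy of $u-v$ and of the trivial first inequality is also fine.
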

\begin{proof}
Observe that the inequality $P_\theta(u) - \sup_X v \leq P_\theta(u-v)$ is obvious.

Without loss of generality we can assume $\sup_X v=0$.
Using this we see that the set $D := \{v>-1\}$ is non-pluripolar because $v$ is quasi-psh. It then follows from  \cite[Theorem 9.17]{GZbook} and \cite{Vu19IJM} that the global extremal function of $D$ with respect to $\omega$ 
$$ V_{D,\omega} := \sup \{h \in \PSH(X,\omega)\; : \; h \leq 0 \; \text{on } D \} $$
is bounded from above by some positive constant $A$. For  $C = \sup_X u +1$, we have $u-v \leq C$  on $D$,  hence $P_\theta(u-v) \leq V_{D,\varepsilon\omega} + C \leq A+C$ is bounded from above. 
The proof is complete.  
\end{proof}
\begin{lemma}\label{lem 11.5 GZbook}
    Let $\mu$ be a positive Radon measure on $X$ that vanishes on pluripolar sets. Assume $(u_j) \subset \PSH(X,\theta)$ is a uniformly bounded sequence that converges in $L^1(X)$ to a  function $u \in \PSH(X,\theta)$. Then $u_j \rightarrow u$ in $L^1(\mu)$. 
\end{lemma}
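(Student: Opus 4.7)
The proof relies on a classical pluripotential-theoretic argument (this is essentially Lemma 11.5 in the Guedj--Zeriahi monograph, adapted to the hermitian setting considered here). The strategy is to control the two one-sided deviations $(u_j - u)_+$ and $(u - u_j)_+$ separately.

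For the upper part $(u_j - u)_+$, the argument is direct. Introduce the decreasing regularization $v_k := (\sup_{j \geq k} u_j)^* \in \PSH(X,\theta)$. Standard Hartogs-type arguments combined with the $L^1(X)$-convergence of $(u_j)$ to $u$ force $v_k \downarrow u$ outside a pluripolar set $N$. Since $\mu(N) = 0$ by hypothesis and $(v_k)$ is uniformly bounded, dominated convergence gives $\int v_k\, d\mu \to \int u\, d\mu$. The sandwich $u \leq \max(u_j, u) \leq v_j$ holding $\mu$-a.e. then forces $\int \max(u_j, u)\, d\mu \to \int u\, d\mu$, so
$$ \int_X (u_j - u)_+\, d\mu \,=\, \int_X \bigl(\max(u_j,u) - u\bigr)\, d\mu \,\longrightarrow\, 0. $$

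The lower part $(u - u_j)_+$ is the genuine difficulty, since no Hartogs-type control of $\liminf u_j$ from below is available for upper-semicontinuous $\theta$-psh functions. Two standard routes handle this. One can first upgrade the $L^1(X)$-convergence to convergence in Monge-Amp\`ere capacity (using simultaneous quasi-continuity of the countable family $\{u_j, u\}$ together with the monotone capacity-convergence of $v_k$) and then invoke the quantitative absolute continuity of $\mu$ with respect to the capacity, i.e.\ for every $\varepsilon > 0$ there is $\delta > 0$ such that $\capa(E) < \delta$ implies $\mu(E) < \varepsilon$; this is a standard consequence of $\mu$ vanishing on pluripolar sets. Combined with the uniform bound $|u_j - u| \leq 2M$ and the splitting $\int_X |u_j - u|\, d\mu \leq \eta\, \mu(X) + 2M\, \mu(\{|u_j - u| > \eta\})$, letting $j \to \infty$ then $\eta \to 0$ concludes. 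Alternatively, one can use a Cegrell-type decomposition $\mu = f \cdot (\theta + dd^c \phi)^n$ with $\phi$ bounded $\theta$-psh and $f \in L^1$, and invoke a Bedford-Taylor-type convergence theorem to pass the limit through $\int u_j\, d\mu$.

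I expect the main technical point to be this second direction: both routes are classical in the K\"ahler setting, but their transfer to the present hermitian framework (where closedness of $\theta$ and integration by parts are not freely available) relies on the pluripotential tools developed earlier in the paper.
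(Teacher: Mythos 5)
The decomposition into the one-sided deviations and the treatment of the upper part $(u_j-u)_+$ via the decreasing regularizations $v_k=(\sup_{\ell\geq k}u_\ell)^*$ are correct; this is the easy half. The paper itself does not spell out a proof but simply invokes \cite[Lemma 11.5]{GZbook} together with the argument of \cite[Lemma 2.3]{KN22}, so the substance of the lemma lives entirely in those references.

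The gap is in your treatment of the lower part $(u-u_j)_+$, and neither of your two routes closes it. Route A rests on the assertion that $L^1(X)$-convergence of a \emph{uniformly bounded} sequence of $\theta$-psh functions can be ``upgraded'' to convergence in capacity; this is not a valid implication, and the ingredients you cite do not yield it. The monotone envelopes $v_k\downarrow u$ (and Dini's lemma on a quasi-closed set of large capacity, via simultaneous quasi-continuity) control only the one-sided sublevel set $\{u_j>u+\delta\}$; they give no information whatsoever on $\{u_j<u-\delta\}$, because there is no decreasing psh envelope from below. In fact, the failure of ``$L^1$ + uniform bound $\Rightarrow$ capacity convergence'' is exactly the phenomenon behind Cegrell's example of discontinuity of the Monge--Amp\`ere operator along $L^1$-convergent bounded sequences (\cite[Example 3.25]{GZbook}), which the present paper cites for precisely this reason. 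Knowing that $\{u_j<u-\delta\}$ has small Lebesgue measure (from $L^1$-convergence) does not make its capacity, nor its $\mu$-measure, small: $\mu$ is only assumed non-pluripolar, not absolutely continuous with respect to Lebesgue measure.

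Route B is too vague to stand on its own: after writing $\mu=f\,\theta_\phi^n$, the statement $\int u_j f\,\theta_\phi^n\to\int u f\,\theta_\phi^n$ for a uniformly bounded $L^1$-convergent sequence is, in substance, the very thing to be proven, and no ``Bedford--Taylor-type convergence theorem'' directly asserts it (Bedford--Taylor convergence theorems concern monotone sequences or weak convergence of the Monge--Amp\`ere measures $\theta_{u_j}^n$, not integrals of $u_j$ against a fixed singular measure). So the genuine argument for the lower bound $\liminf_j\int u_j\,d\mu\geq\int u\,d\mu$ --- the content of \cite[Lemma 11.5]{GZbook} and \cite[Lemma 2.3]{KN22} --- is not supplied by your sketch.
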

\begin{proof}
Using \cite[Lemma 11.5]{GZbook}, the proof goes exactly as in the one of \cite[Lemma 2.3]{KN22}. 
\end{proof}
\subsubsection{Convergence in capacity}
Recall that the capacity of a Borel set $D\subset X$, with respect to $\omega$, is defined by
$$ \capa_\omega(D) := \sup \left\{ \int_D (\omega + dd^c h)^n : h \in \PSH(X,\omega), 0\leq h\leq 1 \right\}.  $$
A sequence of measurable functions $(h_j)$ is said to be convergent in capacity to a measurable function $h$ if
$$ \lim_{j\rightarrow +\infty} \capa_{\omega}(\{|h_j - h|> \delta \}) = 0, $$
for every $\delta>0$.

It is well known, by an example of Cegrell \cite[Example 3.25]{GZbook}, that the Monge-Ampère operator is not continuous with respect to uniformly bounded sequences converging in $L^1(X)$. Building on \cite[Theorem 2.6]{DDL23}, we give a sufficient condition to ensure the continuity of the non-pluripolar Monge-Ampère measure with respect to  potentials converging in capacity.
\begin{lemma}\label{lem: continuity of MA mesures}
     Let  $u_j$, $u \in \PSH(X,\theta)$ be such that $\sup_j\int_X \theta_{u_j}^n<+\infty$ and $u_j \rightarrow u$ in capacity. Let $h_j$ be a uniformly bounded sequence of positive  and quasi-continuous functions that converge to a quasi-continuous function $h$ in capacity.  If the function $f:= P_\theta(\inf u_j)$ belongs to $\PSH(X,\theta)$ and 
  $$  \int_{\{f \leq -c\}} \theta_{u_j}^n   \rightarrow 0, \; \; \text{as} \; c\rightarrow +\infty $$
        uniformly with respect to $j$,
      then $h_j\theta_{u_j}^n \rightarrow h\theta_u^n$ in the weak sense of measures. 
\end{lemma}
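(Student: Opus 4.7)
The plan is to truncate at level $-c$, reduce the statement to bounded potentials where the Monge-Amp\`ere operator is continuous under capacity convergence, and use the hypothesis on $f$ to estimate the tails uniformly in $j$. Set $u_j^c := \max(u_j, -c)$ and $u^c := \max(u,-c)$; these truncations are uniformly bounded and satisfy $u_j^c \to u^c$ in capacity, since convergence in capacity is preserved under $\max$ with a constant. Applying Theorem \ref{thm: env} to $\inf_j u_j$, we have $f \leq u_j$ outside a pluripolar set for every $j$, hence $\{u_j \leq -c\} \subseteq \{f \leq -c\}$ modulo pluripolarity, and the hypothesis yields
\[
\varepsilon(c) \;:=\; \sup_j \theta_{u_j}^n\bigl(\{u_j \leq -c\}\bigr) \;\leq\; \sup_j \theta_{u_j}^n\bigl(\{f \leq -c\}\bigr) \;\longrightarrow\; 0, \qquad c \to +\infty.
\]
By the plurifine locality of Theorem \ref{thm: plurifine property and Demailly type inequality}, $\theta_{u_j}^n$ and $\theta_{u_j^c}^n$ agree on the plurifine open set $\{u_j > -c\}$, so the nonnegative measure $\theta_{u_j}^n - \mathbf{1}_{\{u_j > -c\}}\theta_{u_j^c}^n$ has total mass at most $\varepsilon(c)$. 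The analogous decomposition holds for $u$, with tail mass $\theta_u^n(\{u \leq -c\}) \to 0$ by Fatou and the finite total mass $\int_X \theta_u^n \leq \liminf_j \int_X \theta_{u_j}^n < +\infty$.

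For each fixed $c$, I would then establish the weak convergence $h_j\,\theta_{u_j^c}^n \to h\,\theta_{u^c}^n$. The underlying weak convergence $\theta_{u_j^c}^n \to \theta_{u^c}^n$ for uniformly bounded $\theta$-psh potentials converging in capacity on a Hermitian manifold is of Bedford-Taylor-Xing type; in the present semipositive big setting it is precisely the content of \cite[Theorem 2.6]{DDL23}, whose proof is local and thus insensitive to the closedness of $\theta$. To insert $h_j$, fix $\chi \in C^0(X)$ and split
\[
\int_X \chi h_j\,\theta_{u_j^c}^n \;=\; \int_X \chi h\,\theta_{u_j^c}^n \;+\; \int_X \chi(h_j - h)\,\theta_{u_j^c}^n.
\]
The first term tends to $\int_X \chi h\,\theta_{u^c}^n$ upon approximating the bounded quasi-continuous integrand $\chi h$ in capacity by continuous functions outside arbitrarily small capacity sets, using that the measures $\theta_{u_j^c}^n$ have uniformly bounded mass and do not charge pluripolar sets. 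The second term vanishes since $|h_j - h|$ is uniformly bounded and $h_j \to h$ in capacity, once we note that the Monge-Amp\`ere measures of uniformly bounded $\theta$-psh potentials are dominated by a constant multiple of $\capa_\omega$ via a standard Chern-Levine-Nirenberg type estimate.

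Finally, given $\chi \in C^0(X)$ and $\delta > 0$, choose $c$ so large that the four tail contributions $\|\chi\|_\infty\sup_j\|h_j\|_\infty\,\varepsilon(c)$ and $\|\chi\|_\infty\|h\|_\infty\,\theta_u^n(\{u \leq -c\})$, together with the corresponding pieces for $u$, are each smaller than $\delta$. Then
\[
\left|\int_X \chi h_j\,\theta_{u_j}^n - \int_X \chi h\,\theta_u^n\right| \;\leq\; \left|\int_X \chi h_j\,\theta_{u_j^c}^n - \int_X \chi h\,\theta_{u^c}^n\right| + O(\delta),
\]
where the bracketed right-hand side tends to $0$ as $j \to \infty$ by the previous paragraph, and $\delta$ is arbitrary. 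The main obstacle is the bounded-potential continuity $\theta_{u_j^c}^n \to \theta_{u^c}^n$ in the non-K\"ahler Hermitian setting, since one cannot appeal to Stokes' theorem globally; this input is imported from the local arguments of \cite{DDL23}. Once it is granted, the entire proof reduces to the tightness bookkeeping enabled by the hypothesis on $f$.
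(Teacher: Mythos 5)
Your truncation strategy runs into a gap at the step where you replace $\theta_{u_j}^n$ by $\theta_{u_j^c}^n$. Plurifine locality gives you exactly the identity
$\mathbf{1}_{\{u_j>-c\}}\theta_{u_j}^n=\mathbf{1}_{\{u_j>-c\}}\theta_{u_j^c}^n$, so the nonnegative measure $\theta_{u_j}^n-\mathbf{1}_{\{u_j>-c\}}\theta_{u_j^c}^n=\mathbf{1}_{\{u_j\leq -c\}}\theta_{u_j}^n$ is indeed of mass at most $\varepsilon(c)$. But the inequality you then write,
\[
\left|\int_X \chi h_j\,\theta_{u_j}^n - \int_X \chi h\,\theta_u^n\right| \;\leq\; \left|\int_X \chi h_j\,\theta_{u_j^c}^n - \int_X \chi h\,\theta_{u^c}^n\right| + O(\delta),
\]
requires comparing $\int_X \chi h_j\,\theta_{u_j}^n$ with $\int_X \chi h_j\,\theta_{u_j^c}^n$, i.e.\ with the full truncated measure, not just its restriction to $\{u_j>-c\}$. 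The extra term is $\int_{\{u_j\leq -c\}}\chi h_j\,\theta_{u_j^c}^n$, and the hypothesis gives you no control over $\theta_{u_j^c}^n(\{u_j\leq -c\})$: what is controlled is $\theta_{u_j}^n(\{f\leq -c\})$, which is the mass of the original measure on a sublevel set of $f$, not the mass of the truncated measure on the level set $\{u_j=-c\}$. In the non-K\"ahler setting these quantities are unrelated, because $\int_X\theta_{u_j^c}^n$ need not be uniformly bounded (there is no Stokes identity fixing it), so the piece that $\theta_{u_j^c}^n$ deposits on $\{u_j\leq -c\}$ can be arbitrarily large even while $\theta_{u_j}^n(\{f\leq -c\})\to 0$. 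Already in the K\"ahler case this quantity tends to $\int_X\theta^n-\int_X\theta_u^n$, which vanishes only under a full-mass assumption the lemma does not make.

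The paper sidesteps this exactly by never using an indicator of a $j$-dependent set: it cuts off with the $j$-independent, bounded quasi-continuous function
$f_\varepsilon^c=\max(f+c,0)/(\max(f+c,0)+\varepsilon)$, for which $\{f_\varepsilon^c>0\}\subset\{f>-c\}\subset\{u_j>-c\}$, so that plurifine locality yields $f_\varepsilon^c\theta_{u_j}^n=f_\varepsilon^c\theta_{u_j^c}^n$ as equalities of measures, with no stray tail term on $\{u_j\leq -c\}$. Moreover the paper first invokes \cite[Theorem 2.6]{DDL23} to reduce the whole statement (including the insertion of the $h_j$'s) to the single mass inequality $\limsup_j\int_X\theta_{u_j}^n\leq\int_X\theta_u^n$, which it then proves by the $f_\varepsilon^c$-argument. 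If you want to repair your version, replace the indicator $\mathbf{1}_{\{u_j>-c\}}$ by $f_\varepsilon^c$ throughout, or reduce to the mass inequality first as the paper does; either way the crucial point is that the cutoff must be independent of $j$.
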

\begin{proof}
By  \cite[Theorem 2.6]{DDL23}, we have $\int_X \theta_u^n <+\infty$.
Again by \cite[Theorem 2.6]{DDL23}, it suffices to prove 
\begin{equation}\label{eq : control of MAV}
    \limsup_{j\rightarrow +\infty}  \int_{X} \theta_{u_j}^n \leq \int_X \theta_u^n.
\end{equation}
Fix $\varepsilon$, $c>0$ and consider 
    $$  f_\varepsilon^c = \frac{\max(f+c,0)}{\max(f+c,0)+\varepsilon}. $$
    The function $f_\varepsilon^c$ is positive, bounded and quasi-continuous. Moreover, Theorem \ref{thm: plurifine property and Demailly type inequality} implies  
    $$ f_\varepsilon^c \theta_{u_j}^n = f_\varepsilon^c \theta_{\max(u_j,-c)}^n \;  \; \text{and} \; \; f_\varepsilon^c \theta_{u}^n = f_\varepsilon^c \theta_{\max(u,-c)}^n.    $$
  It then follows from \cite[Theorem 4.26]{GZbook} that 
    $$  \int_X f_{\varepsilon}^c \theta_u^n = \int_X f_{\varepsilon}^c \theta_{\max(u,-c)}^n   = \lim_{j\rightarrow +\infty}  \int_X f_{\varepsilon}^c \theta_{\max(u_j,-c)}^n  = \lim_{j\rightarrow +\infty}  \int_X f_{\varepsilon}^c \theta_{u_j}^n. $$
    Therefore, 
    $$ \int_X \theta_u^n \geq \limsup_{j\rightarrow +\infty}  \int_{\{f > -c + 1\}} f_{\varepsilon}^c \theta_{u_j}^n. $$
Using the fact that the map $x \mapsto \frac{x}{x+\varepsilon}$ is increasing on $\mathbb{R}^+$, we get $f_{\varepsilon}^c \geq  \frac{1}{1 + \varepsilon}$ on $\{f >-c + 1\}$, hence 
$$ \int_X \theta_u^n \geq \frac{1}{1 + \varepsilon} \limsup_{j\rightarrow +\infty} \int_{\{f > -c + 1\}} \theta_{u_j}^n. $$
By the hypothesis, letting $c \rightarrow +\infty$ and then $\varepsilon \rightarrow 0$ yields \eqref{eq : control of MAV}, which proves the lemma. 
\end{proof}
\subsection{Bounds on Monge-Ampère volumes}
Let $u$, $v \in \PSH(X,\theta)$. We say that $u$ is less singular than $v$, and we write $v\preceq u$,  if there is $C>0$ such that $v \leq u+C$ (we also say that $v$ is more singular than $u$). 

We define an equivalence relation on $\PSH(X,\theta)$ by requiring that $u \simeq v$ iff $u \preceq v$ and $v \preceq u$. Fix $u \in \PSH(X,\theta)$. The equivalence class of $u$ is the set of $\theta$-psh functions having the same singularity type as $u$, i.e. 
$$ [u] = \{ v \in \PSH(X,\theta) : v \simeq u \}. $$
In the sequel, we will need to bound the Monge-Ampère volume of functions in $[u]$. We then consider the quantities
$$ v_{-,u}(\theta) = \inf \left\{ \int_X \theta_v^n : v  \in \PSH(X,\theta), v \simeq u \right\}, $$
and 
$$ v_{+,u}(\theta) = \sup \left\{ \int_X \theta_v^n : v  \in \PSH(X,\theta), v \simeq u \right\}. $$
Let us list some basic properties of these Monge-Ampère volumes. 
\begin{itemize}
    \item   We have $\int_X \theta_v^n \leq v_+(\theta)$ for every $v \in \PSH(X,\theta)$, hence $v_{+,u}(\theta) \leq v_+(\theta)$.
    \item If $u$ is bounded then $v_{-,u}(\theta)= v_-(\theta)$ and $v_{+,u}(\theta)= v_+(\theta)$.
    \item When $\theta$ is closed, we have  $v_{-,u}(\theta)=v_{+,u}(\theta)= \int_X \theta_u^n$ by \cite[Theorem 1.2]{WN19JAMS}, hence $v_{+,u}(\theta)$ is always finite  and  $v_{-,u}(\theta)>0$ provided that $\int_X \theta_u^n>0$.  
\end{itemize}
The following result corresponds to \cite[Proposition 3.7]{BGL24}.
\begin{prop}\label{prop: comparison of MA vol}
Let $\theta_1 \leq \theta_2$ be two smooth $(1,1)$-forms on $X$ that are semipositive and big. If $u \in \PSH(X,\theta_1)$ and $v \in \PSH(X,\theta_2)$ are such that $u \preceq v$, then $v_{-,u}(\theta_1) \leq v_{-,v}(\theta_2)$ and $v_{+,u}(\theta_1) \leq v_{+,v}(\theta_2)$.
\end{prop}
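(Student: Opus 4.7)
My plan is to prove both inequalities in parallel via envelope constructions, combining the plurifine identity (Theorem~\ref{thm: plurifine property and Demailly type inequality}) and the envelope theorem (Theorem~\ref{thm: env}) with two elementary facts: (i) $\PSH(X,\theta_1) \subseteq \PSH(X,\theta_2)$, since $\theta_1 \leq \theta_2$; and (ii) $(\theta_1 + dd^c \varphi)^n \leq (\theta_2 + dd^c \varphi)^n$ as non-pluripolar measures for every $\varphi \in \PSH(X,\theta_1)$, which follows from the binomial expansion of $\theta_2 + dd^c \varphi = (\theta_1 + dd^c \varphi) + (\theta_2 - \theta_1)$ into nonnegative wedges, applied to bounded truncations $\max(\varphi,-j)$ and then passed to the non-pluripolar limit.

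For $v_{+,u}(\theta_1) \leq v_{+,v}(\theta_2)$, fix $w \in \PSH(X,\theta_1)$ with $w \simeq u$. Since $u \preceq v$ and $w \simeq u$, there is $C^* > 0$ with $w - C^* \leq v$. For $A > 0$ set
$$ w'_A := \max(w - C^*,\, v - A), $$
which is $\theta_2$-psh by (i) and satisfies $v - A \leq w'_A \leq v$, hence $w'_A \simeq v$. On the Borel set $\{w - C^* > v - A\}$, the plurifine property in $\PSH(X,\theta_2)$ combined with (ii) yields $(\theta_2 + dd^c w'_A)^n = (\theta_2 + dd^c w)^n \geq (\theta_1 + dd^c w)^n$. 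As $A \to +\infty$ these sets exhaust $X$ off the pluripolar locus $\{v = -\infty\}$, which is not charged by any non-pluripolar measure, so
$$ v_{+,v}(\theta_2) \;\geq\; \int_X (\theta_2 + dd^c w'_A)^n \;\geq\; \int_{\{w - C^* > v - A\}} (\theta_1 + dd^c w)^n \;\longrightarrow\; \int_X (\theta_1 + dd^c w)^n. $$
Taking the supremum over $w$ gives the first inequality.

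For $v_{-,u}(\theta_1) \leq v_{-,v}(\theta_2)$, fix $w' \in \PSH(X,\theta_2)$ with $w' \simeq v$, so $u \leq w' + M'$ for some $M' > 0$. For $K > 0$, introduce the envelope
$$ w_K := P_{\theta_1}\bigl(\min(u + K,\, w')\bigr). $$
Since $u - M' \in \PSH(X,\theta_1)$ lies below $\min(u+K, w')$, the envelope is non-trivial and $u - M' \leq w_K \leq u + K$, so $w_K \in [u]$. By Theorem~\ref{thm: env}, the measure $(\theta_1 + dd^c w_K)^n$ is carried by $\{w_K = \min(u+K, w')\}$; split this as $E_1 = \{w_K = u + K \leq w'\}$ and $E_2 = \{w_K = w' \leq u + K\}$. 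On $E_1$, $w_K$ and $u + K$ are both $\theta_1$-psh and agree, so plurifine gives $\textit{1}_{E_1}(\theta_1 + dd^c w_K)^n = \textit{1}_{E_1}(\theta_1 + dd^c u)^n$; on $E_2$, plurifine in $\PSH(X,\theta_2)$ combined with (ii) yields $\textit{1}_{E_2}(\theta_1 + dd^c w_K)^n \leq \textit{1}_{E_2}(\theta_2 + dd^c w')^n$. Hence
$$ \int_X (\theta_1 + dd^c w_K)^n \;\leq\; \int_{\{u \leq w' - K\}} (\theta_1 + dd^c u)^n + \int_X (\theta_2 + dd^c w')^n. $$
As $K \to +\infty$, the set $\{u \leq w' - K\} \subseteq \{u \leq \sup_X w' - K\}$ decreases to a pluripolar subset of $\{u = -\infty\}$, so the first integral tends to $0$. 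Combined with $\int_X (\theta_1 + dd^c w_K)^n \geq v_{-,u}(\theta_1)$ and the infimum over $w'$, this yields the second inequality.

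The main technical obstacle I anticipate is the final convergence $\int_{\{u \leq w' - K\}}(\theta_1 + dd^c u)^n \to 0$: it is automatic when $(\theta_1 + dd^c u)^n$ is a finite Radon measure (e.g.\ under $v_+(\theta_1) < +\infty$), but in full generality I would have to replace $u$ at the outset by a suitably chosen $\tilde u \in [u]$ of finite non-pluripolar Monge--Amp\`ere mass; such $\tilde u$ must exist whenever $v_{-,u}(\theta_1) < +\infty$, which is the only case in which the asserted inequality has content.
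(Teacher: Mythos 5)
The paper does not prove this proposition internally; it only cites \cite[Proposition 3.7]{BGL24}, so there is no in-paper proof to compare against. Your argument is a self-contained pluripotential-theoretic proof, and the overall strategy — envelope competitors, plurifine locality, and monotonicity of the non-pluripolar product in the background form — is the right one.

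The $v_+$ half is correct as written: $w'_A = \max(w - C^*, v - A)$ is a $\theta_2$-psh competitor in $[v]$, the plurifine identity gives $(\theta_2 + dd^c w'_A)^n = (\theta_2 + dd^c w)^n \geq (\theta_1 + dd^c w)^n$ on $\{w - C^* > v - A\}$, and these sets exhaust $X$ modulo the pluripolar set $\{w = -\infty\}$ (your phrase ``$\{v = -\infty\}$'' should really be ``$\{w = -\infty\}$'', but both are pluripolar and uncharged, so the conclusion stands). Monotone convergence then closes the argument with no finiteness assumption. A minor point: on $E_1$ in the second half you write ``$=$'' where Theorem~\ref{thm: plurifine property and Demailly type inequality} gives only ``$\leq$'', but that inequality is all you use.

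The $v_-$ half is structurally sound, but there is a genuine gap at the final limiting step, and you have correctly located it while misjudging its scope. The estimate
$$ \int_X (\theta_1 + dd^c w_K)^n \;\leq\; \int_{\{u \leq w' - K\}}(\theta_1 + dd^c u)^n + \int_X(\theta_2 + dd^c w')^n $$
only delivers $v_{-,u}(\theta_1) \leq \int_X(\theta_2 + dd^c w')^n$ when the first term tends to zero, which requires $\int_X (\theta_1 + dd^c u)^n < +\infty$. Replacing $u$ by a finite-mass $\tilde u \in [u]$ fixes this whenever $v_{-,u}(\theta_1) < +\infty$, as you note. But your closing remark that this ``is the only case in which the asserted inequality has content'' is a logical misstep: when $v_{-,u}(\theta_1) = +\infty$ the proposition asserts $v_{-,v}(\theta_2) = +\infty$, i.e., that \emph{no} $w' \simeq v$ has finite $\theta_2$-mass. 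The contrapositive — if some $w' \simeq v$ has finite $\theta_2$-mass, then some $\tilde u \simeq u$ has finite $\theta_1$-mass — is precisely what your construction cannot reach, since it needs finite mass on $[u]$ as an input before the dominated-convergence step can run. In the general Hermitian setting nothing a priori forces $v_{-,u}(\theta_1) < +\infty$ (bounded potentials always have finite mass, but $u$ is unbounded here), so this case is not vacuous. You should either supply that implication, add the hypothesis $v_{-,u}(\theta_1) < +\infty$, or flag explicitly that your proof covers only that regime.
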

In the rest of this paragraph, we will be concerned only with the condition $v_{-,u}(\theta)>0$. We first prove the following.
\begin{prop}\label{prop : 2.9}
 Assume $u \in \PSH(X,\theta)$ is such that  $v_{+,u}(\theta)> v_{+}(\theta) - v_{-}(\theta)$. Then there is $\varepsilon \in (0,1)$  such that $P_{\varepsilon\theta}(u)\not\equiv -\infty$.
\end{prop}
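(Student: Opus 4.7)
I would argue by contradiction: assume $P_{\varepsilon\theta}(u) \equiv -\infty$ for every $\varepsilon \in (0,1)$, and derive a contradiction with $v_{+,u}(\theta) > v_+(\theta) - v_-(\theta)$. A useful preliminary is the scaling bijection $\psi \in \PSH(X,\varepsilon\theta) \iff \psi/\varepsilon \in \PSH(X,\theta)$, giving $P_{\varepsilon\theta}(u) = \varepsilon P_\theta(u/\varepsilon)$; the contradiction hypothesis thus says that no $\tau \in \PSH(X,\theta)$ satisfies $\tau \leq u/\varepsilon$ for any $\varepsilon \in (0,1)$. Since $\varepsilon \mapsto P_{\varepsilon\theta}(u)$ is monotone non-decreasing in $\varepsilon$, it is enough to produce a non-trivial element of $\PSH(X,\varepsilon\theta)$ dominated by $u$ for some $\varepsilon$ close to $1$.

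Next I would fix $\delta > 0$ with $v_{+,u}(\theta) - 2\delta > v_+(\theta) - v_-(\theta)$, choose $v \in [u]$ with $\int_X \theta_v^n > v_{+,u}(\theta) - \delta$ and $\rho \in \PSH(X,\theta) \cap L^\infty(X)$ with $\int_X \theta_\rho^n < v_-(\theta) + \delta$, and normalize so $\sup_X v = \sup_X \rho = 0$. Applying Theorem \ref{thm: plurifine property and Demailly type inequality} to the truncation $v_C := \max(v, -C)$ yields
$$ \int_X \theta_{v_C}^n = \int_{\{v > -C\}}\theta_v^n + \int_{\{v \leq -C\}}\theta_{v_C}^n, $$
which, together with $\int_X \theta_{v_C}^n \leq v_+(\theta)$, gives in the limit $C \to \infty$
$$ \limsup_{C\to\infty} \int_{\{v \leq -C\}}\theta_{v_C}^n \;\leq\; v_+(\theta) - \int_X \theta_v^n \;<\; v_-(\theta) - \delta. $$
Thus the mass of $v$ concentrated at its singular locus is strictly smaller than $v_-(\theta)$. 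Exploiting this, I would construct a non-trivial $\psi_\varepsilon \in \PSH(X,\varepsilon\theta)$ with $\psi_\varepsilon \leq u$ by gluing the singularity profile of $v$ (captured via $v_C$) with a bounded contribution from $\rho$ whose Monge-Amp\`ere mass compensates the deficit on $\{v \leq -C\}$, and then invoking the scaling identity to land back in $\PSH(X,\varepsilon\theta)$.

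The main obstacle is this final gluing. The naive candidates are each defective: the convex combinations $(1-\varepsilon)\rho + \varepsilon v$ are $\theta$-psh but not $\varepsilon\theta$-psh (one computes $\varepsilon\theta + dd^c[(1-\varepsilon)\rho + \varepsilon v] = (1-\varepsilon)dd^c\rho + \varepsilon\theta_v$, which need not be positive), while the scaled potential $\varepsilon v$ belongs to $\PSH(X,\varepsilon\theta)$ but is strictly less singular than $v$, hence fails the pointwise bound $\varepsilon v \leq u + C$ on the polar set $\{u = -\infty\}$. Matching the singularity type of $u$ while staying inside $\PSH(X,\varepsilon\theta)$ is the heart of the matter; I expect it to rely on the mass bound above to ensure non-triviality of a suitable Perron envelope, together with the domination principle (Theorem \ref{thm: the dom prn in [phi]}) to transfer the resulting Monge-Amp\`ere mass comparison into the required pointwise inequality.
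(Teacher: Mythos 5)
Your setup is on the right track: the scaling reduction $P_{\varepsilon\theta}(u)=\varepsilon P_\theta(u/\varepsilon)$, the choice of $v\in[u]$ with $\int_X\theta_v^n$ close to $v_{+,u}(\theta)$, and the mass-at-infinity estimate
\[
\limsup_{C\to\infty}\int_{\{v\le -C\}}\theta_{v_C}^n \;\le\; v_+(\theta)-\int_X\theta_v^n\;<\;v_-(\theta)
\]
are all ingredients of the paper's argument. But you stop at the crucial construction and flag it yourself as the obstacle, so this is a genuine gap, and the route you sketch to close it does not work. Invoking the domination principle (Theorem \ref{thm: the dom prn in [phi]}) is circular: its hypothesis is precisely that $P_{\delta\theta}(u)\not\equiv -\infty$ for some $\delta\in(0,1)$, i.e.\ the very conclusion you are trying to establish, and it is proved later in the paper.

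The gluing you are missing is the Perron envelope $\varphi_j := P_\theta(b\max(v,-j))$ for a fixed $b>1$ chosen so that $\int_X\theta_v^n > v_+(\theta) - v_-(\theta)/b^n$ (the hypothesis permits this once you absorb the $b^n$ factor). Each $\varphi_j$ is a \emph{bounded} $\theta$-psh function, so $\int_X\theta_{\varphi_j}^n\ge v_-(\theta)$ for free by definition of $v_-(\theta)$; there is no need for your auxiliary potential $\rho$. Proposition \ref{prop : the MA measure of P(bu - (b-1)v} gives $\theta_{\varphi_j}^n\le b^n\theta_{v_j}^n$ on the contact set $\{\varphi_j=bv_j\}$. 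If $\sup_X\varphi_j\to -\infty$, then for any fixed $k$ and all $j$ large, $X=\{\varphi_j\le -bk\}$; on the contact set this forces $v_j\le -k$, and one gets
\[
v_-(\theta)\le\int_X\theta_{\varphi_j}^n\le b^n\int_{\{v_j\le -k\}}\theta_{v_j}^n\le b^n\Bigl(v_+(\theta)-\int_{\{v>-k\}}\theta_v^n\Bigr).
\]
Letting $k\to\infty$ contradicts the choice of $v$. Hence $\sup_X\varphi_j$ is bounded below, $P_\theta(bv)=\lim_j\varphi_j\in\PSH(X,\theta)$, and since $v\simeq u$ also $P_\theta(bu)\in\PSH(X,\theta)$; the scaling identity you already noticed then yields $P_{b^{-1}\theta}(u)\not\equiv -\infty$. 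So the right tool is the envelope bound of Proposition \ref{prop : the MA measure of P(bu - (b-1)v}, not the domination principle.
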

\begin{proof}
By hypothesis, we can find $b>1$ and $v \in \PSH(X,\theta)$ such that $v \simeq u$ and $\int_X \theta_v^n > v_{+}(\theta) - v_{-}(\theta)/b^n$. We are going to prove that $P_\theta(bv)$ belongs to $\PSH(X,\theta)$. It will then follow that   $P_\theta(bu)\in \PSH(X,\theta)$ because $v\simeq u$, hence it  suffices to take $\varepsilon = b^{-1}$. 

    For every $j$, we set $v_j = \max(v,-j)$ and $\varphi_j = P_\theta(bv_j)$. Observe that $\varphi_j$ is a bounded $\theta$-psh function. According to Proposition \ref{prop : the MA measure of P(bu - (b-1)v}, we have
    $$ \theta_{\varphi_j}^n = \textit{1}_{\{\varphi_j = bv_j\}}  \theta_{\varphi_j}^n \leq b^n \textit{1}_{\{\varphi_j = bv_j\}} \theta_{v_j}^n.  $$
    Assume by contradiction that $\sup_X \varphi_j \rightarrow -\infty$. Fixing $k\in \mathbb{N}$, we have $X = \{\varphi_j  \leq -bk\}$ for $j$ large enough. Hence 
    $$ v_-(\theta) \leq \int_{\{\varphi_j  \leq -bk\}} \theta_{\varphi_j}^n \leq b^n \int_{\{v_j \leq -k\}} \theta_{v_j}^n \leq b^n v_+(\theta) - b^n\int_{\{v > -k\}} \theta_v^n.  $$
    Letting $k \rightarrow +\infty$ gives
    $$ v_-(\theta) \leq b^n v_+(\theta) - b^n\int_{X} \theta_v^n, $$
    a contradiction. It follows that $P_\theta(bv) = \lim  v_j \in \PSH(X,\theta)$, which finishes the proof. 
\end{proof}
When $\theta$ is closed, we have $v_{-}(\theta) = v_{+}(\theta)$, hence the conclusion of the last proposition holds as soon as  $\int_X \theta_u^n>0$. The following proposition studies further this conclusion. 
\begin{prop}\label{prop: controling mass}
    Let $u \in \PSH(X,\theta)$ be such that $P_{\varepsilon \theta}(u)\not\equiv -\infty$ for some $\varepsilon \in (0,1)$. Then, there is $v \in \PSH(X,(1-\varepsilon)\theta) \cap L^\infty(X)$ such that 
    $$ \int_X ((1-\varepsilon)\theta + dd^c v)^n \leq \int_X (\theta + dd^c u)^n. $$
    In particular $v_{-,u}(\theta) \geq (1-\varepsilon)^n v_-(\theta)$. 
\end{prop}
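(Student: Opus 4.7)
The plan is to construct $v$ as a bounded truncation of a suitably scaled multiple of $u$, using the hypothesis $\psi := P_{\varepsilon\theta}(u) \not\equiv -\infty$ to control the tail of the Monge--Amp\`ere mass; the ``in particular'' statement will then follow from a scaling bijection.

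First, normalize $\sup_X u = 0$. By Theorem \ref{thm: env} applied with $\varepsilon\theta$ in place of $\theta$, the envelope $\psi \in \PSH(X,\varepsilon\theta)$ satisfies $\psi \leq u$ off a pluripolar set and $(\varepsilon\theta+dd^c\psi)^n$ is carried by $\{\psi = u\}$. Since $\PSH(X,\varepsilon\theta) \subset \PSH(X,\theta)$, the second part of Theorem \ref{thm: plurifine property and Demailly type inequality} applied to $\psi\leq u$, combined with the pointwise inequality $(\varepsilon\theta+dd^c\psi)^n \leq (\theta+dd^c\psi)^n$, yields the key estimate
\[
\int_X(\varepsilon\theta+dd^c\psi)^n = \int_{\{\psi=u\}}(\varepsilon\theta+dd^c\psi)^n \leq \int_{\{\psi=u\}}\theta_u^n \leq \int_X\theta_u^n.
\]

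Next I propose the candidate $v_M := \max((1-\varepsilon)u, -M) = (1-\varepsilon)u_t$ with $u_t := \max(u,-t)$ and $t := M/(1-\varepsilon)$. It is bounded and $(1-\varepsilon)\theta$-plurisubharmonic since $(1-\varepsilon)\theta + dd^c v_M = (1-\varepsilon)(\theta+dd^c u_t) \geq 0$, and a direct scaling gives $\int_X((1-\varepsilon)\theta + dd^c v_M)^n = (1-\varepsilon)^n\int_X\theta_{u_t}^n$. By the plurifine property of Theorem \ref{thm: plurifine property and Demailly type inequality} one has
\[
\int_X\theta_{u_t}^n = \int_{\{u>-t\}}\theta_u^n + \int_{\{u\leq -t\}}\theta_{u_t}^n;
\]
the first summand is bounded by $\int_X\theta_u^n$, while for the tail $\int_{\{u\leq -t\}}\theta_{u_t}^n$ one uses the inclusion $\{u\leq -t\}\subseteq\{\psi\leq -t\}$ (valid off a pluripolar set, since $\psi\leq u$) together with the mass bound on $\psi$ above, arguing as in the proof of Proposition \ref{prop : 2.9}: if this tail failed to go to $0$ as $t\to\infty$, a Perron-type envelope constructed from $\psi$ would be forced to sink uniformly to $-\infty$, contradicting $\psi\not\equiv -\infty$. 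Choosing $t$ large enough then gives $(1-\varepsilon)^n\int_X\theta_{u_t}^n \leq \int_X\theta_u^n$, so that $v := v_M$ satisfies the first assertion.

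For the ``in particular'' clause, note that for any $w\simeq u$ one still has $P_{\varepsilon\theta}(w)\simeq P_{\varepsilon\theta}(u)\not\equiv -\infty$, and the first part furnishes $v_w\in\PSH(X,(1-\varepsilon)\theta)\cap L^\infty(X)$ with $\int_X((1-\varepsilon)\theta+dd^c v_w)^n \leq \int_X\theta_w^n$. The bijection $w\mapsto w/(1-\varepsilon)$ between $\PSH(X,(1-\varepsilon)\theta)\cap L^\infty(X)$ and $\PSH(X,\theta)\cap L^\infty(X)$ yields $v_-((1-\varepsilon)\theta) = (1-\varepsilon)^n v_-(\theta)$, hence $\int_X\theta_w^n \geq (1-\varepsilon)^n v_-(\theta)$, and taking the infimum over $w\simeq u$ gives the claimed lower bound. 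The main obstacle is the uniform control on $\int_{\{u\leq -t\}}\theta_{u_t}^n$ as $t\to\infty$: this is where the envelope hypothesis enters decisively, since without it this tail can remain a positive fraction of $v_+(\theta) - \int_X\theta_u^n$, which is potentially infinite.
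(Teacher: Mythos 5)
Your approach — truncating a rescaled $u$ rather than constructing a bounded envelope — is genuinely different from the paper's, but it has a real gap in the tail estimate. Writing $u_t = \max(u,-t)$ and $T(t) := \int_{\{u\leq -t\}}\theta_{u_t}^n$, your candidate yields $\int_X((1-\varepsilon)\theta+dd^c v_M)^n = (1-\varepsilon)^n\int_X\theta_{u_t}^n = (1-\varepsilon)^n\bigl(\int_{\{u>-t\}}\theta_u^n + T(t)\bigr)$, and since the first summand increases to $\int_X\theta_u^n$, you must control $T(t)$ so that $(1-\varepsilon)^n T(t) \lesssim (1-(1-\varepsilon)^n)\int_X\theta_u^n$. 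You assert this via the claim that $T(t)\to 0$, but that claim is false in general under the stated hypothesis: $T(t)\to 0$ is (in the Guan--Li/K\"ahler setting) precisely the condition $u\in\mathcal{E}(X,\theta)$, which is strictly stronger than $P_{\varepsilon\theta}(u)\not\equiv -\infty$ for a single $\varepsilon$. Concretely, on $(\mathbb{P}^n,\omega_{FS})$ take $u$ with a small but positive Lelong number $\nu$ at a point; then $u\in\PSH(X,\varepsilon\theta)$ for any $\varepsilon\in(\nu,1)$ (so $P_{\varepsilon\theta}(u)\not\equiv -\infty$), yet $u\notin\mathcal{E}(X,\theta)$ and $T(t)\to \int_X\theta^n - \int_X\theta_u^n > 0$. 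In that case your final inequality $(1-\varepsilon)^n\int_X\theta^n\leq\int_X\theta_u^n$ is exactly the conclusion of the proposition, so the argument becomes circular. The invocation of the proof of Proposition \ref{prop : 2.9} does not rescue this: that proof passes from a \emph{lower} bound on a Monge--Amp\`ere volume to the non-triviality of an envelope, not the other way round, and never produces a vanishing tail.

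The paper's proof avoids the tail issue entirely: it takes $\tilde{u}=P_{\varepsilon\theta}(u)$ and $v=P_{(1-\varepsilon)\theta}(u-\tilde{u})$, which is automatically bounded by Lemma \ref{lem: Ptheta(u-v) is in psh(X,theta)} since $u-\tilde{u}\geq 0$. The measure $((1-\varepsilon)\theta+dd^c v)^n$ is then carried by the contact set $\{v+\tilde{u}=u\}$ (Theorem \ref{thm: env}), on which one can compare to $(\theta+dd^c(v+\tilde{u}))^n$ by the pointwise positivity of $\varepsilon\theta+dd^c\tilde{u}$, and then to $\theta_u^n$ by the second part of Theorem \ref{thm: plurifine property and Demailly type inequality}. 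No limit in a truncation parameter and no mass-tail estimate is needed. Your preliminary computation $\int_X(\varepsilon\theta+dd^c\psi)^n\leq\int_X\theta_u^n$ is correct and is in the same spirit, but it is a bound on the wrong object ($\psi=P_{\varepsilon\theta}(u)$, not the bounded $v$ one is after) and does not feed into your tail control. Your derivation of the ``in particular'' clause from the first assertion via the scaling bijection $w\mapsto w/(1-\varepsilon)$ is correct and essentially the one the paper intends.
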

\begin{proof}
   We set $\tilde{u}= P_{\varepsilon \theta}(u)$ and $v= P_{(1-\varepsilon)\theta}(u-\tilde{u})$. Since $\tilde{u} \leq u$, we get  by Lemma  \ref{lem: Ptheta(u-v) is in psh(X,theta)} that $v \in \PSH(X,(1-\varepsilon)\theta)\cap L^\infty(X)$. By Theorem \ref{thm: env}, we know 
that $v+\tilde{u} \leq u$ everywhere on $X$ and that $((1-\varepsilon)\theta + dd^c v)^n$ is carried by $\{v+\tilde{u}=u\}$. It then follows from Theorem \ref{thm: plurifine property and Demailly type inequality} that 
$$ \int_X ((1-\varepsilon)\theta + dd^c v)^n  \leq   \int_{\{v+\tilde{u}=u\}} (\theta + dd^c (v+\tilde{u}))^n \leq \int_X (\theta+ dd^c u)^n. $$
\end{proof}
Assuming $v_-(\theta)>0$ and $v_{+}(\theta)<+\infty$, we give a sufficient and necessary condition to ensure the conclusion of Proposition \ref{prop : 2.9}. 
\begin{prop}\label{condition : v_- positive implies model}
Assume $v_-(\theta)>0$ and $v_{+}(\theta)<+\infty$. For every $u \in \PSH(X,\theta)$, the following conditions are equivalent. 
\begin{itemize}
    \item[(i)] $v_{-,u}(\theta)>0$.
    \item[(ii)] there is $\varepsilon \in (0,1)$ such that $P_{\varepsilon \theta}(u)\not\equiv -\infty$.
\end{itemize}
\end{prop}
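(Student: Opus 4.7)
The plan is to prove the two implications separately: (ii) $\Rightarrow$ (i) will follow directly from Proposition \ref{prop: controling mass}, while (i) $\Rightarrow$ (ii) will be reduced to Proposition \ref{prop : 2.9} by establishing the intermediate inequality $v_{+,u}(\theta) > v_+(\theta) - v_-(\theta)$.

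For (ii) $\Rightarrow$ (i), I would argue that for every $w \in \PSH(X,\theta)$ with $w \simeq u$, the bound $|w-u| \leq C$ gives $P_{\varepsilon\theta}(u) - C \leq u - C \leq w$ with $P_{\varepsilon\theta}(u) - C \in \PSH(X,\varepsilon\theta)$, so $P_{\varepsilon\theta}(w) \not\equiv -\infty$. Applying Proposition \ref{prop: controling mass} to $w$ produces a bounded $(1-\varepsilon)\theta$-psh function $\hat v$ with $\int_X ((1-\varepsilon)\theta + dd^c \hat v)^n \leq \int_X \theta_w^n$. The rescaling $\hat v \mapsto \hat v/(1-\varepsilon)$ is a bijection between bounded $(1-\varepsilon)\theta$-psh and bounded $\theta$-psh functions that multiplies Monge-Ampère masses by $(1-\varepsilon)^{-n}$, so $(1-\varepsilon)^{-n} \int_X ((1-\varepsilon)\theta + dd^c \hat v)^n \geq v_-(\theta)$. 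Combining the two, $\int_X \theta_w^n \geq (1-\varepsilon)^n v_-(\theta) > 0$, and taking the infimum over $w \simeq u$ gives $v_{-,u}(\theta) \geq (1-\varepsilon)^n v_-(\theta) > 0$.

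For (i) $\Rightarrow$ (ii), I would argue by contrapositive. Suppose $P_{\varepsilon\theta}(u) \equiv -\infty$ for every $\varepsilon \in (0,1)$; equivalently, via the bijection $\varphi \mapsto \varphi/\varepsilon$, $P_\theta(bu) \equiv -\infty$ for every $b > 1$. Setting $u_j := \max(u,-j)$, the bounded envelopes $\varphi_{j,b} := P_\theta(b u_j)$ then decrease pointwise to $-\infty$ as $j \to \infty$, and a normalization/precompactness argument forces $\sup_X \varphi_{j,b} \to -\infty$ (otherwise a subsequence of $\varphi_{j,b} - \sup_X \varphi_{j,b}$ would converge in $L^1$ to a $\theta$-psh function, contradicting pointwise convergence to $-\infty$ via Hartogs). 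For every $k \geq 1$ and $j$ large enough, this gives $D_{j,b} := \{\varphi_{j,b} = b u_j\} \subset \{u \leq -k\}$. Proposition \ref{prop : the MA measure of P(bu - (b-1)v} applied with $v = 0$ yields
\[
b^{-n}\theta_{\varphi_{j,b}}^n + \mathbf{1}_{D_{j,b}} (1-b^{-1})^n \theta^n \leq \mathbf{1}_{D_{j,b}} \theta_{u_j}^n,
\]
and since $\varphi_{j,b}$ is bounded $\theta$-psh, $\int_X \theta_{\varphi_{j,b}}^n \geq v_-(\theta)$. Integrating and splitting $\theta_{u_j}^n$ into its contributions on $\{u > -j\}$, $\{u = -j\}$, and $\{u < -j\}$, then letting $j \to \infty$ and $k \to \infty$, should produce a lower bound on $v_{+,u}(\theta)$ strictly exceeding $v_+(\theta) - v_-(\theta)$, whence Proposition \ref{prop : 2.9} yields (ii).

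The hard part is controlling the ``boundary mass'' $\int_{\{u = -j\}} \theta_{u_j}^n$ that arises in the plurifine decomposition of $\theta_{u_j}^n$: in general this mass need not vanish as $j \to \infty$ (for example when $u$ has analytic singularities concentrated on pluripolar sets), and without suitable control the inequality produced above degenerates. The assumption $v_{-,u}(\theta) > 0$ is precisely what must be used here: if the boundary mass were not to decay, one could combine level-set envelopes $P_\theta(u,-C) \simeq u$ with Proposition \ref{prop : the MA measure of P(bu - (b-1)v} to produce potentials in $[u]$ with arbitrarily small Monge-Ampère volume, contradicting $v_{-,u}(\theta) > 0$. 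Carrying out this mass-transfer bookkeeping is the delicate part of the argument.
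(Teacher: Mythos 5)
Your (ii)\,$\Rightarrow$\,(i) direction is correct and is essentially a restatement of Proposition~\ref{prop: controling mass}, which already contains the conclusion $v_{-,u}(\theta)\ge (1-\varepsilon)^n v_-(\theta)$; this matches the paper.

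The (i)\,$\Rightarrow$\,(ii) direction, however, takes a genuinely different route from the paper, and the gap you flag at the end is real and unresolved. Your plan is to deduce $v_{+,u}(\theta) > v_+(\theta)-v_-(\theta)$ and then invoke Proposition~\ref{prop : 2.9}. But the envelopes you work with, $\varphi_{j,b}=P_\theta(bu_j)$ with $u_j=\max(u,-j)$, are \emph{bounded}, so they carry no information about $v_{-,u}(\theta)$ or $v_{+,u}(\theta)$: the only thing you can extract is $\int_X\theta_{\varphi_{j,b}}^n \geq v_-(\theta)$, which after the plurifine splitting yields
\[
b^{-n}v_-(\theta) \;\le\; \int_{\{u<-j\}}\theta^n \;+\;\int_{\{u=-j\}}\theta_{u_j}^n \;+\;\int_{\{-j<u\le-k\}}\theta_u^n .
\]
The boundary term $\int_{\{u=-j\}}\theta_{u_j}^n$ does not tend to $0$ in general (when $u$ has positive Lelong numbers it is bounded away from $0$), the hypothesis $v_{-,u}(\theta)>0$ has not entered the estimate at all, and there is no visible mechanism for the right-hand side to produce a lower bound on $v_{+,u}(\theta)$. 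Your closing paragraph correctly identifies this as the sticking point but sketches only a hope, not an argument.

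The paper sidesteps the issue by a different choice of truncation: it sets $u_t=\max(bu,u-t)$, which has the \emph{same singularity type as $u$} (it lies between $u-t$ and $u$, up to normalization), and then $\phi_t=P_\theta(u_t)\simeq u$, so $\int_X\theta_{\phi_t}^n\ge v_{-,u}(\theta)$ \emph{directly} — this is exactly where (i) enters. After writing $u_t=u+(b-1)\max(u,-t/(b-1))$ and expanding, the dangerous ``boundary'' contribution is absorbed into the region $\{u\le -t/(b-1)\}$, whose $\theta_u^n$-mass tends to $0$ as $t\to\infty$ because $\int_X\theta_u^n\le v_+(\theta)<\infty$; the remaining error is $O(b-1)$, again controlled by $v_+(\theta)<\infty$. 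Letting $t\to\infty$, $C\to\infty$, $b\to1$ then forces $v_{-,u}(\theta)=0$, the desired contradiction. The key structural difference is that the paper keeps the envelope inside the singularity class $[u]$ rather than using bounded truncations; this is what makes the hypothesis $v_{-,u}(\theta)>0$ bite and what makes the boundary mass manageable.
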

\begin{proof}
We start by the proof of (i) $\Rightarrow$ (ii).
 By contradiction, we assume that $P_\theta(bu) = -\infty$ for every $b>1$.  Fix $b>1$. 
For $t>0$, we consider $u_t=\max(bu,u-t))$ and $\phi_t=P_{\theta}(u_t)$.   Fix $C>0$. Since $\phi_t\searrow -\infty$ by assumption, we have $X = \{\phi_t\leq -C\}$ for $t$ sufficiently large.  It thus follows from Proposition \ref{prop : the MA measure of P(bu - (b-1)v} that
$$ v_{-,u}(\theta) \leq \int_{\{\phi_t = u_t \leq -C\}} (\theta+dd^c \phi_t)^n  \leq \int_{\{u \leq -C/b\}} (b\theta+dd^c u_t)^n, $$
where we have used that $\{u_t \leq -C\} = \{u\leq -C/b\}$ for every $t>0$ large enough. Observe that $u_t=u+ (b-1) \max(u,-t/(b-1))$. On one hand, we have by Theorem \ref{thm: plurifine property and Demailly type inequality}
 $$ \int_{\{-t/(b-1)<u\leq -C/b\}} (b\theta+dd^c u_t)^n \leq  b^n \int_{\{u\leq -C/b\}}(\theta+dd^c u)^n. $$
 On the other hand, by applying the Newton expansion formula and using $v_{+}(\theta)<+\infty$, we obtain
$$ \int_{\{u\leq -t/(b-1)\}} (b\theta+dd^c u_t)^n \leq \int_{\{u\leq -t/(b-1)\}}(\theta+dd^c u)^n + O(b-1). $$
We thus infer 
$$  v_{-,u}(\theta) \leq b^n \int_{\{u\leq -C/b\}}(\theta+dd^c u)^n + \int_{\{u\leq -t/(b-1)\}}(\theta+dd^c u)^n + O(b-1). $$
Letting $t \rightarrow +\infty$, $C \rightarrow +\infty$  and then $b \rightarrow 1$, we get $v_{-,u}(\theta)=0$, a contradiction.  This proves $P_\theta(bu) \not\equiv -\infty$ for some $b>1$, hence it suffices to take $\varepsilon = b^{-1}$.

The converse follows from the last proposition since $v_-(\theta)>0$.
\end{proof}
\subsection{Model potentials}\label{subsec: Model pot}
Fix $\varphi \in \PSH(X,\theta)$. If $\phi \in \PSH(X,\theta)$, then the envelope of singularity types of $\phi$ with respect to $\varphi$ is defined by 
$$ P_\theta[\phi](\varphi) = \left( \lim_{t \rightarrow +\infty} P_\theta(\phi+t,\varphi) \right)^*, $$
see \cite{RWN14}. 
When $\varphi=0$, we simply set $P_\theta[\phi]:=P_\theta[\phi](0)$. It is easy to see that $P_\theta[\phi] \in \PSH(X,\theta)$ and $\phi \preceq P_\theta[\phi]$. In the case $P_\theta[\phi] \simeq \phi$, we say that $\phi$ has model singularities.

The following proposition studies the non-pluripolar Monge-Ampère measure of the envelopes of singularity types. 
\begin{prop}\label{prop: the MA measure of P theta[u]}
 Let $u \in \PSH(X,\theta)$. Assume either $\int_X \theta_u^n <+\infty$ or $P_\theta[u] = u$. Then
 $$ (\theta + dd^c{P_\theta[u]})^n \leq \textit{1}_{\{P_\theta[u] =0\}} \theta^n.$$
\end{prop}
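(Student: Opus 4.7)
The approach is to apply Proposition \ref{prop: the envelope of min of two fncs} to the envelopes $\phi_t := P_\theta(u+t, 0)$, which form an increasing sequence with $\phi_t \nearrow \phi := P_\theta[u]$ in the usc-regularized sense. After (harmlessly) shifting $u$ by a constant so that $u \leq 0$ (this leaves both $P_\theta[u]$ and $\theta_u^n$ unchanged), we have $u \leq \min(u+t,0)$, so $u$ competes in the envelope defining $\phi_t$, giving $u \leq \phi_t \leq 0$; in particular $\phi_t \in \PSH(X,\theta)$. Proposition \ref{prop: the envelope of min of two fncs} then yields the key inequality
\begin{equation*}
    \theta_{\phi_t}^n \leq \textit{1}_{\{\phi_t = u + t < 0\}} \theta_u^n + \textit{1}_{\{\phi_t = 0\}} \theta^n. \tag{$\star$}
\end{equation*}

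In the case $P_\theta[u] = u$, the shift is unnecessary since $\phi_t \leq 0$ already forces $u = P_\theta[u] \leq 0$. Sandwiching $u \leq \phi_t \leq (\sup_s \phi_s)^* = u$ forces $\phi_t \equiv u$ for every $t \geq 0$. Then for $t > 0$ the set $\{\phi_t = u + t\} = \{u = u + t\}$ reduces pointwise to the pluripolar locus $\{u = -\infty\}$, which the non-pluripolar measure $\theta_u^n$ does not charge. Thus $(\star)$ immediately delivers $\theta_u^n \leq \textit{1}_{\{u = 0\}} \theta^n$.

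In the case $\int_X \theta_u^n < +\infty$, I pass to the limit $t \to +\infty$ via Lemma \ref{lem: continuity of MA mesures} applied with $u_j := \phi_t$, $u := \phi$, and $h_j = h = 1$. The hypotheses to verify are: $\phi_t \to \phi$ in capacity (standard for monotone $\theta$-psh sequences bounded above), $\sup_t \int_X \theta_{\phi_t}^n < \infty$ (immediate from $(\star)$), $f := P_\theta(\inf_t \phi_t) = P_\theta(\phi_0) = P_\theta(u) = u \in \PSH(X,\theta)$, and the uniform tail estimate $\int_{\{f \leq -c\}} \theta_{\phi_t}^n \to 0$ as $c \to \infty$. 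The tail splits via $(\star)$ into a piece bounded by $\int_{\{u\leq -c\}} \theta_u^n$ (which tends to $0$ because $\int_X \theta_u^n < \infty$ and $\theta_u^n$ does not charge the pluripolar set $\{u = -\infty\}$) and a piece bounded by $\theta^n(\{u\leq -c\}) \to 0$ (since the smooth form $\theta^n$ does not charge pluripolar sets). Lemma \ref{lem: continuity of MA mesures} then yields $\theta_{\phi_t}^n \to \theta_\phi^n$ weakly. Passing $(\star)$ to the limit: the first summand vanishes by the same tail argument, and since $\phi_t \leq \phi$ forces $\{\phi_t = 0\} \subset \{\phi = 0\}$, the second summand is dominated by $\textit{1}_{\{\phi = 0\}} \theta^n$ uniformly in $t$. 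The conclusion $\theta_\phi^n \leq \textit{1}_{\{\phi = 0\}} \theta^n$ follows.

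The principal difficulty is the passage to the limit in the second case: the non-pluripolar Monge-Amp\`ere operator is not continuous along a.e.\ convergent sequences of unbounded $\theta$-psh functions. The envelope inequality $(\star)$ is precisely what permits verifying the uniform tail condition of Lemma \ref{lem: continuity of MA mesures}, thereby unlocking the desired weak convergence.
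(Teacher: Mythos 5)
Your proof is correct and follows the same fundamental strategy as the paper: apply Proposition~\ref{prop: the envelope of min of two fncs} to the envelopes $P_\theta(u+t,0)$ increasing to $P_\theta[u]$, and pass to the limit. In the case $\int_X\theta_u^n<+\infty$ your argument coincides with the paper's (you merely spell out in more detail the hypotheses of Lemma~\ref{lem: continuity of MA mesures}, which the paper verifies more tersely). In the case $P_\theta[u]=u$ your treatment is a genuine, and cleaner, variant: you retain the sharp indicator set $\{\phi_t=u+t<0\}$ from Proposition~\ref{prop: the envelope of min of two fncs} and observe that for $t>0$ this set is precisely the pluripolar locus $\{u=-\infty\}$, which the non-pluripolar measure $\theta_u^n$ does not charge, so the first summand vanishes outright. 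The paper instead relaxes the indicator to $\{u\le -j\}$, multiplies through by $\textit{1}_{\{u>-t\}}$, invokes the plurifine locality (Theorem~\ref{thm: plurifine property and Demailly type inequality}) to replace $\theta_u^n$ by $\theta_{\max(u,-t)}^n$, sends $j\to\infty$ to kill the tail term, and then sends $t\to\infty$, appealing to the definition of the non-pluripolar product. Both routes are valid; yours gets the conclusion in one step for any fixed $t>0$ and avoids the double truncation, at the cost only of the (true and elementary) observation that $\{u=u+t\}=\{u=-\infty\}$ for $t\neq 0$.
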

\begin{proof}
Fix $j\geq 1$ and set $u_j = P_\theta(u+j,0)$. Observe that  $u - \sup_X u \leq u_j$ and $u_j \nearrow P_\theta[u]$ almost everywhere. Observe also that
\begin{equation}\label{eq:lem: the MA measure of P theta[u]}
    \theta_{u_j}^n \leq \textit{1}_{\{u \leq -j\}} \theta_u^n + \textit{1}_{\{u_j =0\}} \theta^n,
\end{equation}
   according to  Proposition \ref{prop: the envelope of min of two fncs}.
 Assume  $\int_X \theta_u^n <+\infty$. By \eqref{eq:lem: the MA measure of P theta[u]}, the measures $\theta_{u_j}^n$ are dominated by a positive Radon measure.  It then follows from Lemma \ref{lem: continuity of MA mesures} that $\theta_{u_j}^n \rightarrow \theta_{P_\theta[u]}^n$ weakly. Therefore, letting $j \rightarrow +\infty$ in \eqref{eq:lem: the MA measure of P theta[u]} yields
  $$ \theta_{P_\theta[u]}^n \leq \textit{1}_{\{P_\theta[u] =0\}} \theta^n, $$
 since $\{u_j = 0\} \subset \{P_\theta[u] = 0\}$ for every $j$. 
 Assume now $P_\theta[u]=u$. In this case $u_j = u$ for every $j$, hence 
 $$  \theta_{u}^n \leq \textit{1}_{\{u \leq -j\}} \theta_u^n + \textit{1}_{\{u =0\}} \theta^n, $$
 by \eqref{eq:lem: the MA measure of P theta[u]}. Multiplying the above by $\textit{1}_{\{u>-t\}}$ where $t>0$ is fixed and using Theorem \ref{thm: plurifine property and Demailly type inequality}, we get 
 $$  \textit{1}_{\{u>-t\}} \theta_{\max(u,-t)}^n \leq \textit{1}_{\{-t < u \leq -j\}} \theta_{\max(u,-t)}^n + \textit{1}_{\{u =0\}} \theta^n. $$
 Letting $j \rightarrow +\infty$ we find 
 $$ \textit{1}_{\{u>-t\}} \theta_{\max(u,-t)}^n  \leq \textit{1}_{\{u =0\}} \theta^n. $$
 This is true for all $t>0$. Letting $t \rightarrow +\infty$, the result follows from the definition of the non-pluripolar Monge-Ampère measure.
\end{proof}
We will use the following terminology in the sequel. 
\begin{definition}\label{defn: Mod pot} 
 We say that a function $\phi \in \PSH(X,\theta)$  is a $\theta$-model potential if $P_\theta[\phi] = \phi$ and $P_{\varepsilon\theta}(\phi) \not\equiv -\infty$ for some $\varepsilon \in (0,1)$.   
\end{definition}
We have the following.
\begin{itemize}
    \item The function $\phi \equiv 0$ is a $\theta$-model potential on $X$. 
    \item Proposition \ref{prop: the MA measure of P theta[u]} ensures that 
    $\int_X \theta_\phi^n<+\infty$ for every $\theta$-model potential $\phi$.
    \item By Proposition \ref{prop: controling mass}, if $\theta$ is closed then every potential $\phi \in \PSH(X,\theta)$ that satisfies $P_\theta[\phi] = \phi$ and $\int_X \theta_\phi^n >0$ is $\theta$-model. Thus, the last definition is a generalization of the one introduced by Darvas, DiNezza and the second author (see the discussion following Theorem 2.1 in \cite{DDL4}).
    \item By Proposition \ref{condition : v_- positive implies model}, when $v_+(\theta)<+\infty$, the condition $P_{\varepsilon\theta}(\phi) \not\equiv -\infty$ for some $\varepsilon \in (0,1)$ is weaker than $v_{-,\phi}(\theta) >0$.
    \item According to Proposition \ref{prop: controling mass}, if $\phi$ is a $\theta$-model potential then
    $$ \int_X (\theta + dd^c u)^n > 0, \; \; \forall u \in \PSH(X,\theta),  u \simeq \phi. $$
    Indeed, if $u \in \PSH(X,\theta)$ satisfies $u \simeq \phi$ and $\int_X (\theta + dd^c u)^n = 0$ then Proposition \ref{prop: controling mass} yields $v \in \PSH(X,\varepsilon\theta) \cap L^\infty(X)$ such that $(\varepsilon \theta + dd^c v)^n = 0$. Arguing as in the proof of Corollary 2.4 in \cite{ALS24}, we can construct $w \in \PSH(X,\alpha \omega) \cap L^\infty(X)$ ($\alpha >0$) such that $(\alpha \omega + dd^c w)^n=0$, which is impossible.
\end{itemize}

We will give in Proposition \ref{prop: cons of model pot} a method for constructing $\theta$-model potentials, proving that there are plenty of these functions.
\section{The relative full mass class}\label{sect rel mass class}
\subsection{Definition and properties}
Fix $\phi \in \PSH(X,\theta)$. When $\theta$ is closed, the relative full mass class $\mathcal{E}(X,\theta,\phi)$  is defined by the set of $\theta$-psh functions $u$ satisfying $u\preceq \phi$ and 
$$ \int_X (\theta + dd^cu)^n = \int_X (\theta+dd^c\phi)^n, $$
see \cite{DDL2}.

When $\theta$ is not closed, the equality $\int_X \theta_u^n = \int_X \theta_\phi^n$ fails even when $u \simeq \phi$, hence the need for an alternative definition of $\mathcal{E}(X,\theta,\phi)$. This is the objective of the following proposition. 
\begin{prop}\label{prop: nat def}
    Assume $\theta$ is closed and $\phi \in \PSH(X,\theta)$ is such that $\int_X \theta_\phi^n>0$. Then a $\theta$-psh function $u$ belongs to $\mathcal{E}(X,\theta,\phi)$ iff  it is more singular than $\phi$ and satisfies
    $$ P_\theta(bu-(b-1)\phi) \in \PSH(X,\theta), \; \; \forall b \geq 1. $$
\end{prop}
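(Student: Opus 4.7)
The plan is to establish the two implications separately, using throughout that $\theta$ is closed to invoke the Witt-Nyström mass-conservation theorem \cite{WN19JAMS}: for closed $\theta$, $\int_X \theta_\psi^n$ depends only on the singularity class $[\psi]$, and $\psi_1 \preceq \psi_2$ implies $\int_X \theta_{\psi_1}^n \leq \int_X \theta_{\psi_2}^n$.

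For the forward implication, assume $u \in \mathcal{E}(X,\theta,\phi)$, i.e.\ $u \preceq \phi$ and $\int_X \theta_u^n = \int_X \theta_\phi^n$. The case $b=1$ reduces to $P_\theta(u)=u$. For $b>1$ I would work with the decreasing approximation $u_j := \max(u,\phi-j)$, which satisfies $u_j \searrow u$ and $u_j \simeq \phi$. The envelopes $\varphi_j^{(b)} := P_\theta(bu_j-(b-1)\phi)$ are then honest $\theta$-psh functions with $\varphi_j^{(b)} \simeq \phi$ (bounded above and below in terms of $\phi$ since $u_j \simeq \phi$), and they decrease as $j \to \infty$ to $\varphi^{(b)} := P_\theta(bu-(b-1)\phi)$. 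The goal reduces to ruling out $\sup_X \varphi_j^{(b)} \to -\infty$. Witt-Nyström gives $\int_X \theta_{\varphi_j^{(b)}}^n = \int_X \theta_{u_j}^n = \int_X \theta_\phi^n$, and combining this with Proposition~\ref{prop : the MA measure of P(bu - (b-1)v}(iii) applied to $(u_j,\phi)$ yields
\[
b^{-n}\int_X \theta_\phi^n \leq \int_{D_j^b}\theta_{u_j}^n, \qquad D_j^b := \{b^{-1}\varphi_j^{(b)}+(1-b^{-1})\phi = u_j\},
\]
so that $D_j^b$ carries uniformly positive $\theta_{u_j}^n$-mass. If $\sup_X \varphi_j^{(b)} \to -\infty$, the envelope identity $\varphi_j^{(b)} = bu_j-(b-1)\phi$ on $D_j^b$ would force $u_j$ uniformly to $-\infty$ on this set; via the plurifine property of Theorem~\ref{thm: plurifine property and Demailly type inequality} (which gives $\theta_{u_j}^n = \theta_u^n$ on $\{u \geq \phi-j\}$), this would concentrate a positive amount of $\theta_u^n$-mass on the pluripolar locus $\{u=-\infty\}$, contradicting the non-pluripolarity of $\theta_u^n$ and the full-mass identity.

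For the reverse implication, assume $u \preceq \phi$ and $\varphi_b := P_\theta(bu-(b-1)\phi) \in \PSH(X,\theta)$ for all $b \geq 1$. Witt-Nyström's monotonicity yields $\int_X \theta_u^n \leq \int_X \theta_\phi^n$. For the reverse, apply Proposition~\ref{prop : the MA measure of P(bu - (b-1)v}(iii) directly to $(u,\phi)$ to get
\[
(1-b^{-1})^n \int_{D_b}\theta_\phi^n \leq \int_X \theta_u^n, \qquad D_b := \{b^{-1}\varphi_b+(1-b^{-1})\phi = u\},
\]
and show $\theta_\phi^n(X \setminus D_b) \to 0$ as $b \to \infty$. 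Setting $w_b := b^{-1}\varphi_b+(1-b^{-1})\phi \in \PSH(X,\theta)$, this amounts to $w_b \to u$ $\theta_\phi^n$-almost everywhere. The convergence follows from a rooftop-envelope argument together with Theorem~\ref{thm: env}: the upper semi-continuous regularization $(\sup_b w_b)^*$ is a $\theta$-psh function $\leq u$, and by the envelope theorem its Monge-Ampère measure picks up the full $\theta_\phi^n$-mass on its contact set with $u$, so that the equality $w_b = u$ must hold outside a pluripolar set in the limit.

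The main obstacle is the forward direction, specifically turning the full-mass hypothesis into a uniform lower bound on $\sup_X \varphi_j^{(b)}$. The argument hinges on Witt-Nyström plus the plurifine locality of the Monge-Ampère operator (Theorem~\ref{thm: plurifine property and Demailly type inequality}), which together prevent the mass on the contact sets $D_j^b$ from being swept onto pluripolar singular loci; this is exactly where the hypothesis $\int_X \theta_\phi^n > 0$ is used crucially to guarantee that the contact-set mass remains uniformly bounded away from zero.
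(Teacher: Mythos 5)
Your two directions diverge from the paper in opposite ways: the forward implication you re-derive from scratch (the paper simply cites \cite[Theorem~3.10]{DDL23}), while the converse you make harder than it needs to be, and the hard step you introduce is left unproved.

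On the forward direction, the idea is workable but your key sentence glosses over the actual mechanism. Writing that $\sup_X \varphi_j^{(b)}\to-\infty$ ``would force $u_j$ uniformly to $-\infty$'' on $D_j^b$ hides the normalization that makes it true: after arranging $\phi\le 0$, the identity $u_j=b^{-1}\varphi_j^{(b)}+(1-b^{-1})\phi$ on $D_j^b$ gives $u_j\le b^{-1}\sup_X\varphi_j^{(b)}$ there, hence $D_j^b\subset\{u\le -b^{-1}M_j\}$ with $M_j:=-\sup_X\varphi_j^{(b)}$. To finish you must split $\theta_{u_j}^n(\{u\le -b^{-1}M_j\})$ along $\{u>\phi-j\}$ versus $\{u\le\phi-j\}$, handle the first piece by plurifine locality plus the fact that the finite measure $\theta_u^n$ puts no mass on $\{u=-\infty\}$, and the second piece by the full-mass identity $\int_X\theta_u^n=\int_X\theta_\phi^n$ together with Witt--Nystr\"om. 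Both ingredients are in your last sentence, but the argument as stated is not yet a proof; it essentially reconstitutes \cite[Theorem~3.10]{DDL23}, which the paper invokes directly.

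On the converse, your route has a genuine gap. You restrict the lower bound to the contact set $D_b$ via Proposition~\ref{prop : the MA measure of P(bu - (b-1)v}(iii) and are then forced to show $\theta_\phi^n(X\setminus D_b)\to 0$, but the ``rooftop-envelope'' paragraph does not establish this: $(\sup_b w_b)^*$ being $\theta$-psh and $\le u$ says nothing about its Monge--Amp\`ere measure charging the contact set with $u$, and in any case the $w_b$ need not be monotone in $b$, so the passage to a supremum requires justification you have not supplied. The paper avoids all of this. Since $w_b:=b^{-1}\varphi_b+(1-b^{-1})\phi\in\PSH(X,\theta)$ and $w_b\le u\preceq\phi$, the Newton expansion of the non-pluripolar product (valid globally on $X$, not just on a contact set) gives $(\theta+dd^c w_b)^n\ge(1-b^{-1})^n\theta_\phi^n$ as measures, whence
\[
(1-b^{-1})^n\int_X\theta_\phi^n\le\int_X(\theta+dd^c w_b)^n\le\int_X\theta_u^n\le\int_X\theta_\phi^n
\]
by Witt--Nystr\"om monotonicity on both sides. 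Letting $b\to\infty$ finishes in one line, with no contact-set estimate required. You should replace your $D_b$ argument with this global estimate.
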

\begin{proof}
    The direct implication follows from \cite[Theorem 3.10]{DDL23}. Hence, it suffices to prove the converse.
    
    Fix $b\geq 1$ and set $\varphi = P_\theta(bu-(b-1)\phi)$. The function  $\varphi$ belongs to $\PSH(X,\theta)$ by hypothesis, and we have $b^{-1}\varphi + (1-b^{-1})\phi \leq u$ by Proposition \ref{prop : the MA measure of P(bu - (b-1)v}. It then follows from the monotonicity of the non-pluripolar Monge-Ampère measure (see \cite{WN19}) that 
$$ (1-b^{-1})^n \int_X  \theta_\phi^n \leq  \int_X (\theta + dd^c(b^{-1}\varphi + (1-b^{-1})\phi))^n \leq \int_X \theta_u^n \leq \int_X  \theta_\phi^n . $$
Since $b$ is arbitrary, we obtain by letting $b \rightarrow +\infty$ that $\int_X \theta_u^n = \int_X \theta_\phi^n$, which implies $u \in \mathcal{E}(X,\theta,\phi)$. 
\end{proof}
We can thus extend the definition of the relative full mass class as follows.  
\begin{definition}
Let $\phi \in \PSH(X,\theta)$. We define the relative full mass class $\mathcal{E}(X,\theta,\phi)$ by the set of $\theta$-psh functions $u$ which are more singular than $\phi$ and satisfy the condition 
    $$ P_\theta(bu-(b-1)\phi) \in \PSH(X,\theta), \; \; \forall b \geq 1. $$
\end{definition}
 By definition, it is easy to see that the set $\mathcal{E}(X,\theta,\phi)$ is convex, that functions having the same singularity type as $\phi$ belong to $\mathcal{E}(X,\theta,\phi)$ and that 
    $$ u \in \mathcal{E}(X,\theta,\phi),  v \in \PSH(X,\theta), u \leq v \; \text{and } v \preceq \phi  \Longrightarrow  v \in \mathcal{E}(X,\theta,\phi). $$
    In particular, the relative full mass class is stable under maxima.

Observe also that, for $\phi=0$, the full mass class $\mathcal{E}(X,\theta):=\mathcal{E}(X,\theta,0)$ is then given by 
$$ \mathcal{E}(X,\theta) = \{ u \in \PSH(X,\theta) : P_\theta(bu) \in \PSH(X,\theta), \; \forall b \geq 1\}.  $$
According to \cite[Proposition 2.5]{ALS24}, when $X$ admits a Guan-Li metric, a $\theta$-psh function $u$ belongs to $\mathcal{E}(X,\theta)$ iff
$$  \lim_{t \rightarrow +\infty} \int_{\{u\leq -t \}} (\theta + dd^c \max(u,-t))^n = 0. $$
Therefore, this definition generalizes the one of Guedj-Zeriahi which was stated in the context of K\"ahler manifolds \cite{GZ07}.

The following proposition studies further the definition of the relative full mass class $\mathcal{E}(X,\theta,\phi)$ in the case $v_{-,\phi}(\theta)>0$.
\begin{prop}
Assume $\phi \in \PSH(X,\theta)$ satisfies $v_{-,\phi}(\theta)>0$, and let $u \in \PSH(X,\theta)$ be such that $u \preceq \phi$. If $\theta_u^n$ is a Radon measure on $X$ and
$$  \lim_{j \rightarrow +\infty} \int_{\{u \leq \phi - j\}} (\theta + dd^c \max(u,\phi-j))^n = 0 $$
then $u \in \mathcal{E}(X,\theta,\phi)$. The converse holds true if $X$ admits a Guan-Li form.
\end{prop}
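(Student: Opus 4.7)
The forward implication carries the content; the converse follows by adapting \cite[Proposition~2.5]{ALS24} to the relative setting, using that a Guan--Li form guarantees $v_{+}(\theta)<+\infty$ (so $\theta_{u}^{n}$ is automatically Radon) and that the truncation estimates of \cite{ALS24} can be run with $\phi-j$ in place of $-j$.

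For the forward direction I fix $b\geq 1$ and set $u_{j}:=\max(u,\phi-j)$. Because $u\preceq\phi$ and $u_{j}\geq\phi-j$, the function $u_{j}$ has the same singularity type as $\phi$, hence so does $bu_{j}-(b-1)\phi$; moreover $bu_{j}-(b-1)\phi\geq\phi-bj$, so the envelope $\varphi_{j}:=P_{\theta}(bu_{j}-(b-1)\phi)$ is a genuine $\theta$-psh function with $\phi-bj\leq\varphi_{j}$ and $\varphi_{j}\simeq\phi$. The sequence $(\varphi_{j})$ is decreasing (as $(u_{j})$ is), so its limit $\varphi$ is either $\theta$-psh or identically $-\infty$. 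If $\varphi\not\equiv-\infty$, then combining the monotonicity $P_{\theta}(bu-(b-1)\phi)\leq\varphi_{j}$ (which follows from $u\leq u_{j}$) with the fact that $\varphi\leq bu-(b-1)\phi$ outside a pluripolar set (obtained by passing to the limit in Theorem~\ref{thm: env} applied to each $\varphi_{j}$) gives $P_{\theta}(bu-(b-1)\phi)=\varphi\in\PSH(X,\theta)$.

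The main obstacle is ruling out $\varphi\equiv-\infty$. Assume by contradiction that $\sup_{X}\varphi_{j}\to-\infty$. Proposition~\ref{prop : the MA measure of P(bu - (b-1)v} applied to $(u_{j},\phi,b)$ gives
\[
b^{-n}\int_{X}\theta_{\varphi_{j}}^{n}\leq\int_{D_{b}^{j}}\theta_{u_{j}}^{n},\qquad D_{b}^{j}:=\{\varphi_{j}=bu_{j}-(b-1)\phi\}.
\]
The left-hand side is bounded below by $b^{-n}v_{-,\phi}(\theta)>0$ since $\varphi_{j}\simeq\phi$. For the right-hand side, given $C>0$ and $j$ so large that $\varphi_{j}\leq-C$ on $X$, we get $bu_{j}-(b-1)\phi\leq-C$ on $D_{b}^{j}$. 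Splitting $X=\{u>\phi-j\}\sqcup\{u\leq\phi-j\}$: on the first set $u_{j}=u$, hence $\theta_{u_{j}}^{n}=\theta_{u}^{n}$ there by Theorem~\ref{thm: plurifine property and Demailly type inequality}, and the constraint becomes $bu-(b-1)\phi\leq-C$; on the second set the contribution is absorbed by the hypothesis. Thus
\[
b^{-n}v_{-,\phi}(\theta)\leq\int_{\{bu-(b-1)\phi\leq-C\}}\theta_{u}^{n}+\int_{\{u\leq\phi-j\}}\theta_{u_{j}}^{n}.
\]
Letting $j\to+\infty$ kills the second term by hypothesis; then letting $C\to+\infty$ kills the first term, because $\{bu-(b-1)\phi=-\infty\}\subset\{u=-\infty\}\cup\{\phi=-\infty\}$ is pluripolar while $\theta_{u}^{n}$ is Radon (hence vanishes on pluripolar sets). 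This contradicts $v_{-,\phi}(\theta)>0$.

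The hard part is precisely this non-degeneracy step: there is no a priori upper bound on $\int_{X}\theta_{u_{j}}^{n}$, and a bare mass computation would require $v_{+,\phi}(\theta)<+\infty$. The trick is to restrict attention to the coincidence set $D_{b}^{j}$, where the assumption $\sup_{X}\varphi_{j}\to-\infty$ forces inclusion in $\{bu-(b-1)\phi\leq-C\}$ modulo the ``bad'' part $\{u\leq\phi-j\}$; the non-pluripolarity of $\theta_{u}^{n}$ then supplies the vanishing needed to close the contradiction.
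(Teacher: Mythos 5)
Your forward implication is correct and follows essentially the same route as the paper's proof: set $u_{j}=\max(u,\phi-j)$, form $\varphi_{j}=P_{\theta}(bu_{j}-(b-1)\phi)$, use that $\varphi_{j}\simeq\phi$ so $\int_X\theta_{\varphi_{j}}^{n}\geq v_{-,\phi}(\theta)$, invoke Proposition \ref{prop : the MA measure of P(bu - (b-1)v} to push the mass onto the contact set, split along $\{u>\phi-j\}$ versus $\{u\leq\phi-j\}$, and let $j\to+\infty$ then $C\to+\infty$ to contradict $v_{-,\phi}(\theta)>0$. The only cosmetic difference is that the paper normalizes $\phi\leq 0$ and works with $\{\varphi_{j}\leq -bC\}$ to land directly on the sublevel set $\{u\leq-C\}$, whereas you keep $\{bu-(b-1)\phi\leq -C\}$; both shrink to a pluripolar set as $C\to+\infty$, which is all that is needed since $\theta_{u}^{n}$ is a finite Radon measure charging no pluripolar set. (One small point you gloss over, as does the paper: passing from ``$\varphi\leq bu-(b-1)\phi$ quasi-everywhere'' to ``$\varphi=P_{\theta}(bu-(b-1)\phi)$'' uses the standard fact that decreasing envelopes $P_{\theta}(h_{j})\searrow P_{\theta}(h)$ when $h_{j}\searrow h$; this is implicit in the paper's one-line assertion that $\varphi_{j}\searrow P_{\theta}(bu-(b-1)\phi)$.)

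For the converse, however, you only wave at ``adapting \cite[Proposition 2.5]{ALS24} with $\phi-j$ in place of $-j$,'' which is a pointer, not a proof. The paper's argument is short but concrete and worth knowing: since $\theta\leq\omega$, $u\in\mathcal{E}(X,\theta,\phi)$ implies $u\in\mathcal{E}(X,\omega,\phi)$; mass constancy for the Guan--Li form (via Proposition \ref{prop: nat def}) gives $\int_X\omega_{u}^{n}=\int_X\omega_{\phi}^{n}=\int_X\omega_{\max(u,\phi-j)}^{n}$ for all $j$; the plurifine identity of Theorem \ref{thm: plurifine property and Demailly type inequality} splits this total mass as $\int_{\{u>\phi-j\}}\omega_{u}^{n}+\int_{\{u\leq\phi-j\}}\omega_{\max(u,\phi-j)}^{n}$, so the second term tends to $0$; finally $\theta\leq\omega$ gives $\theta_{\max(u,\phi-j)}^{n}\leq\omega_{\max(u,\phi-j)}^{n}$, which yields the claim. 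To make your proposal a complete proof you should supply some version of this mass-constancy argument rather than leaving it at a reference to the absolute ($\phi=0$) case.
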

It will be interesting to establish this result in a more general complex manifold.
\begin{proof}
We start by proving the direct implication. Fix $b \geq 1$. For every $j\geq 1$, we set $u_j = \max(u,\phi-j)$ and $\varphi_j = P_\theta(bu_j - (b-1) \phi)$. We have $\varphi_j \in \PSH(X,\theta)$, $\varphi_j \simeq \phi$ and $\varphi_j \searrow P_\theta(b u -(b-1) \phi)$. By contradiction, we assume $P_\theta(b u -(b-1) \phi) = -\infty$. It then follows by the Hartogs' lemma that $\sup_X \varphi_j \rightarrow -\infty$. Fixing $C\geq 0$, we can find $j_0 \geq 1$ such that $X = \{\varphi_j \leq -bC\}$ for every $j \geq j_0$.

Observe that  $\varphi_j \simeq \phi$ for every $j\geq 1$. Hence $\int_X \theta_{\varphi_j}^n \geq v_{-,\phi}(\theta)$. On the other hand, according to Proposition \ref{prop : the MA measure of P(bu - (b-1)v}, we know that 
\begin{align*}
    \int_X \theta_{\varphi_j}^n &\leq b^n \int_{\{\varphi_j = bu_j - (b-1) \phi \leq -bC\}} \theta_{u_j}^n \\
    &\leq b^n \int_{\{u \leq \phi - j\}} \theta_{u_j}^n + b^n \int_{\{u > \phi - j\} \cap \{u \leq -C\}} \theta_{u_j}^n.
\end{align*}
Therefore, letting $j\rightarrow +\infty$ we get by Theorem \ref{thm: plurifine property and Demailly type inequality} and the hypothesis that 
$$ v_{-,\phi}(\theta)   \leq b^n  \int_{\{u\leq -C\}} \theta_u^n. $$ 
 Now letting $C\rightarrow +\infty$, we find that $v_{-,\phi}(\theta)=0$, which is assumed not to be true by the hypothesis. We thus infer $P_\theta(bu - (b-1)\phi) \in \PSH(X,\theta)$. This holds for every $b\geq 1$, hence $u \in \mathcal{E}(X,\theta,\phi)$.

Assume now $\omega$ is Guan-Li and let $u \in \mathcal{E}(X,\theta,\phi)$. Since $\theta \leq \omega$, this implies $u \in \mathcal{E}(X,\omega,\phi)$, hence we get by  Proposition  \ref{prop: nat def} that 
$$ \int_X \omega_u^n = \int_X \omega_\phi^n = \int_X \omega_{\max(u,\phi -j)}^n, \; \; \forall j\geq 1. $$
It then follows from Theorem \ref{thm: plurifine property and Demailly type inequality} that 
$$ \int_X \omega_u^n = \int_{\{u>\phi -j\}} \omega_u^n + \int_{\{u \leq \phi -j\}} \omega_{\max(u,\phi -j)}^n, \; \; \forall j \geq 1. $$
Letting $j \rightarrow +\infty$ we obtain
$$ \lim_{j\rightarrow +\infty} \int_{\{u \leq \phi -j\}} (\omega + dd^c \max(u,\phi -j))^n = 0, $$
and the fact that  $\theta \leq \omega$ allows us to conclude. 
\end{proof}
We now give a sufficient condition to ensure that a $\theta$-psh function belongs to a relative full mass class. 
\begin{prop}\label{u in E(X,theta,Ptheta u)}
 If $u \in \PSH(X,\theta)$ is such that $v_{-,u}(\theta)>0$ and $\int_X \theta_u^n<+\infty$, then $u \in \mathcal{E}(X,\theta,P_\theta[u])$. 
\end{prop}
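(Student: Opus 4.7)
Let $\phi := P_\theta[u]$. The plan is to verify the two defining conditions for $u \in \mathcal{E}(X,\theta,\phi)$: namely $u \preceq \phi$ and $P_\theta(bu - (b-1)\phi) \in \PSH(X,\theta)$ for every $b \geq 1$. After normalizing $\sup_X u = 0$, the function $u$ is itself a $\theta$-psh candidate in the definition of $P_\theta(u+t,0)$ for every $t \geq 0$, so $u \leq P_\theta(u+t,0) \nearrow \phi$; this immediately gives $\phi \in \PSH(X,\theta)$, $u \preceq \phi$, and settles the trivial case $b=1$.

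Fix $b > 1$ and set $\phi_t := P_\theta(u + t, 0)$, so that $\phi_t \nearrow \phi$ almost everywhere and $u \leq \phi_t \leq \min(u+t, 0)$. Define
\[
\psi_t := P_\theta\bigl(bu - (b-1)\phi_t\bigr).
\]
The upper bound $\phi_t \leq u + t$ yields $bu - (b-1)\phi_t \geq u - (b-1)t$, which secures $\psi_t \geq u - (b-1)t$ and hence $\psi_t \in \PSH(X,\theta)$. Symmetrically, $\phi_t \geq u$ forces $bu - (b-1)\phi_t \leq bu - (b-1)u = u$, so $\psi_t \leq u$. Therefore $u - (b-1)t \leq \psi_t \leq u$ for each fixed $t$, so $\psi_t \simeq u$; moreover $(\psi_t)$ is decreasing in $t$ because $\phi_t$ is increasing. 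Applying Proposition \ref{prop : the MA measure of P(bu - (b-1)v} with the pair $(u,\phi_t)$ gives the uniform upper bound $\int_X \theta_{\psi_t}^n \leq b^n \int_X \theta_u^n < +\infty$ together with the fact that $\theta_{\psi_t}^n$ is supported on $D_t := \{\psi_t = bu - (b-1)\phi_t\}$. Combined with $\psi_t \simeq u$, the hypothesis $v_{-,u}(\theta) > 0$ supplies the matching lower bound $\int_X \theta_{\psi_t}^n \geq v_{-,u}(\theta) > 0$.

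The crux is to prevent $\sup_X \psi_t \to -\infty$. Assume towards contradiction that for any $M>0$ one has $\psi_t \leq -M$ on $X$ for $t$ large. On $D_t$ the equality $bu - (b-1)\phi_t = \psi_t \leq -M$ combined with $\phi_t \leq 0$ forces $u \leq -M/b$; hence $D_t \subset \{u \leq -M/b\}$ and
\[
v_{-,u}(\theta) \leq \int_X \theta_{\psi_t}^n \leq b^n \int_{\{u \leq -M/b\}} \theta_u^n.
\]
Letting $M \to +\infty$, the right-hand side tends to $0$ by continuity from above of the finite measure $\theta_u^n$, which does not charge the pluripolar set $\{u=-\infty\}$; this gives $v_{-,u}(\theta)=0$, a contradiction.

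Consequently $\sup_X \psi_t$ stays bounded below, the decreasing limit $\psi := \lim_t \psi_t$ lies in $\PSH(X,\theta)$, and a routine monotone-envelope comparison (using $\phi_t \leq \phi$ for the lower direction and $\phi_t \nearrow \phi$ a.e.\ for the upper direction) identifies $\psi = P_\theta(bu - (b-1)\phi)$. The main obstacle throughout is precisely the prevention of $\sup_X \psi_t \to -\infty$: it relies in an essential way on combining the mass lower bound coming from $v_{-,u}(\theta)>0$ with the finiteness-and-non-pluripolarity of $\theta_u^n$; neither hypothesis alone would suffice.
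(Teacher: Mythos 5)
Your proof is correct and follows essentially the same route as the paper's: both normalize $\sup_X u = 0$, truncate the envelope via $\phi_t := P_\theta(u+t,0) \nearrow P_\theta[u]$, introduce $\psi_t := P_\theta(bu-(b-1)\phi_t)$ (the paper's $\varphi_j$ with $t=j$), observe $\psi_t \simeq u$ so that $\int_X \theta_{\psi_t}^n \geq v_{-,u}(\theta)$, and then derive the contradiction by showing that when $\psi_t \leq -M$ on $X$ the measure $\theta_{\psi_t}^n$ lives on $\{u \leq -M/b\}$, whence $v_{-,u}(\theta) \leq b^n\int_{\{u\leq -M/b\}}\theta_u^n \to 0$. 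The only cosmetic difference is that the paper phrases the whole argument as a reductio on $P_\theta(bu-(b-1)P_\theta[u]) \equiv -\infty$, while you first establish $\sup_X\psi_t$ is bounded below and then identify the decreasing limit with the target envelope; the identification step you flag as ``routine'' is the same implicit passage the paper also leaves unspelled.
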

\begin{proof}
We have $u - \sup_X u \leq P_\theta[u]$. By contradiction, we assume there is
  $b \geq 1$ such that $P_\theta(b u - (b-1) P_\theta[u]) = -\infty$.  For every $j \geq 1$, we set $u_j = P_\theta(u+j,0)$ and $\varphi_j = P_\theta(b u - (b-1) u_j)$. The function $\varphi_j$ belongs to $\PSH(X,\theta)$ and satisfies $\varphi_j \simeq u$, in particular  $\int_X \theta_{\varphi_j}^n \geq v_{-,u}(\theta)$.

  Fix $k \in  \mathbb{N}$.  Since $\varphi_j \searrow -\infty$, we can find $j_0 \geq 1$ such that $X= \{\varphi_j \leq -bk\}$ for every $j \geq j_0$. By Proposition \ref{prop : the MA measure of P(bu - (b-1)v}, we know that 
    $$ \theta_{\varphi_j}^n \leq \textit{1}_{D_j} b^n \theta_u^n, $$
where $D_j = \{\varphi_j = bu - (b-1)u_j\}$.  Therefore
$$ v_{-,u}(\theta) \leq \int_X \theta_{\varphi_j}^n \leq  b^n\int_{\{u \leq -k\}}  \theta_u^n.  $$
 Since $k$ is arbitrary, we obtain by letting $k \rightarrow +\infty$ that $v_{-,u}(\theta)=0$, a contradiction. This proves $P_\theta(b u - (b-1) P_\theta[u]) \in \PSH(X,\theta)$ for every $b \geq 1$, hence $u \in \mathcal{E}(X,\theta,P_\theta[u])$. 
\end{proof}

\begin{prop}\label{prop: Ptheta(bu - (b-1)v) is in E}
    Let $\phi  \in \PSH(X,\theta)$ and let $u$, $v \in \mathcal{E}(X,\theta,\phi)$. We have $P_\theta(bu - (b-1)v) \in \mathcal{E}(X,\theta,\phi)$ for every $b\geq 1$. 
\end{prop}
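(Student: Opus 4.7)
The plan is to verify, for $w := P_\theta(bu-(b-1)v)$, both defining conditions for membership in $\mathcal{E}(X,\theta,\phi)$: first, that $w \in \PSH(X,\theta)$ with $w \preceq \phi$; second, that $P_\theta(cw-(c-1)\phi) \in \PSH(X,\theta)$ for every $c \geq 1$. My approach is to construct an explicit $\theta$-psh minorant $w_0 \leq w$ that already lies in $\mathcal{E}(X,\theta,\phi)$, handle the domination $w \preceq \phi$ as a separate step, and close by the stability property recorded just after the definition of $\mathcal{E}(X,\theta,\phi)$ (i.e.\ that the class is closed under $\theta$-psh dominators that remain $\preceq \phi$).

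For the minorant, pick $C_u, C_v \geq 0$ with $u \leq \phi + C_u$ and $v \leq \phi + C_v$. From $v \leq \phi + C_v$ I obtain $bu-(b-1)v \geq bu-(b-1)\phi-(b-1)C_v$, and monotonicity of $P_\theta$ gives
$$ w \geq \varphi_b - (b-1)C_v, \qquad \varphi_b := P_\theta(bu-(b-1)\phi) \in \PSH(X,\theta), $$
where the $\PSH$-membership of $\varphi_b$ is precisely $u \in \mathcal{E}(X,\theta,\phi)$. In particular $w \in \PSH(X,\theta)$. I then check $\varphi_b \in \mathcal{E}(X,\theta,\phi)$: the estimate $\varphi_b \leq bu-(b-1)\phi \leq \phi + bC_u$ yields $\varphi_b \preceq \phi$, and for each $c \geq 1$, with $K := bc$ and $\chi_K := P_\theta(Ku-(K-1)\phi) \in \PSH(X,\theta)$, a direct calculation shows the $\theta$-psh function $\tfrac{1}{c}\chi_K + \tfrac{c-1}{c}\phi$ is pointwise at most $bu-(b-1)\phi$, hence at most $\varphi_b$; rearranging gives $\chi_K \leq c\varphi_b - (c-1)\phi$, so $P_\theta(c\varphi_b - (c-1)\phi) \in \PSH(X,\theta)$. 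Translation invariance of $\mathcal{E}(X,\theta,\phi)$ under constants then places $w_0 := \varphi_b - (b-1)C_v$ in $\mathcal{E}(X,\theta,\phi)$.

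The remaining and expected main obstacle is $w \preceq \phi$. Proposition~\ref{prop : the MA measure of P(bu - (b-1)v}(i) provides that $\chi := b^{-1}w + (1-b^{-1})v$ is $\theta$-psh with $\chi \leq u \leq \phi + C_u$, which rearranges only to the pointwise bound $w \leq b\phi + bC_u - (b-1)v$; since $-(b-1)v$ can be very large off a pluripolar set, this alone is insufficient. The strategy I would pursue is to sharpen the estimate on the non-pluripolar set $D_M := \{v - \phi \geq -M\}$ (non-pluripolar once $M$ is large enough, as $v - \phi$ is finite outside a pluripolar set), where it becomes $w \leq \phi + bC_u + (b-1)M$, and then to globalize this by a comparison argument tailored to the envelope structure of $w$: specifically, by considering $\max(w,\varphi_b) \in \PSH(X,\theta)$ and combining Proposition~\ref{prop: the envelope of min of two fncs} with a Siciak-style extremal function on $D_M$ of the kind used in the proof of Lemma~\ref{lem: Ptheta(u-v) is in psh(X,theta)}. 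I expect this globalization to carry the bulk of the technical work, because a naive extension fails---the bound on $D_M$ does not automatically globalize for an arbitrary $\theta$-psh function, and so the specific representation $w = P_\theta(bu-(b-1)v)$ must be brought in. Once $w \preceq \phi$ is secured, the stability property applied to $w_0 \leq w$ with $w_0 \in \mathcal{E}(X,\theta,\phi)$ yields $w \in \mathcal{E}(X,\theta,\phi)$ and completes the proof.
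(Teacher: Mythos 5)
Your construction of the minorant and the verification that $\varphi_b := P_\theta(bu-(b-1)\phi)$ lies in $\mathcal{E}(X,\theta,\phi)$ is correct and coincides with the paper's argument: the paper also reduces to $v=\phi$ and checks $P_\theta(s\varphi_b-(s-1)\phi)\in\PSH(X,\theta)$ by rewriting $bu-(b-1)\phi$ as a convex combination of $tu-(t-1)\phi$ and $\phi$ for $t\ge bs$, which is the same computation you do with $t=K=bc$. Up to this point the two proofs are essentially identical.

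The gap is where you stop. You correctly observe that the remaining issue is $w=P_\theta(bu-(b-1)v)\preceq\phi$, and you correctly note that the direct consequence $w\le b\phi+bC_u-(b-1)v$ is useless because $-(b-1)v$ is unbounded. But your proposed fix does not close the gap: on $D_M=\{v-\phi\ge -M\}$ you only get $w\le\phi+bC_u+(b-1)M$, and this bound \emph{degrades as $M\to\infty$} while $X\setminus D_M$ only shrinks to a pluripolar set in that same limit. A Siciak-type extremal of $D_M$ cannot remove the $M$-dependence, and $\max(w,\varphi_b)$ differs from $w$ only by a constant, so Proposition~\ref{prop: the envelope of min of two fncs} does not give you new leverage either. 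In short, you have correctly identified the step that needs justifying and correctly diagnosed why the naive estimate fails, but the sketch you offer is not a proof and, as set up, it would not become one.

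For comparison, the paper's proof dispatches this step by the reduction ``it suffices to prove the result for $v=\phi$,'' which is invoked tersely: after normalizing $u,v\le\phi$, the case $v=\phi$ gives $w=\varphi_b\le bu-(b-1)\phi\le\phi$ q.e.\ directly, so $w\preceq\phi$ is immediate there. For general $v\le\phi$ one has $w\ge\varphi_b$ and inherits the second defining condition of $\mathcal{E}(X,\theta,\phi)$, but $w\preceq\phi$ is indeed not spelled out; note, however, that it is immediate whenever $u\le v$ (then $bu-(b-1)v\le v$, so $w\le v\preceq\phi$), and the paper only ever applies the proposition either with $v=\phi$ or after replacing $v$ by $\max(u,v)$, so in practice the easy cases suffice. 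If you want a genuinely general argument for $w\preceq\phi$, the route via $P_\theta[\cdot]$ is more promising than a capacity estimate: using $b^{-1}w+(1-b^{-1})v\le u$ and the concavity of $\psi\mapsto P_\theta[\psi]$, one gets $b^{-1}P_\theta[w]+(1-b^{-1})P_\theta[v]\le P_\theta[u]$; combined with $P_\theta[u]\le P_\theta[\phi]$ and $P_\theta[v]=P_\theta[\phi]$ (the latter being the content of Theorem~\ref{thm: Ptheta[u]=phi} when $\phi$ is model) this yields $P_\theta[w]\le P_\theta[\phi]=\phi$, hence $w\preceq\phi$. That is the structural idea; your Siciak-style globalization is not.
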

\begin{proof}
     Subtracting a constant, one can assume $u$, $v \leq \phi$. Then, by the definition of $\mathcal{E}(X,\theta,\phi)$, it suffices to prove the result for $v=\phi$. Fix $b\geq 1$ and set $u_t = P_\theta(tu - (t-1)\phi)$ for every $t \geq 1$. Fixing $s\geq 1$ and $t>bs$, we have 
    $$ bu - (b-1)\phi = t^{-1}b(tu - (t-1)\phi) + (1-t^{-1}b) \phi,  $$
     hence $u_b \geq  t^{-1}b u_t+ (1-t^{-1}b) \phi$, and thus 
    $su_b - (s-1)\phi \geq t^{-1}bs u_t+ (1-t^{-1}bs) \phi$. 
   We then infer $P_\theta(su_b - (s-1)\phi) \in \PSH(X,\theta)$. Since $s$ is arbitrary, we get $u_b \in \mathcal{E}(X,\theta,\phi)$. 
\end{proof}
\begin{prop}\label{prop: control of MA mass in E}
Fix $\phi  \in \PSH(X,\theta)$. We have 
$$ P_\theta(u-\phi) \in \mathcal{E}(X,\theta), \; \; \forall u \in \mathcal{E}(X,\theta,\phi). $$
\end{prop}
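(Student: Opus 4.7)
The plan is to verify directly the two conditions in the definition of $\mathcal{E}(X,\theta)=\mathcal{E}(X,\theta,0)$ for the function $w:=P_\theta(u-\phi)$. Namely, we need $w\preceq 0$ (so in particular $w\in\PSH(X,\theta)$) and $P_\theta(bw)\in\PSH(X,\theta)$ for every $b\geq 1$. Both $w\preceq 0$ and the non-triviality of $w$ follow immediately from Lemma \ref{lem: Ptheta(u-v) is in psh(X,theta)}: the envelope $w$ is bounded from above, and $u-\sup_X\phi \leq w$ since $u$ is itself $\theta$-psh, hence $P_\theta(u)=u$. Only the envelope condition on $P_\theta(bw)$ requires work.

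Fix $b\geq 1$ and set $v_b := P_\theta(bu-(b-1)\phi)$. By definition of $\mathcal{E}(X,\theta,\phi)$, $v_b\in\PSH(X,\theta)$. The argument hinges on two observations. Firstly, the semipositivity of $\theta$ together with $b\geq 1$ ensures that $v_b/b$ is again $\theta$-psh: from $dd^c v_b\geq -\theta$ and $\theta\geq 0$ one obtains $dd^c(v_b/b)\geq -\theta/b \geq -\theta$. Secondly, Proposition \ref{prop : the MA measure of P(bu - (b-1)v}(i) gives $v_b\leq bu-(b-1)\phi$ on $X$, and since $\phi\leq\sup_X\phi$,
\[
\frac{v_b-\sup_X\phi}{b}\;\leq\;\frac{v_b-\phi}{b}\;\leq\; u-\phi
\]
wherever the right-hand side is defined.

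Combining the two, $\eta:=(v_b-\sup_X\phi)/b$ is a $\theta$-psh function bounded above by $u-\phi$, hence by the very definition of the envelope $\eta\leq w$. Multiplying by $b$, we get $v_b-\sup_X\phi\leq bw$. But $v_b-\sup_X\phi$ is itself a non-trivial $\theta$-psh function, so it is a valid candidate in the envelope defining $P_\theta(bw)$; thus $P_\theta(bw)\geq v_b-\sup_X\phi$, which proves $P_\theta(bw)\in\PSH(X,\theta)$. Since $b\geq 1$ was arbitrary, this yields $w\in\mathcal{E}(X,\theta)$.

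I do not expect a serious obstacle. The only step that is not a pure definition chase is the first observation above, where the standing hypothesis that $\theta$ is semipositive (as opposed to merely closed) is crucially used to convert $v_b\in\PSH(X,\theta)$ into the $\theta$-psh property of $v_b/b$; a minor point to watch is the usual subtlety that $u-\phi$ is only defined off a pluripolar set, but this is absorbed by the convention already in force throughout the paper for the envelope $P_\theta$.
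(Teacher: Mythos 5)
Your proof is correct and follows essentially the same route as the paper's: both arguments use that $v_b=P_\theta(bu-(b-1)\phi)\in\PSH(X,\theta)$ by definition of $\mathcal{E}(X,\theta,\phi)$, that dividing a $\theta$-psh function by $b\geq 1$ preserves $\theta$-plurisubharmonicity because $\theta\geq 0$, and that the resulting function is a competitor for $P_\theta(u-\phi)$; the only cosmetic difference is that you work directly with $v_b-\sup_X\phi$ where the paper passes through the intermediate envelope $P_\theta(b(u-\phi))$ after implicitly normalizing $\phi\leq 0$.
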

\begin{proof}
  Fix $b\geq 1$ and set $\varphi = P_\theta(u-\phi)$. The function $\varphi$ belongs to $\PSH(X,\theta)$ by Lemma \ref{lem: Ptheta(u-v) is in psh(X,theta)}.
    Since $bu - (b-1) \phi \leq b(u-\phi)$, we get $P_\theta(b(u-\phi)) \in \PSH(X,\theta)$. 
    Using this  and the fact that  $b^{-1} P_\theta(b(u-\phi)) \leq \varphi$, we obtain  $P_\theta(b\varphi) \in \PSH(X,\theta)$. This holds for every $b\geq 1$, hence $\varphi \in \mathcal{E}(X,\theta)$. 
\end{proof}
\begin{prop}\label{prop: Ptheta(u,v) in E(X,theta,phi)}
 Assume $\phi$ is a $\theta$-model potential on $X$. If $u$, $v \in \mathcal{E}(X,\theta,\phi)$ then $P_\theta(u,v) \in \mathcal{E}(X,\theta,\phi)$. 
\end{prop}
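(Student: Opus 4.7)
After subtracting constants one may assume $\phi \leq 0$ and $u,v \leq \phi$; write $w := P_\theta(u,v)$. The upper bound $w \leq \phi$ is automatic from $\min(u,v) \leq \phi$, so $w \preceq \phi$. The plan is to establish $w \in \mathcal{E}(X,\theta,\phi)$ in two stages: first, that $w \in \PSH(X,\theta)$ via monotone approximation by potentials with the same singularity type as $\phi$, and second, that $P_\theta(bw - (b-1)\phi) \in \PSH(X,\theta)$ for every $b \geq 1$ via Proposition \ref{prop: Ptheta(bu - (b-1)v) is in E} combined with a convex combination trick.

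For the first stage, set $u_t := \max(u, \phi - t)$ and $v_t := \max(v, \phi - t)$. These satisfy $\phi - t \leq u_t, v_t \leq \phi$, so $u_t, v_t \simeq \phi$ and in particular $u_t, v_t \in \mathcal{E}(X,\theta,\phi)$. The envelope $w_t := P_\theta(u_t, v_t)$ is $\theta$-psh (since the $\theta$-psh function $\phi - t$ is a minorant of $\min(u_t,v_t)$) with $\phi - t \leq w_t \leq \phi$, so $w_t \simeq \phi$ and $w_t \in \mathcal{E}(X,\theta,\phi)$. As $t$ grows the $w_t$ decrease to some $w_\infty \leq \min(u,v)$; any $\theta$-psh minorant of $\min(u,v)$ is also $\leq \min(u_t,v_t) \leq w_t$ for every $t$, so $w = w_\infty$ provided $w_\infty \not\equiv -\infty$. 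This non-triviality is the main obstacle: I expect to argue it using that $\phi$ is a $\theta$-model potential, so $\int_X \theta_{w_t}^n > 0$ for every $t$ by the discussion following Definition \ref{defn: Mod pot}, combined with Proposition \ref{prop: the envelope of min of two fncs}, which bounds $\theta_{w_t}^n$ by $\textit{1}_{\{w_t = u_t < v_t\}} \theta_{u_t}^n + \textit{1}_{\{w_t = v_t\}} \theta_{v_t}^n$, to derive a uniform lower bound on $\sup_X w_t$ and prevent its collapse.

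Granted $w \in \PSH(X,\theta)$, fix $b \geq 1$ and set $\psi_u := P_\theta(bu - (b-1)\phi)$, $\psi_v := P_\theta(bv - (b-1)\phi)$. By Proposition \ref{prop: Ptheta(bu - (b-1)v) is in E} (with $\phi$ playing the role of the second argument) both lie in $\mathcal{E}(X,\theta,\phi)$, so the first-stage argument applied to the pair $(\psi_u, \psi_v)$ gives $P_\theta(\psi_u, \psi_v) \in \PSH(X,\theta)$. From
$$ P_\theta(\psi_u, \psi_v) \leq \min(\psi_u, \psi_v) \leq b\min(u,v) - (b-1)\phi, $$
the convex combination $\rho := \tfrac{1}{b} P_\theta(\psi_u,\psi_v) + \tfrac{b-1}{b}\phi$ is $\theta$-psh and bounded above by $\min(u,v)$, hence $\rho \leq w$; rearranging gives $P_\theta(\psi_u,\psi_v) \leq bw - (b-1)\phi$, which provides a $\theta$-psh minorant of $bw - (b-1)\phi$ and so $P_\theta(bw - (b-1)\phi) \in \PSH(X,\theta)$. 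As $b \geq 1$ was arbitrary and $w \preceq \phi$, this yields $w \in \mathcal{E}(X,\theta,\phi)$.
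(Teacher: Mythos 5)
Your second stage — reducing the problem to the statement that $P_\theta(u,v)\in\PSH(X,\theta)$ and then using $\psi_u,\psi_v\in\mathcal{E}(X,\theta,\phi)$ together with the convex combination $b^{-1}P_\theta(\psi_u,\psi_v)+(1-b^{-1})\phi\leq\min(u,v)$ — is exactly the paper's argument and is correct.

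The genuine gap is in the first stage, and it is not a small one. You truncate $u_t=\max(u,\phi-t)$, $v_t=\max(v,\phi-t)$, set $w_t=P_\theta(u_t,v_t)$, and need $w_\infty=\lim_t w_t\not\equiv-\infty$, but you leave this as a sketch: you want a uniform lower bound on $\sup_X w_t$ coming from a uniform lower bound on $\int_X\theta_{w_t}^n$. The difficulty is that the ``discussion following Definition~\ref{defn: Mod pot}'' only gives $\int_X\theta_{w_t}^n>0$ for \emph{each} $t$; it does not give a uniform positive lower bound as $t\to+\infty$. Such a uniform bound would require $v_{-,\phi}(\theta)>0$, and in the present generality this is not available: Proposition~\ref{prop: controling mass} only gives $v_{-,\phi}(\theta)\geq(1-\varepsilon)^n v_-(\theta)$, and the proposition does \emph{not} assume $v_-(\theta)>0$ (that hypothesis appears later, in Section~4). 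Moreover, even granted a uniform mass bound, the estimate you propose via Proposition~\ref{prop: the envelope of min of two fncs} would still require showing that $\int_{\{u_t\leq -k\}}\theta_{u_t}^n$ and $\int_{\{v_t\leq -k\}}\theta_{v_t}^n$ tend to zero as $k\to\infty$ uniformly in $t$, which is itself a uniform-integrability statement you haven't addressed. So the key non-triviality claim is unsupported.

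The paper sidesteps the limiting argument entirely by producing an explicit $\theta$-psh minorant of $\min(u,v)$. Set $\tilde u=P_\theta(u-\phi)$, $\tilde v=P_\theta(v-\phi)$; by Proposition~\ref{prop: control of MA mass in E} these lie in $\mathcal{E}(X,\theta)$, and since $\tilde u,\tilde v\leq 0$ one has $P_\theta(2b\tilde u)/2+P_\theta(2b\tilde v)/2\leq b(\tilde u+\tilde v)\leq b\min(\tilde u,\tilde v)$, which forces $w:=P_\theta(\tilde u,\tilde v)\in\mathcal{E}(X,\theta)$ with $w+\phi\leq\min(u,v)$. Then, using that $\phi$ is $\theta$-model (so $P_{\varepsilon\theta}(\phi)\not\equiv-\infty$), the function $b^{-1}P_\theta(bw)+P_{\varepsilon\theta}(\phi)$ with $b>1/(1-\varepsilon)$ is a genuine $\theta$-psh function bounded above by $\min(u,v)$. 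This gives $P_\theta(u,v)\in\PSH(X,\theta)$ unconditionally, with no mass estimates needed. If you want to rescue your truncation strategy you would need to supply the uniform lower bound on the masses or replace it with a different obstruction to collapse; as written, the step is missing, and it is precisely the step that the paper's envelope construction is designed to avoid.
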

\begin{proof}
Assume first that $P_\theta(u,v) \in \PSH(X,\theta)$ for every $u$, $v\in \mathcal{E}(X,\theta,\phi)$. Fix $u$, $v\in \mathcal{E}(X,\theta,\phi)$ and set $u_b = P_\theta(bu - (b-1)\phi)$ and $v_b = P_\theta(bv - (b-1)\phi)$ for every $b\geq 1$. By Proposition \ref{prop: Ptheta(bu - (b-1)v) is in E}, we have  $u_b$, $v_b \in \mathcal{E}(X,\theta,\phi)$,  hence  $P_\theta(u_b,v_b) \in \PSH(X,\theta)$ by our assumption. Since 
$$ b^{-1} P_\theta(u_b,v_b) + (1-b^{-1}) \phi \leq P_\theta(u,v),  $$
we get $P_\theta(u_b,v_b) \leq bP_\theta(u,v) - (b-1)\phi$,  hence $P_\theta( bP_\theta(u,v) - (b-1)\phi)\in \PSH(X,\theta)$. This holds for every $b\geq 1$, therefore $P_\theta(u,v) \in \mathcal{E}(X,\theta,\phi)$.

It thus remains to show $P_\theta(u,v) \in \PSH(X,\theta)$ for every $u$, $v \in \mathcal{E}(X,\theta,\phi)$. Fix $u$, $v \in \mathcal{E}(X,\theta,\phi)$ and set $\tilde{u} = P_\theta(u-\phi)$, $\tilde{v} = P_\theta(v-\phi)$. Clearly, there is no loss of generality in assuming $\max(u,v) \leq \phi$. First, Proposition \ref{prop: control of MA mass in E} gives  $\tilde{u}$, $\tilde{v}\in \mathcal{E}(X,\theta)$.  
Observe that $\tilde{u} + \phi \leq u$, $\tilde{v} + \phi \leq v$,   $\tilde{u} + \tilde{v} \leq \min(\tilde{u},\tilde{v})$ and that $P_\theta(2b\tilde{u} )/2 +P_\theta(2b\tilde{v} )/2 \leq bP_\theta(\tilde{u},\tilde{v})$ for every $b\geq 1$. Therefore 
$w := P_\theta(\tilde{u},\tilde{v}) \in \mathcal{E}(X,\theta)$ and $w + \phi \leq \min(u,v)$.  Since $\phi$ is a $\theta$-model potential, we know that $P_{\varepsilon\theta}(\phi) \not\equiv -\infty$ for some $\varepsilon \in (0,1)$. For $b> 1/(1-\varepsilon)$, the function $\varphi := b^{-1} P_\theta(bw) + P_{\varepsilon \theta} (\phi)$ belongs to $\PSH(X,\theta)$ and satisfies $\varphi \leq \min(u,v)$,  hence $P_\theta(u,v) \in \PSH(X,\theta)$. The result follows. 
\end{proof}

 The following proposition allows us to control the lower bound of the Monge-Ampère volumes of functions in the relative full mass class when $v_{-}(\theta)>0$.
\begin{prop}\label{prop: cond ensure v- positive}
  Assume $\phi \in \PSH(X,\theta)$  is such that $P_{\varepsilon\theta}(\phi) \not\equiv -\infty$ for some $\varepsilon \in (0,1)$.  For every $u \in \mathcal{E}(X,\theta,\phi)$,  we have 
    $$ \int_X (\theta + dd^c u)^n \geq (1-\varepsilon)^n v_-(\theta). $$
\end{prop}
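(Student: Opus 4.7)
The plan is to reduce this to Proposition \ref{prop: controling mass} by producing, for every $b > 1$, a function in $\PSH(X,\alpha_b\theta)$ bounded above by $u$, where $\alpha_b \searrow \varepsilon$ as $b \to +\infty$. Proposition \ref{prop: controling mass} will then deliver the bound $(1-\alpha_b)^n v_-(\theta)$, and passing to the limit in $b$ will produce the desired factor $(1-\varepsilon)^n$.

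Concretely, I would set $\phi_\varepsilon := P_{\varepsilon\theta}(\phi) \in \PSH(X,\varepsilon\theta)$, which is well-defined by hypothesis, and $u_b := P_\theta(bu - (b-1)\phi) \in \PSH(X,\theta)$, which is well-defined by the very definition of $\mathcal{E}(X,\theta,\phi)$. The inequality $\phi_\varepsilon \leq \phi$ everywhere follows from the definition of the envelope combined with the upper semicontinuity of $\phi$. Fix $b > 1$ and consider the convex combination
$$ \psi_b := b^{-1} u_b + (1 - b^{-1}) \phi_\varepsilon. $$
From the positivity of $\theta + dd^c u_b$ and $\varepsilon\theta + dd^c\phi_\varepsilon$ one computes
$$ \alpha_b \theta + dd^c \psi_b = b^{-1}(\theta + dd^c u_b) + (1 - b^{-1})(\varepsilon\theta + dd^c\phi_\varepsilon) \geq 0, $$
where $\alpha_b := b^{-1} + (1 - b^{-1})\varepsilon$ lies in $(\varepsilon,1)$ for every $b > 1$. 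Thus $\psi_b \in \PSH(X,\alpha_b\theta)$.

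Next, I would combine $\phi_\varepsilon \leq \phi$ with Proposition \ref{prop : the MA measure of P(bu - (b-1)v}(i) to obtain
$$ \psi_b \leq b^{-1} u_b + (1-b^{-1})\phi \leq u, $$
which shows $P_{\alpha_b\theta}(u) \geq \psi_b \not\equiv -\infty$. Applying Proposition \ref{prop: controling mass} to $u$ with $\alpha_b$ playing the role of $\varepsilon$ yields
$$ \int_X (\theta + dd^c u)^n \geq (1 - \alpha_b)^n v_-(\theta). $$
Since $1 - \alpha_b = (1-\varepsilon)(1 - b^{-1})$, letting $b \to +\infty$ produces the desired inequality.

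The whole argument is essentially algebraic once Proposition \ref{prop: controling mass} is available, so there is no significant analytic obstacle. The one substantive insight is the choice of weights $(b^{-1}, 1 - b^{-1})$ in $\psi_b$, tuned precisely so that the reference form $\alpha_b\theta$ tends to $\varepsilon\theta$ as $b \to +\infty$, thereby recovering the sharp asymptotic factor $(1-\varepsilon)^n$ in the conclusion.
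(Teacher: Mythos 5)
Your proof is correct. Both your argument and the paper's reduce the claim to Proposition \ref{prop: controling mass} by exhibiting, for a parameter $\gamma$ slightly larger than $\varepsilon$, a $\gamma\theta$-psh function dominated by $u$; the difference lies in how that subsolution is produced. You work directly from the definition of $\mathcal{E}(X,\theta,\phi)$: you take $u_b = P_\theta(bu-(b-1)\phi)$, use Proposition \ref{prop : the MA measure of P(bu - (b-1)v}(i) to get $b^{-1}u_b + (1-b^{-1})\phi \leq u$, and then replace $\phi$ by $P_{\varepsilon\theta}(\phi)$ to land in $\PSH(X,\alpha_b\theta)$ with $\alpha_b = b^{-1}+(1-b^{-1})\varepsilon$. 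The paper instead first invokes Proposition \ref{prop: control of MA mass in E} to produce $w=P_\theta(u-\phi)\in\mathcal{E}(X,\theta)$ with $\phi+w\leq u$, then uses $\delta P_\theta(\delta^{-1}w)\in\PSH(X,\delta\theta)$ together with $P_{\varepsilon\theta}(\phi)$ to conclude $P_{(\varepsilon+\delta)\theta}(u)\not\equiv-\infty$, and lets $\delta\to0$. The two parametrizations $(\alpha_b\searrow\varepsilon$ as $b\to\infty$ versus $\varepsilon+\delta\searrow\varepsilon$ as $\delta\to0)$ are interchangeable; your route is slightly more self-contained since it bypasses Proposition \ref{prop: control of MA mass in E} and the class $\mathcal{E}(X,\theta)$ altogether, at the cost of the extra algebraic bookkeeping for the convex combination. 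Both arguments are sound and essentially the same length.
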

\begin{proof}
Fix $\delta > 0$ small enough so that $\varepsilon + \delta < 1$. By the last proposition, there is $w \in \mathcal{E}(X,\theta)$ such that $u \geq \phi + w$. By the definition of $\mathcal{E}(X,\theta)$, we have  $P_\theta(\delta^{-1} w) \in \PSH(X,\theta)$ and $u \geq P_{\varepsilon\theta}(\phi) + \delta P_\theta(\delta^{-1} w)$, hence $P_{(\varepsilon + \delta)\theta}(u) \not\equiv -\infty$. We thus get by Proposition \ref{prop: controling mass} that 
   $$ \int_X \theta_u^n \geq (1-\varepsilon - \delta)^n v_-(\theta). $$
 The conclusion follows by letting $\delta \rightarrow 0$.
\end{proof}
\begin{remark}\label{rmk positivity of total masses}
The proof of the last proposition shows also that if $\phi$ is $\theta$-model potential on $X$, then 
$$ \int_X (\theta + dd^c u)^n > 0, \; \; \forall u \in \mathcal{E}(X,\theta,\phi). $$
\end{remark}
As a consequence of the last proposition, we obtain a method for constructing $\theta$-model potentials. 
\begin{prop}\label{prop: cons of model pot}
 Let $u \in \PSH(X,\theta)$ and set $\phi = P_\theta[u]$. If $v_{-,\phi}(\theta)>0$ and $v_+(\theta)<+\infty$ then $\phi$ is a $\theta$-model potential. 
\end{prop}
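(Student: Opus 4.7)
The task is to verify the two clauses of Definition \ref{defn: Mod pot} for $\phi=P_\theta[u]$: namely (a) $P_\theta[\phi]=\phi$ and (b) $P_{\varepsilon\theta}(\phi)\not\equiv -\infty$ for some $\varepsilon\in(0,1)$.

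For (b), my plan is to apply the direction (i)$\Rightarrow$(ii) of Proposition \ref{condition : v_- positive implies model} directly to $\phi$. Although that proposition is stated under the joint hypotheses $v_-(\theta)>0$ and $v_+(\theta)<+\infty$, a close reading of the proof shows that $v_-(\theta)>0$ is invoked only for the converse direction (through Proposition \ref{prop: controling mass}), whereas the implication (i)$\Rightarrow$(ii) uses only $v_+(\theta)<+\infty$ (for the Newton binomial step) together with $v_{-,\phi}(\theta)>0$. Both prerequisites hold in our setting, and the conclusion immediately delivers some $\varepsilon\in(0,1)$ with $P_{\varepsilon\theta}(\phi)\not\equiv -\infty$.

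For (a), set $\psi:=P_\theta[\phi]$, so that $\phi\leq \psi\leq 0$ holds automatically from the defining envelope; what remains is the reverse inequality $\psi\leq \phi$. I would first observe that $\int_X\theta_u^n\leq v_+(\theta)<+\infty$, so Proposition \ref{prop: the MA measure of P theta[u]} applied to $u$ gives $\theta_\phi^n\leq \textit{1}_{\{\phi=0\}}\theta^n$, whence $\int_X\theta_\phi^n<+\infty$. Re-applying Proposition \ref{prop: the MA measure of P theta[u]} to $\phi$ then yields $\theta_\psi^n\leq \textit{1}_{\{\psi=0\}}\theta^n$. The plan is to combine these structural mass bounds with the Demailly-type inequality of Proposition \ref{prop : the MA measure of P(bu - (b-1)v} applied to the pair $(\psi,\phi)$: for each $b\geq 1$, the envelope $\varphi_b:=P_\theta(b\psi-(b-1)\phi)$ belongs to $\PSH(X,\theta)$ because $\psi\geq\varphi$ forces $b\psi-(b-1)\phi\geq \psi$, and Proposition \ref{prop : the MA measure of P(bu - (b-1)v}(iii) provides a quantitative mass comparison between $\theta_{\varphi_b}^n$, $\theta_\psi^n$, and $\theta_\phi^n$. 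Letting $b\to+\infty$ and exploiting the lower bound $\int_X \theta_{\varphi_b}^n\geq v_{-,\phi}(\theta)>0$, one should convert any strict slack between $\psi$ and $\phi$ on a non-pluripolar set into a violation of the bound $\theta_\phi^n\leq \textit{1}_{\{\phi=0\}}\theta^n$, forcing $\psi=\phi$ outside a pluripolar set and hence everywhere by upper semicontinuity.

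The main technical obstacle is precisely this limiting argument. A naive envelope inclusion is unavailable because $h\preceq \phi$ does not imply $h\preceq u$, so the extremal property defining $\phi=P_\theta[u]$ cannot be invoked to conclude $\psi\leq \phi$ directly. Threading the positivity of $v_{-,\phi}(\theta)$ through the Newton expansion of $(\theta+dd^c\varphi_b)^n$, together with plurifine locality (Theorem \ref{thm: plurifine property and Demailly type inequality}) and the envelope formula (Theorem \ref{thm: env}), is where the decisive estimate must appear; controlling the ``extra'' mass of $\varphi_b$ as $b\to +\infty$ when $\psi\simeq\phi$ fails is the step I expect to require the most care.
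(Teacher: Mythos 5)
Your treatment of condition (b) is fine: the direction (i)$\Rightarrow$(ii) of Proposition \ref{condition : v_- positive implies model} is what is needed, and since $v_{-,\phi}(\theta)\leq v_-(\theta)$ by Proposition \ref{prop: comparison of MA vol}, the blanket hypothesis $v_-(\theta)>0$ of that proposition is in any case automatic here, so you obtain $P_{\varepsilon\theta}(\phi)\not\equiv-\infty$ for some $\varepsilon\in(0,1)$.

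For (a), however, you have taken the envelope in the wrong direction, and this is a genuine gap rather than merely a technical point to ``require care.'' With $\psi:=P_\theta[\phi]\geq\phi$, the barrier $b\psi-(b-1)\phi=\psi+(b-1)(\psi-\phi)$ is \emph{increasing} in $b$, so your $\varphi_b:=P_\theta(b\psi-(b-1)\phi)\geq\psi$ is an increasing family. No blowup to $-\infty$ occurs, so you cannot conclude that $\{\phi<\psi-a\}$ is negligible. Worse, the lower bound $\int_X\theta_{\varphi_b}^n\geq v_{-,\phi}(\theta)$ that you invoke is not available: it requires $\varphi_b\in\mathcal{E}(X,\theta,\phi)$, in particular $\varphi_b\preceq\phi$, but since $\varphi_b\geq\psi$ and you do not yet know $\psi\simeq\phi$, asserting $\varphi_b\preceq\phi$ is essentially equivalent to the statement you are trying to prove. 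Finally, the factor $b^{-n}$ in front of $\theta_{\varphi_b}^n$ in Proposition \ref{prop : the MA measure of P(bu - (b-1)v}(iii) tends to $0$, so the inequality degenerates as $b\to+\infty$ and yields no mass estimate. The paper's proof reverses the roles: it considers the \emph{decreasing} envelopes $\phi_b:=P_\theta(b\phi-(b-1)\psi)\leq\phi$. Using Propositions \ref{u in E(X,theta,Ptheta u)} and \ref{prop: Ptheta(bu - (b-1)v) is in E}, one shows $\phi_b\in\mathcal{E}(X,\theta,\phi)$, hence $\int_X\theta_{\phi_b}^n>0$ by Proposition \ref{prop: cond ensure v- positive}. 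Because $\theta_\phi^n$ lives on $\{\phi=0\}$ (Proposition \ref{prop: the MA measure of P theta[u]}) and $\psi=0$ there, Proposition \ref{prop : the MA measure of P(bu - (b-1)v} forces $\theta_{\phi_b}^n$ to be carried by $\{\phi_b=0\}$, so $\sup_X\phi_b=0$ for every $b$. The decreasing limit is then a true $\theta$-psh function, which is $-\infty$ on $\{\phi<\psi-a\}$; this set is therefore Lebesgue-null, giving $\phi\geq\psi-a$ for all $a>0$ and hence $\phi=\psi$. You should redo (a) with $\phi_b$ in place of $\varphi_b$.
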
 
\begin{proof}
     According to Proposition \ref{condition : v_- positive implies model}, there is $\varepsilon \in (0,1)$ such that $P_{\varepsilon \theta}(\phi)\not\equiv -\infty$. It thus suffices to prove $P_\theta[\phi]=\phi$.

     For every $j\geq 1$, we will use the notations $u_j = P_\theta(u+j,0)$ and $\phi_j = P_\theta(\phi+j,0)$. Fix $b \geq 1$ and consider $\phi_b = P_\theta(b\phi - (b-1)P_\theta[\phi])$. By Proposition \ref{u in E(X,theta,Ptheta u)}, we have  $\phi \in \mathcal{E}(X,\theta,P_\theta[\phi])$.  Proposition \ref{prop: Ptheta(bu - (b-1)v) is in E} then gives $\phi_b \in \mathcal{E}(X,\theta,P_\theta[\phi])$. 
  Moreover, we have $\phi \leq P_\theta[\phi]$, hence $\phi_b \leq \phi$ and $P_\theta(t\phi_b - (t-1)\phi) \geq P_\theta(t\phi_b - (t-1)P_\theta[\phi])$ for all $t \geq 1$, which means  $\phi_b \in \mathcal{E}(X,\theta,\phi)$. Since $v_{-,\phi}(\theta) \leq v_-(\theta)$ by Proposition \ref{prop: comparison of MA vol}, it follows from the last proposition that $\int_X \theta^n_{\phi_b} >0$.

  On the other hand, Proposition \ref{prop : the MA measure of P(bu - (b-1)v} implies  $\theta_{\phi_b}^n \leq b^n \textit{1}_{\{\phi_b = b\phi - (b-1)P_\theta[\phi]\}} \theta_\phi^n$. Since  the measure $\theta_\phi^n$ is carried by $\{\phi=0\}$ by Proposition \ref{prop: the MA measure of P theta[u]}, we infer that $\theta_{\phi_b}^n$ is carried by
 $$ \{\phi_b = b \phi - (b-1)P_\theta[\phi]\} \cap \{\phi = 0\} \subset \{\phi_b=0\}. $$
 Since  $\int_X \theta_{\phi_b}^n>0$, we get  $\sup_X \phi_b = 0$ for every $b$, hence $\psi := \lim \phi_b$ belongs to $\PSH(X,\theta)$. Fix $a>0$. We have  $\psi \leq b\phi - (b-1)P_\theta[\phi]$ for every $b$, hence $\psi =-\infty$ on $\{\phi<P_\theta[\phi] - a\}$, and thus  $\phi \geq P_\theta[\phi] - a$. Now letting $a \rightarrow 0$ gives $\phi \geq P_\theta[\phi]$, whence equality. 
\end{proof}

    From now on, and unless otherwise stated, we will denote by $\phi$ a fixed $\theta$-model potential on $X$.
\subsection{The domination principle}
The domination principle is an effective tool in the theory of complex Monge-Ampère equations. For example proving the uniqueness of solutions, connecting the $L^1$-convergence and the convergence in capacity of potentials, and thereby establishing weak solutions to Monge-Ampère equations  are all consequences of (or use in an essential way)  the domination principle \cite{GL22,DDL23, GL23Crelle, ALS24}.

We first propose the following version which generalizes \cite[Proposition 2.8]{GL22}. 
\begin{theorem}\label{thm: the dom prn in [phi]}
    Let $u \in \PSH(X,\theta)$ be such that $P_{\delta\theta}(u) \not\equiv - \infty$
    for some $\delta \in (0,1)$. Assume $v \in \PSH(X,\theta)$  is such that $v \preceq u$ and
    $$\theta_u^n \leq c \theta_v^n \; \; \text{on} \; \{u<v\}, $$ 
    for some $c \in [0,1)$. Then $u \geq v$.  
\end{theorem}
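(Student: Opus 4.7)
The plan is to argue by contradiction, using the envelope $\tilde u := P_{\delta\theta}(u)$ as a source of strict Monge-Amp\`ere positivity. The assumption $P_{\delta\theta}(u)\not\equiv -\infty$ places $\tilde u$ in $\PSH(X,\theta)$ with the strict reinforcement $\theta + dd^c\tilde u \geq (1-\delta)\theta$, and this extra positivity is exactly what will let us upgrade the gain $c<1$ in the mass inequality into a contradiction after reduction to the bounded domination principle \cite[Proposition 2.8]{GL22}.

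First I would record the basic properties of $\tilde u$: Theorem~\ref{thm: env} applied to the quasi-continuous function $u$ gives $\tilde u \leq u$ outside a pluripolar set, hence everywhere by the $\theta$-plurisubharmonicity of both sides; and the strict positivity of $\theta + dd^c\tilde u$ yields the pointwise bound $(\theta + dd^c\tilde u)^n \geq (1-\delta)^n\theta^n$. Assuming for contradiction that $u\not\geq v$, there is $\eta>0$ such that $E_\eta := \{u < v - \eta\}$ is non-pluripolar. To enhance the hypothesis using $\tilde u$, I would introduce the convex perturbation
\[
v_\varepsilon := (1-\varepsilon)\, v + \varepsilon\,\tilde u \in \PSH(X,\theta), \qquad \varepsilon \in (0,1) \text{ small},
\]
so that $v_\varepsilon \preceq u$. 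The elementary inequality $((1-\varepsilon)A + \varepsilon B)^n \geq (1-\varepsilon)^n A^n + \varepsilon^n B^n$ for positive $(1,1)$-currents transfers to non-pluripolar Monge-Amp\`ere measures by truncation and limit, giving
\[
\theta_{v_\varepsilon}^n \geq (1-\varepsilon)^n\, \theta_v^n + \varepsilon^n(1-\delta)^n\,\theta^n.
\]
On the portion of $E_\eta$ where $u - \tilde u$ is not too large, we still have $u < v_\varepsilon$ for $\varepsilon$ small, so the hypothesis $\theta_u^n \leq c\theta_v^n$ upgrades to $\theta_u^n \leq c(1-\varepsilon)^{-n}\theta_{v_\varepsilon}^n$ with effective coefficient $c(1-\varepsilon)^{-n} < 1$, while $\theta_{v_\varepsilon}^n$ gains the uniform lower bound $\varepsilon^n(1-\delta)^n\theta^n$ coming from $\tilde u$.

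The final step would be to reduce to the bounded setting by truncating both potentials along the reservoir $\tilde u$. For $t > 0$ large, set $u_t := \max(u, \tilde u - t)$ and $v_{\varepsilon,t} := \max(v_\varepsilon, \tilde u - t)$; these are bounded $\theta$-psh on $\{\tilde u > -t\}$ with the same singularity type as $\tilde u - t$. Plurifine locality (Theorem~\ref{thm: plurifine property and Demailly type inequality}) preserves the reinforced mass inequality on the open set $\{u > \tilde u - t\} \cap \{v_\varepsilon > \tilde u - t\}$, and the positivity $\theta + dd^c\tilde u \geq (1-\delta)\theta$ controls the extra Monge-Amp\`ere mass produced by the $\max$-construction on the complement. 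Applying the bounded domination principle of \cite[Proposition 2.8]{GL22} to the pair $(u_t, v_{\varepsilon,t})$ then yields $u_t \geq v_{\varepsilon,t}$ on $\{\tilde u > -t\}$; letting $t\to+\infty$ and $\varepsilon\to 0$ forces $u \geq v$ everywhere, contradicting the non-pluripolarity of $E_\eta$.

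The main obstacle will be the truncation step, where one must verify that the plurifine mass equalities and the strict coefficient $c(1-\varepsilon)^{-n} < 1$ both survive the $\max$-truncation; the extra $\theta^n$ contribution appearing on $\{u < \tilde u - t\}$ from the constant-in-direction piece of $u_t$ threatens to swallow the strict gap, and it is precisely the reinforcement $\theta + dd^c\tilde u \geq (1-\delta)\theta$ (i.e., the full strength of $P_{\delta\theta}(u)\not\equiv -\infty$, not merely positivity of $\int_X\theta_u^n$) that controls this error uniformly in $t$. Carefully tuning the two small parameters $1-c$ and $1-\delta$ against the truncation level $t$ and the perturbation parameter $\varepsilon$ is the delicate computational core of the proof.
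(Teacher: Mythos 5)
There is a genuine gap at the core of your reduction. You set $\tilde u := P_{\delta\theta}(u)$ and claim that $u_t := \max(u,\tilde u - t)$ and $v_{\varepsilon,t} := \max(v_\varepsilon,\tilde u - t)$ are bounded $\theta$-psh functions to which the bounded domination principle of \cite[Proposition 2.8]{GL22} can be applied. But $P_{\delta\theta}(u) \leq u$ always (since $\PSH(X,\delta\theta)\subset\PSH(X,\theta)$, any competitor for $P_{\delta\theta}(u)$ is already a $\theta$-psh function below $u$, whence $P_{\delta\theta}(u)\leq P_\theta(u)=u$). So $\tilde u - t < \tilde u \leq u$ and $u_t = u$ identically; after the standard reduction to $u\leq v$, also $\tilde u\leq u\leq v$ gives $\tilde u\leq v_\varepsilon$ and $v_{\varepsilon,t}=v_\varepsilon$. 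Your ``truncation'' does nothing, the potentials remain unbounded, and \cite[Proposition 2.8]{GL22} cannot be invoked. There is no obvious fix within this scheme: truncating by $\max(\cdot,-t)$ instead destroys the matching of singularity types, and restricting to the sublevel set $\{\tilde u>-t\}$ does not produce globally bounded $\theta$-psh functions on $X$, which is what \cite[Proposition 2.8]{GL22} requires.

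The paper's mechanism for producing a genuinely bounded competitor is different and is precisely what your argument lacks. The technical lemma preceding the theorem introduces $\varphi := P_{(1-\delta)\theta}\bigl(u - P_{\delta\theta}(v)\bigr)$; this is bounded by Lemma~\ref{lem: Ptheta(u-v) is in psh(X,theta)} (because $v\preceq u$ forces $P_{\delta\theta}(v)-C\leq u$, so $u - P_{\delta\theta}(v)$ is bounded below, and the envelope is then automatically bounded above). The strictness $c<1$ is also exploited differently: rather than a convex perturbation $(1-\varepsilon)v + \varepsilon\tilde u$, the paper uses $\varphi_b = P_\theta(bu-(b-1)v)$ for $b$ large enough that $(1-b^{-1})^n > c$, together with Proposition~\ref{prop : the MA measure of P(bu - (b-1)v}, to conclude that $\theta_{\varphi_b}^n$ charges only $\{\varphi_b=u\}$; the lemma then gives $\varphi_b \geq P_{\delta\theta}(u)$ for all $b$, and letting $b\to+\infty$ forces $P_{\delta\theta}(u)\equiv -\infty$ on $\{u<v-a\}$, a contradiction since $P_{\delta\theta}(u)\in L^1(X)$. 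Your intuition that $P_{\delta\theta}(u)\not\equiv-\infty$ is the source of the needed positivity is right, and the lower bound $\theta_{v_\varepsilon}^n\geq(1-\varepsilon)^n\theta_v^n$ is fine (though transferring the binomial inequality to non-pluripolar products still needs Theorem~\ref{thm: plurifine property and Demailly type inequality} and a monotone limit, which you gesture at but do not carry out), but without a correct mechanism to land in the bounded setting the argument does not close.
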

The proof of the last theorem is based on the following technical lemma. 
\begin{lemma}
   Let $u$ and $v \in \PSH(X,\theta)$ be such that $v \preceq u$ and $\theta_u^n = 0$ on $\{u<v\}$. If  $\delta \in (0,1)$ is such that $P_{\delta\theta}(v) \not\equiv -\infty$ then $u \geq P_{\delta\theta}(v)$. 
\end{lemma}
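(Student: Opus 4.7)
My plan is to set $\tilde{v} := P_{\delta\theta}(v) \in \PSH(X, \delta\theta) \subseteq \PSH(X, \theta)$, form the envelope $\tilde{u} := P_\theta(u, \tilde{v})$, and argue via a Monge--Amp\`ere mass analysis that $\tilde u = \tilde v$, which then gives $\tilde v = \tilde u \leq u$. Since $v \preceq u$ provides $C \geq 0$ with $v \leq u + C$, one has $\tilde v \leq v \leq u + C$, so $\tilde v - C$ is a $\theta$-psh function lying below $\min(u, \tilde v)$. This forces $\tilde u \geq \tilde v - C$, and in particular $\tilde u \in \PSH(X, \theta)$.

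\textbf{Mass analysis and envelope iteration.} Proposition \ref{prop: the envelope of min of two fncs} yields
\[
\theta_{\tilde u}^n \;\leq\; \mathbf{1}_{\{\tilde u = u < \tilde v\}}\,\theta_u^n + \mathbf{1}_{\{\tilde u = \tilde v\}}\,\theta_{\tilde v}^n.
\]
Since $\tilde v \leq v$, the first indicator is supported on $\{u < \tilde v\} \subseteq \{u < v\}$, where $\theta_u^n = 0$ by hypothesis. Thus $\theta_{\tilde u}^n$ is carried by $\{\tilde u = \tilde v\} \subseteq \{u \geq \tilde v\}$. To deduce $\tilde u = \tilde v$, I apply Proposition \ref{prop : the MA measure of P(bu - (b-1)v} to the pair $(\tilde u, \tilde v)$ for $b > 1$: the function $\psi_b := P_\theta(b\tilde u - (b-1)\tilde v)$ lies in $\PSH(X, \theta)$ (thanks to $\tilde u \geq \tilde v - C$, which gives $\psi_b \geq \tilde u - (b-1)C$), and one obtains
\[
b^{-n}\theta_{\psi_b}^n + \mathbf{1}_{E_b}(1-b^{-1})^n \theta_{\tilde v}^n \;\leq\; \mathbf{1}_{E_b}\,\theta_{\tilde u}^n
\]
on $E_b := \{b^{-1}\psi_b + (1-b^{-1})\tilde v = \tilde u\}$. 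By the previous step, $\theta_{\tilde u}^n$ is carried by $\{\tilde u = \tilde v\}$, and on $E_b \cap \{\tilde u = \tilde v\}$ the defining equation rearranges to $\psi_b = \tilde v$. Consequently the right-hand side is carried by $\{\psi_b = \tilde v\}$, which forces $\theta_{\psi_b}^n$ to vanish on $\{\psi_b < \tilde v\}$. On the hypothetical non-pluripolar set $\{\tilde u < \tilde v\}$, the obstacle $b\tilde u - (b-1)\tilde v = \tilde u + (b-1)(\tilde u - \tilde v)$ tends to $-\infty$ as $b \to \infty$, so $\psi_b$ itself decays on this set; confronting this decay with the lower bound $\psi_b \geq \tilde u - (b-1)C$ and the support restriction on $\theta_{\psi_b}^n$ produces a contradiction in the spirit of the proof of Proposition \ref{prop : 2.9}. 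Hence $\{\tilde u < \tilde v\}$ is empty, $\tilde u = \tilde v$, and $\tilde v \leq u$ as required.

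\textbf{Main obstacle.} The delicate point is making the final $b \to \infty$ argument fully rigorous: the lemma does not assume $v_-(\theta) > 0$, so Proposition \ref{prop: controling mass} does not directly supply a uniform positive lower bound for $\int_X \theta_{\psi_b}^n$. The way around this is to exploit that $\tilde v$ is $\delta\theta$-psh with $\delta < 1$: expanding via the binomial formula, one gets $\theta_{\tilde v}^n \geq (1-\delta)^n \theta^n$, a non-trivial positive smooth measure (using that $\theta$ is semipositive and big). This extra slack from the $(1-\delta)\theta$-component, combined with the concentration identity for $\theta_{\psi_b}^n$, is what rules out $\{\tilde u < \tilde v\}$ being non-empty and completes the proof.
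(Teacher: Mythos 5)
Your reduction steps are sound as far as they go: setting $\tilde v = P_{\delta\theta}(v)$, $\tilde u = P_\theta(u,\tilde v)$, noting $\tilde u \geq \tilde v - C$ so $\tilde u \in \PSH(X,\theta)$, and then using Proposition \ref{prop: the envelope of min of two fncs} together with the hypothesis on $\theta_u^n$ to conclude that $\theta_{\tilde u}^n$ is carried by $\{\tilde u = \tilde v\}$ — all of this is correct. The problem is the final step. Having reached ``$\theta_{\tilde u}^n = 0$ on $\{\tilde u < \tilde v\}$'', you are exactly back at the hypothesis of the lemma (with $\tilde u, \tilde v$ in place of $u, v$), so you have made no real progress; what is missing is an actual mechanism that converts this into $\tilde u \geq \tilde v$. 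The $b\to\infty$ iteration does not provide one: from Proposition \ref{prop : the MA measure of P(bu - (b-1)v} and your carrying statement you get $\theta_{\tilde v}^n\bigl(E_b \cap \{\tilde u < \tilde v\}\bigr) = 0$ for each $b$, but this only says each contact set $E_b$ misses $\{\tilde u < \tilde v\}$ up to a $\theta_{\tilde v}^n$-null set — it does not say $\{\tilde u < \tilde v\}$ is pluripolar. The remark that $\theta_{\tilde v}^n \geq (1-\delta)^n\theta^n$ (which is correct) strengthens this to a statement about Lebesgue measure of $E_b \cap \{\tilde u < \tilde v\}$, but still says nothing about $\{\tilde u < \tilde v\}\setminus\bigcup_b E_b$. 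The appeal to ``the spirit of Proposition \ref{prop : 2.9}'' also does not help, because that argument pivots on the numerical inequality involving $v_-(\theta)$ and $v_+(\theta)$ furnished by its hypothesis; no analogous uniform positive lower bound on $\int_X \theta_{\psi_b}^n$ is available here.

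The ingredient you are missing is the one the paper actually uses, and it sidesteps both the iteration in $b$ and the intermediate reduction to $\tilde u$. Set $v_\delta = P_{\delta\theta}(v)$ and $\varphi := P_{(1-\delta)\theta}(u - v_\delta)$. The crucial point is that $\varphi$ is a \emph{bounded} $(1-\delta)\theta$-psh function: it is bounded below by $-C$ because $v \leq u + C$ forces $u - v_\delta \geq -C$, and bounded above by Lemma \ref{lem: Ptheta(u-v) is in psh(X,theta)}. Theorem \ref{thm: env} says $((1-\delta)\theta + dd^c\varphi)^n$ is carried by the contact set $\{\varphi + v_\delta = u\}$, Theorem \ref{thm: plurifine property and Demailly type inequality} gives $((1-\delta)\theta + dd^c\varphi)^n \leq \theta_u^n$ there, and on $\{\varphi + v_\delta = u\}\cap\{\varphi < 0\}\cap\{v_\delta > -\infty\}$ one has $u < v_\delta \leq v$, so by hypothesis the measure $((1-\delta)\theta + dd^c\varphi)^n$ puts no mass on $\{\varphi < 0\}$. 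Since $\varphi$ is bounded one can invoke the bounded domination principle \cite[Proposition 2.8]{GL22} directly, getting $\varphi \geq 0$ and hence $u \geq v_\delta$. This is what resolves the circularity: the unbounded domination principle (Theorem \ref{thm: the dom prn in [phi]}) is exactly what the lemma is a step towards, so one cannot invoke it, but the passage to the bounded auxiliary function $\varphi$ makes the already-established bounded version available. That idea — taking the $P_{(1-\delta)\theta}$-envelope of the \emph{difference} $u - v_\delta$ — is absent from your proposal and is what closes the proof.
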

\begin{proof}
Set $v_\delta = P_{\delta\theta}(v)$ and define $\varphi = P_{(1-\delta)\theta} (u-v_\delta)$. By hypothesis, there is $C \geq 0$ such that $v - C \leq u$.  That implies $v_\delta - C \leq u$, and consequently $\varphi \in \PSH(X,(1-\delta)\theta) \cap L^\infty(X)$ by Lemma \ref{lem: Ptheta(u-v) is in psh(X,theta)}. It follows from Theorems \ref{thm: plurifine property and Demailly type inequality} and \ref{thm: env} that
$$ ((1-\delta)\theta + dd^c \varphi)^n \leq \textit{1}_{\{\varphi + v_\delta = u\}} ((1-\delta)\theta + dd^c \varphi)^n \leq \theta_u^n, $$
hence $((1-\delta)\theta + dd^c \varphi)^n(\{u<v\})=0$ by hypothesis. That implies
$$ ((1-\delta)\theta + dd^c \varphi)^n(\{\varphi<0\})=((1-\delta)\theta + dd^c \varphi)^n(\{\varphi + v_\delta = u\} \cap \{\varphi<0\})=0 $$
because $\{\varphi + v_\delta = u\} \cap \{\varphi<0\} \subset \{u<v\} \cup \{v_\delta = -\infty\}$. Proposition 2.8 in \cite{GL22} then yields $\varphi \geq 0$, and consequently $u \geq v_\delta$. 
\end{proof}
We now proceed to the proof of Theorem \ref{thm: the dom prn in [phi]}.
\begin{proof}
For $w:= \max(u,v)$, we have $w \in \PSH(X,\theta)$, $w \simeq u$ and   
$$ \theta_u^n \leq c \theta_w^n \; \; \text{on} \; \{u<w\} $$
by Theorem \ref{thm: plurifine property and Demailly type inequality}. Therefore, up to replacing $v$ by $\max(u,v)$, there is no loss of generality in assuming $v- C \leq u \leq v$ for some $C>0$. Fix $b \geq 1$ and consider $\varphi_b = P_\theta(bu - (b-1)v)$. We have $\varphi_b \in \PSH(X,\theta)$ and $-bC + u \leq \varphi_b \leq u$. 
Set $D:= \{\varphi_b=bu - (b-1)v\}$. According to Theorem \ref{thm: env}, the measure $\theta_{\varphi_b}^n$ is carried by $D$ and $\varphi_b \leq b u - (b-1)v$. 
   It thus follows from Theorem \ref{thm: plurifine property and Demailly type inequality} that  
    $$ \textit{1}_{D} (\theta + dd^c (b^{-1} \varphi_b + (1 - b^{-1})v))^n \leq \textit{1}_D \theta_u^n. $$
    Hence, we get by the hypothesis that
    $$ \textit{1}_{D\cap \{u<v\}} b^{-n}  \theta_{\varphi_b}^n + \textit{1}_{D\cap \{u<v\}}(1 - b^{-1})^n\theta_v^n \leq \textit{1}_{D \cap \{u<v\}} c\theta_v^n. $$
    Taking $b$ large enough so that $(1 - b^{-1})^n > c$, we obtain that  $\theta_{\varphi_b}^n$  is carried by  $D \cap \{u=v\} \subset \{\varphi_b = u\}$, hence $\theta_{\varphi_b}^n(\{\varphi_b <u\})=0$. It then follows from the last lemma that $\varphi_b \geq P_{\delta \theta}(u)$. Fix $a>0$. On the set $\{u<v-a\}$, we have $\varphi_b \leq v - ab$ for every $b$, hence $P_{\delta \theta}(u) = -\infty$.
 Using that $P_{\delta \theta}(u) \in L^1(X)$, we obtain $u \geq v -a$. The result now follows by letting $a\rightarrow 0$.
\end{proof}
As a consequence of Theorem \ref{thm: the dom prn in [phi]}, we have 
\begin{corollary}\label{cor: uniqness MA-sol in [phi]}
Let  $u$, $v \in \PSH(X,\theta)$ be such that  $v \preceq u$ and $P_{\delta\theta}(u) \not\equiv - \infty$ for some $\delta \in (0,1)$. 
\begin{itemize}
    \item[(i)] If  $e^{-\lambda u}\theta_u^n\leq e^{-\lambda v}\theta_v^n$ then $u\geq v$.  
    \item[(ii)] If $\theta_u^n \leq c \theta_v^n$ for some $c \geq 0$ then $c \geq 1$.
\end{itemize}
\end{corollary}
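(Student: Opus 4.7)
The plan is to reduce both items to Theorem \ref{thm: the dom prn in [phi]} by translating $v$ by a suitable constant; the gain of the translation is that a pointwise inequality of Monge-Amp\`ere measures becomes one with a uniform constant strictly less than $1$ on an appropriate set. Both parts rely on translation invariance of the non-pluripolar Monge-Amp\`ere operator (so $\theta_{v+t}^n=\theta_v^n$ for $t\in\bR$), on the fact that $v\preceq u$ is preserved under bounded translations of $v$, and on the standing hypothesis $P_{\delta\theta}(u)\not\equiv-\infty$, all of which make Theorem \ref{thm: the dom prn in [phi]} applicable to the shifted pairs below.

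For (i), I assume $\lambda>0$, which is the case of interest in the paper. For each $\varepsilon>0$, I apply Theorem \ref{thm: the dom prn in [phi]} to the pair $(u,v_\varepsilon)$ with $v_\varepsilon:=v-\varepsilon$. On the set $\{u<v_\varepsilon\}$ the hypothesis $e^{-\lambda u}\theta_u^n\leq e^{-\lambda v}\theta_v^n$ rewrites as $\theta_u^n\leq e^{\lambda(u-v)}\theta_v^n\leq e^{-\lambda\varepsilon}\theta_{v_\varepsilon}^n$, so Theorem \ref{thm: the dom prn in [phi]} applied with constant $c=e^{-\lambda\varepsilon}<1$ gives $u\geq v_\varepsilon=v-\varepsilon$. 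Letting $\varepsilon\to 0^+$ yields $u\geq v$.

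For (ii), I argue by contradiction assuming $c<1$. For any $\varepsilon>0$, I apply Theorem \ref{thm: the dom prn in [phi]} to the pair $(u,v^\varepsilon)$ with $v^\varepsilon:=v+\varepsilon$: the measure inequality $\theta_u^n\leq c\theta_v^n=c\theta_{v^\varepsilon}^n$ holds globally, in particular on $\{u<v^\varepsilon\}$, with the same constant $c<1$. Theorem \ref{thm: the dom prn in [phi]} therefore gives $u\geq v+\varepsilon$ for every $\varepsilon>0$, which contradicts the fact that $u$ is finite almost everywhere. Hence $c\geq 1$.

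The only subtle point is verifying that the hypotheses of Theorem \ref{thm: the dom prn in [phi]} remain available after translation, which is immediate from the translation invariance of the Monge-Amp\`ere operator and from $v\preceq u$ being preserved under bounded shifts. I do not expect a substantial obstacle: once the correct sign of the shift is chosen in each case, both conclusions are formal consequences of the domination principle already established.
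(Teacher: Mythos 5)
Your proof is correct and is essentially the intended argument: the paper only cites [ALS24, Corollaries 2.12 and 2.13], where precisely this shift-by-a-constant reduction to the domination principle (Theorem \ref{thm: the dom prn in [phi]}) is carried out, with the same $\varepsilon\to 0^+$ passage in (i) and the same $u\geq v+\varepsilon$ for all $\varepsilon>0$ absurdity in (ii). You are also right to record the implicit hypothesis $\lambda>0$, which is the standing convention in this part of the paper and is needed for the factor $e^{-\lambda\varepsilon}$ to fall strictly below $1$.
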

\begin{proof}
    Using Theorems \ref{thm: the dom prn in [phi]}, the proof goes immediately as in the ones of Corollaries 2.12 and 2.13 in \cite{ALS24}. 
\end{proof}
Assuming $v_-(\theta)>0$, we can extend the domination principle to the class $\mathcal{E}(X,\theta,\phi)$, providing a generalization of \cite[Theorem 3.12]{DDL23} and \cite[Theorem 1.3]{ALS24}.
\begin{theorem}\label{thm: the dom prn in E(X,theta,phi)}
Assume $v_-(\theta)>0$. Fix $c \in [0,1)$, and let $u \in \mathcal{E}(X,\theta,\phi)$ and $v \in \PSH(X,\theta)$ be such that $v \preceq \phi$. Assume either $\int_X \theta_u^n < +\infty$ or $\int_X \theta_v^n < +\infty$. If 
$$\theta_u^n \leq c \theta_v^n \; \; \text{on} \; \{u<v\}, $$ 
then $u \geq v$.  
\end{theorem}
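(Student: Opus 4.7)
The plan is to adapt the envelope scheme from the proof of Theorem~\ref{thm: the dom prn in [phi]} to the relative full mass class, using the hypothesis $v_-(\theta)>0$ and the $\theta$-model property of $\phi$ to control the extra singularity allowed for functions in $\mathcal E(X,\theta,\phi)$. For $b\ge 1$, I will set $\varphi_b := P_\theta(bu-(b-1)v)$. Since $v\preceq\phi$, I pick $C\ge 0$ with $v\le\phi+C$, so that $bu-(b-1)v\ge bu-(b-1)\phi-(b-1)C$; because $u\in\mathcal E(X,\theta,\phi)$, the auxiliary function $u_b:=P_\theta(bu-(b-1)\phi)$ is in $\mathcal E(X,\theta,\phi)$ by Proposition~\ref{prop: Ptheta(bu - (b-1)v) is in E}, and therefore $\varphi_b\ge u_b-(b-1)C$ lies in $\PSH(X,\theta)$. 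Moreover, the argument in the proof of Proposition~\ref{prop: cond ensure v- positive} (which uses precisely $v_-(\theta)>0$ and $\phi$ being $\theta$-model, so that $P_{\varepsilon\theta}(\phi)\not\equiv-\infty$) produces $P_{\delta\theta}(u_b)\not\equiv-\infty$ for some $\delta\in(0,1)$, and hence $P_{\delta\theta}(\varphi_b)\not\equiv-\infty$.

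Next I apply Proposition~\ref{prop : the MA measure of P(bu - (b-1)v}(iii): the measure $\theta^n_{\varphi_b}$ is carried by $D_b:=\{b^{-1}\varphi_b+(1-b^{-1})v=u\}$ and satisfies
\[
b^{-n}\theta^n_{\varphi_b}+\mathbf 1_{D_b}(1-b^{-1})^n\theta^n_v\le \mathbf 1_{D_b}\theta^n_u.
\]
On $D_b\cap\{u<v\}$ one has $\varphi_b=bu-(b-1)v$, so combining with the standing hypothesis $\theta^n_u\le c\theta^n_v$ gives $(1-b^{-1})^n\theta^n_v\le c\theta^n_v$; for $b$ so large that $(1-b^{-1})^n>c$ this forces $\theta^n_v\equiv 0$ on $D_b\cap\{u<v\}$, and hence also $\theta^n_{\varphi_b}\equiv 0$ there. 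Consequently $\theta^n_{\varphi_b}$ is supported on $D_b\cap\{u\ge v\}$, where $\varphi_b=bu-(b-1)v\ge v$; in particular $\theta^n_{\varphi_b}(\{\varphi_b<v\})=0$.

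The final step is to turn this concentration into $u\ge v$ by invoking the lemma preceding the proof of Theorem~\ref{thm: the dom prn in [phi]}. Applied naively with $\tilde u=\varphi_b$ and $\tilde v=v$ it would yield $\varphi_b\ge P_{\delta\theta}(v)$; combined with the trivial bound $\varphi_b\le bu-(b-1)v\le v-ab$ on $\{u<v-a\}$, letting $b\to+\infty$ would force $\{u<v-a\}\subset\{P_{\delta\theta}(v)=-\infty\}$, a pluripolar set, and then $a\to 0$ would give $u\ge v$ up to a pluripolar set, hence everywhere by upper semicontinuity. The main obstacle lies in the lemma's hypothesis $v\preceq\varphi_b$, which can fail because $\varphi_b$ may be strictly more singular than $v$ whenever $u$ is strictly more singular than $\phi$. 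I plan to overcome this by a truncation argument: replace $v$ by $v_k:=\max(v,\phi-k)\simeq\phi$, re-run the analysis of the previous paragraph for the pair $(u,v_k)$ (for which $v_k$ is controlled by $\phi$ and is therefore comparable to the corresponding envelope, making the lemma applicable), apply the lemma to obtain a lower bound of the form $\varphi_{b,k}\ge P_{\delta\theta}(v_k)$, and then pass to the limit $k\to+\infty$. The finiteness assumption $\int_X\theta^n_u<+\infty$ or $\int_X\theta^n_v<+\infty$ enters precisely at this stage, as it provides the dominating measure needed to apply the continuity result Lemma~\ref{lem: continuity of MA mesures} along the sequence $(v_k)$, closing the argument.
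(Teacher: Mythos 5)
You correctly set up the envelopes $\varphi_b := P_\theta(bu - (b-1)v)$, correctly verify they are $\theta$-psh using $v\preceq\phi$ and $u\in\mathcal E(X,\theta,\phi)$, and correctly apply Proposition \ref{prop : the MA measure of P(bu - (b-1)v} together with the domination hypothesis to conclude that $\theta_{\varphi_b}^n$ vanishes on $\{\varphi_b<v\}$ once $(1-b^{-1})^n>c$. You also accurately pinpoint the obstruction to continuing as in Theorem \ref{thm: the dom prn in [phi]}: the unlabelled lemma preceding it requires $v\preceq\varphi_b$, which fails because $\varphi_b$ is, up to constants, at least as singular as $P_\theta(bu-(b-1)\phi)$, in general strictly more singular than $\phi$ and hence than $v$.

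However, the truncation plan you propose does not close this gap, for two independent reasons. First, the standing hypothesis does not transfer to $v_k=\max(v,\phi-k)$: on the plurifine open set $\{v<\phi-k\}$ one has $\theta_{v_k}^n=\theta_\phi^n$ by Theorem \ref{thm: plurifine property and Demailly type inequality}, and the inequality $\theta_u^n\le c\theta_\phi^n$ there is not a consequence of $\theta_u^n\le c\theta_v^n$ on $\{u<v\}$. Second, and more fundamentally, the singularity mismatch persists: $\varphi_{b,k}=P_\theta(bu-(b-1)v_k)$ still lies below $bu-(b-1)v_k$, whose singularity type is that of $bu-(b-1)\phi$; so $\varphi_{b,k}$ is still (up to constants) at least as singular as $P_\theta(bu-(b-1)\phi)$, which is strictly more singular than $\phi\simeq v_k$ whenever $u$ is strictly more singular than $\phi$. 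Thus $v_k\preceq\varphi_{b,k}$ fails exactly as before, and the lemma remains inapplicable; your claim that $v_k$ is ``comparable to the corresponding envelope'' is not correct.

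The paper's proof does not use that lemma at all. After reducing to $u\le v$ with $v\in\mathcal E(X,\theta,\phi)$ (via $v\mapsto\max(u,v)$), it keeps the same envelopes $u_b=P_\theta(bu-(b-1)v)\in\mathcal E(X,\theta,\phi)$ (Proposition \ref{prop: Ptheta(bu - (b-1)v) is in E}) but then argues as in \cite[Theorem~2.11]{ALS24}: using the finiteness of $\theta_u^n$ or $\theta_v^n$ and the concentration of $\theta_{u_b}^n$, it produces an increasing weight $h$ with $h(+\infty)=+\infty$ and $\sup_b\int_X h(|u_b|)\theta_{u_b}^n<+\infty$. Combined with the uniform lower bound $\int_X\theta_{u_b}^n\ge(1-\varepsilon)^nv_-(\theta)>0$ from Proposition \ref{prop: cond ensure v- positive}, this forbids $\sup_X u_b\to-\infty$, so a subsequence converges in $L^1(X)$ to some $\varphi\in\PSH(X,\theta)$; since $u_b\le v-ab$ on $\{u<v-a\}$, that set has zero Lebesgue measure and $u\ge v-a$. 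This mass-based compactness argument, driven by $v_-(\theta)>0$, is exactly the ingredient your proposal is missing, and the truncation does not supply a substitute for it.
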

\begin{proof}
The proof is similar to the one of \cite[Theorem 2.11]{ALS24}. Notice that the function $w:= \max(u,v)$ belongs to $\mathcal{E}(X,\theta,\phi)$, and it satisfies   
$$ \theta_u^n \leq c \theta_w^n \; \; \text{on} \; \{u<w\} $$
by Theorem \ref{thm: plurifine property and Demailly type inequality}. Therefore, up to replacing $v$ by $\max(u,v)$, there is no loss of generality in assuming $v \in \mathcal{E}(X,\theta,\phi)$ and $u \leq v$. 

 For every $b \geq 1$, the function $u_b := P_\theta(bu - (b-1)v)$ belongs to $\mathcal{E}(X,\theta,\phi)$ by Proposition \ref{prop: Ptheta(bu - (b-1)v) is in E}.  Using that $\theta_u^n$ or $\theta_v^n$ is a Radon measure on $X$ and arguing as in the proof of \cite[Theorem 2.11]{ALS24}, one can construct an increasing function $h: \mathbb{R}^+ \rightarrow \mathbb{R}^+$ such that $h(+\infty)=+\infty$ and 
 $$ \sup_{b\geq 1} \int_X h(|u_b|) \theta_{u_b}^n <+\infty. $$
If $\sup_X u_b \rightarrow -\infty$ then 
 $$  \int_X \theta_{u_b}^n \leq \frac{C}{h(|\sup_X u_b|)} \rightarrow 0, \; \text{as} \; b \rightarrow +\infty,$$
 contradicting Proposition \ref{prop: cond ensure v- positive}. Therefore $(u_b)$ has a subsequence that converges in $L^1(X)$ to a function $\varphi \in \PSH(X,\theta)$.

Fix $a>0$. On the set $\{u<v-a\}$, we have $u_b \leq v - ab$ for every $b$, hence $\varphi = -\infty$. Since $\varphi \in L^1(X)$, we obtain $u \geq v -a$. This holds for every $a>0$, hence $u\geq v$. 
\end{proof}
As above, we have the following consequence of Theorem \ref{thm: the dom prn in E(X,theta,phi)}. 
\begin{corollary}\label{cor: uniqness MA-sol in E(X,theta,phi)}
Assume $v_-(\theta)>0$. Let  $u \in \mathcal{E}(X,\theta,\phi)$ and $v \in \PSH(X,\theta)$ be such that $v \preceq \phi$ and $\int_X \theta_u^n<+\infty$. 
\begin{itemize}
    \item[(i)] If  $e^{-\lambda u}\theta_u^n\leq e^{-\lambda v}\theta_v^n$ then $u\geq v$.  
    \item[(ii)] If $\theta_u^n \leq c \theta_v^n$ for some $c \geq 0$ then $c \geq 1$.
\end{itemize}
\end{corollary}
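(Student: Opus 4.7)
The plan is to derive both parts as direct consequences of the domination principle Theorem~\ref{thm: the dom prn in E(X,theta,phi)}, following the pattern used for the analogous corollaries in the full mass class (Corollaries 2.12 and 2.13 of \cite{ALS24}). In each case I would manufacture a pair $(u, w)$ for which Theorem~\ref{thm: the dom prn in E(X,theta,phi)} applies with a constant $c' < 1$, thereby forcing $u \geq w$; the rest is a limiting argument.

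For (i), I would fix $\varepsilon > 0$ and work on the set $\{u < v - \varepsilon\}$. Off the pluripolar set $\{v = -\infty\}$ one has $e^{-\lambda u} \geq e^{\lambda \varepsilon}\, e^{-\lambda v}$, so combining with the hypothesis and dividing by the positive factor $e^{-\lambda v}$ yields
\[
\theta_u^n \leq e^{-\lambda \varepsilon}\,\theta_v^n = e^{-\lambda \varepsilon}\,\theta_{v-\varepsilon}^n \quad \text{on } \{u < v-\varepsilon\}.
\]
Since $v - \varepsilon \preceq \phi$, $\int_X \theta_u^n < +\infty$, and $e^{-\lambda \varepsilon} < 1$, Theorem~\ref{thm: the dom prn in E(X,theta,phi)} applied to the pair $(u,\, v-\varepsilon)$ delivers $u \geq v - \varepsilon$; letting $\varepsilon \to 0^+$ concludes. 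The only minor point is the passage from the exponential inequality to $\theta_u^n \leq e^{\lambda(u-v)}\theta_v^n$, which is legitimate because the non-pluripolar measures $\theta_u^n$ and $\theta_v^n$ put no mass on the pluripolar locus $\{u=-\infty\}\cup\{v=-\infty\}$.

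For (ii), I would argue by contradiction, assuming $c < 1$. For any $\alpha > 0$ the potential $v + \alpha$ still satisfies $v + \alpha \preceq \phi$ and $\theta_{v + \alpha}^n = \theta_v^n$, so the hypothesis reads $\theta_u^n \leq c\,\theta_{v + \alpha}^n$ on all of $X$, and \emph{a fortiori} on $\{u < v + \alpha\}$. Theorem~\ref{thm: the dom prn in E(X,theta,phi)} then forces $u \geq v + \alpha$ for every $\alpha > 0$, which contradicts the fact that $u, v \in L^1(X)$ are finite on sets of positive Lebesgue measure. Hence $c \geq 1$. No step in either argument should present any serious obstacle once Theorem~\ref{thm: the dom prn in E(X,theta,phi)} and the stability properties of $\mathcal{E}(X,\theta,\phi)$ under constant shifts are in hand.
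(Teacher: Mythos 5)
Your proof is correct and follows exactly the approach the paper intends: both parts are obtained by applying Theorem~\ref{thm: the dom prn in E(X,theta,phi)} to the pair $(u, v\pm\text{const.})$ and letting the constant tend to the appropriate limit, which is precisely the template of Corollaries 2.12 and 2.13 in \cite{ALS24} that the paper invokes. The ``division'' step in (i) is indeed legitimate for the reason you give: since $\theta_u^n$ and $\theta_v^n$ put no mass on the pluripolar set $\{u=-\infty\}\cup\{v=-\infty\}$, the hypothesis is equivalent to $\theta_u^n\leq e^{\lambda(u-v)}\theta_v^n$, which yields the required factor $e^{-\lambda\varepsilon}<1$ on $\{u<v-\varepsilon\}$.
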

\section{Weak convergence of Monge-Ampère measures}\label{sec: cvg in capacity}
The objective of this section is to develop pluripotential tools that will be of great help in solving degenerate Monge-Ampère equations.  By following Cegrell's ideas from local settings \cite{Ceg98,Ceg04}, we are led to work with sequences $u_j \in \PSH(X,\theta)$ that satisfy one of the following conditions: 
\begin{enumerate}
    \item  $\theta_{u_j}^n \leq A e^{\lambda u_j}\omega_{\psi_j}^n$, where  $\lambda$, $A$ are positive constants and $\psi_j$ is a uniformly bounded sequence of  $\omega$-psh functions converging in capacity to a function $\psi \in \PSH(X,\omega)\cap L^\infty(X)$; \label{condition 1}
    \item  $\theta_{u_j}^n \leq \mu$ is dominated by a non-pluripolar Radon measure $\mu$. \label{condition 2}
\end{enumerate}
The main objective of this section is to prove the following. 
\begin{theorem}\label{main cvg thm}
    Let $u_j \in \mathcal{E}(X,\theta,\phi)$ be a sequence converging in $L^1(X)$ to a function $u \in \mathcal{E}(X,\theta,\phi)$ such that $P_\theta(\inf u_j) \in \mathcal{E}(X,\theta,\phi)$. Assume either $v_{-}(\theta)>0$ or $P_\theta(\inf u_j) \simeq \phi$.  If $u_j$ satisfies \eqref{condition 1} or \eqref{condition 2}, then $u_j \rightarrow u$ in capacity and $\theta_{u_j}^n \rightarrow \theta_u^n$ weakly.   
\end{theorem}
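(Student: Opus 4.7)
The plan is to apply Lemma \ref{lem: continuity of MA mesures} with $h_j = h \equiv 1$. This requires three ingredients: (a) the uniform mass bound $\sup_j \int_X \theta_{u_j}^n < +\infty$, (b) the capacity convergence $u_j \rightarrow u$, and (c) the uniform tail estimate $\int_{\{f \leq -c\}} \theta_{u_j}^n \rightarrow 0$ as $c \rightarrow +\infty$, where $f := P_\theta(\inf_j u_j)$. Once (a)--(c) are in hand, Lemma \ref{lem: continuity of MA mesures} immediately yields $\theta_{u_j}^n \rightharpoonup \theta_u^n$ weakly.

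Items (a) and (c) will be read off from the hypotheses. Under \eqref{condition 2}, the inequality $\theta_{u_j}^n \leq \mu$ gives (a) at once, and since $f \in L^1(X)$ the set $\{f = -\infty\}$ is pluripolar, so the non-pluripolarity of $\mu$ together with its continuity from above yields $\mu(\{f \leq -c\}) \downarrow 0$, proving (c) uniformly in $j$. Under \eqref{condition 1}, Hartogs' lemma for $\theta$-psh functions converging in $L^1$ gives a uniform upper bound on $\sup_X u_j$, hence $e^{\lambda u_j} \leq C$, while the uniform $L^\infty$-bound on $\psi_j$ yields uniformly finite $\int_X \omega_{\psi_j}^n$ by local Chern--Levine--Nirenberg estimates, which together give (a). For (c), one uses the capacity estimate $\omega_{\psi_j}^n(E) \leq C \capa_\omega(E)$, valid uniformly in $j$ thanks to the $L^\infty$-bound on $\psi_j$, combined with the standard capacity decay $\capa_\omega(\{f \leq -c\}) \downarrow 0$ as $c \rightarrow +\infty$ for a quasi-psh $f \in L^1(X)$.

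The main obstacle is (b). I plan to bracket $u_j$ between the natural envelopes
\[
v_j := P_\theta\!\Bigl(\inf_{k \geq j} u_k\Bigr), \qquad w_j := \Bigl(\sup_{k \geq j} u_k\Bigr)^*,
\]
so that $v_j \leq u_j \leq w_j$, with $v_j$ nondecreasing and $w_j$ nonincreasing. The hypothesis $v_1 = P_\theta(\inf u_j) \in \mathcal{E}(X,\theta,\phi)$, combined with the stability of $\mathcal{E}(X,\theta,\phi)$ under the envelope $P_\theta(\cdot,\cdot)$ given by Proposition \ref{prop: Ptheta(u,v) in E(X,theta,phi)} (which applies because $\phi$ is a $\theta$-model potential), gives $v_j \in \mathcal{E}(X,\theta,\phi)$ for all $j$. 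Hartogs' lemma and $L^1$-convergence yield $w_j \searrow u$ pointwise, and Bedford--Taylor type continuity for decreasing sequences of quasi-psh functions then gives $w_j \rightarrow u$ in capacity. Setting $v := (\lim v_j)^*$, we have $v \in \PSH(X,\theta)$ with $v_1 \leq v \leq u$ (the upper bound from any a.e.\ convergent subsequence of $u_j$), so $v \in \mathcal{E}(X,\theta,\phi)$ by the stability of the class under bounded-below modifications preserving the singularity dominance.

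The hardest step, which I expect to be the crux of the whole argument, is to upgrade the inequality $v \leq u$ to the equality $v = u$; once this is achieved, the sandwich $v_j \leq u_j \leq w_j$ together with $v_j \nearrow u$ and $w_j \searrow u$ in capacity forces $u_j \rightarrow u$ in capacity. To obtain $v = u$ I plan to invoke the domination principle. Applying Lemma \ref{lem: continuity of MA mesures} along the monotone sequences $v_j$ and $w_j$ (the mass bound (a) and tail estimate (c) transfer to both via the sandwich) yields $\theta_{v_j}^n \rightharpoonup \theta_v^n$ and $\theta_{w_j}^n \rightharpoonup \theta_u^n$ weakly; combining these with the plurifine locality of Theorem \ref{thm: plurifine property and Demailly type inequality} and a concentration analysis on the set $\{v < u\}$ should produce a comparison of the form $\theta_u^n \leq \theta_v^n$ there. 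Then Theorem \ref{thm: the dom prn in E(X,theta,phi)} applies under the hypothesis $v_-(\theta) > 0$, while Theorem \ref{thm: the dom prn in [phi]} applies under $P_\theta(\inf u_j) \simeq \phi$, each delivering $u \leq v$ and thereby closing the argument. This dichotomy of hypotheses in the theorem is precisely what makes one or the other form of the domination principle available.
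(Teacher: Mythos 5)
Your overall architecture is right: apply Lemma \ref{lem: continuity of MA mesures}, so you need (a) the uniform mass bound, (b) convergence in capacity, and (c) a uniform tail estimate for $\int_{\{f\leq -c\}}\theta_{u_j}^n$ with $f=P_\theta(\inf u_j)$; and (a), (c) are handled essentially as in the paper (for (c) the paper cites \cite[Proposition 2.5]{DK12} to get the uniform capacity decay). The bracketing $v_j\leq u_j\leq w_j$ with $v_j=P_\theta(\inf_{k\geq j}u_k)$ is also the correct skeleton, and it is exactly the one used inside the paper's Lemma \ref{lem: cvg in capacity}.

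However, the step you flag as the crux — upgrading $v\leq u$ to $v=u$ — is left as a hand-wave and is, in fact, where your proposal breaks. Weak convergence $\theta_{v_j}^n\rightharpoonup\theta_v^n$ and $\theta_{w_j}^n\rightharpoonup\theta_u^n$ gives no control over the restriction of $\theta_v^n$ to the non-open set $\{v<u\}$, so the "concentration analysis" you invoke has no content. Moreover, to deduce $v\geq u$ from the domination principle (Theorems \ref{thm: the dom prn in [phi]} or \ref{thm: the dom prn in E(X,theta,phi)}) you would need $\theta_v^n\leq c\,\theta_u^n$ on $\{v<u\}$ with $c<1$; the inequality $\theta_u^n\leq\theta_v^n$ you aim for has the roles reversed and would give nothing. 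The ingredient you are missing is the quantitative estimate
\begin{equation*}
\lim_{j\to+\infty}\int_X\bigl|e^{u_j}-e^{u}\bigr|\,\theta_{u_j}^n=0,
\end{equation*}
which the paper first derives from $\theta_{u_j}^n\leq C\omega_{\psi_j}^n+\mu$ via \cite[Lemma 2.3]{KN22} and Lemma \ref{lem 11.5 GZbook}, and then feeds into Lemma \ref{lem: cvg in capacity}. That lemma transfers the estimate to the envelope sequence $v_j$ via Proposition \ref{prop: the envelope of min of two fncs} and \cite[Theorem 2.6]{DDL23}, obtaining $\int_X|e^{v}-e^{u}|\theta_v^n=0$; this forces $\theta_v^n$ to vanish on $\{v<u\}$, and only then does the domination principle (with $c=0$) yield $v\geq u$. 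Without this $e^{u_j}$-weighted estimate your domination-principle step cannot be made to run, so the gap is genuine even though the high-level roadmap matches the paper's.
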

Observe that the sequence $u_j$ in the last theorem  satisfies $\sup_j \int_X \theta_{u_j}^n < +\infty$. 
Before proving the theorem, we will give a sufficient condition ensuring $P_\theta(\inf u_j) \in \mathcal{E}(X,\theta,\phi)$.
\begin{lemma}\label{lem: P_theta(inf u_j) in E}
      Assume $v_{-}(\theta)>0$. Let $u_j \in \mathcal{E}(X,\theta,\phi)$ be a sequence that satisfies \eqref{condition 1} or \eqref{condition 2}. If  $u_j \rightarrow u \in \PSH(X,\theta)$ in $L^1(X)$  then $u$ and $P_\theta(\inf u_j)$ are in $\mathcal{E}(X,\theta,\phi)$. 
\end{lemma}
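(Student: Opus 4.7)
My plan is to introduce the decreasing envelopes $\varphi_k := P_\theta(u_1,\ldots,u_k)$, show that they remain in $\mathcal{E}(X,\theta,\phi)$ with uniformly positive non-pluripolar Monge-Amp\`ere mass, conclude that their limit $\varphi_\infty := P_\theta(\inf_j u_j)$ is not identically $-\infty$ and lies in $\mathcal{E}(X,\theta,\phi)$, and finally deduce $u \in \mathcal{E}(X,\theta,\phi)$ via the stability of the class under majoration.

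First I would derive a uniform bound $\sup_j \int_X \theta_{u_j}^n < +\infty$. Under condition (2) this is immediate from $\theta_{u_j}^n \leq \mu$. Under condition (1), the $L^1$-convergence $u_j \to u$ together with Hartogs' lemma gives $\sup_j \sup_X u_j \leq M < +\infty$, so $e^{\lambda u_j} \leq e^{\lambda M}$; moreover, the convergence in capacity of the uniformly bounded sequence $\psi_j$ to the bounded $\psi$ yields, by Bedford-Taylor theory, $\omega_{\psi_j}^n \to \omega_\psi^n$ weakly, hence $\sup_j \int_X \omega_{\psi_j}^n < +\infty$, and combining gives the desired bound.

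Next, iterating Proposition~\ref{prop: Ptheta(u,v) in E(X,theta,phi)}---which crucially uses that $\phi$ is a $\theta$-model potential---the functions $\varphi_k$ belong to $\mathcal{E}(X,\theta,\phi)$ and decrease to $\varphi_\infty$. Since $\phi$ is $\theta$-model, there is $\varepsilon \in (0,1)$ with $P_{\varepsilon\theta}(\phi) \not\equiv -\infty$, and Proposition~\ref{prop: cond ensure v- positive} then yields the key lower bound $\int_X \theta_{\varphi_k}^n \geq (1-\varepsilon)^n v_-(\theta) =: c_0 > 0$, positive by the hypothesis $v_-(\theta) > 0$. To show $\varphi_\infty \not\equiv -\infty$, I would use Proposition~\ref{prop: control of MA mass in E} to introduce the auxiliary functions $w_k := P_\theta(\varphi_k - \phi) \in \mathcal{E}(X,\theta)$; by Lemma~\ref{lem: Ptheta(u-v) is in psh(X,theta)} combined with the uniform bound $\sup \varphi_k \leq M$, the $w_k$ are uniformly bounded from above, and they satisfy $w_k + \phi \leq \varphi_k$. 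Extracting a subsequential $L^1$-limit $w_\infty \in \PSH(X,\theta)$, one obtains $\varphi_\infty \geq w_\infty + \phi \not\equiv -\infty$. Once $\varphi_\infty \in \PSH(X,\theta)$ is secured, $\varphi_\infty \preceq \phi$ is immediate from $\varphi_\infty \leq u_1 \leq \phi + C_1$, and the envelope condition defining $\mathcal{E}(X,\theta,\phi)$ follows from $\varphi_\infty \leq \varphi_k \in \mathcal{E}(X,\theta,\phi)$ together with the lower control $\varphi_\infty \geq w_\infty + \phi$ via $w_\infty \in \mathcal{E}(X,\theta)$. Finally, $\varphi_\infty \leq u_j$ for every $j$ passes in the $L^1$-limit to $\varphi_\infty \leq u$, and given $u \preceq \phi$, the stability property of $\mathcal{E}(X,\theta,\phi)$ listed after its definition gives $u \in \mathcal{E}(X,\theta,\phi)$.

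The hard part will be showing that the decreasing sequence $w_k$ does not drift off to $-\infty$. The lower mass bound $\int_X \theta_{w_k}^n \geq v_-(\theta)$ (from Proposition~\ref{prop: cond ensure v- positive} applied to $w_k \in \mathcal{E}(X,\theta) = \mathcal{E}(X,\theta,0)$, using $v_-(\theta)>0$) alone does not rule this out, since translates $\chi - k$ of a fixed $\mathcal{E}(X,\theta)$ potential furnish counterexamples with constant positive Monge-Amp\`ere mass but $\sup \to -\infty$. The crux is therefore to exploit the specific construction of $w_k$ via the sequence $u_j$, whose $L^1$-limit is a genuine $\theta$-psh function $u$: one expects this to follow by coupling the uniform upper bound on the averages $\int_X \varphi_k\, \omega^n \leq \int_X u_k\, \omega^n \to \int_X u\, \omega^n$ with the decomposition $\varphi_k \geq w_k + \phi$ and a careful use of $v_-(\theta) > 0$. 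A secondary obstacle, of the same nature, will be establishing $u \preceq \phi$ with a uniform constant in the final step, which amounts to ruling out unbounded drift of $\sup_X(u_j - \phi)$ using the mass and model structure.
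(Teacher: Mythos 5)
The overall structural scaffolding you set up — the decreasing envelopes $\varphi_k := P_\theta(u_1,\ldots,u_k)$, their membership in $\mathcal{E}(X,\theta,\phi)$ via Proposition~\ref{prop: Ptheta(u,v) in E(X,theta,phi)}, the lower mass bound from Proposition~\ref{prop: cond ensure v- positive}, and the final reduction of $u\in\mathcal{E}(X,\theta,\phi)$ to $P_\theta(\inf u_j)\in\mathcal{E}(X,\theta,\phi)$ via the stability property — matches the paper. However, there is a genuine gap exactly at the step you flag as ``the hard part'', and your proposed way around it does not work. You observe that $\int_X \varphi_k\,\omega^n \le \int_X u_k\,\omega^n \to \int_X u\,\omega^n$, but this is an \emph{upper} bound on the integrals of a decreasing sequence; to prevent $\varphi_k$ (equivalently $w_k$) from drifting to $-\infty$ you would need a uniform \emph{lower} bound, and the $L^1$-convergence $u_j\to u$ alone supplies no such thing, since $\varphi_k\le u_k$ only goes the wrong way. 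The lower mass bound $\int_X\theta_{w_k}^n\ge v_-(\theta)$ cannot close the gap either, as you yourself note with the translate example. Your sketch never identifies what actually rules out the drift.

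The ingredient you are missing is a substantive use of conditions~\eqref{condition 1} or \eqref{condition 2}, which in your proposal only enter once, to bound $\sup_j\int_X\theta_{u_j}^n$. That mass bound is far too weak: it gives at best linear growth of $\int_X\theta_{P_\theta(u_1,\ldots,u_j)}^n$ in $j$, with no mechanism to contradict the constant lower bound. The paper instead fixes $b\ge 1$, sets $\varphi_j=P_\theta(bv_j-(b-1)\phi)$ with $v_j=P_\theta(u_1,\ldots,u_j)$, assumes for contradiction that $\sup_X\varphi_j\to-\infty$ (so, after extraction, $\varphi_j\le -j$), and then uses the \emph{exponential weight} encoded in \eqref{condition 1} or \eqref{condition 2} to defeat the linear growth. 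Concretely, Proposition~\ref{prop: the envelope of min of two fncs} combined with \eqref{condition 1} gives $\int_X e^{-\lambda v_j}\theta_{v_j}^n\le Cj$, and pushing this through $\theta_{\varphi_j}^n\le b^n\mathbf{1}_{D_j}\theta_{v_j}^n$ together with $\sup_X\varphi_j\le -j$ yields $\int_X\theta_{\varphi_j}^n\le Cj\,e^{-\lambda b^{-1}j}\to 0$, contradicting Proposition~\ref{prop: cond ensure v- positive}. Under \eqref{condition 2} a parallel argument uses Lemma~\ref{lem 11.5 GZbook} to control $\int_X e^u\theta_{\varphi_j}^n$. Nothing in your plan produces this crucial exponential decay, and without it the approach through $w_k=P_\theta(\varphi_k-\phi)$ cannot establish a lower bound on $\sup_X w_k$; the step ``extracting a subsequential $L^1$-limit $w_\infty$'' is precisely what remains unproved.
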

\begin{proof}
  Observe that $u_j \preceq \phi$ for every $j$. Since $u_j \rightarrow u \in \PSH(X,\theta)$ in $L^1(X)$, the Hartogs' lemma implies $(\sup_X u_j)$ is bounded. Using this and the fact that $\phi$ is a $\theta$-model potential, we can find a uniform constant $C$ such that $u_j \leq \phi + C$. We thus infer $u \preceq \phi$.  
  
  By the definition of $\mathcal{E}(X,\theta,\phi)$, we will be done if we prove $P_\theta(\inf u_j) \in \mathcal{E}(X,\theta,\phi)$. Fix then $b \geq 1$ and set   $v_j = P_\theta(u_1,...,u_j)$, $v = P_\theta(\inf u_j)$ and $\varphi_j = P_\theta(bv_j - (b-1)\phi)$. We want to prove that $(\sup_X \varphi_j)$ doesn't converge to $-\infty$. Assume by contradiction that this is not true. Up to extracting, we can further assume $\varphi_j \leq -j$ for every $j$.

  According to Proposition \ref{prop: Ptheta(u,v) in E(X,theta,phi)}, we have $v_j \in \mathcal{E}(X,\theta,\phi)$, hence $\varphi_j \in \mathcal{E}(X,\theta,\phi)$ by Proposition \ref{prop: Ptheta(bu - (b-1)v) is in E}. 
  In particular, since $v_-(\theta)>0$, the sequence  $\int_X \theta_{\varphi_j}^n$ is uniformly bounded away from zero by Proposition \ref{prop: cond ensure v- positive}.

  By Theorem \ref{thm: env}, we know that $\theta_{\varphi_j}^n$ is carried by the set $D_j = \{\varphi_j = bv_j - (b-1)\phi\}$. Since the functions $b^{-1}\varphi_j + (1-b^{-1})\phi$ and $v_j$ belong to $\PSH(X,\theta)$ and satisfy $b^{-1}\varphi_j + (1-b^{-1})\phi \leq v_j$ with equality on $D_j$,  we get by Theorem \ref{thm: plurifine property and Demailly type inequality} that
\begin{equation}\label{dom of varphi_j}
    \theta_{\varphi_j}^n \leq b^n \textit{1}_{D_j} (\theta + dd^c(b^{-1}\varphi_j + (1-b^{-1})\phi))^n \leq b^n \textit{1}_{D_j} \theta_{v_j}^n.
\end{equation}
     Assume first that $u_j$ satisfies \eqref{condition 1}. Applying Proposition \ref{prop: the envelope of min of two fncs} yields
    \[     \int_X e^{-\lambda v_j} \theta_{v_j}^n \leq  A \sum_{k=1}^j \int_X \omega_{\psi_k}^n.   \]
    Since $(\psi_k)$ is uniformly bounded, we can find $C\geq 0$ such that 
    \[ \int_X e^{-\lambda v_j} \theta_{v_j}^n \leq C j. \]
  It follows that from \eqref{dom of varphi_j} that 
   \[\int_X e^{-\lambda b^{-1} \varphi_j}\theta_{\varphi_j}^n \leq \int_X e^{-\lambda (b^{-1} \varphi_j + (1-b^{-1})\phi) } \theta_{\varphi_j}^n
 \leq  b^n C j.  \]
 Therefore,  
   \[  \int_X \theta_{\varphi_j}^n \leq e^{\lambda b^{-1} \sup_X \varphi_j} \int_X e^{-\lambda b^{-1} \varphi_j}\theta_{\varphi_j}^n  \leq C je^{-\lambda b^{-1} j},
   \]
hence $\inf \int_X \theta_{\varphi_j}^n=0$, a contradiction. 
     Assume now that $u_j$ satisfies \eqref{condition 2}. By Lemma \ref{lem 11.5 GZbook}, we have
     $$ \lim_{j\rightarrow +\infty} \int_X |e^{u_{j}} - e^u| \theta_{u_{j}}^n = 0. $$
     Passing to a subsequence if necessary, we can assume 
     $$ \int_X |e^{u_{j}} - e^u| \theta_{u_{j}}^n \leq 2^{-j}, \; \; \forall j \geq 1. $$
   Proposition \ref{prop: the envelope of min of two fncs} then implies  
\[  \int_X |e^{v_{j}} - e^u| \theta_{v_{j}}^n \leq 2^{-j+1}.\]
Hence
$$ \int_X  |e^{b^{-1}\varphi_{j}+ (1-b^{-1})\phi} - e^u| \theta_{\varphi_{j}}^n  \leq b^n 2^{-j+1}, $$
by \eqref{dom of varphi_j}. Again by \eqref{dom of varphi_j},  we have
$$ \int_X e^{b^{-1}\varphi_{j}+ (1-b^{-1})\phi}\theta_{\varphi_j}^n \leq e^{-b^{-1}j} \int_X  \theta_{v_j}^n \leq j e^{-b^{-1}j} \mu(X), $$
where in the last inequality we have used Proposition \ref{prop: the envelope of min of two fncs}. Finally, we can write 
$$ \int_X e^u \theta_{\varphi_{j}}^n \leq b^n 2^{-j+1} + \int_X e^{b^{-1}\varphi_{j}+ (1-b^{-1})\phi}\theta_{\varphi_j}^n \leq b^n 2^{-j+1} + j e^{-b^{-1}j} \mu(X) \to 0 $$
as $j\rightarrow +\infty$. Fixing $C>0$, we have 
$$ \int_X \theta_{\varphi_{j}}^n \leq \mu(\{u \leq -C\}) +  e^C\int_X e^u \theta_{\varphi_{j}}^n. $$
Letting $j \rightarrow +\infty$ and then $C\rightarrow +\infty$, we find that $\inf \int_X \theta_{\varphi_{j}}^n = 0$, 
a contradiction.

We infer that in both cases  $(\sup_X \varphi_j)$ is bounded from below.  Since $\varphi_j$ decreases to  $P_\theta(bv - (b-1)\phi)$ and $b$ is arbitrary, we get $v \in \mathcal{E}(X,\theta,\phi)$, which proves the result. 
\end{proof}
The following lemma gives a sufficient condition to ensure the convergence in capacity of $\theta$-psh functions that converge in $L^1(X)$, generalizing \cite[Theorem 3.1]{ALS24} which deals with uniformly bounded sequences. 
\begin{lemma}\label{lem: cvg in capacity}
Let $u_j \in \PSH(X,\theta)$ be such that $P_\theta(\inf u_j) \in \mathcal{E}(X,\theta,\phi)$ and $\sup_j \int_X \theta_{u_j}^n<+\infty$. Assume either $P_\theta(\inf u_j) \simeq \phi$ or $v_-(\theta)>0$. If $u_j$ converges in $L^1(X)$ to a function $u \in \mathcal{E}(X,\theta,\phi)$  and  
    $$ \lim_{j\rightarrow +\infty} \int_X |e^{u_j} - e^{u}| \theta_{u_j}^n = 0, $$
   then $u_j \rightarrow u$ in capacity. 
\end{lemma}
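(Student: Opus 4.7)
The plan is to sandwich $(u_j)$ between two monotone envelopes converging to $u$. Set
\[ v_k := P_\theta\!\bigl(\inf_{j \geq k} u_j\bigr), \qquad w_k := \bigl(\sup_{j \geq k} u_j\bigr)^*, \]
both of which are $\theta$-psh. Since $v_1 = P_\theta(\inf_j u_j) \in \mathcal{E}(X,\theta,\phi)$ by hypothesis, and since the Hartogs lemma combined with the fact that $\phi$ is a $\theta$-model potential produces a uniform constant $C$ with $u_j \leq \phi + C$ (as observed in the proof of Lemma \ref{lem: P_theta(inf u_j) in E}), each $v_k$ lies between $v_1$ and $\phi + C$, hence $v_k \in \mathcal{E}(X,\theta,\phi)$ by the stability properties of this class. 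Writing $v := (\lim_k v_k)^*$ and $w := \lim_k w_k$, we have $v \leq u \leq w$ outside a pluripolar set: the upper bound is the Hartogs inequality for $L^1$-convergent $\theta$-psh sequences, while $v_k \leq u_j$ off a pluripolar set (Theorem \ref{thm: env}) yields the lower bound after passing to an a.e.-convergent subsequence of $(u_j)$.

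The upper envelope is handled immediately: Hartogs gives $(\limsup_j u_j)^* \leq u$, the a.e.\ subsequence forces $\limsup_j u_j \geq u$ almost everywhere, and hence $w = u$ off a pluripolar set. The core of the proof is to obtain the same equality for $v$. For this I would approximate $v_k$ from above by the finite-minimum envelopes $\varphi_k^N := P_\theta(u_k, u_{k+1}, \ldots, u_{k+N})$, which decrease to $v_k$ as $N \to \infty$. Iterating Proposition \ref{prop: the envelope of min of two fncs}, the measure $\theta_{\varphi_k^N}^n$ is carried by the union of $\{\varphi_k^N = u_\ell\}$ for $\ell = k, \ldots, k+N$, with $\theta_{\varphi_k^N}^n \leq \theta_{u_\ell}^n$ on the $\ell$-th piece. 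Since $\varphi_k^N = u_\ell$ there, the hypothesis $\int_X |e^{u_j} - e^u|\theta_{u_j}^n \to 0$ lets us replace $e^{\varphi_k^N}$ by $e^u$ in the mass estimates modulo errors that vanish as $k\to\infty$. Passing to the limits $N \to \infty$ and then $k \to \infty$ with Lemma \ref{lem: continuity of MA mesures} (whose required hypothesis is precisely $P_\theta(\inf u_j) \in \mathcal{E}(X,\theta,\phi)$) yields $\int_X e^u \theta_v^n \geq \int_X e^u \theta_u^n$, while the reverse is automatic from $v \leq u$ and monotonicity of the non-pluripolar Monge-Amp\`ere measures on the contact set. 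The appropriate domination principle, Theorem \ref{thm: the dom prn in E(X,theta,phi)} when $v_-(\theta)>0$ or Theorem \ref{thm: the dom prn in [phi]} when $P_\theta(\inf u_j) \simeq \phi$, then forces $v \geq u$, so $v = u$ off a pluripolar set.

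Once the sandwich $v_k \nearrow u$ and $w_k \searrow u$ is in hand, for any $\delta > 0$ and every $j \geq k$ we have
\[ \{|u_j - u| > \delta\} \subset \{w_k - u > \delta\} \cup \{u - v_k > \delta\} \]
modulo a pluripolar set, and the right-hand side has $\capa_\omega$-capacity tending to $0$ as $k \to \infty$ by the standard capacity estimate for monotone sequences of $\theta$-psh functions with uniformly bounded non-pluripolar Monge-Amp\`ere mass. The main obstacle is the mass-transfer step in the second paragraph: extracting enough information from $\int_X |e^{u_j} - e^u|\theta_{u_j}^n \to 0$ so that the finite-minimum envelopes $\varphi_k^N$ produce, in the iterated limit, a measure that dominates $e^u \theta_u^n$. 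The unboundedness of $u_j$ is exactly what forces every estimate to be weighted by $e^u$, which is why the hypothesis is formulated in this exponential form rather than as a weak convergence $\theta_{u_j}^n \to \theta_u^n$.
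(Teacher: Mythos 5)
Your skeleton matches the paper's: approximate $v_k = P_\theta(\inf_{j\geq k}u_j)$ from above by the finite envelopes $\varphi_k^N = P_\theta(u_k,\ldots,u_{k+N})$, bound the weighted masses by iterating Proposition~\ref{prop: the envelope of min of two fncs} and the dyadic version of the hypothesis, pass to the limit, apply the domination principle, and squeeze. That is exactly the paper's argument (where $\tilde u_j$ plays the role of your $v_k$ and $u_{j,k}$ that of $\varphi_k^N$). However, there are two concrete problems in how you execute the ``mass-transfer step,'' which is precisely the obstacle you flag at the end.

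First, Lemma~\ref{lem: continuity of MA mesures} is not the right tool for the double limit, and you mis-state its hypotheses. That lemma requires, in addition to $P_\theta(\inf u_j)\in\PSH(X,\theta)$, both $\sup_j\int_X\theta_{u_j}^n<+\infty$ and the uniform tail estimate $\int_{\{f\leq -c\}}\theta_{u_j}^n\to 0$ as $c\to+\infty$, uniformly in $j$. Neither is automatic from $P_\theta(\inf u_j)\in\mathcal{E}(X,\theta,\phi)$; indeed in the proof of Theorem~\ref{main cvg thm} the tail estimate is only obtained using conditions (1) or (2), which are not assumed in Lemma~\ref{lem: cvg in capacity}. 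Moreover, for the inner sequence $\varphi_k^N$ with $N\to\infty$ the iterated decomposition gives a bound of order $(N+1)\sup_j\int_X\theta_{u_j}^n$ on the mass, so $\sup_N\int_X\theta_{\varphi_k^N}^n<+\infty$ is not clear either. What is actually needed (and what the paper uses) is only the Fatou-type lower semicontinuity of $\int_X h\,\theta_\psi^n$ along convergence in capacity with bounded quasi-continuous weights, i.e.\ \cite[Theorem~2.6]{DDL23}; this inequality requires no uniform mass bound and no tail control, and applied to $\varphi_k^N\searrow v_k$ and then $v_k\nearrow v$ it gives $\int_X|e^{v_k}-e^u|\theta_{v_k}^n\leq 2^{-k+1}$ and then $\int_X|e^v-e^u|\theta_v^n=0$.

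Second, the conclusion you extract, $\int_X e^u\theta_v^n\geq\int_X e^u\theta_u^n$, is not what the domination principle needs, and the step ``the reverse is automatic from $v\leq u$ and monotonicity on the contact set'' presupposes that $\theta_v^n$ is carried by $\{v=u\}$, which is the very thing to be proved. The correct output of the limit passage is $\int_X|e^v-e^u|\theta_v^n=0$; since $v\leq u$ this says $\theta_v^n(\{v<u\})=0$, which \emph{is} the hypothesis of Theorem~\ref{thm: the dom prn in [phi]} or Theorem~\ref{thm: the dom prn in E(X,theta,phi)} (with $c=0$) and yields $v\geq u$ directly. The weighted total-mass inequality by itself does not give a pointwise measure comparison on $\{v<u\}$ and cannot replace it. One more small point: to see $v_k\in\mathcal{E}(X,\theta,\phi)$ you cannot import the uniform bound $u_j\leq\phi+C$ from Lemma~\ref{lem: P_theta(inf u_j) in E}, since Lemma~\ref{lem: cvg in capacity} does not assume conditions (1) or (2); instead note that $v_k\leq u_j$ for all $j\geq k$ together with a.e.\ convergence along a subsequence gives $v_k\leq u\leq\phi+C$, and then the stability properties of $\mathcal{E}(X,\theta,\phi)$ apply.
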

\begin{proof}
The proof follows the lines of the proof of \cite[Theorem 3.1]{ALS24} which builds on the rooftop envelopes as in \cite{Dar15,Dar17}.

Up to extracting a subsequence, one can assume 
$$ \int_X |e^{u_j} - e^{u}| \theta_{u_j}^n \leq 2^{-j}, \; \; \forall j \geq 1. $$    
For any $j$, $k \in \mathbb{N}$, we set $u_{j,k}= P_\theta(u_j,...,u_{j+k})$. By hypothesis, the function $u_{j,k}$ belongs to $\PSH(X,\theta)$ and it converges to the $\theta$-psh function $\tilde{u}_j := P_\theta(\inf_{k\geq j} u_k)$. By Proposition \ref{prop: the envelope of min of two fncs}, we have 
 $$ \int_X |e^{u_{j,k}} - e^{u}| \theta_{u_{j,k}}^n  \leq 2^{-j+1}, \; \; \forall j, k \geq 1. $$ 
 Therefore, letting $k \rightarrow +\infty$, we get by \cite[Theorem 2.6]{DDL23} that 
 $$ \int_X |e^{\tilde{u}_j} - e^{u}| \theta_{\tilde{u}_j}^n \leq \liminf_{k\rightarrow +\infty} \int_X |e^{u_{j,k}} - e^{u}| \theta_{u_{j,k}}^n   \leq 2^{-j+1},  $$ 
 for all $j \geq 1$.  Observe now that $(\tilde{u}_j)$ is increasing, hence it converges a.e. to a function $\tilde{u} \in \PSH(X,\theta)$.  We get similarly that 
 \begin{equation*}
     \int_X |e^{\tilde{u}} - e^{u}| \theta_{\tilde{u}}^n \leq \lim_{j\rightarrow +\infty} \int_X |e^{\tilde{u}_j} - e^{u}| \theta_{\tilde{u}_j}^n = 0.
 \end{equation*}
 By hypothesis, we have  $u$, $P_\theta(\inf u_j) \in \mathcal{E}(X,\theta,\phi)$. Since $P_\theta(\inf u_j) \leq \tilde{u} \leq u$ by construction, it follows that $\tilde{u} \in \mathcal{E}(X,\theta,\phi)$. Observe that if $P_\theta(\inf u_j) \simeq \phi$ then so does $\tilde{u}$. Observe also that $\int_X \theta_u^n < +\infty$ by Theorem 2.6 in \cite{DDL23}. Therefore, we infer by  Theorems \ref{thm: the dom prn in [phi]} and \ref{thm: the dom prn in E(X,theta,phi)} that $u \leq \tilde{u}$, hence $u = \tilde{u}$.

 Finally, we have $\tilde{u}_j \leq u_j \leq \max(u,u_j)$ and the sequences $(\tilde{u}_j)$  and $(\max(u,u_j))$ converge to $u$ in capacity. Therefore $u_j \rightarrow u$ in capacity, completing the proof.
\end{proof}
We are now in position to prove Theorem \ref{main cvg thm}.  
\begin{proof}[Proof of Theorem \ref{main cvg thm}]
 By hypothesis, the sequence $(u_j)$ converges in $L^1(X)$ to $u$. Applying the Hartogs' lemma, we infer that $(\sup_X u_j)$ is bounded, hence $\theta_{u_j}^n \leq C \omega_{\psi_j}^n + \mu$ for some constant $C>0$. It thus follows from \cite[Lemma 2.3]{KN22} and Lemma \ref{lem 11.5 GZbook} that 
  $$ \lim_{j\rightarrow +\infty} \int_X |e^{u_j} - e^u| \theta_{u_j}^n =0. $$
 Lemma \ref{lem: cvg in capacity} then implies $u_j \rightarrow u$ in capacity. It thus remains to prove $\theta_{u_j}^n \rightarrow \theta_u^n$ weakly. Fix then $c>0$ and set $f = P_\theta(\inf u_j)$. Since $u_j$ satisfies the condition \eqref{condition 1} or \eqref{condition 2}, we have 
    $$ \int_{\{f \leq -c\}} \theta_{u_j}^n \leq C \capa_\omega(\{f \leq -c\}) + \mu(\{f \leq -c\}),  $$
    for some constant $C>0$.  It follows from \cite[Proposition 2.5]{DK12} that 
    $$  \int_{\{f \leq -c\}} \theta_{u_j}^n   \rightarrow 0, \; \; \text{as} \; c\rightarrow +\infty $$
        uniformly with respect to $j$, hence $\theta_{u_j}^n \rightarrow \theta_u^n$ weakly by Lemma \ref{lem: continuity of MA mesures}, finishing the proof. 
\end{proof}
\section{Monge-Ampère equations with prescribed singularities}\label{sec: MAE with sin}
This section is devoted to the main problem of solving degenerate Monge-Ampère equations in the class $\mathcal{E}(X,\theta,\phi)$. We will first start by studying the case when the right hand side has a $L^p$-density with respect to the Lebesgue measure, and then we will proceed to the general case.  
\subsection{The case of measures with $L^p$-density}
Our objective in this paragraph is to solve the Monge-Ampère equation 
$$ (\theta + dd^cu)^n = e^{\lambda u} f \omega^n, \; \; u \in \PSH(X,\theta) \; \text{with} \; u \simeq \phi. $$
Here $\lambda\geq 0$,  $p>1$  are constants and  $0 \leq f \in L^p(X)$ is such that $\int_X f \omega^n >0$. 
In the sequel, we will assume $\lambda>0$. The case $\lambda=0$ will be treated separately in Theorem \ref{sol to MA for lambda=0}.

We first start by proving the following subsolution theorem. 
\begin{theorem}\label{thm: ext of bounded sol}
Assume $\mu = f (\theta+dd^c \psi)^n$ where $\psi \in \PSH(X,\theta)\cap L^\infty(X)$ and $0 \leq f \in L^\infty(X)$ is such that $\mu(X)>0$. Then, we can find a unique function $u \in  \PSH(X,\theta)\cap L^\infty(X)$ such that 
    $$ (\theta + dd^c u)^n = e^{\lambda u} \mu.  $$
\end{theorem}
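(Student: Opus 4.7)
The plan is to prove uniqueness by an immediate application of the domination principle, and to tackle existence through an approximation scheme. For uniqueness: if $u_1, u_2 \in \PSH(X,\theta) \cap L^\infty(X)$ both solve the equation, then both are bounded and hence have the same (trivial) singularity type, so $u_2 \preceq u_1$; moreover, because $\delta\theta \geq 0$, constants belong to $\PSH(X,\delta\theta)$, so $P_{\delta\theta}(u_i) \not\equiv -\infty$. Since $e^{-\lambda u_1}\theta_{u_1}^n = \mu = e^{-\lambda u_2}\theta_{u_2}^n$, Corollary~\ref{cor: uniqness MA-sol in [phi]}(i) gives $u_1 \geq u_2$, and by symmetry $u_1 = u_2$.

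For existence, I would first regularize. Apply Demailly's approximation to produce a decreasing sequence of smooth functions $\psi_j \searrow \psi$ satisfying $\theta_j + dd^c\psi_j \geq 0$ with $\theta_j := \theta + \varepsilon_j\omega$ and $\varepsilon_j \searrow 0$. The approximate measures $\mu_j := f\,(\theta_j + dd^c\psi_j)^n$ then take the form $g_j\omega^n$ with $g_j \in L^\infty(\omega^n)$, and their total masses are uniformly controlled by $\mu_j(X) \leq \|f\|_\infty \int_X (\theta_j+dd^c\psi_j)^n$. For each $j$, the bounded-solution theory in the semipositive-big setting (e.g.\ \cite{GL22,GL23Crelle,BGL24}) yields a unique $u_j \in \PSH(X,\theta_j) \cap L^\infty(X)$ satisfying $(\theta_j + dd^c u_j)^n = e^{\lambda u_j}\mu_j$.

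The core of the argument is a uniform $L^\infty$-bound on the sequence $(u_j)$. For the lower bound, $v_j := \psi_j - M$ is a subsolution of the $j$-th equation as soon as $M \geq \lambda^{-1}\log\|f\|_\infty + \sup_X\psi$, and the domination principle (Corollary~\ref{cor: uniqness MA-sol in [phi]}(i)) then gives $u_j \geq \psi_j - M \geq \psi - M$ uniformly in $j$. The upper bound is subtler and I expect it to require an adaptation of Ko\l odziej's $L^\infty$-estimate to the present setting, relying on the uniform mass control of $\mu_j$ and on capacity comparison. With these uniform bounds in hand, I would extract a subsequence $u_j \to u$ in $L^1(X)$, note that $u$ is bounded, and apply Lemma~\ref{lem: continuity of MA mesures} together with $\mu_j \to \mu$ weakly (coming from Bedford-Taylor continuity along the decreasing sequence $\psi_j$) to pass to the limit in the Monge-Amp\`ere equation. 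The principal obstacle is the uniform upper bound on $u_j$, namely the $L^\infty$-estimate for the semipositive-big form $\theta_j$ with a measure dominated by the non-smooth reference $(\theta_j + dd^c\psi_j)^n$ rather than by $\omega^n$.
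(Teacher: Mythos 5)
Your uniqueness argument via Corollary~\ref{cor: uniqness MA-sol in [phi]}(i) is correct and coincides with the paper's approach. The existence part, however, has a genuine gap precisely at the step you yourself flag as the ``principal obstacle'': the uniform upper bound on $\sup_X u_j$. This is indeed the crux, and your suggestion that it should follow from ``an adaptation of Ko\l odziej's $L^\infty$-estimate'' points in the wrong direction: Ko\l odziej-type estimates control the oscillation $\sup_X u_j - \inf_X u_j$, not $\sup_X u_j$ itself. The latter is a sup-bound coming from the $e^{\lambda u}$ term combined with a volume/mass inequality, and in the Hermitian setting there is no cohomological mass conservation to lean on. The paper obtains it by a different mechanism: a mixed Monge--Amp\`ere inequality $\theta_{u_j}\wedge\theta_\psi^{n-1}\geq e^{\lambda u_j/n}f_j^{1/n}\theta_\psi^n$, integrated over $X$ and controlled via Stokes' theorem and the Hermitian torsion bounds $dd^c\omega\geq -B\omega^2$, $d\omega\wedge d^c\omega\geq -B\omega^3$ together with the CLN inequality, using crucially that $\mu(X)>0$ forces the normalized mass $\int_X e^{\lambda v_j/n}f_j^{1/n}\theta_\psi^n$ to stay bounded below. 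Without supplying this (or an equivalent) argument, your scheme does not close.

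There are two further issues with your regularization scheme that the paper's two-step structure deliberately avoids. First, you regularize $\psi$ (via Demailly, changing the background form to $\theta_j=\theta+\varepsilon_j\omega$) while keeping $f\in L^\infty$ merely bounded; but then $\mu_j=f(\theta_j+dd^c\psi_j)^n\to\mu=f\theta_\psi^n$ weakly is \emph{not} an immediate consequence of Bedford--Taylor: $(\theta_j+dd^c\psi_j)^n\to\theta_\psi^n$ weakly as measures, and multiplication by a mere $L^\infty$ density does not commute with weak convergence (the densities $g_j$ of $(\theta_j+dd^c\psi_j)^n$ are only uniformly in $L^\infty$, not uniformly integrable in $L^p$ for $p>1$, so one cannot upgrade to $L^1$-weak convergence). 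The paper avoids this by a two-stage approximation: in Step~1 it takes $f$ smooth and strictly positive and regularizes only $\psi$ (projecting smooth $\omega$-psh approximants by $P_\theta$, keeping $\theta$ fixed), and in Step~2 it keeps $\psi$ and $\theta_\psi^n$ fixed and regularizes $f$ by $f_j\to f$ in $L^1(\theta_\psi^n)$ and a.e., so that $f_j\theta_\psi^n\to f\theta_\psi^n$ strongly. Second, your $u_j$ live in $\PSH(X,\theta_j)$ with $\theta_j$ varying, so Lemma~\ref{lem: continuity of MA mesures}, which is stated for a fixed background form, does not directly apply; one must account for the $\varepsilon_j\omega$-perturbation when passing to the limit. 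The paper's projection $\varphi_j=P_\theta(\psi_j)$ keeps everything in $\PSH(X,\theta)$ with $\theta$ fixed, which is technically cleaner and lets Lemma~\ref{lem: cvg in capacity} apply directly.
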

Observe that generalizing this result to the case $\lambda=0$ will provide a positive answer to a question of Ko\l odziej from K\"ahler settings \cite[Question 15]{DGZ16}. 
\begin{proof}  We divide the proof into two steps.

{\bf Step 1.} We first assume $f \in \mathcal{C}^\infty(X)$, $f>0$. Let $\psi_j$ be a sequence of smooth $\omega$-psh functions decreasing to $\psi$.  The sequence  $\varphi_j := P_\theta(\psi_j)$ is a uniformly bounded sequence of $\theta$-psh functions which decreases to $\psi$ since $\psi \in \PSH(X,\theta)$. Proposition \ref{prop : the MA measure of P(bu - (b-1)v}  then implies
$$ \theta_{\varphi_j}^n = \textit{1}_{\{\varphi_j = \psi_j\}} \theta_{\varphi_j}^n \leq \omega_{\psi_j}^n, $$
hence the measure $f\theta_{\varphi_j}^n$ has a $L^\infty$-density with respect to the Lebesgue measure. Applying  \cite[Theorem 3.4]{GL23Crelle} yields $u_j \in \PSH(X,\theta)\cap L^\infty(X)$ such that 
$$ (\theta + dd^c u_j)^n = e^{\lambda u_j} f(\theta + dd^c\varphi_j)^n. $$
Since $f>0$ is bounded and $\varphi_j$ is uniformly bounded, \cite[Corollary 2.9]{GL22} gives $C>0$ such that $\varphi_j - C \leq u_j \leq\varphi_j + C$, hence $u_j$ is uniformly bounded. Passing to a subsequence if necessary, one can assume $u_j \rightarrow u \in \PSH(X,\theta)\cap L^\infty(X)$ in $L^1(X)$. By definition of $u_j$, we have $\theta_{u_j}^n \leq\|f\|_\infty e^{\lambda u_j} \theta_{\varphi_j}^n$, hence 
$$ \lim_{j\rightarrow +\infty} \int_X |e^{u_j} - e^{u}| \theta_{u_j}^n = 0 $$
by \cite[Lemma 2.3]{KN22}. It follows from Lemma \ref{lem: cvg in capacity} that $u_j \rightarrow u$ in capacity. Therefore, we get by \cite[Theorem 4.26]{GZbook} that  $\theta_{u_j}^n \rightarrow \theta_u^n$ weakly and
$$ \theta_{u_j}^n = e^{\lambda u_j} f \theta_{\varphi_j}^n \rightarrow e^{\lambda u} f \theta_\psi^n \; \; \text{weakly,} $$
which means $\theta_u^n = e^{\lambda u} f \theta_\psi^n$.

    {\bf Step 2.}   We now proceed to the general case when $f$ is merely bounded. Let $f_j$ be a uniformly bounded sequence of positive smooth functions converging to $f$ in $L^1(\theta_\psi^n)$ and a.e. with respect to $\theta_\psi^n$. For each $j$, Step 1 gives $u_j \in \PSH(X,\theta)\cap L^\infty(X)$ such that 
      $$  \theta_{u_j}^n = e^{\lambda u_j} f_j \theta_{\psi}^n.  $$
      By construction, the function $u_j$ is the limit in capacity of a uniformly bounded sequence $(u_j^k) \subset \PSH(X,\theta)$ such that
      $$ (\theta + dd^c u_j^k)^n = e^{\lambda u_j^k} f_j \theta_{\psi_k}^n,  $$
      where $\psi_k \in \PSH(X,\theta) \cap L^\infty(X)$ is such that $\psi_k \searrow \psi$ and $\theta_{\psi_k}^n$ is absolutely continuous with respect to the Lebesgue measure.
      It follows from  the mixed Monge-Ampère inequality \cite[Lemma 1.9]{Ng16AIM}  that 
      $$ (\theta + dd^c u_j^k) \wedge (\theta + dd^c \psi_k)^{n-1}  \geq e^{\lambda u_j^k/n} f_j^{1/n} (\theta + dd^c \psi_k)^n.  $$
      Letting $k \rightarrow +\infty$, we get by \cite[Theorem 4.26]{GZbook} that 
      \begin{equation}\label{Mixed MA ineq}
          (\theta + dd^c u_j) \wedge (\theta + dd^c \psi)^{n-1}  \geq e^{\lambda u_j/n} f_j^{1/n} (\theta + dd^c \psi)^n. 
      \end{equation}
          Consider now $v_j = u_j - \sup_X u_j$ and $c_j = e^{\lambda \sup_X u_j}$. By the Hartogs lemma, we know that  a subsequence of $v_j$ converges in $L^1(X)$ to a function $v \in \PSH(X,\theta)$. Using Lemma \ref{lem 11.5 GZbook} and extracting if necessary, we can assume $v_j \rightarrow v$ a.e. with respect to $\theta_\psi^n$.  In particular, we get by the Lebesgue  convergence theorem that
          $$ \int_X e^{\lambda v_j/n} f_j^{1/n} (\theta + dd^c \psi)^n \rightarrow \int_X e^{\lambda v/n} f^{1/n} (\theta + dd^c \psi)^n.  $$
          If $\int_X e^{\lambda v/n} f^{1/n} (\theta + dd^c \psi)^n = 0$, then 
          $$ \int_{\{v \geq -t\}} f^{1/n} (\theta + dd^c \psi)^n \leq e^{t/n} \int_X e^{\lambda v/n} f^{1/n} (\theta + dd^c \psi)^n = 0, \; \; \forall t \geq 0, $$
          hence $f = 0$ in $L^1(\theta_\psi^n)$, consequently $\mu=0$, a contradiction. We thus infer  that
        $$ \int_X e^{\lambda v_j/n} f_j^{1/n} (\theta + dd^c \psi)^n \geq c' > 0. $$
          Recall that, by \eqref{Mixed MA ineq}, we have 
             $$ (\theta + dd^c v_j) \wedge (\theta + dd^c \psi)^{n-1}  \geq c_j^{1/n} e^{\lambda v_j/n} f_j^{1/n} (\theta + dd^c \psi)^n.  $$
             Integrating over $X$ and using Stokes theorem, we obtain 
             \begin{align*}
                  c' c_j^{1/n} \leq \int_X \theta_{v_j} \wedge \theta_\psi^{n-1} \leq \int_X \theta \wedge \omega_\psi^{n-1} + \int_X v_j dd^c \omega_\psi^{n-1}.
             \end{align*}
          
             Observe that 
             \begin{align*}
                 dd^c \omega_\psi^{n-1} &= (n-1) dd^c \omega \wedge \omega_{\psi}^{n-2} + n(n-1) d\omega \wedge d^c \omega \wedge \omega_{\psi}^{n-3} \\
                 &\geq -n(n-1) B \left(\omega^2 \wedge \omega_\psi^{n-2} + \omega^3 \wedge \omega_\psi^{n-3}\right), 
             \end{align*}
                   where $B\geq 0$ is such that $dd^c \omega \geq -B\omega^2$ and $d\omega \wedge d^c \omega \geq -B\omega^3$. 
             Therefore, we have 
             $$ \int_X v_j dd^c \omega_\psi^{n-1} \leq - n(n-1) B \int_X v_j \left(\omega^2 \wedge \omega_\psi^{n-2} + \omega^3 \wedge \omega_\psi^{n-3}\right), $$
             which is bounded from above by the CLN  inequality \cite[Proposition 1.1]{Ng16AIM}. We thus infer that $u_j$ is uniformly bounded from above on $X$.

       On the other hand, we have 
       $$ e^{-\lambda u_j} \theta_{u_j}^n = f_j \theta_{\psi}^n \leq e^{-\lambda(\psi-C)} \theta_\psi^n  $$
       for some $C>0$. It follows from \cite[Proposition 2.8]{GL22} that $u_j \geq \psi -C$, hence $u_j$ is uniformly bounded. Up to extracting we can assume $u_j \rightarrow u \in \PSH(X,\theta) \cap L^\infty(X)$ in $L^1(X)$. Using Lemma \ref{lem 11.5 GZbook} and extracting if necessary, we can further assume $u_j \rightarrow u$ a.e. with respect $L^1(\theta_\psi^n)$.
      We then obtain by the Lebesgue convergence theorem that $f_j e^{\lambda u_j}\theta_\psi^n \rightarrow f e^{\lambda u}\theta_\psi^n$ strongly.

      On the other hand, using the fact that $u_j$, $f_j$ are uniformly bounded, we get  $\theta_{u_j}^n \leq C \theta_{\psi}^n$ for some $C>0$. It then follows from Lemma \ref{lem 11.5 GZbook} that 
      $$ \lim_{j\rightarrow +\infty} \int_X |e^{u_j} - e^u| \theta_{u_j}^n =0. $$
      Arguing as in the end of the proof of Step 1, we get  $\theta_{u_j}^n \rightarrow \theta_u^n$ weakly, and consequently  $\theta_u^n = e^{\lambda u} \mu$. 
\end{proof}
We will also need the following subsolution theorem. 
\begin{theorem}\label{thm: sol to MA eqt with right hand-side is the sum of MA mesures}
    Let $u$ and  $v \in \PSH(X,\theta)$ with $u \simeq\phi$ and $v \simeq\phi$. If $\mu:=  e^{-\lambda u} \theta_u^n + e^{-\lambda v} \theta_v^n$    is a Radon measure,  then  there is $\varphi \in \PSH(X,\theta)$ such that  $\varphi \simeq\phi$ and  
    $$ \theta_{\varphi}^n = e^{\lambda \varphi} \mu. $$
\end{theorem}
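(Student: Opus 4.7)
The plan is to prove existence by approximating the measure $\mu$ through bounded truncations of $u$ and $v$, solving the truncated equations via Theorem~\ref{thm: ext of bounded sol}, and passing to the limit using the convergence machinery of Section~\ref{sec: cvg in capacity}.

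First I would set $u_k := \max(u,-k)$ and $v_k := \max(v,-k)$, both in $\PSH(X,\theta) \cap L^\infty(X)$ and decreasing to $u, v$ respectively, and define
\[
\mu_k := e^{-\lambda u_k}\theta_{u_k}^n + e^{-\lambda v_k}\theta_{v_k}^n.
\]
The pointwise concavity of $A \mapsto (\det A)^{1/n}$ on Hermitian semipositive forms, combined with the elementary inequality $(a+b)^n \geq a^n + b^n$ for $a,b \geq 0$, gives the mixed Monge-Amp\`ere bound $\theta_{u_k}^n + \theta_{v_k}^n \leq 2^n \theta_{\chi_k}^n$ with $\chi_k := (u_k+v_k)/2 \in \PSH(X,\theta)\cap L^\infty(X)$. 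Hence $\mu_k \leq 2^n e^{\lambda k}\theta_{\chi_k}^n$, and Theorem~\ref{thm: ext of bounded sol} (after a constant shift absorbing the multiplicative prefactor) produces $\varphi_k \in \PSH(X,\theta)\cap L^\infty(X)$ satisfying $\theta_{\varphi_k}^n = e^{\lambda\varphi_k}\mu_k$.

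Next, since $e^{\lambda u_k}\mu_k = \theta_{u_k}^n + e^{\lambda(u_k - v_k)}\theta_{v_k}^n \geq \theta_{u_k}^n$, the function $u_k$ is a supersolution for the $\mu_k$-equation, so Corollary~\ref{cor: uniqness MA-sol in [phi]}(i) gives $\varphi_k \leq u_k$ and symmetrically $\varphi_k \leq v_k$; taking upper semicontinuous limits, $\limsup^*\varphi_k \leq \min(u,v) \leq \phi + C$. A matching uniform lower bound $\varphi_k \geq \phi - C'$ (independent of $k$) is obtained by constructing a subsolution of the $\mu_k$-equation with singularity type $\phi$, exploiting the plurifine identity $e^{-\lambda u_k}\theta_{u_k}^n = e^{-\lambda u}\theta_u^n$ on $\{u > -k\}$ together with the finiteness of $\mu$. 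After extracting a subsequence, $\varphi_k \to \varphi$ in $L^1(X)$ with $\phi - C' \leq \varphi \leq \phi + C$, hence $\varphi \simeq \phi$. Combining Lemma~\ref{lem 11.5 GZbook}, Lemma~\ref{lem: cvg in capacity} and Lemma~\ref{lem: continuity of MA mesures} one upgrades the convergence of $\varphi_k$ to capacity and obtains $\theta_{\varphi_k}^n \to \theta_\varphi^n$ weakly. On the right-hand side, the plurifine decomposition yields $\mu_k = \mu$ on $\{u > -k\} \cap \{v > -k\}$, while the ``correction'' supported on the complement vanishes in mass thanks to the finiteness of $\mu$; combined with the uniform bound on $e^{\lambda\varphi_k}$, this gives weak convergence $e^{\lambda\varphi_k}\mu_k \to e^{\lambda\varphi}\mu$, and passing to the limit in $\theta_{\varphi_k}^n = e^{\lambda\varphi_k}\mu_k$ yields the claim.

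The main obstacle is the uniform lower bound $\varphi_k \geq \phi - C'$: the naive candidate $\chi_k - M$ fails to be a subsolution for $M$ independent of $k$ because the estimate $\mu_k \leq 2^n e^{\lambda k}\theta_{\chi_k}^n$ deteriorates exponentially in $k$. One must use the finiteness of $\mu$ quantitatively, replacing the crude pointwise bound $e^{-\lambda u_k} \leq e^{\lambda k}$ by the integral bound on $\{u \leq -k\}$ inherited from $\mu$, so as to build a subsolution whose singularity type is controlled by $\phi$ uniformly in $k$; this is also the step that ultimately guarantees that the limit $\varphi$ has the prescribed singularity type $\simeq \phi$.
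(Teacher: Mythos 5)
Your skeleton is the same as the paper's: truncate $u,v$ at level $-k$, solve the bounded equation with right-hand side $e^{\lambda\cdot}\mu_k$ via Theorem~\ref{thm: ext of bounded sol}, then pass to the limit. The upper bound $\varphi_k\leq\min(u_k,v_k)$ is also handled the same way. However, you misdiagnose the uniform lower bound, and you leave it unresolved, so the proposal has a genuine gap precisely where you flagged ``the main obstacle.''

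You claim the natural candidate $\chi_k-M$ with $\chi_k=(u_k+v_k)/2$ ``fails to be a subsolution for $M$ independent of $k$'' because $\mu_k\leq 2^ne^{\lambda k}\theta_{\chi_k}^n$ deteriorates exponentially. This reasoning confuses two different estimates. The bound with the $e^{\lambda k}$ factor is only used to verify that $\mu_k$ has a bounded density, so that Theorem~\ref{thm: ext of bounded sol} applies; it is irrelevant to the subsolution comparison. The subsolution (or, more precisely, the comparison via Corollary~\ref{cor: uniqness MA-sol in [phi]}(i)) requires $e^{-\lambda\psi}\theta_\psi^n\geq\mu_k$, and here the weight $e^{-\lambda\chi_k}$ absorbs the truncation entirely. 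Indeed, $\theta_{\chi_k}^n\geq 2^{-n}(\theta_{u_k}^n+\theta_{v_k}^n)$ by the binomial expansion, and since $u\simeq v$ gives $|u_k-v_k|\leq C$, one has $e^{-\lambda\chi_k}\geq e^{-\lambda C/2}\max(e^{-\lambda u_k},e^{-\lambda v_k})$; combining these yields $e^{-\lambda\chi_k}\theta_{\chi_k}^n\geq 2^{-n}e^{-\lambda C/2}\mu_k$. Setting $\psi_k=\chi_k-C/2-n\log 2/\lambda$ then gives $e^{-\lambda\psi_k}\theta_{\psi_k}^n\geq\mu_k=e^{-\lambda\varphi_k}\theta_{\varphi_k}^n$, whence $\varphi_k\geq\psi_k\geq\phi-C_1$ with $C_1$ independent of $k$ (using $u,v\simeq\phi$). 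So the ``naive'' candidate works, with $M$ fixed; the key input you missed is the hypothesis $u\simeq v$, which supplies the uniform bound $|u_k-v_k|\leq C$. Your proposed replacement --- an unspecified ``integral bound inherited from $\mu$'' --- is never actually constructed.

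Beyond the lower bound, your limit-passage is also too loose. To run Lemma~\ref{lem: cvg in capacity} one must show $\int_X|e^{\varphi_k}-e^\varphi|\theta_{\varphi_k}^n\to 0$. On the set $\{\min(u,v)>-k\}$ the plurifine locality identifies $\theta_{\varphi_k}^n=e^{\lambda\varphi_k}\mu$ and Lemma~\ref{lem 11.5 GZbook} applies; but on the tail $\{\min(u,v)\leq -k\}$ the quantity $\int(\omega_{u_k}^n+\omega_{v_k}^n)$ is \emph{not} controlled by $\mu(X)$ and can grow with $k$. The paper handles this with the separate Lemma~\ref{lem : control of max(u,-j)}, which shows this mass grows at most like $k^n$ and hence is killed by the factor $e^{-k}$ coming from $\max(e^{\varphi_k},e^\varphi)\leq e^{-k}$ there. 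Your appeal to ``finiteness of $\mu$'' does not see these Bedford--Taylor masses of the truncations. Finally, obtaining the equality $\theta_\varphi^n=e^{\lambda\varphi}\mu$ from the weak limit requires a two-sided cutoff argument (establishing $\theta_\varphi^n\leq e^{\lambda\varphi}\mu$ and $\int e^{\lambda\varphi}\,d\mu\leq\int\theta_\varphi^n$ separately, via the quasi-continuous cutoffs $f_k^\varepsilon g_k^\varepsilon$ and $h_\varepsilon^c$); a bare weak-convergence statement on both sides is insufficient because the right-hand sides $e^{\lambda\varphi_k}\mu_k$ are not a priori uniformly dominated.
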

We first prove the following technical lemma. 
\begin{lemma}\label{lem : control of max(u,-j)}
    For every $u \in \PSH(X,\omega)$, there is $C \geq 0$ such that 
    $$ \int_X (\omega + dd^c \max(u,-j))^n \leq Cj^n, \; \; \forall j \geq 1. $$
\end{lemma}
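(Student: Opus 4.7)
The plan is to reduce this global Monge--Amp\`ere mass bound to a local one via a finite cover of $X$ by coordinate balls, and then invoke the Chern--Levine--Nirenberg (CLN) inequality. Subtracting $\sup_X u$, I may assume $u\leq 0$ on $X$, so that $u_j:=\max(u,-j)$ satisfies $-j\leq u_j\leq 0$ for all $j\geq 1$.

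First I would cover $X$ by finitely many relatively compact coordinate balls $V_k\Subset U_k$ with holomorphic coordinates $z$ on $U_k$. Since $\omega$ is smooth and positive, on each $U_k$ there is a constant $A_k>0$ such that $\omega\leq A_k\,dd^c\|z\|^2$ as positive $(1,1)$-forms. Setting $h_{k,j}:=A_k\|z\|^2+u_j$, the function $h_{k,j}$ is psh and bounded on $U_k$ with $\|h_{k,j}\|_{L^\infty(U_k)}\leq A_k\sup_{U_k}\|z\|^2+j$, and
$$\omega+dd^c u_j \;\leq\; A_k\,dd^c\|z\|^2 + dd^c u_j \;=\; dd^c h_{k,j}\quad\text{on }U_k.$$
Applying the elementary identity
$$\beta^n-\alpha^n=(\beta-\alpha)\wedge\sum_{l=0}^{n-1}\alpha^l\wedge\beta^{n-1-l}$$
with $\alpha=\omega+dd^c u_j$ and $\beta=dd^c h_{k,j}$, where the difference $\beta-\alpha$ is the smooth positive form $A_k\,dd^c\|z\|^2-\omega$, exhibits $\beta^n-\alpha^n$ as a wedge of a smooth positive form with positive Bedford--Taylor currents; hence $(\omega+dd^c u_j)^n\leq(dd^c h_{k,j})^n$ as positive measures on $U_k$.

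Then the CLN inequality applied to the bounded psh function $h_{k,j}$ on $U_k$ yields a constant $C_k$ depending only on $V_k\Subset U_k$ such that
$$\int_{V_k}(dd^c h_{k,j})^n\;\leq\;C_k\|h_{k,j}\|_{L^\infty(U_k)}^n\;\leq\;C_k\bigl(A_k\sup_{U_k}\|z\|^2+j\bigr)^n\;\leq\;C'_k\,j^n$$
for all $j\geq 1$. Summing over the finitely many $V_k$ gives the desired estimate $\int_X(\omega+dd^c u_j)^n\leq Cj^n$. The only point requiring genuine care is the monotonicity $\alpha^n\leq\beta^n$ for Bedford--Taylor products of bounded potentials, but since $\beta-\alpha$ is a smooth positive form it reduces by a standard approximation argument to the smooth case.
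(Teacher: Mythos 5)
Your argument is correct, but it takes a noticeably different route from the paper's.  The paper's proof is a one-line scaling trick: writing $\max(u,-j)=j\max(u/j,-1)$, one has
\[
(\omega+dd^c\max(u,-j))^n\le (j\omega+dd^c\max(u,-j))^n=j^n(\omega+dd^c\max(u/j,-1))^n,
\]
and since $\max(u/j,-1)$ is a \emph{uniformly} bounded sequence of $\omega$-psh functions (after normalizing $\sup_X u=0$, it lies in $[-1,0]$), the global bound $\int_X(\omega+dd^c\max(u/j,-1))^n\le C$ is cited as a known uniform estimate for uniformly bounded $\omega$-psh potentials on a compact Hermitian manifold, yielding $Cj^n$ immediately.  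What you do instead is bypass the scaling and extract the $j^n$ growth directly from the Chern--Levine--Nirenberg inequality: you localize to coordinate charts, dominate $\omega+dd^c u_j$ by $dd^c h_{k,j}$ with $h_{k,j}$ a bounded psh function of $L^\infty$-size $O(j)$, justify the comparison of Monge--Amp\`ere masses via the telescoping/binomial identity (valid here because the excess form $A_k\,dd^c\|z\|^2-\omega$ is a smooth nonnegative form and all potentials are bounded), and then invoke local CLN with its explicit $\|h\|^n_{L^\infty}$ dependence.  Both arguments ultimately rest on the same CLN-type mass control; the paper's version is shorter because it appeals to the ready-made uniform bound (which the scaling renders applicable), while yours is more self-contained, in effect re-deriving that uniform bound and carrying the $j$-dependence through the $L^\infty$ norm rather than normalizing it away.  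One small presentational point: you should state explicitly that the smooth coefficient form $A_k\,dd^c\|z\|^2-\omega$ is nonnegative by the choice of $A_k$, since that positivity is exactly what makes $(\omega+dd^c u_j)^l\wedge(A_k\,dd^c\|z\|^2-\omega)^{n-l}$ a positive measure and hence the monotonicity $\alpha^n\le\beta^n$ legitimate for the Bedford--Taylor products; you allude to it but it deserves emphasis since $\omega$ is only Hermitian (not closed) and there is no global potential comparison available.
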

\begin{proof}
The functions $(\max(u/j,-1))$ form a uniformly bounded sequence of $\omega$-psh functions. We can then find a uniform constant $C\geq 0$ such that 
\[ \int_X (\omega +dd^c \max(u/j,-1))^n \leq C. \]
Therefore 
\begin{align*}
    \int_X (\omega + dd^c \max(u,-j))^n &\leq 
\int_X (j\omega + dd^c \max(u,-j))^n \\ &\leq j^n \int_X (\omega +dd^c \max(u/j,-1))^n \leq Cj^n. 
\end{align*}
\end{proof}
We now proceed to the proof of Theorem \ref{thm: sol to MA eqt with right hand-side is the sum of MA mesures}.
\begin{proof}
 For each $j \in \mathbb{N}$, we set $u_j=\max(u,-j)$, $v_j=\max(v,-j)$ and 
 $$ \mu_j  =  e^{-\lambda u_j} \theta_{u_j}^n + e^{-\lambda v_j} \theta_{v_j}^n. $$
The latter is  a positive Radon measure on $X$ that satisfies 
 $$  \mu_j  \leq 2^n e^{\lambda j} \theta_{{(u_j+v_j)/2}}^n. $$
  By Theorem \ref{thm: ext of bounded sol}, there is $\varphi_j \in \PSH(X,\theta)\cap L^\infty(X)$ such that 
   $$ \theta_{\varphi_j}^n = e^{\lambda \varphi_j}\mu_j. $$
  On one hand, \cite[Corollary 2.9]{GL22} yields $\varphi_j \leq  \min(u_j,v_j)$ for every $j$. 
  On the other hand, since $u$ and $v$ have the same singularity type, there is  $C>0$ such that $|u - v| \leq C$. Observe that we also have $|u_j - v_j| \leq C$ 
  by the definition of $u_j$ and $v_j$.   Now consider the function  $\psi_j = (u_j+ v_j)/2 -C/2 - n\log 2/\lambda$. We have 
  $$ e^{-\lambda \psi_j} \theta_{\psi_j}^n \geq e^{-\lambda/2(u_j+v_j) + C\lambda /2} (\theta_{u_j}^n + \theta_{v_j}^n) \geq e^{-\lambda u_j} \theta_{u_j}^n + e^{-\lambda v_j} \theta_{v_j}^n = e^{-\lambda \varphi_j} \theta_{\varphi_j}^n.  $$
  It follows from \cite[Corollary 2.9]{GL22} that $\varphi_j \geq \psi_j$ for every $j$. By the definition of $\psi_j$ and the fact that $u \simeq \phi$ and $v \simeq \phi$, we get $\varphi_j \geq \phi - C_1$ for some uniform constant $C_1$. We finally obtain $\phi - C_1\leq \varphi_j \leq \min(u_j,v_j)$ for every $j$.

  By the Hartogs lemma,  $(\varphi_j)$ has a subsequence converging in $L^1(X)$ to a function $\varphi \in \PSH(X,\theta)$ with $\varphi\simeq \phi$. Passing to a subsequence if necessary, one can suppose $\varphi_j \rightarrow \varphi$ in $L^1(X)$ and almost everywhere. 
 
   We are going to show that $\varphi$ is the desired solution. A key tool to do this is to prove that $\varphi_j \rightarrow \varphi$ in capacity. By Lemma \ref{lem: cvg in capacity}, it suffices to prove  
 $$  \lim_{j\rightarrow +\infty} \int_X |e^{\varphi_j} - e^\varphi| \theta_{\varphi_j}^n = 0. $$
 Recall that $\varphi_j \leq \min(u_j,v_j)$ and $\varphi \leq \min(u,v)$. 
On one hand, we have by Theorem \ref{thm: plurifine property and Demailly type inequality}
 \begin{align}\label{eq: first equation}
     \int_{\{\min(u,v)>-j\}} |e^{\varphi_j} - e^\varphi| \theta_{\varphi_j}^n &= \int_{\{\min(u,v)>-j\}} |e^{\varphi_j} - e^\varphi| e^{\lambda \varphi_j} d\mu \nonumber\\
     &\leq e^{\lambda  \sup_X u}\int_X |e^{\varphi_j} - e^{\varphi}| d\mu,
 \end{align}
  where we have taken $j$ large enough so that $\sup_X u \geq -j$.  On the other hand,
  \begin{equation}\label{eq: 1}
      |e^{\varphi} - e^{\varphi_j}| \leq \max(e^{\varphi}, e^{\varphi_j}) \leq e^{-j} \; \; \text{on} \; \{\min(u,v) \leq -j\}.
  \end{equation}
 Therefore
  $$ \int_{\{\min(u,v) \leq -j\}} |e^{\varphi_j} - e^\varphi| \theta_{\varphi_j}^n \leq e^{-j} \int_X e^{\lambda (\varphi_j - u_j)} \theta_{u_j}^n + e^{\lambda (\varphi_j - v_j)} \theta_{v_j}^n \leq e^{-j} \int_X (\omega_{u_j}^n + \omega_{v_j}^n).  $$
 We now apply the last lemma to obtain 
  $$  \int_{\{\min(u,v) \leq -j\}} |e^{\varphi_j} - e^\varphi| \theta_{\varphi_j}^n \leq A j^n e^{-j},  $$
  where $A$ is uniform constant that does not depend on $j$. 
  Using this and \eqref{eq: first equation}, we get
  \begin{align*}
       \int_X |e^{\varphi_j} - e^\varphi| \theta_{\varphi_j}^n &=  \int_{\{\min(u,v) >-j\}} |e^{\varphi_j} - e^\varphi| \theta_{\varphi_j}^n + \int_{\{\min(u,v) \leq -j\}} |e^{\varphi_j} - e^\varphi| \theta_{\varphi_j}^n \\
       &\leq e^{\lambda  \sup_X u}\int_X |e^{\varphi_j} - e^{\varphi}| d\mu + Aj^n e^{-j}.
  \end{align*}
   for $j$ large enough. Letting $j \rightarrow +\infty$, it follows from Lemma \ref{lem 11.5 GZbook} that
   $$  \lim_{j\rightarrow +\infty} \int_X |e^{\varphi_j} - e^\varphi| \theta_{\varphi_j}^n = 0, $$
  hence  $\varphi_j \rightarrow \varphi$ in capacity by Lemma \ref{lem: cvg in capacity}. 
  
We shall prove now that $\varphi$ is the desired solution. 
Fix $k \in \mathbb{N}$, $\varepsilon>0$ and consider the functions 
$$ f_k^\varepsilon = \frac{\max(u+k,0)}{\max(u+k,0)+\varepsilon} \; \text{and} \; g_k^\varepsilon = \frac{\max(v+k,0)}{\max(v+k,0)+\varepsilon}.  $$
These are positive, bounded and quasi-continuous functions. Moreover, Theorem \ref{thm: plurifine property and Demailly type inequality} gives 
 $$ f_k^\varepsilon g_k^\varepsilon \theta_{\varphi_j}^n = f_k^\varepsilon g_k^\varepsilon e^{\varphi_j} \mu, \; \; \forall j \geq k. $$
 Fixing $\chi \in \mathcal{C}^0(X)$, $\chi\geq 0$ and using  \cite[Lemma 11.5]{GZbook} and \cite[Theorem 2.6]{DDL23}, we  get
\begin{align*}
    \int_X \chi f_k^\varepsilon g_k^\varepsilon \theta_\varphi^n 
    &\leq \liminf_{j\rightarrow +\infty} \int_X \chi f_k^\varepsilon g_k^\varepsilon \theta_{\varphi_j}^n \\
    &= \liminf_{j\rightarrow +\infty} \int_X \chi f_k^\varepsilon g_k^\varepsilon e^{\lambda \varphi_j} d\mu \\
    &= \int_X \chi f_k^\varepsilon g_k^\varepsilon e^{\lambda \varphi} d\mu.
\end{align*}
Letting $\varepsilon \rightarrow 0$ and then $k\rightarrow +\infty$, we get 
$$  \int_X \chi \theta_\varphi^n  \leq \int_X \chi e^{\lambda \varphi} d\mu.   $$
By our choice of the function $\chi$, this means that $\theta_\varphi^n \leq e^{\lambda \varphi} \mu$. 

On the other hand, the function  $\psi := P_\theta(\inf \varphi_j)$ belongs $\PSH(X,\theta)$.  For every $c$, $\varepsilon>0$, we set
$$ h_\varepsilon^c = \frac{\max(\psi + c,0)}{\max(\psi + c,0)+\varepsilon}. $$
Observe that $0\leq h_\varepsilon^c \leq 1$ and $h_\varepsilon^c \neq 0$ only if $\varphi_j>-c$ for every $j$. Moreover, for $\varphi^c=\max(\varphi,-c)$ and $\varphi_j^c=\max(\varphi_j,-c)$, we have 
$$ |\varphi_j^c - \varphi^c| \leq |\varphi_j - \varphi|, \; \; \forall j \geq 1, \forall c>0. $$
Thus, we infer $\varphi_j^c \rightarrow \varphi^c$ in capacity. It follows from \cite[Theorem 4.26]{GZbook} that
$$ \int_X h_\varepsilon^c  \theta_\varphi^n = 
    \int_X h_\varepsilon^c  \theta_{\varphi^c}^n 
    = \lim_{j\rightarrow +\infty} \int_X h_\varepsilon^c   \theta_{\varphi_j^c}^n 
    \geq \limsup_{j\rightarrow +\infty} \int_{\{\min(u,v)>-j\}} h_\varepsilon^c e^{\lambda \varphi_j} d\mu. $$
    Using \eqref{eq: 1}, we obtain 
    $$  \lim_{j\rightarrow +\infty} \int_{\{\min(u,v)\leq -j\}} h_\varepsilon^c e^{\lambda \varphi_j} d\mu = 0, $$
    hence 
    $$  \int_X h_\varepsilon^c  \theta_\varphi^n \geq \limsup_{j\rightarrow +\infty} \int_X h_\varepsilon^c e^{\lambda \varphi_j} d\mu = \int_X h_\varepsilon^c e^{\lambda \varphi} d\mu, $$
where in the last equality we have used \cite[Lemma 11.5]{GZbook}. 
Letting $\varepsilon \rightarrow 0$ and then $c \rightarrow +\infty$, we obtain $\int_X e^{\lambda \varphi} d\mu \leq \int_X \theta_\varphi^n$.  We have shown that   $\theta_\varphi^n \leq e^{\lambda \varphi}\mu$ and $\int_X e^{\lambda \varphi} d\mu \leq \int_X \theta_\varphi^n$, hence $\theta_\varphi^n=e^{\lambda \varphi}\mu$, finishing the proof.
\end{proof}
We are now in position to prove the existence of weak solutions with model singularities when the right hand side has $L^p$-density.
\begin{theorem}\label{thm: sol to MA eqt with right hand-side has Lp density}
Assume $\mu = f \omega^n$ is a positive Radon measure with  $ f \in L^p(X)$, $p>1$. Then, there is a uniquely determined $u \in \PSH(X,\theta)$ such that $u\simeq \phi$ and
    $$ \theta_u^n = e^{\lambda u} \mu. $$
\end{theorem}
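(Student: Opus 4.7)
Uniqueness reduces immediately to the domination principle; for existence I would follow the Perron-envelope strategy announced in the Introduction, built on Theorem A, the subsolution theorem (Theorem \ref{thm: sol to MA eqt with right hand-side is the sum of MA mesures}), the relative uniform estimate (Lemma \ref{lem: uniform estimate}), and the capacity-convergence results of Section \ref{sec: cvg in capacity}.

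For uniqueness, suppose $u_1, u_2 \in \PSH(X,\theta)$ both solve $\theta_{u_i}^n = e^{\lambda u_i}\mu$ with $u_i \simeq \phi$. Since $\phi$ is a $\theta$-model potential, $P_{\varepsilon\theta}(\phi) \not\equiv -\infty$ for some $\varepsilon \in (0,1)$, and the relation $u_i \simeq \phi$ transfers this non-triviality to $P_{\varepsilon\theta}(u_i) \not\equiv -\infty$. The identity $e^{-\lambda u_1}\theta_{u_1}^n = \mu = e^{-\lambda u_2}\theta_{u_2}^n$ then yields $u_1 = u_2$ upon applying Corollary \ref{cor: uniqness MA-sol in [phi]}(i) in both directions.

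For existence I would first truncate the density: with $f_j := \min(f, j) \in L^\infty$, after a translation by an additive constant $f_j \omega^n$ is dominated by the Monge-Ampère measure of a bounded $\theta$-psh potential (via the Hermitian Kolodziej-type estimates available for $\theta$ big and semipositive), so Theorem \ref{thm: ext of bounded sol} produces bounded solutions $\tau_j \in \PSH(X,\theta) \cap L^\infty$ to $\theta_{\tau_j}^n = e^{\lambda\tau_j} f_j\omega^n$. To promote these to potentials with singularity type $\phi$, I would apply Theorem \ref{thm: sol to MA eqt with right hand-side is the sum of MA mesures} iteratively, feeding it pairs of potentials $\simeq \phi$ chosen so that the right-hand side of the produced equation is $e^{\lambda\varphi_j} f_j\omega^n$. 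This yields $\varphi_j \simeq \phi$ with $\theta_{\varphi_j}^n = e^{\lambda\varphi_j} f_j\omega^n$, and Lemma \ref{lem: uniform estimate} provides a $j$-independent constant $C$ with $\phi - C \leq \varphi_j \leq \phi + C$.

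Passing to the limit, a subsequence $\varphi_j \to \varphi \in \PSH(X,\theta)$ converges in $L^1(X)$, with $\varphi \simeq \phi$ by Hartogs and the uniform bound. From $\varphi_j \leq \phi + C$ one gets $\theta_{\varphi_j}^n \leq C'\mu$, so Lemma \ref{lem 11.5 GZbook} gives $\int_X |e^{\varphi_j} - e^\varphi|\theta_{\varphi_j}^n \to 0$; Lemma \ref{lem: cvg in capacity} upgrades this to convergence in capacity, and Lemma \ref{lem: continuity of MA mesures} (or directly Theorem \ref{main cvg thm}) yields weak convergence $\theta_{\varphi_j}^n \to \theta_\varphi^n$, while dominated convergence handles $e^{\lambda\varphi_j} f_j \omega^n \to e^{\lambda\varphi} f\omega^n$; comparing limits proves the equation. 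The principal obstacle is the lifting step: Theorem \ref{thm: sol to MA eqt with right hand-side is the sum of MA mesures} only handles right-hand sides of the form $e^{-\lambda u}\theta_u^n + e^{-\lambda v}\theta_v^n$ with $u, v \simeq \phi$, so realizing $f_j\omega^n$ in this shape requires a careful Perron-type iteration in which Lemma \ref{lem: uniform estimate} plays the role of an indispensable $L^\infty$-type a priori estimate.
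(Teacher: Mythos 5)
Your uniqueness argument is correct, and your inventory of the required ingredients (Theorem \ref{thm: ext of bounded sol}, Theorem \ref{thm: sol to MA eqt with right hand-side is the sum of MA mesures}, Lemma \ref{lem: uniform estimate}, and the convergence machinery of Section~\ref{sec: cvg in capacity}) is exactly right. But the step you yourself flag as the ``principal obstacle''—using Theorem \ref{thm: sol to MA eqt with right hand-side is the sum of MA mesures} ``iteratively'' to realize $e^{\lambda\varphi_j} f_j\omega^n$ as the Monge--Amp\`ere measure of some $\varphi_j\simeq\phi$—is a genuine gap, and iteration is the wrong tool: there is no evident inductive scheme by which summing two measures of the form $e^{-\lambda u}\theta_u^n$ with $u\simeq\phi$ converges to the target $f_j\omega^n$, and the truncation $f_j=\min(f,j)$ offers no obvious help with this.

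The paper sidesteps the iteration entirely with a Perron maximization plus a one-shot contradiction. First, a preliminary lemma (proved just before the theorem) produces a \emph{subsolution} $\varphi\simeq\phi$ with $e^{-\lambda\varphi}\theta_\varphi^n\le f\omega^n$: one solves $\theta_v^n=e^{\lambda v}f\omega^n$ with $v$ bounded via \cite[Theorem~3.4]{GL23Crelle}, and then pushes $v-\sup_X v$ down to singularity type $\phi$ by taking $\psi=P_\theta(v-\sup_X v,\phi)$ and estimating $\theta_\psi^n$ with Propositions \ref{prop: the envelope of min of two fncs} and \ref{prop: the MA measure of P theta[u]}. One then maximizes the total mass $\int_X e^{-\lambda u}\theta_u^n$ over the class
$\{u\in\PSH(X,\theta): u\simeq\phi,\ e^{-\lambda\varphi}\theta_\varphi^n\le e^{-\lambda u}\theta_u^n\le f\omega^n\}$.
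A maximizing sequence $u_j$ is sandwiched (upper bound from Corollary \ref{cor: uniqness MA-sol in [phi]} against $\varphi$; lower bound from Lemma \ref{lem: uniform estimate} and comparison with a bounded solution of $((1-\varepsilon)\theta+dd^c\psi)^n=e^{\lambda\psi}f\omega^n$), so it subconverges in capacity by Theorem~\ref{main cvg thm} to some $u\simeq\phi$ with $e^{-\lambda u}\theta_u^n=g\omega^n$, $g\le f$, and mass equal to the supremum. If $g\ne f$ on a set of positive measure, the same subsolution lemma applied to $(f-g)\omega^n$ (which has positive mass and $L^p$ density) produces $v\simeq\phi$ with $e^{-\lambda v}\theta_v^n\le(f-g)\omega^n$, and Theorem~\ref{thm: sol to MA eqt with right hand-side is the sum of MA mesures} applied once to the pair $(u,v)$ yields $w\simeq\phi$ with $e^{-\lambda w}\theta_w^n=e^{-\lambda u}\theta_u^n+e^{-\lambda v}\theta_v^n\le f\omega^n$ but strictly larger total mass, contradicting maximality. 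So Theorem~\ref{thm: sol to MA eqt with right hand-side is the sum of MA mesures} is invoked exactly once, to ``bump up'' a putatively non-maximal candidate, not as an iterative construction tool.

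In short: your proposal names the right pieces but does not close the argument, and the missing mechanism is the extremal (maximizing) characterization of the solution together with the Radon--Nikodym decomposition $e^{-\lambda u}\theta_u^n=g\omega^n$, $g\le f$, which makes the single application of Theorem~\ref{thm: sol to MA eqt with right hand-side is the sum of MA mesures} decisive.
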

To prove the last theorem, we need to establish the following relative $L^\infty$ estimate.
\begin{lemma}\label{lem: uniform estimate}
    Assume $u \in \PSH(X,\theta)$ satisfies $\sup_X u=0$, $u \simeq \phi$ and  $\theta_u^n \leq  f \omega^n$ for some function $f \in L^p(X)$ with $p>1$.   Then $u \geq \phi - C$ for a uniform constant $C>0$ that only depends on $X$, $\phi$ and an upper bound of $\|f\|_p$. 
\end{lemma}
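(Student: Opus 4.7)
The plan is to follow Ko\l odziej's iteration strategy, adapted to the relative setting dictated by the model potential $\phi$. For $s \geq 0$, consider the non-increasing sublevel capacity
$$ g(s) := \mathrm{cap}_\omega\bigl(\{u < \phi - s\}\bigr). $$
Because $u \simeq \phi$ a priori, we already have $g(s) = 0$ for some $s$ depending on $u$; the goal is to control this threshold in terms of $X$, $\phi$ and $\|f\|_p$ only. The argument has three pieces: a relative capacity--Monge-Amp\`ere inequality, insertion of the $L^p$-hypothesis via H\"older, and a Ko\l odziej-type iteration to force $g$ to vanish past a uniform level.

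The crux is a relative capacity--Monge-Amp\`ere estimate of the form
$$ t^n g(s+t) \;\leq\; C_\phi \int_{\{u < \phi - s\}} \theta_u^n, \qquad s > 0,\; t \in (0,1), $$
with $C_\phi$ depending only on $\phi$. Given a test function $h \in \PSH(X,\omega)$ with $0 \leq h \leq 1$, the candidate is $w_t := \max\bigl(u,\, \phi + t(h-1) - s\bigr)$, which is $\theta$-psh, has the same singularity type as $\phi$, coincides with $\phi + t(h-1) - s$ on $\{u < \phi - s - t\}$, and lies above $u$. Applying the plurifine/Demailly inequality of Theorem \ref{thm: plurifine property and Demailly type inequality} to $(u,w_t)$ and expanding $(\theta + dd^c w_t)^n$ by the multinomial formula, one isolates the pure $t^n(\omega + dd^c h)^n$ contribution and bounds it by $\theta_u^n$ on the sublevel set; taking the supremum over $h$ yields the claim. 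To cope with the unboundedness of $\phi$, the estimate is first established for the truncations $\phi_j := \max(\phi,-j)$ and then passed to the limit using the convergence properties of Theorem \ref{thm: plurifine property and Demailly type inequality}.

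Combining this with H\"older's inequality and the hypothesis $\theta_u^n \leq f\omega^n$ gives
$$ t^n g(s+t) \;\leq\; C_\phi\|f\|_p\, \mathrm{Vol}_\omega\bigl(\{u<\phi-s\}\bigr)^{1-1/p}. $$
The standard volume-capacity inequality on compact hermitian manifolds then converts this into a self-improving super-polynomial decay for $g$, and Ko\l odziej's iteration lemma forces $g(s) = 0$ for all $s \geq C_\infty$, with $C_\infty$ depending only on $X$, $\phi$ and $\|f\|_p$. Since pluripolar sets are Lebesgue-null and $u,\phi$ are $\theta$-psh, the inequality $u \geq \phi - C_\infty$ then holds everywhere. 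The main difficulty lies in the first step: in the K\"ahler case the corresponding estimate follows directly from Stokes' theorem, a tool unavailable here because $\theta$ is not closed. The remedy is to keep the argument local, relying on the plurifine locality of Theorem \ref{thm: plurifine property and Demailly type inequality} in place of integration by parts, and to absorb the error terms produced by $(\omega - \theta)$ and $d\omega$ into the $\phi$-dependent constant $C_\phi$, which in turn only affects the final value of $C_\infty$.
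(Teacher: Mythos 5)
The proposal hinges on a relative capacity--Monge-Amp\`ere inequality (a comparison principle)
\[
t^n\,\capa_\omega\bigl(\{u<\phi-s-t\}\bigr)\;\leq\;C_\phi\int_{\{u<\phi-s\}}\theta_u^n,
\]
and this is exactly the ingredient that is not available in the Hermitian, non-closed setting. Two distinct things go wrong. First, the test competitor $\phi+t(h-1)-s$ with $h\in\PSH(X,\omega)$, $0\leq h\leq 1$, is in general \emph{not} $\theta$-psh: one has $\theta+dd^c\phi+t\,dd^c h\geq(\theta+dd^c\phi)-t\omega$, and the positive current $\theta+dd^c\phi$ does not dominate any multiple of $\omega$, so the function $w_t$ need not lie in $\PSH(X,\theta)$ and the ``expansion of $(\theta+dd^c w_t)^n$'' is undefined. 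Second, and more fundamentally, the plurifine locality of Theorem~\ref{thm: plurifine property and Demailly type inequality} only identifies the measures $\theta_u^n$ and $\theta_{\max(u,v)}^n$ on the open set $\{u>v\}$; it does not yield the inequality between integrals over the complementary sublevel set. In the K\"ahler/closed case that inequality follows precisely because $\int_X\theta_u^n=\int_X\theta_v^n$ (Stokes), so the local identity on $\{u>v\}$ transfers to a bound on $\{u<v\}$ by subtraction. When $\theta$ is merely Hermitian this invariance of total mass fails, and there is no way to ``absorb the error terms produced by $d\omega$ into $C_\phi$'' by a local argument: the comparison principle genuinely breaks, not up to constants. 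This is the very obstruction that forces the paper to take a different route.

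For contrast, the paper's proof solves an auxiliary \emph{bounded} Monge-Amp\`ere equation $\bigl((1-\varepsilon)\theta+dd^c v\bigr)^n=c\,e^{-\gamma(\phi_\varepsilon+u)}f\,\omega^n$ using \cite[Theorem~3.4]{GL23Crelle} (where the Ko\l odziej-type iteration does apply, since everything is bounded), then builds the $\theta$-psh competitor $\varphi=\phi+\chi(\phi_\varepsilon+v-\phi)$ with $\chi(t)=-1/(t-1)$ so that $\theta_\varphi^n\geq ac\,e^{-\gamma u}\theta_u^n$, and finally concludes via the domination principle Theorem~\ref{thm: the dom prn in [phi]} --- which in this paper is established by envelope methods, not by comparison of total masses. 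If you want to salvage an iteration-style argument in the relative Hermitian setting, you would first need a substitute for the comparison principle at the level of unbounded potentials, and the present proposal does not supply one.
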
 
\begin{proof}
   Fix $\varepsilon \in (0,1)$ such that $\phi_\varepsilon := P_{\varepsilon \theta}(\phi) \not\equiv - \infty$. According to Skoda’s uniform integrability theorem (see \cite[Theorem 8.11]{GZ17}), there exists $\gamma>0$ and $1<q<p$ such that $e^{-\gamma (\phi_\varepsilon + u)}f \in L^q(\omega^n)$. We stress here that $\gamma$ and $\|e^{-\gamma (\phi_\varepsilon + u)}f\|_q$ depend only on $(X,\omega)$, $\phi$ and $\|f\|_p$. By \cite[Theorem 3.4]{GL23Crelle}, one can find $c>0$ and $v \in \PSH(X,(1 - \varepsilon)\theta) \cap L^\infty(X)$ such that $\sup_X v= 0$ and 
$$ ((1 - \varepsilon)\theta + v)^n = c e^{-\gamma (\phi_{\varepsilon}+u)} f \omega^n.  $$
Moreover, by construction of $c$, we know that it is bounded from below by a constant that only depends on $\phi$ and an upper bound of $\|f\|_p$.

 The function defined on $\mathbb{R}_-$ by $\chi(t) = \frac{-1}{t-1}$ is convex, increasing  and satisfies $0 \leq \chi \leq 1$. For 
$$ h:= \phi_\varepsilon + v \; \text{and} \; \varphi:= \phi + \chi(h-\phi) = \phi - \frac{1}{h-\phi-1},$$
we have $h \in \PSH(X,\theta)$, $\varphi \in L^1(X)$ and $\phi \leq \varphi \leq \phi + 1$. We further have 
\begin{align*}
  \theta +  dd^c \varphi &= \theta + dd^c \phi + dd^c(\chi(h-\phi)) \\
    &= \theta + dd^c \phi + \chi'(h-\phi) dd^c(h-\phi) + \chi''(h-\phi) d(h-\phi) \wedge d^c(h-\phi) \\
    &\geq \theta + dd^c \phi + \chi'(h-\phi) dd^c(h-\phi) \\
    &= (1 - \chi'(h-\phi))(\theta + dd^c \phi) + \chi'(h-\phi) (\theta + dd^c h).
\end{align*}
Now using that $\chi'(t) \leq 1$ on $\mathbb{R}_-$, we obtain 
\[\theta + dd^c \varphi \geq \chi'(h-\phi) (\theta + dd^c h) \geq 0. \]
Therefore, $\varphi\in \PSH(X,\theta)$ and 
\[
(\theta +dd^c \varphi)^n \geq \chi'(h-\phi)^n (\theta + dd^c h)^n. 
\]
Using
\[
\theta_h^n \geq ((1-\varepsilon)\theta +dd^c v)^n \geq ce^{-\gamma \phi_{\varepsilon}} e^{-\gamma u} f\omega^n, 
\]
and 
\[
 0\leq \phi-h+1\leq -h +1 = -\phi_{\varepsilon}-v+1\leq -\phi_{\varepsilon}+C, 
\]
we get 
\[
\theta_{\varphi}^n \geq \frac{1}{(\phi-h+1)^{2n}} ce^{-\gamma\phi_{\varepsilon}} e^{-\gamma u} fdV \geq ac e^{-\gamma u} (\theta+dd^c u)^n,
\]
where $a$ is a positive constant that only depends on $n$ and $\gamma$. 
 Therefore, on the set $\{u < \varphi  - 3 +\log(ac)/\gamma\}$, we have  
$$ (\theta + dd^cu)^n \leq e^{\gamma (u+1) -\log(ac)} (\theta + dd^c \varphi)^n \leq e^{-\gamma} (\theta + dd^c \varphi)^n  $$
because $\varphi \leq \phi + 1 \leq 1$. It then follows from the domination principle (Theorem \ref{thm: the dom prn in [phi]}) that $u \geq \varphi -3 + \log(ac)/\gamma$, and consequently that $u \geq \phi -2 + \log(ac)/\gamma$. The proof is complete.
\end{proof}
We will also need the following lemma. 
\begin{lemma}
  Assume $\mu = f\omega^n$ is a positive Radon measure with density 
  $f\in L^p(X)$ for some $p>1$. Then there is $u \in \PSH(X,\theta)$ such that $u \simeq \phi$  and 
$$ \theta_u^n \leq e^{\lambda u} \mu. $$
\end{lemma}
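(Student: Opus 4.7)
The plan is to construct $u$ as the $\theta$-psh envelope $u := P_\theta(\phi,\psi)$, where $\psi \in \PSH(X,\theta)\cap L^\infty(X)$ is a bounded subsolution of the equation, normalized so that $\sup_X\psi=0$. Producing such a $\psi$ is the first (and most technical) step: by the $L^p$-Monge-Amp\`ere theory on Hermitian manifolds used in the proof of Theorem \ref{thm: ext of bounded sol} (cf. \cite{GL22,GL23Crelle,BGL24}), for a constant $c>0$ chosen sufficiently small the measure $c\mu$ is dominated by the Monge-Amp\`ere mass of a bounded $\theta$-psh potential, so Theorem \ref{thm: ext of bounded sol} furnishes a bounded solution $\psi_c$ of $\theta_{\psi_c}^n = c e^{\lambda\psi_c}\mu$, which for $c\le 1$ trivially satisfies $\theta_{\psi_c}^n \le e^{\lambda\psi_c}\mu$. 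A careful combination of rescaling $c$ with a vertical translation, matched via a Ko\l odziej-type a priori $L^\infty$-estimate, then produces $\psi$ with $\sup_X\psi=0$ and the subsolution inequality preserved.

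Having $\psi$ at hand, I would set $u := P_\theta(\phi,\psi)$. Since we may assume $\sup_X\phi\le 0$ (automatic for a $\theta$-model potential, as $P_\theta[\phi]\le 0$ by construction of the envelope of singularities), the function $\phi + \inf_X\psi$ is $\theta$-psh and is bounded above by both $\phi$ (trivially) and $\psi$ (since $\phi + \inf_X\psi \le \inf_X\psi \le \psi$), hence is an admissible competitor for the envelope, yielding
\[
\phi + \inf_X\psi \le u \le \min(\phi,\psi) \le \phi,
\]
which proves $u \simeq \phi$. The subsolution inequality is then obtained from Proposition \ref{prop: the envelope of min of two fncs}, which gives
\[
\theta_u^n \le \textit{1}_{\{u=\phi<\psi\}}\theta_\phi^n + \textit{1}_{\{u=\psi\}}\theta_\psi^n.
\]
Since $\sup_X\psi=0$, the set $\{\phi<\psi\}$ is contained in $\{\phi<0\}$, and Proposition \ref{prop: the MA measure of P theta[u]} (applied to the $\theta$-model potential $\phi$, for which $P_\theta[\phi]=\phi$) yields $\theta_\phi^n \le \textit{1}_{\{\phi=0\}}\theta^n$, so the first term vanishes; on $\{u=\psi\}$ one has $e^{\lambda u}=e^{\lambda\psi}$ and the subsolution property of $\psi$ gives $\theta_\psi^n \le e^{\lambda u}\mu$. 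Combining yields $\theta_u^n \le e^{\lambda u}\mu$.

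The main obstacle is the first step: matching the normalization $\sup_X\psi=0$ to the subsolution inequality. The Monge-Amp\`ere solution provided by Theorem \ref{thm: ext of bounded sol} has an implicitly determined supremum, and a naive vertical shift destroys the subsolution inequality, so one must balance the rescaling $\mu \leadsto c\mu$ against the translation of $\psi$ by means of a Ko\l odziej-type a priori $L^\infty$-estimate in the Hermitian framework. Once this normalization is secured, the remainder of the argument is a transparent application of the envelope theory developed in Section \ref{sect: NP measures}.
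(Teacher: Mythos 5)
Your envelope construction in the second half is exactly the paper's: the paper also takes $\psi=P_\theta(\varphi,\phi)$ with $\varphi$ a bounded $\theta$-psh function normalized to $\sup_X\varphi=0$, and kills the $\theta_\phi^n$ term via Proposition~\ref{prop: the MA measure of P theta[u]} and the inclusion $\{\phi<\varphi\}\subset\{\phi<0\}$. But the first half of your plan has a genuine gap, and the remedy you sketch does not actually close it.

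You ask for a bounded $\psi\in\PSH(X,\theta)$ satisfying \emph{both} $\sup_X\psi=0$ and $\theta_\psi^n\le e^{\lambda\psi}\mu$. These two requirements can be mutually incompatible: if $\sup_X\psi=0$ then $e^{\lambda\psi}\le1$, so $\theta_\psi^n\le\mu$ and hence $\int_X\theta_\psi^n\le\mu(X)$; on the other hand, since $\psi$ is bounded, $\int_X\theta_\psi^n\ge v_-(\theta)$. If $\mu(X)<v_-(\theta)$ no such $\psi$ exists. Worse, the rescaling $\mu\leadsto c\mu$ you propose cannot help: the bounded solution of $\theta_{\psi_c}^n=c\,e^{\lambda\psi_c}\mu$ is, by uniqueness (Corollary~\ref{cor: uniqness MA-sol in [phi]}), precisely $\psi_c=\psi_1-\lambda^{-1}\log c$, where $\psi_1$ is the solution for $c=1$. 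Thus $\psi_c-\sup_X\psi_c=\psi_1-\sup_X\psi_1$ for every $c$, i.e.\ all the normalized functions you could obtain by varying $c$ are literally the same. No Ko\l odziej-type estimate will change this arithmetic.

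The paper sidesteps the issue by not insisting that the normalized potential already be a subsolution. One applies \cite[Theorem~3.4]{GL23Crelle} directly to get a bounded $v\in\PSH(X,\theta)$ with $\theta_v^n=e^{\lambda v}f\omega^n$, sets $\varphi=v-\sup_X v$, and notes that $\varphi$ satisfies the equation only up to a multiplicative constant: $\theta_\varphi^n=e^{\lambda\sup_X v}\,e^{\lambda\varphi}f\omega^n$. The envelope $\psi=P_\theta(\varphi,\phi)$ then satisfies $\theta_\psi^n\le e^{\lambda\sup_X v}\,e^{\lambda\psi}f\omega^n$, and the multiplicative constant is absorbed at the very end by the additive shift $u:=\psi+\sup_X v$, which preserves $\theta_u^n=\theta_\psi^n$ while rescaling $e^{\lambda u}$ by exactly the required factor. (This works because the statement imposes no normalization on $u$.) If you replace your normalized-subsolution requirement by this ``subsolution up to a constant, then shift the answer'' device, your argument becomes essentially the paper's proof; as currently written the first step cannot be carried out.
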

\begin{proof}
By \cite[Theorem 3.4]{GL23Crelle}, there is  $v \in \PSH(X,\theta) \cap L^\infty(X)$ such that 
$$ \theta_v^n = e^{\lambda v} f \omega^n. $$
Set $\varphi= v - \sup_X v$ and $\psi = P_\theta(\varphi,\phi)$. We have $\psi \in \PSH(X,\theta)$, $\psi \simeq \phi$ and 
$$ \theta_{\psi}^n \leq \textit{1}_{\{\psi = \varphi \leq \phi\}} \theta_{\varphi}^n + \textit{1}_{\{\psi = \phi < \varphi\}} \theta_\phi^n $$
by Proposition \ref{prop: the envelope of min of two fncs}. Since $\phi$ is $\theta$-model, we get by Proposition \ref{prop: the MA measure of P theta[u]} that
$$ \theta_\phi^n(\{\psi = \phi < \varphi\}) \leq \theta^n(\{\psi = \phi < \varphi\}\cap \{\phi=0\})= 0.  $$
Therefore, 
$$ \theta_{\psi}^n \leq \textit{1}_{\{\psi = \varphi \leq \phi\}} \theta_{\varphi}^n = \textit{1}_{\{\psi = \varphi \leq \phi\}} e^{\lambda(\varphi+ \sup_Xv)} f \omega^n \leq e^{\lambda(\psi + \sup_X v)} f \omega^n. $$
It then suffices to take $u =\psi + \sup_X v$. 
\end{proof}
We now proceed to the proof of Theorem \ref{thm: sol to MA eqt with right hand-side has Lp density}.
\begin{proof}[Proof of Theorem \ref{thm: sol to MA eqt with right hand-side has Lp density}]
By the last lemma, there is $\varphi\in \PSH(X,\theta)$ such that $\varphi \simeq \phi$ and $e^{-\lambda \varphi}\theta_\varphi^n \leq f\omega^n$.  
    Consider the set
    $$ \mathcal{B} = \{\mu(X) :  \mu = e^{-\lambda u} \theta_u^n, u \in \PSH(X,\theta), u \simeq \phi \; \text{and} \; e^{-\lambda \varphi}\theta_\varphi^n \leq \mu  \leq f\omega^n\}.  $$
 Observe that $\mathcal{B}$ is a  bounded and non-empty subset of $\mathbb{R}_+$. Therefore, there is   $u_j \in \PSH(X,\theta)$ such that $u_j \simeq \phi$, $e^{-\lambda \varphi}\theta_{\varphi}^n \leq e^{-\lambda u_j}\theta_{u_j}^n \leq  f\omega^n$ and 
 $$   \int_X e^{-\lambda u_j}  \theta_{u_j}^n  \nearrow \sup \mathcal{B}. $$
By Corollary \ref{cor: uniqness MA-sol in [phi]}, we have  $u_j \leq \varphi$ for every $j$. 

Fix $\varepsilon \in (0,1)$ such that $P_{\varepsilon\theta}(\phi) \not\equiv -\infty$. By \cite[Theorem 3.4]{GL23Crelle}, there is  $\psi \in \PSH(X,(1-\varepsilon)\theta) \cap L^\infty(X)$ such that 
$$ ((1-\varepsilon)\theta + dd^c \psi)^n = e^{\lambda \psi} f \omega^n. $$
That implies 
\begin{align*}
     e^{-\lambda u_j}(\theta + dd^c u_j)^n &\leq e^{-\lambda \psi}((1-\varepsilon)\theta + dd^c \psi)^n \\ &\leq e^{-\lambda (\psi+P_{\varepsilon\theta}(\phi))}(\theta + dd^c (\psi+P_{\varepsilon\theta}(\phi)))^n.
\end{align*}
It then follows from Corollary \ref{cor: uniqness MA-sol in [phi]} that $u_j \geq \psi + P_{\varepsilon\theta}(\phi)$; in particular $(\sup_X u_j)$ is uniformly bounded.  Up to extracting a subsequence, we can assume $u_j \rightarrow u \in \PSH(X,\theta)$ in $L^1(X)$. Since $u_j \leq \varphi$ for every $j$, we have $u \leq \varphi$ and  $\theta_{u_j}^n \leq e^{\lambda \sup_X \varphi} f\omega^n$ for every $j$. We first obtain by  Lemma \ref{lem: uniform estimate} that $u_j \geq \phi - A$ for a uniform constant $A>0$, hence $u \simeq \phi$. Then, by observing that $u_j$ satisfies \eqref{condition 2}, we conclude from Theorem \ref{main cvg thm} that $u_j \rightarrow u$ in capacity and $\theta_{u_j}^n \longrightarrow \theta_u^n$ weakly.

   Since $e^{-\lambda u_j}\theta_{u_j}^n \leq f\omega^n$ for every $j$, the sequence $(e^{-\lambda u_j}\theta_{u_j}^n)$ has at least one cluster point  $\nu$ for the weak topology. Up to extracting a subsequence, one can assume $e^{-\lambda u_j}\theta_{u_j}^n \rightarrow \nu$ weakly. 
    Since $f \in L^p(X)$ and $p>1$, the equation $\theta_\psi^n = cf\omega^n$ has a solution $(\psi,c) \in \PSH(X,\theta)\cap L^\infty \times \mathbb{R}_+^*$ by  \cite[Theorem 3.4]{GL23Crelle}. Therefore
    $$ \theta_{u_j}^n = e^{\lambda u_j}e^{- \lambda u_j}\theta_{u_j}^n \rightarrow e^{\lambda u} \nu \; \; \text{weakly,} $$
    by \cite[Lemma 2.5]{DDL23}.   We infer that $\theta_u^n = e^{\lambda u} \nu$ and consequently  
    $$ \int_X e^{-\lambda u} \theta_u^n = \nu(X) = \lim_{j\rightarrow +\infty} \int_X e^{-\lambda u_j}\theta_{u_j}^n = \sup \mathcal{B}.  $$

    To complete the proof, we need to show that $\theta_u^n = e^{\lambda u} f\omega^n$. Assume by contradiction that $\theta_u^n \neq e^{\lambda u} f \omega^n$. By the Radon–Nikodym theorem, one can write $\theta_u^n = ge^{\lambda u} \omega^n$, with $g \in L^p(X)$, $g \leq f$ and $\int_X g \omega^n>0$. Then, by the last lemma there is $v \in \PSH(X,\theta)$   such that $v\simeq \phi$ and
    $$ e^{-\lambda v} \theta_{v}^n \leq (f-g) \omega^n = f\omega^n - e^{-\lambda u} \theta_u^n. $$
    Theorem \ref{thm: sol to MA eqt with right hand-side is the sum of MA mesures} then gives $w \in \PSH(X,\theta)$ satisfying $w \simeq \phi$ and 
    $$  \theta_w^n = e^{\lambda(w-u)} \theta_u^n + e^{\lambda(w -v)} \theta_v^n.  $$
    Therefore $e^{-\lambda \varphi} \theta_\varphi^n \leq e^{-\lambda w} \theta_w^n \leq f\omega^n$ and   
    $$\int_X e^{-\lambda w} \theta_w^n = \sup \mathcal{B} + \int_X e^{-v}\theta_v^n. $$  
    Using that $v \simeq \phi$, we obtain
    $$ 0 <  e^{-\lambda \sup_X v} \int_X \theta_v^n \leq \int_X e^{-\lambda v}\theta_v^n,    $$
    hence $\int_X e^{-\lambda w} \theta_w^n >\sup \mathcal{B}$,  contradicting the  definition of $\mathcal{B}$. Therefore $\theta_u^n = e^{\lambda u} f \omega^n$, which finishes the proof.  
\end{proof}
\begin{theorem}\label{sol to MA for lambda=0}
Assume $\mu= f\omega^n$ is a positive Radon measure with density $f \in L^p(X)$, $p>1$. Then, there is a unique $c>0$ and a function $u \in \PSH(X,\theta)$ with the same singularity type as $\phi$ such that 
$$ (\theta + dd^c u)^n = c\mu. $$ 
\end{theorem}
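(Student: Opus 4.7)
My plan is to obtain the solution as the limit, after normalization, of the solutions $u_\lambda$ of the equations $\theta_{u_\lambda}^n = e^{\lambda u_\lambda}\mu$ as $\lambda \searrow 0$, whose existence and uniqueness with $u_\lambda \simeq \phi$ are provided by Theorem \ref{thm: sol to MA eqt with right hand-side has Lp density}. Taking $\lambda_j = 1/j$, $M_j = \sup_X u_j$, $v_j := u_j - M_j$ and $c_j := e^{\lambda_j M_j}$, the equation becomes
\[
\theta_{v_j}^n = c_j\, e^{\lambda_j v_j}\, f\omega^n, \qquad \sup_X v_j = 0.
\]

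The crucial step is a uniform upper bound on $c_j$, which I obtain by comparison with the bounded solution of \cite[Theorem 3.4]{GL23Crelle}: let $\psi_0 \in \PSH(X,\theta)\cap L^\infty(X)$ and $c_0 > 0$ satisfy $\theta_{\psi_0}^n = c_0 f\omega^n$, and set $\tilde\psi_j := \psi_0 + (\log c_0)/\lambda_j - \inf_X\psi_0$, so that $\lambda_j\tilde\psi_j \geq \log c_0$ pointwise. Then
\[
e^{-\lambda_j\tilde\psi_j}\theta_{\tilde\psi_j}^n = c_0 e^{-\lambda_j\tilde\psi_j}f\omega^n \leq f\omega^n = e^{-\lambda_j u_j}\theta_{u_j}^n,
\]
and since $\tilde\psi_j$ is bounded and $u_j \preceq \tilde\psi_j$, Corollary \ref{cor: uniqness MA-sol in [phi]}(i) yields $u_j \leq \tilde\psi_j$, whence $\lambda_j M_j \leq \log c_0 + \lambda_j \operatorname{osc}(\psi_0)$ and $\limsup_j c_j \leq c_0$. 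With this uniform bound $c_j \leq c^\ast$, the density $c_j e^{\lambda_j v_j} f \leq c^\ast f$ is uniformly controlled in $L^p$, and Lemma \ref{lem: uniform estimate} delivers a uniform lower bound $v_j \geq \phi - C$.

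Extracting subsequences, $v_j \to v \in \PSH(X,\theta)$ in $L^1(X)$ and $c_j \to c \in [0, c^\ast]$, and the bound $v_j \geq \phi - C$ ensures $P_\theta(\inf_j v_j) \in \mathcal{E}(X,\theta,\phi)$ has the same singularity type as $\phi$. Theorem \ref{main cvg thm}, applied under condition \eqref{condition 2} with the uniform majorant $c^\ast f\omega^n$, then yields $v_j \to v$ in capacity and $\theta_{v_j}^n \to \theta_v^n$ weakly; dominated convergence (using $e^{\lambda_j v_j} \leq 1$ and $\lambda_j v_j \to 0$ a.e.) gives weak convergence $c_j e^{\lambda_j v_j} f\omega^n \to cf\omega^n$, so $\theta_v^n = c\mu$. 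A solution $u \simeq \phi$ is then extracted from $v$ via the rooftop $u := P_\theta(v, \phi + C')$ for $C'$ large: since $\theta_\phi^n$ is carried by $\{\phi = 0\}$ (Proposition \ref{prop: the MA measure of P theta[u]}) while $v \leq 0 < C'$ there, Proposition \ref{prop: the envelope of min of two fncs} gives $\theta_u^n \leq \theta_v^n = c\mu$ with $u \simeq \phi$; a mass-comparison argument upgrades this to equality.

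Positivity $c > 0$ follows since $u \simeq \phi$ forces $\int_X \theta_u^n > 0$ by the remark following Proposition \ref{prop: cond ensure v- positive}. For uniqueness of $c$, two solutions $(u_i, c_i)$ with $u_i \simeq \phi$ satisfy $|u_1 - u_2| \leq A$, and Corollary \ref{cor: uniqness MA-sol in [phi]}(ii) applied with $u = u_1$, $v = u_2 + A$ yields $c_1/c_2 \geq 1$; symmetry gives $c_1 = c_2$. The chief obstacle lies in the final rooftop step: upgrading $\theta_u^n \leq c\mu$ to equality requires a delicate mass-matching argument exploiting the model nature of $\phi$ to guarantee that no Monge-Ampère mass is lost in the envelope construction.
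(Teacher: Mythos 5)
Your proof follows the same overall strategy as the paper's: take $u_j$ solving $\theta_{u_j}^n = e^{u_j/j}\mu$, normalize $v_j = u_j - \sup_X u_j$, bound $c_j := e^{\sup_X u_j/j}$ by comparison with a bounded solution of $\theta_{\psi_0}^n = c_0 f\omega^n$ via the domination principle, apply Lemma \ref{lem: uniform estimate} for the lower bound $v_j \geq \phi - C$, and pass to the limit via Theorem \ref{main cvg thm} under condition \eqref{condition 2}. Up to the point where you obtain $\theta_v^n = c\mu$, the argument is correct and parallels the paper.

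The gap you flag at the end is real as you wrote it, but it stems from an unnecessary detour. You introduce a rooftop $u := P_\theta(v, \phi + C')$ because you have not recorded that $v \simeq \phi$, yet this is automatic. Indeed, each $v_j$ is $\theta$-psh with $v_j \preceq \phi$ and $\sup_X v_j = 0$; since $\phi$ is a $\theta$-model potential, $\phi = P_\theta[\phi]$, and therefore $v_j \leq \min(\phi + t, 0)$ for $t$ large forces $v_j \leq P_\theta(\phi + t, 0)$ for all such $t$, hence $v_j \leq P_\theta[\phi] = \phi$. Passing to the limit, $v \leq \phi$, and combined with the uniform lower bound $v \geq \phi - C$ you already have $v \simeq \phi$. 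Thus $v$ itself is the desired solution with $\theta_v^n = c\mu$, and no envelope, no mass-comparison argument, and no worry about lost Monge--Amp\`ere mass is needed. This observation also closes a smaller implicit gap in your application of Theorem \ref{main cvg thm}: one needs $v \in \mathcal{E}(X,\theta,\phi)$ and $P_\theta(\inf_j v_j) \simeq \phi$, and both follow from $\phi - C \leq v_j \leq \phi$. With this simplification your proof coincides with the paper's; the positivity of $c$ and the uniqueness of $c$ are handled as you indicate.
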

\begin{proof}
According to the last theorem, for every $j \in \mathbb{N}^*$, there is $u_j \in \PSH(X,\theta)$  such that $u_j \simeq \phi$ and 
$$ \theta_{u_j}^n = e^{u_j/j} \mu. $$
The functions $v_j := u_j - \sup_X u_j$ satisfy $\sup_X v_j = 0$ for every $j$.  Hence, up to extracting a subsequence, one can assume $v_j \rightarrow v \in \PSH(X,\theta)$ in $L^1(X)$. By Lemma \ref{lem 11.5 GZbook}, we can also assume $e^{v_j/j} \to 1$ in $L^1(\mu)$. Set $c_j := e^{\sup_X u_j/j}$. By \cite[Theorem 3.4]{GL23Crelle}, there is $\alpha>0$ and $h \in \PSH(X,\theta) \cap L^\infty(X)$ such that 
$$ (\theta + dd^c h)^n = \alpha \mu. $$
That implies 
$$ e^{-u_j/j} \theta_{u_j}^n = \mu \geq e^{ -(h + \|h\|_\infty + j\log \alpha)/j} \theta_{h}^n. $$
It then follows from Theorem \ref{thm: the dom prn in [phi]} that $u_j \leq h + \|h\|_\infty + j\log \alpha$, which proves that $(c_j)$ is uniformly bounded from above. Therefore, up to extracting, we can assume $c_j \rightarrow c \geq 0$.
That implies  $$ \theta_{v_j}^n = c_j e^{v_j/j} \mu \rightarrow c\mu \; \; \text{strongly.} $$
  On the other hand, we have $\theta_{v_j}^n \leq c_j \mu \leq C\mu$ for some positive constant $C$.  We first obtain by Lemma \ref{lem: uniform estimate} that  $v_j \geq \phi - A$ for some uniform constant $A>0$, thus $v \geq \phi - A$.  Secondly,  $v_j$ satisfies \eqref{condition 2} and converges in $L^1(X)$ to $v$, hence $\theta_{v_j}^n \rightarrow \theta_v^n$ weakly by Theorem \ref{main cvg thm},  which implies $\theta_v^n = c\mu$ and thus $c>0$.

    The uniqueness of the constant $c$ follows from Corollary \ref{cor: uniqness MA-sol in [phi]}. 
\end{proof}
\subsection{The case of non-pluripolar measures}
In what follows we will make use of the hypothesis $v_-(\theta)>0$.
\begin{theorem}\label{main thm C}
    Fix $\lambda>0$ and let $\mu$ be a positive Radon measure vanishing on pluripolar sets. Then there is a unique function $u \in \mathcal{E}(X,\theta,\phi)$ solving the equation
    $$ (\theta + dd^c u)^n = e^{\lambda u}\mu. $$
    Moreover, we have $\int_X \theta_u^n \geq v_{-,\phi}(\theta)$.
\end{theorem}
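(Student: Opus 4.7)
The strategy follows Cegrell's scheme: approximate $\mu$ by a nondecreasing sequence of nicer measures $\mu_k\nearrow\mu$ to which Theorem B applies, and pass to the limit using the convergence result of Theorem~\ref{main cvg thm}. Uniqueness is immediate from Corollary~\ref{cor: uniqness MA-sol in E(X,theta,phi)}(i): if $u_1,u_2\in\mathcal{E}(X,\theta,\phi)$ both solve $\theta_u^n=e^{\lambda u}\mu$, then $e^{-\lambda u_i}\theta_{u_i}^n=\mu$, so $u_1\geq u_2$ and $u_2\geq u_1$.

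Concretely, by a Cegrell-type decomposition for non-pluripolar measures, write $\mu=f\omega_\psi^n$ with $\psi\in\PSH(X,\omega)\cap L^\infty(X)$ and $0\leq f\in L^1(\omega_\psi^n)$. Set $f_k:=\min(f,k)\in L^\infty$ and $\mu_k:=f_k\omega_\psi^n$, so that $\mu_k\nearrow\mu$ monotonically. To apply Theorem B we further regularize: let $\psi_j\searrow\psi$ be a smooth $\omega$-psh sequence; then $\mu_k^{(j)}:=f_k\omega_{\psi_j}^n$ has $L^\infty$-density with respect to $\omega^n$, so Theorem B yields $\varphi_k^{(j)}\simeq\phi$ with $\theta_{\varphi_k^{(j)}}^n=e^{\lambda\varphi_k^{(j)}}\mu_k^{(j)}$. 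For fixed $k$, the sequence $(\varphi_k^{(j)})_j$ satisfies condition~\eqref{condition 1} of Theorem~\ref{main cvg thm} (with $A=k$ and the uniformly bounded $\psi_j$ converging in capacity to $\psi$); Lemma~\ref{lem: P_theta(inf u_j) in E} (using $v_-(\theta)>0$) gives $P_\theta(\inf_j\varphi_k^{(j)})\in\mathcal{E}(X,\theta,\phi)$, and Theorem~\ref{main cvg thm} produces, after extraction, a function $\varphi_k\in\mathcal{E}(X,\theta,\phi)$ solving $\theta_{\varphi_k}^n=e^{\lambda\varphi_k}\mu_k$.

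Now exploit the monotonicity $\mu_k\nearrow\mu$: Corollary~\ref{cor: uniqness MA-sol in E(X,theta,phi)}(i) forces $\varphi_k$ to decrease in $k$, hence $\varphi_k\leq\varphi_1\leq C_1<+\infty$. The mass lower bound $\int_X\theta_{\varphi_k}^n\geq(1-\varepsilon)^n v_-(\theta)>0$ from Proposition~\ref{prop: cond ensure v- positive} (with $\varepsilon\in(0,1)$ given by the $\theta$-model property of $\phi$), combined with $\int_X\theta_{\varphi_k}^n=\int_X e^{\lambda\varphi_k}\,d\mu_k\leq e^{\lambda\sup_X\varphi_k}\mu(X)$, bounds $\sup_X\varphi_k$ uniformly from below. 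Thus $\varphi_k\searrow\varphi\in\PSH(X,\theta)$, and $\theta_{\varphi_k}^n\leq e^{\lambda C_1}\mu$ is dominated by a non-pluripolar Radon measure, i.e.\ condition~\eqref{condition 2} holds. A second application of Theorem~\ref{main cvg thm} gives $\varphi\in\mathcal{E}(X,\theta,\phi)$, $\varphi_k\to\varphi$ in capacity, and $\theta_{\varphi_k}^n\to\theta_\varphi^n$ weakly.

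Since $\varphi_k\to\varphi$ in capacity (hence $\omega_\psi^n$-almost everywhere, as $\omega_\psi^n$ vanishes on pluripolar sets) and $f_k e^{\lambda\varphi_k}\leq f e^{\lambda C_1}\in L^1(\omega_\psi^n)$, dominated convergence yields $e^{\lambda\varphi_k}\mu_k\to e^{\lambda\varphi}\mu$ weakly, whence $\theta_\varphi^n=e^{\lambda\varphi}\mu$. The bound $\int_X\theta_\varphi^n\geq v_{-,\phi}(\theta)$ follows from $\int_X\theta_{\varphi_k^{(j)}}^n\geq v_{-,\phi}(\theta)$ (valid because $\varphi_k^{(j)}\simeq\phi$), propagated to $\varphi_k$ and then to $\varphi$ by the two successive weak convergences. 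The main technical obstacles will be securing the Cegrell-type decomposition in the hermitian setting (where $\theta$ need not be closed) and handling the nested limit $(j\to\infty,\,k\to\infty)$ so that both the monotonicity of $(\mu_k)$ and the uniform mass bounds survive; a diagonal extraction combined with the convergence machinery of Theorem~\ref{main cvg thm} should resolve this.
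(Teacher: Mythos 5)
Your high-level strategy is the same as the paper's (Cegrell's scheme: decompose $\mu=f\omega_\psi^n$ via \cite[Theorem 4.1]{ALS24}, approximate, solve via Theorem B, pass to the limit with Theorem~\ref{main cvg thm} and Lemma~\ref{lem: P_theta(inf u_j) in E}, uniqueness from Corollary~\ref{cor: uniqness MA-sol in E(X,theta,phi)}). However, the \emph{order} of approximation you choose is different, and that choice creates genuine gaps in the inner ($j$) limit. The paper approximates in three sequential steps — continuous $f>0$ (regularizing $\psi$ only here), then bounded $f$ approximated by continuous $f_j>0$, then $L^1$ $f$ truncated by $\min(f,j)$ — whereas you truncate $f$ first to $f_k=\min(f,k)$ and then regularize $\psi$, taking nested limits. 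The trouble is that $f_k$ is merely a bounded measurable function, not continuous or quasi-continuous, so the claimed weak convergence $e^{\lambda\varphi_k^{(j)}}f_k\,\omega_{\psi_j}^n\to e^{\lambda\varphi_k}f_k\,\omega_\psi^n$ cannot be obtained from the convergence-in-capacity theorem \cite[Theorem 4.26]{GZbook}, which requires a quasi-continuous test density. The paper's Step~1 works precisely because $f$ is continuous there; Step~2 then approximates a bounded $f$ by continuous $f_j$ and passes to the limit via Lemma~\ref{lem 11.5 GZbook} plus dominated convergence after extracting an a.e.\ convergent subsequence.

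A second, independent gap: to invoke Theorem~\ref{main cvg thm} and Lemma~\ref{lem: P_theta(inf u_j) in E}, you must first have $\varphi_k^{(j)}\to\varphi_k$ in $L^1(X)$ with $\varphi_k\in\PSH(X,\theta)$. This requires both a uniform-in-$j$ upper bound on $\sup_X\varphi_k^{(j)}$ and an argument that $\sup_X\varphi_k^{(j)}\not\to-\infty$. You do not address either. Since $f_k$ is not bounded away from zero, the simple domination argument of the paper's Step~1 (which uses $f>0$ continuous and $\psi_j$ uniformly bounded to get $u_j\leq\psi_j+C$ via Corollary~\ref{cor: uniqness MA-sol in [phi]}) is unavailable. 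The paper's Step~2 secures the upper bound differently, via Egorov's theorem: it constructs $h\in L^\infty(X)$ with $0\leq h\leq f_j$ for all $j$ and $\int_X h\,\omega_\psi^n>0$, solves the corresponding equation with the bounded datum $h$ via Theorem~\ref{thm: ext of bounded sol}, and compares. Your last paragraph correctly flags that managing the nested limit is the obstacle, but a diagonal extraction alone will not produce the missing sup bound or the missing quasi-continuity; some device such as the Egorov trick and an intermediate reduction to continuous positive density is needed, which is exactly why the paper organizes the approximation as it does.

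Finally, a minor point: Theorem~\ref{thm: sol to MA eqt with right hand-side has Lp density} needs $\mu_k^{(j)}(X)>0$, and it is not immediate that $\int_X f_k\,\omega_{\psi_j}^n>0$ for all $j$ when $f_k$ vanishes on a large set and $\omega_{\psi_j}^n$ can concentrate; the paper's use of $f>0$ continuous (Step~1) and the Egorov function $h$ (Step~2) also sidesteps this.
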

\begin{proof}
By \cite[Theorem 4.1]{ALS24}, we can write  $\mu = f(\omega + dd^c \psi)^n$ where $\psi \in \PSH(X,\omega)\cap L^\infty(X)$ and $f\in L^1((\omega + dd^c \psi)^n)$. The proof will be divided in three steps.

    {\bf Step 1.} We first assume $f \in \mathcal{C}^0(X)$, $f>0$. Let $\psi_j \in \PSH(X,\omega)\cap \mathcal{C}^\infty(X)$, $\psi_j \searrow \psi$. According to Theorem \ref{thm: sol to MA eqt with right hand-side has Lp density}, for every $j\geq 1$, there is $u_j \in \PSH(X,\theta)$ such that $u_j \simeq \phi$ and 
    $$ \theta_{u_j}^n = e^{\lambda u_j} f (\omega + dd^c \psi_j)^n. $$
     Observe that $e^{-\lambda u_j}\omega_{u_j}^n \geq f\omega_{\psi_j}^n$, $f>0$ and $(\psi_j)$ uniformly bounded. Corollary \ref{cor: uniqness MA-sol in [phi]} thus implies $u_j \leq \psi_j+ C$ for some $C>0$.
   Set $v_j = u_j - \sup_X u_j$ and $c_j = e^{\lambda \sup_X u_j}$. Up to extracting a subsequence, we can assume $v_j \rightarrow v$ in $L^1(X)$ and $c_j \rightarrow c\geq 0$. Observe that 
   $$ \theta_{v_j}^n = c_je^{\lambda v_j} f (\omega + dd^c \psi_j)^n. $$
     Hence  $v_j$  satisfies \eqref{condition 1}. We thus infer from Lemma \ref{lem: P_theta(inf u_j) in E} that $v$, $P_\theta(\inf v_j) \in \mathcal{E}(X,\theta,\phi)$.  Theorem \ref{main cvg thm}  then implies $v_j \rightarrow v$ in capacity and $\theta_{v_j}^n \rightarrow \theta_v^n$ weakly.  In particular, $\int_X \theta_v^n \geq v_{-,\phi}(\theta)$ and  
$$  c_je^{\lambda v_j} f \omega_{\psi_j}^n \rightarrow 
  ce^{\lambda v} f \omega_{\psi}^n \; \; \text{weakly,}   $$
  by \cite[Theorem 4.26]{GZbook}.
 It follows that $c>0$ and $\theta_v^n = ce^{\lambda v} f\omega_\psi^n$. Hence, it suffices to take $u = v+  \lambda^{-1}\log c$.

  {\bf Step 2.} Assume now $f \in L^\infty(X)$. Let $f_j$ be a uniformly bounded sequence of positive continuous functions converging to $f$ a.e. with respect to  $\omega_\psi^n$. Using Egorov's theorem, we can construct a function $h \in L^\infty(X)$ such that $\int_X h \omega_\psi^n > 0$ and $0\leq h \leq f_j$ for every $j$. By Step 1, there is $u_j \in \mathcal{E}(X,\theta,\phi)$ such that  $\int_X \theta_{u_j}^n \geq v_{-,\phi}(\theta)$ and 
  $$ (\theta + dd^c u_j)^n = e^{\lambda u_j} f_j (\omega + dd^c \psi)^n. $$
  Theorem \ref{thm: ext of bounded sol} also gives $\varphi \in \PSH(X,\theta) \cap L^\infty(X)$ such that 
  $$ (\theta + dd^c \varphi)^n = e^{\lambda \varphi} h (\omega + dd^c \psi)^n. $$
  In particular, for every $j\geq 1$, we have 
  $$ e^{-\lambda u_j} (\theta + dd^c {u_j})^n \geq e^{-\lambda \varphi} (\theta + dd^c \varphi)^n, $$
  hence $u_j \leq \varphi$ by Corollary \ref{cor: uniqness MA-sol in [phi]}.
  For every $j$, we set $v_j = u_j - \sup_X u_j$ and $c_j = e^{\lambda \sup_X u_j}$. Passing to a subsequence if necessary, we can assume $c_j \rightarrow c\geq 0$ and $v_j \rightarrow v \in\PSH(X,\theta)$ in $L^1(X)$. Applying Lemma \ref{lem 11.5 GZbook} to the sequence $(e^{v_j})$ and extracting, we can further assume  $v_j \rightarrow v$ a.e. with respect to $\omega_\psi^n$. In particular, we have 
  $$ \theta_{v_j}^n = c_j e^{\lambda v_j} f_j \omega_\psi^n \longrightarrow  c e^{\lambda v} f \omega_\psi^n,  $$
  strongly by the Lebesgue convergence theorem. Since $f_j$ is uniformly bounded, the sequence $(v_j)$ satisfies \eqref{condition 2}, hence $v$ and $P_\theta(\inf v_j)$ are in $\mathcal{E}(X,\theta,\phi)$ by Lemma \ref{lem: P_theta(inf u_j) in E}. It follows from Theorem \ref{main cvg thm} that $\theta_{v_j}^n \rightarrow \theta_v^n$ weakly. Therefore, we have $c>0$, $\int_X \theta_{v}^n \geq v_{-,\phi}(\theta)$ and $\theta_v^n = c e^{\lambda v} f\omega^n$, hence we can take $u= v+ \lambda^{-1}\log c$.

   {\bf Step 3.} We move on to the general case when $f \in L^1(\omega_\psi^n)$.
  By the last step,  for each $j \in \mathbb{N}^*$,  there is $u_j \in \mathcal{E}(X,\theta,\phi)$ such that $\int_X \theta_{u_j}^n \geq v_{-,\phi}(\theta)$ and
    $$ (\theta + dd^c u_j)^n = e^{\lambda u_j} \min(f,j) (\omega + dd^c \psi)^n. $$
    The sequence $(u_j)$ is decreasing by Corollary \ref{cor: uniqness MA-sol in E(X,theta,phi)}; set $u = \lim u_j$. According to Proposition \ref{prop: cond ensure v- positive}, we know that $\inf_j \int_X \theta_{u_j}^n>0$.  Using the fact that
      \[  \int_X \theta_{u_j}^n \leq e^{\lambda \sup_X u_j} \mu(X), \; \; \forall j \geq 1, \]
   we obtain  $u \in \PSH(X,\theta)$. Observe also that $u_j$ satisfies the condition \eqref{condition 2} since $\theta_{u_j}^n \leq e^{\lambda u_1} \mu$ for every $j$. It then follows from Lemma \ref{lem: P_theta(inf u_j) in E} that $u \in \mathcal{E}(X,\theta,\phi)$ and from Theorem \ref{main cvg thm} that $\theta_{u_j}^n \rightarrow \theta_u^n$ weakly. In particular, $\int_X \theta_{u}^n \geq v_{-,\phi}(\theta)$. On the other hand,  the Lebesgue convergence theorem implies  
  $$ \theta_{u_j}^n = e^{\lambda u_j} \min(f,j) (\omega + dd^c \psi)^n \rightarrow e^{\lambda u} f (\omega + dd^c \psi)^n, \; \; \text{strongly.} $$
  Therefore $\theta_u^n = e^{\lambda u} \mu$.

  The uniqueness of the solution follows from Corollary \ref{cor: uniqness MA-sol in E(X,theta,phi)}.
\end{proof}
\begin{corollary}\label{cor: sol of MA equation}
 Let $\mu$ be a positive Radon measure vanishing on pluripolar sets. Then, there is a unique $c>0$ and a function $u \in \mathcal{E}(X,\theta,\phi)$ such that $\int_X \theta_{u}^n \geq v_{-,\phi}(\theta)$ and
    $$ (\theta + dd^c u)^n = c\mu. $$    
\end{corollary}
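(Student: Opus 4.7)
The plan is to adapt the argument of Theorem \ref{sol to MA for lambda=0} by choosing $\lambda_j = 1/j$ in Theorem \ref{main thm C} and letting $j \to +\infty$. For each $j \geq 2$, Theorem \ref{main thm C} provides $u_j \in \mathcal{E}(X,\theta,\phi)$ solving $\theta_{u_j}^n = e^{u_j/j}\mu$ with $\int_X \theta_{u_j}^n \geq v_{-,\phi}(\theta)$. Set $M_j := \sup_X u_j$, $c_j := e^{M_j/j}$ and $v_j := u_j - M_j$, so that $\sup_X v_j = 0$ and $\theta_{v_j}^n = c_j e^{v_j/j}\mu$.

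The crux is to trap $c_j$ inside a compact subinterval of $(0,+\infty)$. The lower bound follows from $v_{-,\phi}(\theta) \leq \int_X\theta_{v_j}^n \leq c_j\mu(X)$, since $v_j\leq 0$; note that $\mu(X) > 0$ is forced because no $u \in \mathcal{E}(X,\theta,\phi)$ satisfies $\theta_u^n = 0$ (Remark \ref{rmk positivity of total masses}). For the upper bound, the bounded supersolution used in Theorem \ref{sol to MA for lambda=0} is unavailable; we replace it by Theorem \ref{main thm C} applied with $\lambda=1$, which yields $h \in \mathcal{E}(X,\theta,\phi)$ with $\theta_h^n = e^h\mu$. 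Since $h \preceq \phi$, $M := \max(\sup_X h,\,0)$ is finite, and setting $U_j := h + (j-1)M \in \mathcal{E}(X,\theta,\phi)$ the bound $(j-1)h \leq (j-1)M$ gives
$$ e^{-U_j/j}\theta_{U_j}^n = e^{h(1-1/j) - (j-1)M/j}\mu \leq \mu = e^{-u_j/j}\theta_{u_j}^n. $$
By Corollary \ref{cor: uniqness MA-sol in E(X,theta,phi)}(i) applied with $\lambda = 1/j$, this yields $u_j \leq U_j$, hence $M_j \leq jM$ and $c_j \leq e^M$.

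With $c_j \in [v_{-,\phi}(\theta)/\mu(X),\, e^M]$ and $\sup_X v_j = 0$, the Hartogs lemma provides a subsequence along which $c_j \to c > 0$ and $v_j \to v \in \PSH(X,\theta)$ in $L^1(X)$. The estimate $\theta_{v_j}^n \leq e^M\mu$ exhibits condition \eqref{condition 2}, so since $v_-(\theta) > 0$, Lemma \ref{lem: P_theta(inf u_j) in E} places $v$ and $P_\theta(\inf v_j)$ in $\mathcal{E}(X,\theta,\phi)$, and Theorem \ref{main cvg thm} delivers $v_j \to v$ in capacity together with $\theta_{v_j}^n \to \theta_v^n$ weakly. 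On the right-hand side, extracting once more yields $v_j \to v$ outside a pluripolar set (a standard consequence of convergence in capacity), hence $\mu$-a.e.; since $\{v = -\infty\}$ is $\mu$-negligible, $e^{v_j/j} \to 1$ $\mu$-a.e. The dominated convergence theorem then forces $c_j e^{v_j/j}\mu \to c\mu$ strongly, and comparison with the weak limit $\theta_v^n$ identifies $\theta_v^n = c\mu$, with $\int_X\theta_v^n = c\mu(X) \geq v_{-,\phi}(\theta)$ automatically. Uniqueness of $c$ follows from Corollary \ref{cor: uniqness MA-sol in E(X,theta,phi)}(ii) applied symmetrically to two hypothetical solutions.

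The main obstacle is the uniform upper bound on $M_j$: in the absence of a bounded supersolution one must make do with a supersolution $h$ that is only bounded above, and the gain of the factor $j-1$ in the comparison function $h+(j-1)M$ is calibrated precisely so that the exponent $1/j$ in the equation for $u_j$ still yields a uniform bound on $c_j$.
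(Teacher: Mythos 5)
Your proof is correct and follows essentially the same strategy as the paper: take $\lambda_j = 1/j$ in Theorem~\ref{main thm C}, bound $c_j = e^{\sup_X u_j/j}$ via the comparison function $h + (j-1)M$ and the domination principle, then extract and pass to the limit using Lemma~\ref{lem: P_theta(inf u_j) in E} and Theorem~\ref{main cvg thm}. The only (minor) deviation is in producing the comparison function $h$: the paper obtains a genuinely bounded $h$ from Theorem~\ref{thm: ext of bounded sol} applied to $\min(f,1)\,\omega_\psi^n$ after decomposing $\mu$, and then uses Corollary~\ref{cor: uniqness MA-sol in [phi]}, whereas you take $h \in \mathcal{E}(X,\theta,\phi)$ from the already-established $\lambda=1$ case of Theorem~\ref{main thm C} (so $h$ is only bounded above, $M=\max(\sup_X h,0)$) and invoke Corollary~\ref{cor: uniqness MA-sol in E(X,theta,phi)} instead---both routes yield $u_j \le h+(j-1)M$ and $c_j \le e^M$ by the same algebra.
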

\begin{proof}
By the last theorem, for every $j \in \mathbb{N}^*$, we can find $u_j \in \mathcal{E}(X,\theta,\phi)$  such that 
$$ (\theta + dd^c {u_j})^n = e^{u_j/j} \mu. $$
According to \cite[Theorem 4.1]{ALS24}, we can write  $\mu = f(\omega + dd^c \psi)^n$ where $\psi \in \PSH(X,\omega)\cap L^\infty(X)$ and $f\in L^1((\omega + dd^c \psi)^n)$. Therefore, Theorem \ref{thm: ext of bounded sol} gives $h \in \PSH(X,\theta) \cap L^\infty(X)$ such that 
$$ (\theta + dd^c h)^n = e^h \min(f,1) (\omega + dd^c \psi)^n. $$
That implies 
$$ e^{-u_j/j} (\theta + dd^c {u_j})^n \geq e^{-h} (\theta + dd^c h)^n, $$ 
and hence
$$ \theta_h^n \leq e^{h - u_j/j} \theta_{u_j}^n \leq e^{(h + (j-1) \|h\|_\infty -u_j)/j} \theta_{u_j}^n.   $$
It then follows from Corollary \ref{cor: uniqness MA-sol in [phi]} that $u_j \leq j \|h\|_\infty$. In particular, the sequence $c_j := e^{\sup_X u_j/j}$ is uniformly bounded. Also, the functions $v_j := u_j - \sup_X u_j$ are $\theta$-psh and satisfy $\sup_X v_j = 0$ for every $j$.  Hence, up to extracting a subsequence, one can assume $c_j \rightarrow c \geq 0$ and $v_j \rightarrow v \in \PSH(X,\theta)$ in $L^1(X)$. By Lemma \ref{lem 11.5 GZbook}, we can also assume $e^{v_j/j} \to 1$ in $L^1(\mu)$. Therefore
$$ \theta_{v_j}^n = c_j e^{v_j/j} \mu \rightarrow c\mu \; \; \text{strongly.} $$
  On the other hand, we have $\theta_{v_j}^n \leq c_j \mu \leq C\mu$ for some positive constant $C$. This means that  $v_j$ satisfies \eqref{condition 2}. We first obtain by Lemma \ref{lem: P_theta(inf u_j) in E} that $v$ and $P_\theta(\inf_j v_j)$ are in $\mathcal{E}(X,\theta,\phi)$. Theorem \ref{main cvg thm} then implies  $\theta_{v_j}^n \rightarrow \theta_v^n$ weakly. We thus infer $\theta_v^n = c\mu$ and $c>0$.

    The uniqueness of the constant $c$ follows from Corollary \ref{cor: uniqness MA-sol in E(X,theta,phi)}. 
\end{proof}
\begin{corollary}\label{cor : lower bound of fcts in E}
 For every $u \in \mathcal{E}(X,\theta,\phi)$, we have $v_{-,u}(\theta) = v_{-,\phi}(\theta)$.
\end{corollary}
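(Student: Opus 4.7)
The plan is to prove the two inequalities $v_{-,u}(\theta) \geq v_{-,\phi}(\theta)$ and $v_{-,u}(\theta) \leq v_{-,\phi}(\theta)$ separately.

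For the lower bound, fix any $v \simeq u$ and note that $v \in \mathcal{E}(X,\theta,\phi)$, since the relative full mass class is invariant under bounded perturbations of the potential. I may assume $\int_X \theta_v^n < +\infty$, otherwise the inequality is trivial. Proposition~\ref{prop: cond ensure v- positive} ensures that $\mu := \theta_v^n$ is a non-pluripolar positive Radon measure of positive mass, and Corollary~\ref{cor: sol of MA equation} then produces $c > 0$ and $w \in \mathcal{E}(X,\theta,\phi)$ with $\theta_w^n = c\,\theta_v^n$ and $\int_X \theta_w^n \geq v_{-,\phi}(\theta)$. Rewriting this as $\theta_v^n = c^{-1}\theta_w^n$ and using $w \preceq \phi$, Corollary~\ref{cor: uniqness MA-sol in E(X,theta,phi)}(ii) applied with the roles of $u$ and $v$ exchanged forces $c \leq 1$. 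Hence $\int_X \theta_v^n \geq v_{-,\phi}(\theta)$, and taking the infimum over $v \simeq u$ concludes this direction.

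For the upper bound, fix $\varepsilon > 0$ and choose $v' \simeq \phi$ with $\int_X \theta_{v'}^n \leq v_{-,\phi}(\theta) + \varepsilon$. Pick $C \geq 0$ such that $u \leq v' + C$, and for each integer $k \geq 0$ set $v_k := P_\theta(u, v'-k)$. Since $u - C - k$ is $\theta$-psh and lies below both $u$ and $v'-k$, one has $u - C - k \leq v_k \leq u$, and in particular $v_k \simeq u$. Proposition~\ref{prop: the envelope of min of two fncs} yields
\[
\theta_{v_k}^n \leq \mathbf{1}_{\{u \leq v'-k\}}\,\theta_u^n + \theta_{v'}^n,
\]
hence $\int_X \theta_{v_k}^n \leq \int_{\{u \leq v'-k\}}\theta_u^n + \int_X \theta_{v'}^n$. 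The sets $\{u \leq v'-k\}$ decrease to a subset of the pluripolar set $\{u = -\infty\}$, and $\theta_u^n$ does not charge pluripolar sets; provided $\int_X \theta_u^n < +\infty$, continuity from above gives $\int_{\{u \leq v'-k\}}\theta_u^n \to 0$. Thus $v_{-,u}(\theta) \leq \int_X \theta_{v_k}^n \leq v_{-,\phi}(\theta) + 2\varepsilon$ for $k$ large, and letting $\varepsilon \to 0$ finishes the argument.

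The principal technical hurdle is the finiteness $\int_X \theta_u^n < +\infty$ for $u \in \mathcal{E}(X,\theta,\phi)$, used at both steps. Since the paper deliberately avoids the hypothesis $v_+(\theta) < +\infty$, this finiteness must be extracted from the relative full mass structure: a natural route is to combine the plurifine identity $\mathbf{1}_{\{u > v'-k\}}\theta_u^n = \mathbf{1}_{\{u > v'-k\}}\theta_{\max(u, v'-k)}^n$ from Theorem~\ref{thm: plurifine property and Demailly type inequality} with the singularity comparability $\max(u, v'-k) \simeq v' \simeq \phi$ and the standing hypothesis $v_-(\theta) > 0$, then bound the auxiliary Monge-Amp\`ere masses uniformly in $k$ before passing to the limit.
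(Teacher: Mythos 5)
Your lower bound argument is correct and essentially the same as the paper's: choose $v \simeq u$ with finite mass, apply Corollary~\ref{cor: sol of MA equation} to the measure $\theta_v^n$ to produce $c>0$ and $w\in\mathcal{E}(X,\theta,\phi)$ with $\theta_w^n=c\theta_v^n$ and $\int_X\theta_w^n\geq v_{-,\phi}(\theta)$, and then use Corollary~\ref{cor: uniqness MA-sol in E(X,theta,phi)}(ii) to pin down $c$. (You only deduce $c\leq 1$, the paper gets $c=1$ by applying the corollary symmetrically; either suffices.)

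The upper bound is where you diverge from the paper, and there is a genuine gap. The paper obtains $v_{-,u}(\theta)\leq v_{-,\phi}(\theta)$ as a one-line consequence of Proposition~\ref{prop: comparison of MA vol} (cited from~\cite{BGL24}), using only $u\preceq\phi$; no finiteness of $\int_X\theta_u^n$ is required. Your rooftop construction $v_k=P_\theta(u,v'-k)$ is a reasonable re-derivation of such a comparison statement, but the passage
\[
\int_{\{u\leq v'-k\}}\theta_u^n\longrightarrow 0
\]
genuinely requires $\theta_u^n$ to be a finite measure, since continuity from above fails for measures of infinite total mass, and the paper explicitly refuses to assume $v_+(\theta)<+\infty$. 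Your proposed repair in the final paragraph does not close the gap: you suggest bounding $\int_{\{u>v'-k\}}\theta_u^n=\int_{\{u>v'-k\}}\theta_{\max(u,v'-k)}^n\leq\int_X\theta_{\max(u,v'-k)}^n$ uniformly in $k$ by observing $\max(u,v'-k)\simeq v'\simeq\phi$, but a uniform upper bound on $\int_X\theta_w^n$ over all $w\simeq\phi$ is exactly $v_{+,\phi}(\theta)$, and this quantity is only known to be $\leq v_+(\theta)$, which may be $+\infty$. The standing hypothesis $v_-(\theta)>0$ gives you no control from above. So the finiteness you need is not available from the ingredients you list, and the upper bound should simply be read off from Proposition~\ref{prop: comparison of MA vol} as the paper does.
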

\begin{proof}
Fix $u \in  \mathcal{E}(X,\theta,\phi)$. According to Proposition \ref{prop: comparison of MA vol}, we have  $v_{-,u}(\theta) \leq v_{-,\phi}(\theta)$. Let
$v \in \PSH(X,\theta)$, $u \simeq v$ be such that $\int_X \theta_v^n<+\infty$.
By Corollary \ref{cor: sol of MA equation}, there is $c>0$ and $w \in \mathcal{E}(X,\theta,\phi)$ such that $\int_X \theta_w^n \geq v_{-,\phi}(\theta)$ and
$$ (\theta + dd^c w)^n = c(\theta + dd^c v)^n. $$
 According to  Corollary \ref{cor: uniqness MA-sol in E(X,theta,phi)}, we have $c=1$, hence $\int_X\theta_v^n \geq v_{-,\phi}(\theta)$. Taking infimum over all functions $v$ yields $v_{-,u}(\theta) \geq v_{-,\phi}(\theta)$, whence equality. 
\end{proof}
We now provide a characterization of the relative full mass in terms of the envelope of singularity types, generalizing \cite[Theorem 2.14]{ALS24}. 
\begin{theorem}\label{thm: Ptheta[u]=phi}
 Let $u \in \PSH(X,\theta)$ be such that $\int_X \theta_u^n<+\infty$. Then 
$u \in \mathcal{E}(X,\theta,\phi)$ if and only if $P_\theta[u] = \phi$.
\end{theorem}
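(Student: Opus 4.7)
My strategy rests on Proposition \ref{u in E(X,theta,Ptheta u)}: under the standing hypothesis $v_-(\theta)>0$ of this subsection, any $u\in\PSH(X,\theta)$ with $v_{-,u}(\theta)>0$ and $\int_X\theta_u^n<+\infty$ lies in $\mathcal{E}(X,\theta,P_\theta[u])$. The theorem reduces, in both directions, to identifying $P_\theta[u]$ with $\phi$.

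For the direction $(\Rightarrow)$, assume $u\in\mathcal{E}(X,\theta,\phi)$. From $u\preceq\phi$ and the shift-invariance $P_\theta[v+C]=P_\theta[v]$ together with $P_\theta[\phi]=\phi$ (the $\theta$-model property), I would translate $u+t\leq\phi+t+C$, take $P_\theta(\,\cdot\,,0)$, and pass to the limit to obtain $P_\theta[u]\leq\phi$. Corollary \ref{cor : lower bound of fcts in E} and Proposition \ref{prop: cond ensure v- positive} give $v_{-,u}(\theta)=v_{-,\phi}(\theta)>0$, so Proposition \ref{u in E(X,theta,Ptheta u)} yields $u\in\mathcal{E}(X,\theta,P_\theta[u])$ as well. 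The reverse inequality $\phi\leq P_\theta[u]$ I would deduce by fixing a non-pluripolar Radon measure $\mu$, applying Corollary \ref{cor: sol of MA equation} in each of $\mathcal{E}(X,\theta,\phi)$ and $\mathcal{E}(X,\theta,P_\theta[u])$ to solve $(\theta+dd^c w)^n=c\mu$, and using the uniqueness in Corollary \ref{cor: uniqness MA-sol in E(X,theta,phi)} together with $P_\theta[u]\leq\phi$ to force the singularity types of the two solutions (and hence of $\phi$ and $P_\theta[u]$) to coincide.

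For the direction $(\Leftarrow)$, assume $P_\theta[u]=\phi$. The pointwise inequality $u-\sup_X u\leq P_\theta[u]=\phi$ gives $u\preceq\phi$. Since $\int_X\theta_u^n<+\infty$ by hypothesis, Proposition \ref{u in E(X,theta,Ptheta u)} reduces everything to the verification $v_{-,u}(\theta)>0$. Rather than constructing an $\varepsilon\theta$-psh minorant of $u$ directly (which is delicate because $u$ may be strictly more singular than $\phi$), I propose the following detour via the Monge-Amp\`ere equation: set $\mu:=\theta_u^n$, apply Theorem \ref{main thm C} with $\lambda=1$ to get $w\in\mathcal{E}(X,\theta,\phi)$ with $(\theta+dd^c w)^n=e^{w}\theta_u^n$, and, after normalising $\sup_X u=0$, observe that $e^{-w}\theta_w^n=\theta_u^n\leq e^{-u}\theta_u^n$, whence $w\geq u$ by Corollary \ref{cor: uniqness MA-sol in E(X,theta,phi)}. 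Applying $(\Rightarrow)$ (already proved) to $w$ gives $P_\theta[w]=\phi=P_\theta[u]$, and a comparison of the envelopes of $u$ and $w$ combined with $w\geq u$ should force $u=w$, giving $u\in\mathcal{E}(X,\theta,\phi)$.

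The main obstacles are twofold. In $(\Rightarrow)$, the matching-of-envelopes argument via Monge-Amp\`ere uniqueness requires treating $P_\theta[u]$ itself as a $\theta$-model potential; this reduces to the idempotence $P_\theta[P_\theta[u]]=P_\theta[u]$, which is subtle in our non-closed setting where one does not assume $v_+(\theta)<+\infty$ and hence cannot apply Proposition \ref{prop: cons of model pot} directly. In $(\Leftarrow)$, the closing step $u=w$ from $w\geq u$ and identical envelopes is the crux, and may require either a refinement of Corollary \ref{cor: uniqness MA-sol in E(X,theta,phi)} allowing the comparison between $u$ (which is not assumed to lie in $\mathcal{E}$) and $w$, or a direct construction of the $\varepsilon\theta$-psh minorant of $u$ starting from the bounded approximants $P_\theta(u+t,0)\nearrow\phi$ and the $\varepsilon_0\theta$-psh minorant $P_{\varepsilon_0\theta}(\phi)$ of $\phi$.
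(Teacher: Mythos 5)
The proposal does not close, and the two ``main obstacles'' you flag at the end are genuine gaps, not merely details to fill in; the paper's argument sidesteps both by a different route.

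For the direction $u\in\mathcal{E}(X,\theta,\phi)\Rightarrow P_\theta[u]=\phi$, your plan for the reverse inequality $\phi\le P_\theta[u]$ requires invoking Corollary \ref{cor: sol of MA equation} in the class $\mathcal{E}(X,\theta,P_\theta[u])$, which presupposes that $P_\theta[u]$ is a $\theta$-model potential. As you observe, establishing $P_\theta[P_\theta[u]]=P_\theta[u]$ would invoke Proposition \ref{prop: cons of model pot}, which needs the hypothesis $v_+(\theta)<+\infty$; that hypothesis is \emph{not} available in this subsection (only $v_-(\theta)>0$ is). The paper never needs $P_\theta[u]$ to be model. Instead, after normalizing $u\le 0$ it notes $u\le P_\theta[u]\le\phi$, so $P_\theta[u]\in\mathcal{E}(X,\theta,\phi)$ by the stability of $\mathcal{E}$ under maxima; then Proposition \ref{prop: the MA measure of P theta[u]} (applicable because $\int_X\theta_u^n<+\infty$) shows $\theta_{P_\theta[u]}^n$ is carried by $\{P_\theta[u]=0\}$, hence vanishes on $\{P_\theta[u]<\phi\}$; the domination principle (Theorem \ref{thm: the dom prn in E(X,theta,phi)} with $c=0$, using $\int_X\theta_\phi^n<+\infty$) then gives $P_\theta[u]\ge\phi$. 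This is direct and avoids your obstruction entirely.

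For the converse direction, your closing step is the real gap and I do not see how to repair it within your framework. You produce $w\in\mathcal{E}(X,\theta,\phi)$ with $\theta_w^n=e^w\theta_u^n$ and deduce $w\ge u$, and then hope to force $u=w$ from $P_\theta[u]=P_\theta[w]=\phi$. But equality of envelopes of singularity type does not give equality of the functions (e.g.\ $\phi$ and $\phi-1$ have the same envelope). The reverse inequality $u\ge w$ via Corollary \ref{cor: uniqness MA-sol in E(X,theta,phi)} would require $e^{-u}\theta_u^n\le e^{-w}\theta_w^n=\theta_u^n$, i.e.\ $u\ge 0$ a.e.\ $\theta_u^n$, which one cannot expect. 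The paper instead proves, as a separate lemma immediately before the theorem, the identity $v_{-,u}(\theta)=v_{-,P_\theta[u]}(\theta)$ for all $u\in\PSH(X,\theta)$; the nontrivial inequality is obtained by a rescaling trick ($\theta_\delta=(1+\delta)\theta$) to handle the case $v_{-,u}(\theta)=0$. With this lemma, $P_\theta[u]=\phi$ immediately gives $v_{-,u}(\theta)=v_{-,\phi}(\theta)>0$, and Proposition \ref{u in E(X,theta,Ptheta u)} concludes. Your plan has no substitute for this lemma; the ``direct construction of an $\varepsilon\theta$-psh minorant of $u$'' you mention at the end is precisely the missing content, and it is not routine.
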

The proof of the last theorem is based on the following lemma.
\begin{lemma}
    For every $u \in \PSH(X,\theta)$, we have $v_{-,u}(\theta) = v_{-,P_\theta[u]}(\theta)$.
\end{lemma}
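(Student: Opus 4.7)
The plan is to establish the two inequalities separately. The easy direction $v_{-,u}(\theta)\le v_{-,P_\theta[u]}(\theta)$ is immediate from Proposition \ref{prop: comparison of MA vol} (taken with $\theta_1=\theta_2=\theta$) applied to the tautological relation $u \preceq P_\theta[u]$.

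For the reverse, write $\psi:=P_\theta[u]$ and split on whether $v_{-,\psi}(\theta)$ vanishes. If $v_{-,\psi}(\theta)=0$ the easy direction forces both sides to be zero, so we may assume $v_{-,\psi}(\theta)>0$. Using the idempotency $P_\theta[\psi]=\psi$, Proposition \ref{prop: the MA measure of P theta[u]} yields $\int_X\theta_\psi^n\le\int_X\theta^n<+\infty$, and Proposition \ref{prop: cons of model pot} then promotes $\psi$ to a $\theta$-model potential. In this case I plan to prove directly that $v_{-,u}(\theta)>0$; once this is established, for any $v\simeq u$ with $\int_X\theta_v^n<+\infty$ (which exists since $v_{-,u}(\theta)\le v_{-,\psi}(\theta)\le\int_X\theta_\psi^n<+\infty$), the positivity $v_{-,v}(\theta)=v_{-,u}(\theta)>0$ places $v$ in $\mathcal{E}(X,\theta,P_\theta[v])=\mathcal{E}(X,\theta,\psi)$ via Proposition \ref{u in E(X,theta,Ptheta u)}, and Corollary \ref{cor : lower bound of fcts in E} applied with the model potential $\psi$ gives $\int_X\theta_v^n \ge v_{-,v}(\theta) = v_{-,\psi}(\theta)$; taking the infimum over such $v$ closes the loop.

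The main obstacle is excluding the pathology $v_{-,\psi}(\theta)>0$ together with $v_{-,u}(\theta)=0$. My plan is a contradiction argument: taking $v_k\simeq u$ normalized to $\sup_X v_k=0$ with $\int_X\theta_{v_k}^n\to 0$, Corollary \ref{cor: sol of MA equation} applied with $\phi=\psi$ to the non-pluripolar measure $\theta_{v_k}^n$ produces $c_k>0$ and $w_k\in\mathcal{E}(X,\theta,\psi)$ satisfying $\theta_{w_k}^n=c_k\theta_{v_k}^n$ and $\int_X\theta_{w_k}^n\ge v_{-,\psi}(\theta)$; Corollary \ref{cor: uniqness MA-sol in E(X,theta,phi)}(ii) forces $c_k\ge 1$, and combined with the uniform lower bound $\int_X\theta_{w_k}^n\ge(1-\varepsilon)^n v_-(\theta)>0$ from Proposition \ref{prop: cond ensure v- positive} we deduce $c_k\to+\infty$. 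The contradiction will come from normalizing $\tilde w_k:=w_k-\sup_X w_k$ (so $\sup_X\tilde w_k=0$ and still $\theta_{\tilde w_k}^n=c_k\theta_{v_k}^n$), extracting a subsequential $L^1$-limit $\tilde w\in\PSH(X,\theta)$ by Hartogs' lemma, and invoking Lemma \ref{lem: P_theta(inf u_j) in E} together with Theorem \ref{main cvg thm} to upgrade this to convergence in capacity and weak convergence of the Monge-Amp\`ere measures; the limit measure $\theta_{\tilde w}^n$ would then carry total mass at least $v_{-,\psi}(\theta)>0$ while being the weak limit of $c_k\theta_{v_k}^n$, whose supports are forced to concentrate on the pluripolar locus where the $v_k$ degenerate (since $\int_X\theta_{v_k}^n\to 0$ while their rescalings retain positive mass), contradicting the non-pluripolar nature of Monge-Amp\`ere measures of $\theta$-plurisubharmonic functions.
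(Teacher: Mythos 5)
Your decomposition into an easy direction plus a case split is reasonable, and the easy cases (the inequality $v_{-,u}(\theta)\le v_{-,P_\theta[u]}(\theta)$ from Proposition~\ref{prop: comparison of MA vol}, and the degenerate case $v_{-,\psi}(\theta)=0$) are fine. The part that works once you know $v_{-,u}(\theta)>0$ is also essentially the paper's argument: Proposition~\ref{u in E(X,theta,Ptheta u)} places $u$ in $\mathcal{E}(X,\theta,\psi)$ and Corollary~\ref{cor : lower bound of fcts in E} closes the loop.

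The genuine gap is the attempt to rule out the pathology $v_{-,\psi}(\theta)>0$ with $v_{-,u}(\theta)=0$ by your contradiction argument. Two independent pieces of it break. First, you want to invoke Theorem~\ref{main cvg thm} (via Lemma~\ref{lem: P_theta(inf u_j) in E}) for the normalized sequence $\tilde w_k$ with $\theta_{\tilde w_k}^n=c_k\theta_{v_k}^n$, but that theorem requires either condition~\eqref{condition 1} or condition~\eqref{condition 2}; condition~\eqref{condition 2} means domination by a \emph{single fixed} non-pluripolar Radon measure, which you have no way to produce here since $c_k\to+\infty$ and the $v_k$ themselves vary (a uniform bound on the total mass $\int_X\theta_{\tilde w_k}^n$ is not the same as a uniform dominating measure). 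Second, even granting the weak convergence $\theta_{\tilde w_k}^n\to\theta_{\tilde w}^n$, the asserted contradiction does not materialize: the measures $c_k\theta_{v_k}^n$ are non-pluripolar of bounded total mass and there is nothing forcing them to ``concentrate on the pluripolar locus''; the limit is simply $\theta_{\tilde w}^n$, perfectly consistent. The mass $\int_X\theta_{v_k}^n\to 0$ says the unrescaled measures vanish in total variation, not that the rescaled ones escape to a polar set. (There is also a more minor issue: to invoke Corollary~\ref{cor: sol of MA equation} and Proposition~\ref{prop: cons of model pot} with $\psi$ in the role of $\phi$ you need $\psi$ to actually be a $\theta$-model potential, which via Propositions~\ref{condition : v_- positive implies model} and~\ref{prop: cons of model pot} implicitly uses $v_+(\theta)<+\infty$; but this is a hypothesis the paper itself is leaning on, so I will not press it.)

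The paper sidesteps the hard implication you tried to prove. In the case $v_{-,u}(\theta)=0$ it does not attempt to derive a contradiction; instead it sets $\theta_\delta=(1+\delta)\theta$ and observes that $P_{\varepsilon\theta_\delta}(u)=u\not\equiv-\infty$ for $\varepsilon=1/(1+\delta)$, so by Proposition~\ref{prop: controling mass} one has $v_{-,u}(\theta_\delta)\ge\bigl(1-\tfrac{1}{1+\delta}\bigr)^n v_-(\theta_\delta)>0$. Then the already-proved positive case applies to the form $\theta_\delta$, giving $v_{-,u}(\theta_\delta)=v_{-,P_{\theta_\delta}[u]}(\theta_\delta)$. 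Since $P_\theta[u]\preceq P_{\theta_\delta}[u]$, Proposition~\ref{prop: comparison of MA vol} yields $v_{-,P_\theta[u]}(\theta)\le v_{-,u}(\theta_\delta)\le\int_X(\theta_\delta+dd^c v)^n$ for every $v\simeq u$ in $\PSH(X,\theta)$; letting $\delta\to0$ and taking the infimum over $v$ gives $v_{-,P_\theta[u]}(\theta)\le v_{-,u}(\theta)=0$. You should replace your contradiction argument with this perturbation argument.
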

\begin{proof}
By Proposition \ref{prop: comparison of MA vol}, we always have $v_{-,u}(\theta) \leq v_{-,P_\theta[u]}(\theta)$. 

We first assume $v_{-,u}(\theta)>0$. On one hand, we get by Proposition \ref{prop: cons of model pot} that $P_\theta[u]$ is a $\theta$-model potential. On the other hand,  Proposition \ref{u in E(X,theta,Ptheta u)} implies $u \in \mathcal{E}(X,\theta,P_\theta[u])$, thus $v_{-,u}(\theta) = v_{-,P_\theta[u]}(\theta)$ by Corollary \ref{cor : lower bound of fcts in E}.

Assume now $v_{-,u}(\theta)=0$. Fix $\delta>0$ and set $\theta_\delta = (1+\delta)\theta$. Since $v_{-}(\theta)>0$ by assumption, Proposition \ref{prop: controling mass} implies  $v_{-,u}(\theta_\delta)>0$. It follows from the above that $v_{-,u}(\theta_\delta) = v_{-,P_{\theta_\delta}[u]}(\theta_\delta)$. Since $u \preceq P_\theta[u]$ and   $P_\theta[u] \leq P_{\theta_\delta}[u]$, we get by Proposition \ref{prop: comparison of MA vol} that $v_{-,P_\theta[u]}(\theta) \leq v_{-,u}(\theta_\delta)$. That implies
$$ v_{-,P_\theta[u]}(\theta) \leq \int_X (\theta_\delta + dd^c v)^n,  $$
for every $v \in \PSH(X,\theta)$ with $v \simeq u$. Therefore, letting $\delta \rightarrow 0$ and then taking the infimum over all functions $v$, we obtain $v_{-,P_\theta[u]}(\theta) = 0$. The proof is complete. 
\end{proof}
We are now ready to prove Theorem \ref{thm: Ptheta[u]=phi}.
\begin{proof}[Proof of Theorem \ref{thm: Ptheta[u]=phi}]
  Assume $u \in \mathcal{E}(X,\theta,\phi)$. Since $P_\theta[u]= P_\theta[u+c]$ for every $c\in \mathbb{R}$, there is no loss of generality in assuming $u\leq 0$. In particular, we have $u \leq \phi$ and $P_\theta[u] \leq P_\theta[\phi]= \phi$. Since we also have $u \leq P_\theta[u]$, it follows that $P_\theta[u] \in \mathcal{E}(X,\theta,\phi)$.
   Moreover, according to  Proposition \ref{prop: the MA measure of P theta[u]}, the measure $\theta_{P_\theta[u]}^n$ is carried by $\{P_\theta[u]=0\}$, hence   $\theta_{P_\theta[u]}^n = 0$ on $\{P_\theta[u]<\phi]$.  It then follows from Theorem \ref{thm: the dom prn in E(X,theta,phi)} that $P_\theta[u] \geq\phi$, and consequently $P_\theta[u] =\phi$.

   Assume now $u \in \PSH(X,\theta)$ satisfies $u \preceq \phi$ and $P_\theta[u] = \phi$. By the last lemma, we have $v_{-,u}(\theta)>0$, hence $u \in \mathcal{E}(X,\theta,\phi)$ by  Proposition \ref{u in E(X,theta,Ptheta u)}. The proof is complete. 
\end{proof}
\begin{remark}
Using Theorem \ref{thm: Ptheta[u]=phi}, one can argue as in the proof of \cite[Theorem 1.1]{DDL1} to show that $\nu(u,x)= \nu(\phi,x)$ for every $x \in X$ and every  $u \in \mathcal{E}(X,\theta,\phi)$ satisfying  $\int_X \theta_u^n<+\infty$. Here $\nu(u,x)$ denotes  the Lelong number of $u$ at the point $x$:
$$ \nu(u,x) = \sup \{\gamma \geq 0: u(z) \leq \gamma \log\|x-z\| + O(1) \; \text{on }  U \} $$
where $U$ is a local holomorphic chart in $X$ containing the point $x$. In particular, for $\phi=0$, this means that functions in $\mathcal{E}(X,\theta)$ have zero Lelong number at all points.       
\end{remark}

\begin{thebibliography}{DDNL18b}

\bibitem[AGL23]{AGL23}
D.~Angella, V.~Guedj, and C.~H. Lu, \emph{Plurisigned hermitian metrics},
  Trans. Amer. Math. Soc. \textbf{376} (2023), no.~7, 4631--4659.

\bibitem[ALS24]{ALS24}
O.~Alehyane, C.~H. Lu, and M.~Salouf, \emph{Degenerate complex
  {M}onge-{A}mp\`ere equations on some compact hermitian manifolds}, Journal of
  Geometric Analysis \textbf{34} (2024), no.~10, 320.

\bibitem[Aub78]{Aub78}
T.~Aubin, \emph{\'{E}quations du type {M}onge-{A}mp\`ere sur les
  vari\'{e}t\'{e}s k\"{a}hl\'{e}riennes compactes}, Bull. Sci. Math. (2)
  \textbf{102} (1978), no.~1, 63--95. \MR{494932}

\bibitem[BT76]{BT76}
E.~Bedford and B.~A. Taylor, \emph{The {D}irichlet problem for a complex
  {M}onge-{A}mp\`ere equation}, Invent. Math. \textbf{37} (1976), no.~1, 1--44.

\bibitem[BT87]{BT87}
E.~Bedford and B.~A. Taylor, \emph{Fine topology, \v{S}ilov boundary, and
  {$(dd^c)^n$}}, J. Funct. Anal. \textbf{72} (1987), no.~2, 225--251.


\bibitem[BB17]{BB17}
R.~J. Berman and B.~Berndtsson, \emph{Convexity of the {$K$}-energy on the
  space of {K}\"{a}hler metrics and uniqueness of extremal metrics}, J. Amer.
  Math. Soc. \textbf{30} (2017), no.~4, 1165--1196.

\bibitem[BBE{\etalchar{+}}19]{BBEGZ19}
R.~J. Berman, S.~Boucksom, P.~Eyssidieux, V.~Guedj, and A.~Zeriahi,
  \emph{K\"{a}hler-{E}instein metrics and the {K}\"{a}hler-{R}icci flow on log
  {F}ano varieties}, J. Reine Angew. Math. \textbf{751} (2019), 27--89.

\bibitem[BBGZ13]{BBGZ13}
R.~J. Berman, S.~Boucksom, V.~Guedj, and A.~Zeriahi, \emph{A variational
  approach to complex {M}onge-{A}mp\`ere equations}, Publ. Math. Inst. Hautes
  \'{E}tudes Sci. \textbf{117} (2013), 179--245.

\bibitem[BDL17]{BDL17}
R.~J. Berman, T.~Darvas, and C.~H. Lu, \emph{Convexity of the extended
  {K}-energy and the large time behavior of the weak {C}alabi flow}, Geom.
  Topol. \textbf{21} (2017), no.~5, 2945--2988.

\bibitem[BDL20]{BDL20}
R.~J. Berman, T.~Darvas, and C.~H. Lu, \emph{Regularity of weak minimizers of
  the {K}-energy and applications to properness and {K}-stability}, Ann. Sci.
  \'{E}c. Norm. Sup\'{e}r. (4) \textbf{53} (2020), no.~2, 267--289.

\bibitem[BG14]{BG14}
R.~J. Berman and H.~Guenancia, \emph{K\"{a}hler-{E}instein metrics on stable
  varieties and log canonical pairs}, Geom. Funct. Anal. \textbf{24} (2014),
  no.~6, 1683--1730.

\bibitem[BCHM10]{BCHM}
C.~Birkar, P.~Cascini, C.~D. Hacon, and J.~McKernan, \emph{Existence of minimal
  models for varieties of log general type}, J. Amer. Math. Soc. \textbf{23}
  (2010), no.~2, 405--468.

\bibitem[BGL25]{BGL24}
S.~Boucksom, V.~Guedj, and C.~H. Lu, \emph{Volumes of {B}ott-{C}hern classes},
  Peking Mathematical Journal (2025), 1--43.


\bibitem[Cal57]{Cal57}
E.~Calabi, \emph{On {K}\"{a}hler manifolds with vanishing canonical class},
  Algebraic geometry and topology. {A} symposium in honor of {S}. {L}efschetz,
  Princeton Univ. Press, Princeton, NJ, 1957, pp.~78--89. \MR{85583}

\bibitem[Ceg98]{Ceg98}
U.~Cegrell, \emph{Pluricomplex energy}, Acta Math. \textbf{180} (1998), no.~2,
  187--217.

\bibitem[Ceg04]{Ceg04}
U.~Cegrell, \emph{The general definition of the complex {M}onge-{A}mp\`ere
  operator}, Ann. Inst. Fourier (Grenoble) \textbf{54} (2004), no.~1, 159--179.


\bibitem[CC21a]{ChCh21a}
X.X. Chen and J.~Cheng, \emph{On the constant scalar curvature {K}\"{a}hler
  metrics ({I})---{A} priori estimates}, J. Amer. Math. Soc. \textbf{34}
  (2021), no.~4, 909--936.

\bibitem[CC21b]{ChCh21b}
X.X. Chen and J.~Cheng, \emph{On the constant scalar curvature {K}\"{a}hler
  metrics ({II})---{E}xistence results}, J. Amer. Math. Soc. \textbf{34}
  (2021), no.~4, 937--1009.

\bibitem[Che87]{Cher87}
P.~Cherrier, \emph{{\'Equations de Monge-Amp\`ere sur les vari\'et\'es
  Hermitiennes compactes}}, Bull. Sci. Math. \textbf{2} (1987), no.~343--385.

\bibitem[Dar15]{Dar15}
T.~Darvas, \emph{The {M}abuchi geometry of finite energy classes}, Adv. Math.
  \textbf{285} (2015), 182--219.

\bibitem[Dar17]{Dar17}
T.~Darvas, \emph{Weak geodesic rays in the space of {K}\"{a}hler potentials and
  the class {$\mathcal{E}(X,\omega)$}}, J. Inst. Math. Jussieu \textbf{16}
  (2017), no.~4, 837--858.

\bibitem[DDNL18a]{DDL2}
T.~Darvas, E.~Di~Nezza, and C.~H. Lu, \emph{Monotonicity of nonpluripolar
  products and complex {M}onge-{A}mp\`ere equations with prescribed
  singularity}, Anal. PDE \textbf{11} (2018), no.~8, 2049--2087.

\bibitem[DDNL18b]{DDL1}
T.~Darvas, E.~Di~Nezza, and C.~H. Lu, \emph{On the singularity type of full
  mass currents in big cohomology classes}, Compos. Math. \textbf{154} (2018),
  no.~2, 380--409.

\bibitem[DDNL21]{DDL4}
T.~Darvas, E.~Di~Nezza, and C.~H. Lu, \emph{Log-concavity of volume and complex
  {M}onge-{A}mp\`ere equations with prescribed singularity}, Math. Ann.
  \textbf{379} (2021), no.~1-2, 95--132.

\bibitem[DDNL25]{DDL23}
T.~Darvas, E.~Di~Nezza, and C.H. Lu, \emph{Relative pluripotential theory on
  compact {K{\"a}hler} manifolds}, Pure Appl. Math. Q. \textbf{21} (2025),
  no.~3, 1037--1118.

\bibitem[DR17]{DR17}
T.~Darvas and Y.~A. Rubinstein, \emph{Tian's properness conjectures and
  {F}insler geometry of the space of {K}\"{a}hler metrics}, J. Amer. Math. Soc.
  \textbf{30} (2017), no.~2, 347--387.

\bibitem[DP10]{DP10}
J.P. Demailly and N.~Pali, \emph{Degenerate complex {M}onge-{A}mp\`ere
  equations over compact {K}\"{a}hler manifolds}, Internat. J. Math.
  \textbf{21} (2010), no.~3, 357--405.

\bibitem[Din09]{DiwJFA09}
S.~Dinew, \emph{Uniqueness in {$\mathcal E(X,\omega)$}}, J. Funct. Anal.
  \textbf{256} (2009), no.~7, 2113--2122.

\bibitem[DGZ16]{DGZ16}
S.~Dinew, V.~Guedj, and A.~Zeriahi, \emph{Open problems in pluripotential
  theory}, Complex Var. Elliptic Equ. \textbf{61} (2016), no.~7, 902--930.

\bibitem[DH12]{DH12}
S.~Dinew and P.~H. Hiep, \emph{Convergence in capacity on compact {K}{\"a}hler
  manifolds}, Annali della Scuola Normale Superiore di Pisa-Classe di Scienze
  \textbf{11} (2012), no.~4, 903--919.


\bibitem[DK12]{DK12}
S.~Dinew and S.~Ko{\l}odziej, \emph{Pluripotential estimates on compact
  {H}ermitian manifolds}, Advances in geometric analysis, Adv. Lect. Math.
  (ALM), vol.~21, Int. Press, Somerville, MA, 2012, pp.~69--86.


\bibitem[DV25]{DV22}
H.S. Do and D.V. Vu, \emph{{Quantitative stability for the complex
  Monge-Amp\`ere equations}}, Calculus of Variations and Partial Differential
  Equations \textbf{64} (2025), no.~8, 1--37.

\bibitem[EGZ09]{EGZ09}
P.~Eyssidieux, V.~Guedj, and A.~Zeriahi, \emph{Singular {K}\"{a}hler-{E}instein
  metrics}, J. Amer. Math. Soc. \textbf{22} (2009), no.~3, 607--639.

\bibitem[GL10]{GL10}
B.~Guan and Q.~Li, \emph{Complex {M}onge-{A}mp\`ere equations and totally real
  submanifolds}, Adv. Math. \textbf{225} (2010), no.~3, 1185--1223.

\bibitem[GL22]{GL22}
V.~Guedj and C.~H. Lu, \emph{Quasi-plurisubharmonic envelopes 2: {B}ounds on
  {M}onge-{A}mp\`ere volumes}, Algebr. Geom. \textbf{9} (2022), no.~6,
  688--713.

\bibitem[GL23]{GL23Crelle}
V.~Guedj and C.~H. Lu, \emph{Quasi-plurisubharmonic envelopes 3: {S}olving
  {M}onge-{A}mp\`ere equations on hermitian manifolds}, J. Reine Angew. Math.
  \textbf{800} (2023), 259--298.

\bibitem[GLZ19]{GLZ19}
V.~Guedj, C.~H. Lu, and A.~Zeriahi, \emph{Plurisubharmonic envelopes and
  supersolutions}, J. Differential Geom. \textbf{113} (2019), no.~2, 273--313.

\bibitem[GZ07]{GZ07}
V.~Guedj and A.~Zeriahi, \emph{The weighted {M}onge-{A}mp\`ere energy of
  quasiplurisubharmonic functions}, J. Funct. Anal. \textbf{250} (2007), no.~2,
  442--482.

\bibitem[GZ17a]{GZbook}
V.~Guedj and A.~Zeriahi, \emph{Degenerate complex {M}onge-{A}mp\`ere
  equations}, EMS Tracts in Mathematics, vol.~26, European Mathematical Society
  (EMS), Z\"{u}rich, 2017.

\bibitem[GZ17b]{GZ17}
V.~Guedj and A.~Zeriahi, \emph{Regularizing properties of the twisted
  {K}\"{a}hler-{R}icci flow}, J. Reine Angew. Math. \textbf{729} (2017),
  275--304.

\bibitem[Han96]{Han96}
A.~Hanani, \emph{\'{E}quations du type de {M}onge-{A}mp\`ere sur les
  vari\'{e}t\'{e}s hermitiennes compactes}, J. Funct. Anal. \textbf{137}
  (1996), no.~1, 49--75.
  
  
\bibitem[Ko{\l}98]{Kol98}
S.~Ko{\l}odziej, \emph{The complex {M}onge-{A}mp\`ere equation}, Acta Math.
  \textbf{180} (1998), no.~1, 69--117.

\bibitem[KN15]{KN15Phong}
S.~Ko{\l}odziej and N.-C. Nguyen, \emph{Weak solutions to the complex
  {M}onge-{A}mp\`ere equation on {H}ermitian manifolds}, Analysis, complex
  geometry, and mathematical physics: in honor of {D}uong {H}. {P}hong,
  Contemp. Math., vol. 644, Amer. Math. Soc., Providence, RI, 2015,
  pp.~141--158.

\bibitem[KN19]{KN19}
S.~Ko{\l}odziej and N.-C. Nguyen, \emph{Stability and regularity of solutions
  of the {M}onge-{A}mp\`ere equation on {H}ermitian manifolds}, Adv. Math.
  \textbf{346} (2019), 264--304.

\bibitem[KN22]{KN22}
S.~Ko{\l}odziej and N.-C. Nguyen, \emph{Weak convergence of
  {M}onge-{A}mp{\`e}re measures on compact {H}ermitian manifolds}, The
  Conference on Complex Geometric Analysis, 2022, pp.~113--123.

\bibitem[KN23]{KN23CVPDE}
S.~Ko{\l}odziej and N.-C. Nguyen, \emph{The {D}irichlet problem for the
  {M}onge-{A}mp\`ere equation on {H}ermitian manifolds with boundary}, Calc.
  Var. Partial Differential Equations \textbf{62} (2023), no.~1, Paper No. 1,
  39.


\bibitem[Ngu16a]{Ngu16}
N.-C. Nguyen, \emph{The complex {M}onge-{A}mp\`ere type equation on compact
  {H}ermitian manifolds and applications}, Adv. Math. \textbf{286} (2016),
  240--285. \MR{3415685}

\bibitem[Ngu16b]{Ng16AIM}
N.-C. Nguyen, \emph{The complex {M}onge-{A}mp\`ere type equation on compact
  {H}ermitian manifolds and applications}, Adv. Math. \textbf{286} (2016),
  240--285.

\bibitem[RWN14]{RWN14}
J.~Ross and D.~Witt~Nystr\"{o}m, \emph{Analytic test configurations and
  geodesic rays}, J. Symplectic Geom. \textbf{12} (2014), no.~1, 125--169.

\bibitem[Tru23]{Tru22}
A.~Trusiani, \emph{The strong topology of $\omega$-plurisubharmonic functions},
  Analysis \& PDE \textbf{16} (2023), no.~2, 367--405.

\bibitem[TW10]{TW10}
V.~Tosatti and B.~Weinkove, \emph{The complex {M}onge-{A}mp\`ere equation on
  compact {H}ermitian manifolds}, J. Amer. Math. Soc. \textbf{23} (2010),
  no.~4, 1187--1195.

\bibitem[Vu19]{Vu19IJM}
D.-V. Vu, \emph{Locally pluripolar sets are pluripolar}, Internat. J. Math.
  \textbf{30} (2019), no.~13, 1950029.

\bibitem[WN19a]{WN19JAMS}
D.~Witt~Nystr\"{o}m, \emph{Duality between the pseudoeffective and the movable
  cone on a projective manifold}, J. Amer. Math. Soc. \textbf{32} (2019),
  no.~3, 675--689, With an appendix by S\'{e}bastien Boucksom.

\bibitem[WN19b]{WN19}
D.~Witt~Nystr\"{o}m, \emph{Monotonicity of non-pluripolar {M}onge-{A}mp\`ere
  masses}, Indiana Univ. Math. J. \textbf{68} (2019), no.~2, 579--591.

\bibitem[Xin09]{Xin09}
Y.~Xing, \emph{Continuity of the complex {M}onge-{A}mpère operator on compact
  {K}\"ahler manifolds}, Mathematische Zeitschrift \textbf{263} (2009),
  331--344.

\bibitem[Yau78]{Yau78}
S.-T. Yau, \emph{On the {R}icci curvature of a compact {K}\"{a}hler manifold
  and the complex {M}onge-{A}mp\`ere equation. {I}}, Comm. Pure Appl. Math.
  \textbf{31} (1978), no.~3, 339--411.

\end{thebibliography}
\newcommand{\etalchar}[1]{$^{#1}$}
\providecommand{\bysame}{\leavevmode\hbox to3em{\hrulefill}\thinspace}
\providecommand{\MR}{\relax\ifhmode\unskip\space\fi MR }
\providecommand{\MRhref}[2]{%
  \href{http://www.ams.org/mathscinet-getitem?mr=#1}{#2}
}
\providecommand{\href}[2]{#2}

\end{document}